\newcommand{\muf}[1][\varphi]{\mathrm{m}_{\scriptstyle #1}}
\newcommand{\tmuf}{\tilde{\mathrm{m}}_{\scriptstyle\varphi}}
\newcommand{\mux}[1][x]{\mu^u_{#1}}
\newcommand{\mufx}{\mu^u_{fx}}
\newcommand{\msx}[1][x]{\mu^s_{#1}}
\newcommand{\mcux}[1][x]{\mu^{cu}_{#1}}
\newcommand{\mcsx}[1][x]{\mu^{cs}_{#1}}
\newcommand{\nux}[1][x]{\nu^u_{#1}}
\newcommand{\nsx}[1][x]{\nu^s_{#1}}
\newcommand{\Gcal}{\mathcal{G}}
\newcommand{\Jac}[1][x_0,y_0]{\mathrm{Jac}^{\Gcal}_{#1}}
\newcommand{\Jacs}[1][x_0,y_0]{\mathrm{Jac}^{s}_{#1}}
\newcommand{\Jacu}[1][x_0,y_0]{\mathrm{Jac}^{u}_{#1}}
\newcommand{\Pcs}[1]{\ensuremath{P^{cs}(#1)}}
\newcommand{\Pcu}[1]{\ensuremath{P^{cu}(#1)}}
\newcommand{\Benu}[1][x]{D^u(#1,\epsilon,n)}
\newcommand{\Eq}{\mathscr{Eq}}
\newcommand{\clps}{\mathrm{c}_{\scriptscriptstyle{lps}}}
\newcommand{\ccen}{\mathrm{c}_{\scriptscriptstyle{cen}}}
\newcommand{\ccov}{\mathrm{c}_{\scriptscriptstyle{cov}}}
\newcommand{\cpsm}{\mathrm{c}_{\scriptscriptstyle{psm}}}
\newenvironment{proofw}{\noindent\textit{Proof.}}{\hfill$\square$} %definiciones locales para el artículo
\newcommand{\dermm}[1][x]{g_{f^{-1}\xi}(#1)}
\begin{document}

%%%%%%%%%%%%%%%%%%%%%%%%%%%%%%%%%%%%%%%%%%%%%%%%%%%%%%%%%%%%%%%%%%%%%%%%%%%%%%%%%%%%%%%%%

	\title{Equilibrium States for Center Isometries}
	\author[1]{Pablo D. Carrasco \thanks{pdcarrasco@gmail.com; partially supported by FAPEMIG Universal APQ-02160-21}}
	\author[2]{Federico Rodriguez-Hertz \thanks{hertz@math.psu.edu; partially supported by NSF grant DMS-1900778}}
    \affil[1]{ICEx-UFMG, Avda. Presidente Antonio Carlos 6627, Belo Horizonte-MG, BR31270-90}
	\affil[2]{Penn State, 227 McAllister Building, University Park, State College, PA16802}

	\date{\today}

\maketitle

\begin{abstract}
We develop a geometric method to establish existence and uniqueness of equilibrium states associated to some H\"older potentials for center isometries (as are regular elements of Anosov actions), in particular the entropy maximizing measure and the SRB measure. It is also given a characterization of equilibrium states in terms of their disintegrations along stable and unstable foliations. Finally, we show that the resulting system is isomorphic to a Bernoulli scheme.
\end{abstract}
\tableofcontents

\section{Introduction and Main Results} % (fold)
\label{sec:introduction_and_main_results}

We will study some aspects of the thermodynamic formalism for maps related to higher rank Anosov actions. Let $M$ be a compact metric space and $f:M\rightarrow M$ be an homeomorphism. For $x\in M,\epsilon>0, n\in \Nat$ we denote by $D(x,\ep)$ the open disc of center $x$ and radius $\ep$, and by $\Ben$ the open $(\epsilon,n)$-Bowen ball centered at $x$,
\[
\Ben=\{y\in M:d(f^jx,f^jy)<\epsilon,\ j=0,\ldots,n-1 \}.
\]
Recall that the topological pressure of the (continuous) potential $\varphi:M\to \Real$ is given by
\[
P_{\mathit{top}}^f(\varphi)=\Ptop(\varphi):=\lim_{\epsilon\mapsto 0}\limsup_{n\mapsto\oo}\frac{\log S(\epsilon,n)}{n}
\]
where  
\[
S(\epsilon,n)=\inf\{\sum_{x\in E}e^{\SB(x)}:M=\cup_{x\in E}D(x,\epsilon,n)\}\quad \SB=\sum_{i=0}^{n-1}\varphi\circ f^i.
\]
Denote by $\PTM{f}{M}$	the set of $f$-invariant Borel probability measures on $M$. For $\nu\in \PTM{f}{M}$ the pressure of $\varphi$ with respect to $\nu$ is 
\[
P_{\nu}(\varphi):=\nu(\varphi)+h_{\nu}(f).
\]
\begin{theorem*}[Variational Principle]
	It holds
	\[
	\Ptop(\varphi)=\sup_{\mathclap{\nu\in \PTM{f}{M}}}\ P_{\nu}(\varphi)
	\]
\end{theorem*}
This by now classical theorem is due to Walters \cite{WaltersPres}.

\begin{definition}
	A measure $\mm\in \PTM{f}{M}$ is an equilibrium state for the potential $\varphi$ if $\Ptop(\varphi)=P_{\mm}(\varphi)$.
\end{definition}

The existence, uniqueness and general properties of equilibrium states is of utmost relevance in ergodic theory, in particular for the differentiable one. Indeed, one of the most important theorems in smooth ergodic theory is the following.

\begin{theorem*}
	Let $M$ be a closed (that is, compact and without a boundary) manifold and consider a $\mathcal{C}^1$ diffeomorphism $f:M\rightarrow M$ having a hyperbolic attractor $\Lambda$ (in particular, $f|\Lambda$ is transitive). Then for every H\"older potential $\varphi:\Lambda\rightarrow \Real$ there exists a unique equilibrium state $\muf$ for $f|\Lambda$. Furthermore, if $f|\Lambda$ is topologically mixing then the system $(f,\muf)$ is metrically isomorphic to a Bernoulli shift. 
\end{theorem*}
As stated, this theorem is a culmination of the work of several authors. The existence and uniqueness is essentially due to Sinai, Ruelle and Bowen \cite{SinaiMarkov},\cite{SRBattractor},\cite{Bowen1974}, while the Bernoulli property is mainly due to Ornstein and  Weiss \cite{GeoBernoulli}. See also \cite{BowenBernoulli}.

In this work we investigate the thermodynamic formalism for a generalization of the above systems, namely when instead of a single hyperbolic map we have an Anosov action. Let us recall that a differentiable action of a Lie Group on a compact Riemannian manifold $\alpha:G\curvearrowright M$ is Anosov if the $G$-orbits of $\alpha$ foliate $M$ and there exists $g\in G$ an Anosov element, i.e.\@ there exist an $\alpha(g)$-invariant decomposition $TM=E^s\oplus E^c\oplus E^u$ and $C>0,0<\lambda<1$ such that $E^c_x=T_x(G\cdot x)$ and with respect to the metric on $M$ it holds
\begin{align*}
&v\in E^s_x\Rightarrow \forall n\geq 0\quad \norm{D_x\alpha(g)^n(v)}\leq C\lambda^n\norm{v}\\
&v\in E^u_x\Rightarrow \forall n\leq 0\quad \norm{D_x\alpha(g)^n(v)}\leq C\lambda^{-n}\norm{v}.\\
\end{align*}
In this case it is possible to modify the metric to guarantee that $\alpha(g)$ is an isometry when restricted to each $G$-orbit. Diffeomorphisms preserving a splitting with the above properties are called \emph{center isometries} and will be our main focus. See the next section for the precise definition, or the classical reference \cite{HPS} for further information about these systems.

Anosov flows (cf.\@ \cite{AnosovThesis}) constitute the most well understood example of Anosov action, where an analogous theorem as the previous one holds \cite{BowenBernoulli}. The proof relies on the existence of Markov partitions \cite{SymbHyp}, which permit to represent these flows as suspension over a symbolic system; since this technology is unavailable for Anosov actions of rank $\geq 2$, we present a geometric construction of equilibrium states for that context. At this stage however we need to impose some condition on the potential (being constant along the orbits, or being the geometrical potential associated to a regular element). Besides the fact that these cases include the most interesting types of potentials (corresponding to the entropy maximizing measure and the SRB), we also believe that the techniques developed are interesting on their own, and may be applicable to more situations.

For a center isometry $f$ the associated bundles $E^s,E^u,E^c, E^{cs}=E^s\oplus E^c, E^{cu}=E^c\oplus E^u$ are integrable to continuous foliations with smooth leaves $\Fs,\Fu,\Fc, \Fcs, \Fcu$\ called the stable, unstable, center, center stable and center unstable foliations, respectively. If $W$ is a leaf of one of these foliations, then the Riemannian metric of $M$ induces a corresponding metric on $M$, and we will consider $W$ equipped with the topology induced by this distance. Our results require the assumption that $\Fs,\Fu$ are minimal foliations, i.e.\@ every one of their leaves is dense (this condition is implicit in the classical rank-one case cited above, due to the mixing hypothesis).

\begin{theoremA}[$c$-constant case]\hypertarget{theoremA}{}
	Let $f:M\rightarrow M$ be a center isometry of class $\mathcal{C}^2$ and $\varphi:M\rightarrow \Real$ a H\"older potential that is constant on center leaves. Assume that the stable and unstable foliations of $f$ are minimal. Then there exist $\muf \in \PTM{f}{M}$ and families of measures $\mu^u=\{\mux\}_{x\in M}, \mu^{cu}=\{\mcux\}_{x\in M}, \mu^s=\{\mux\}_{x\in M}, \mu^{cs}=\{\mcsx\}_{x\in M}$ satisfying the following.
    	\begin{enumerate}
		\item The probability $\muf$ is the unique equilibrium state for the potential $\varphi$.
		\item For every $x\in M$ the measure $\mu^{\ast}_x,\ast\in \{u,s,cu,cs\}$ is a Radon measure on $W^{\ast}(x)$ which is positive on relatively open sets, and $y\in W^{\ast}(x)$ implies $\mu^{\ast}_x=\mu^{\ast}_y$. 
		\item It holds for every $x\in M$
		\begin{enumerate}
			\item $\mu^{\sigma}_{fx}=e^{\Ptop(\varphi)-\varphi}f_{\ast}\mu^{\sigma}_{x}\quad \sigma\in\{u,cu\}$.
			\item $\mu^{\sigma}_{fx}=e^{\varphi-\Ptop(\varphi)}f_{\ast}\mu^{\sigma}_{x}\quad\sigma\in\{s,cs\}$. 
		\end{enumerate}
		\item If $\xi$ is a measurable partition that refines the partition by unstable (stable) leaves then the conditionals $(\muf)^{\xi}_x$ of $\muf$ are equivalent to $\mux$ (resp. $\msx)$ for $\muf\aep(x)$.
		\item For every $\epsilon>0$ sufficiently small, for every $x\in M$ the measure $\muf|D(x,\epsilon)$ has  product structure with respect to the pairs $\mux,\mcsx$ and $\msx,\mcsx$, i.e.\@ it is equivalent to $\mux\times \mcsx$ and to $\msx\times \mcux$ .
		\item For every $\epsilon>0$ there exist some positive constants $a(\epsilon),b(\epsilon)$ satisfying, for every $x\in M,n\geq 0$
		\[
		a(\epsilon)\leq\frac{\muf(D(x,\epsilon,n))}{e^{\SB(x)-n\Ptop(\varphi)}}\leq b(\epsilon).
		\]
	\end{enumerate} 
\end{theoremA}

\begin{theoremAp}[SRB case]\hypertarget{theoremAp}{}
	Let $f:M\rightarrow M$ be a center isometry of class $\mathcal{C}^2$ and $\varphi=-\log |\det Df|E^u|$. Assume that the unstable foliation of $f$ is minimal, and let $\mu_x=\{\mu_x^u\}$ be the family of Lebesgue measures on unstable leaves, that is, $\mu_x^u$ is the volume on $\Wu{x}$ corresponding to the induced metric. Then there exist $\muf \in \PTM{f}{M}$ and a family of measures $\mu^{cs}=\{\mcsx\}_{x\in M}$ satisfying the following.
	\begin{enumerate}
		\item The probability $\muf$ is the unique equilibrium state for the potential $\varphi$.
		\item For every $x\in M$ the measure $\mu^{cs}_x$ is a Radon measure on $W^{cs}(x)$ which is positive on relatively open sets, and $y\in W^{cs}(x)$ implies $\mu^{cs}_x=\mu^{cs}_y$. 
		\item It holds for every $x\in M$,
		\begin{enumerate}
			\item $\mu^{cs}_{fx}=|\det Df|E^u|^{-1}\cdot f_{\ast}\mu^{cs}_{x}$.
			\item $\mu^{u}_{fx}=|\det Df|E^u|\cdot f_{\ast}\mu^{u}_{x}$   (change of variables theorem).
		\end{enumerate}
        \item If $\xi$ is a measurable partition that refines the partition by unstable leaves then the conditionals $(\muf)^{\xi}_x$ of $\muf$ are equivalent to $\mux$
        \item For every $\epsilon>0$ sufficiently small, for every $x\in M$ the measure $\muf|D(x,\epsilon)$ has  product structure with respect to the pair $\mux,\mcsx$. 
		\item For every $\epsilon>0$ there exist some positive constants $a(\epsilon),b(\epsilon)$ satisfying, for every $x\in M,n\geq 0$
		\[
		a(\epsilon)\leq\frac{\muf(D(x,\epsilon,n))}{|\det D_{f^n(x)}f^{-n}|E^u(f^n(x))|}\leq b(\epsilon).
		\]
	\end{enumerate} 
\end{theoremAp}

Both theorems are similar, except that in \hyperlink{theoremAp}{Theorem A'} we cannot guarantee the existence of a  family $\mu^s=\{\msx\}_{x\in M}$ on stable leaves satisfying the quasi-invariance property $\mu^{s}_{fx}=|\det Df|E^u|^{-1}\cdot f_{\ast}\mu^{s}_{x}$. We point out however that if we assume that also $\Fs$ is minimal, then there exists a family $\mu^{cu}=\{\mcux\}_{x\in M}$ satisfying the corresponding quasi-invariance. See below for more discussion on the existence of these families. 

\begin{corollaryA}\hypertarget{corollaryA}{}
	Let $\alpha:G\curvearrowright M$ be an Anosov action of class $\mathcal{C}^2$. Suppose that $g\in G$ is an Anosov element such that its associated stable and unstable foliations are minimal and denote $f=\alpha(g)$. Let $\varphi:M\rightarrow \Real$ be a H\"older potential that is either constant along orbits of $\alpha$, or the geometrical potential associated to $f$. Then there exist a unique equilibrium state $\muf \in\PTM{f}{M}$ for $\varphi$ and families of measures $\mu^u=\{\mux\}_{x\in M},\mu^{cs}=\{\mcsx\}_{x\in M}$ satisfying the $1-6$ of \hyperlink{theoremA}{Theorem A}.
\end{corollaryA}

In the rank-one case the previous Theorem (Corollary) was established first by G. Margulis \cite{TesisMarg} for the entropy maximizing measure ($\varphi\equiv 0$), and later by R. Bowen and D. Ruelle \cite{BowenRuelle} for general potentials, except for $2,3$ which appear to be new in this generality; the local product structure is due to N. Haydn \cite{localproductstructureHaydn} (see also \cite{Leplaideur2000}). The last item above is the so called Gibbs property. See \cite{EquSta} and \cite{Haydn1992}. The uniqueness of the equilibrium state is known from other works, for example \cite{Climenhaga2020} (see also below for the SRB case).

\smallskip 

It is natural to inquiry about the requirements imposed to the potentials. It turns out that in general there may fail to exist families of measures satisfying the quasi-invariance property $3$ of \hyperlink{theoremA}{Theorems A, A'}.

\begin{theoremD}\hypertarget{theoremD}{}
There exists an analytic center isometry $f:\Tor^3\to\Tor^3$ and a $\mathcal{C}^{\oo}$ potential $\varphi:\Tor^3\to\Real$ for which there is no  family of measures $\{\zeta_x^u\}_{x\in M}$ satisfying
\begin{itemize}
	\item $f^{-1}_{\ast}\zeta_{fx}^u=e^{\varphi - \Ptop(\varphi)}\zeta_x^u$;
	\item the measures depend continuously on the point $x$.
\end{itemize}
\end{theoremD}

\smallskip

Let us say a few words about the SRB case. The existence of SRB measures in the context that we are working also follows from the general results in \cite{GibbsSinaiPesin}, while the uniqueness was initially proven by Dolgopyat in \cite{lecturesugibbs}. Nevertheless, the other parts of the theorem are new, and therefore we are able to make a contribution to the study of SRB measures for partially hyperbolic systems in a setting that has not been explored thoroughly yet (i.e.\@ zero Lyapunov exponents in the center direction).

At this point we bring to the attention of the reader the following characterization of such measures: an invariant measure $\mm$ is an SRB (that is, an equilibrium state for $\varphi(x)=-\log|\det Df|E^u_x|$) if and only if for every measurable partition subordinated to the unstable foliation, the conditional measures induced by $\mm$ are absolutely continuous with respect to Lebesgue on the corresponding leaf. Here we are able to give similar characterization for equilibrium states, that is new even in the classical hyperbolic case; on the other hand, for certain non-uniformly interval expanding maps there exist similar characterizations \cite{uniqueeqpiecewise}, \cite{buzzitermo}.  We highlight that the result relies on the existence of a family of measures as given in \hyperlink{theoremA}{Theorem A}, and could be applied under different hypotheses that guarantee such existence. This is the case for time-one maps of (mixing) Anosov flows due to the results of Haydn cited above (see also \Cref{sec:applications_and_concluding_remarks}), or for Anosov diffeomorphisms (where the condition of the potential being constant on center leaves is automatic).
      
\begin{theoremB}\hypertarget{theoremB}{}
Let $f:M\to M$ be a $\mathcal C^2$ center isometry and $\varphi:M\to\Real$ a Hölder potential. Suppose the existence of a family $\mu^u=\{\mux\}_{x\in M}$ satisfying $2,3$ of \hyperlink{theoremA}{Theorem A}. For an $f$-invariant measure $\mm$, the following conditions are equivalent.
\begin{enumerate}
\item $\mm$ is an equilibrium state for $\varphi$.
\item For every $\mm$-measurable partition $\xi$ subordinated to $\Fu$, the induced conditional measures are absolutely continuous with respect to $\{\mux\}_x$.
\end{enumerate}
Furthermore, if there also exists a family $\mu^s=\{\msx\}_{x\in M}$ satisfying $2,3$ of \hyperlink{theoremA}{Theorem A}, then conditions $1$ and $2$ are also equivalent to:

\begin{enumerate}
\item[3.] for every $\mm$-measurable partition $\xi$ subordinated to $\Fs$, the induced conditional measures are absolutely continuous with respect to $\{\msx\}_x$. 
\end{enumerate}	
\end{theoremB}

The last part is somewhat surprising in the SRB case (for systems where the stable measures exists, as mixing hyperbolic ones), because in general $\msx$ will not be absolutely continuous with respect to Lebesgue; assuming existence of these Condition $3$ is just Condition $2$ for $f^{-1}$.

\begin{remark}\label{rem:valeparaLyaczero}
More generally, in the previous theorem instead of $f$ being a center isometry one can assume that for every $f$-invariant measure the center Lyapunov exponents vanish. The proof is the same.
\end{remark}

As a consequence we get this new piece in the ergodic theory of Anosov flows (or diffeomorphisms).

\begin{corollaryB}\hypertarget{corollaryB}{}
Given a transitive Anosov flow $\Phi=\{f^t\}_t$ of class $\mathcal{C}^2$ that is not a suspension, there exists a family $\mu^u\in\mathrm{Meas}(\Fu)$ satisfying the following: for any $\mm\in\PM[M]$, if
\begin{enumerate}
 	\item $\mm$ is $f^t$-invariant for some $t\neq \emptyset$, and
 	\item $\mm$ has conditionals absolutely continuous with respect to $\mu^u$,
 \end{enumerate}
 then $\mm$ is the unique equilibrium state for the system $(\Phi,\varphi)$. 
\end{corollaryB}

\smallskip

Due to the uniqueness of the equilibrium state given in \hyperlink{theoremA}{Theorems A,}\hyperlink{theoremAp}{A'} it follows from convexity of the pressure function that it is ergodic, however this type of general argument does not yield any stronger metric property. Here we will rely on the precise description that we have of the equilibrium state and employ geometrical methods to establish that in fact this measure is a Bernoulli measure. Again, the proof is written in more generality than what was assumed in the aforementioned theorems.

\begin{theoremC}\hypertarget{theoremC}{}
Let $f:M\to M$ be a $\mathcal C^2$ center isometry and $\varphi:M\to \Real$ a potential that is constant on center leaves, or the SRB potential. Suppose that the unstable foliation of $f$ is minimal and let $\muf$ be the (ergodic) equilibrium state for $(f,\varphi)$. Then the system $(f,\muf)$ is metrically isomorphic to a Bernoulli shift.

In particular, if $f:M\to M$ is conservative, then the volume is an Bernoulli measure for $f$. 
\end{theoremC}

\smallskip
 
 The methods developed in this article are of geometric flavor, and it is the hope of the authors that they can shed on light in the still under development theory of thermodynamic formalism for partially hyperbolic systems. Evidence of their efficacy appears in a companion article \cite{ContributionsErgodictheory}, where we establish new results in the ergodic theory of the classical rank-one case.

 \paragraph{Comparison with other works}Other geometrical constructions exist in the literature, particularly the work of Climenhaga, Pesin and Zelerowicz \cite{Climenhaga2020} based on geometric measure theory, and the work of Spatzier and Vischer \cite{Spatzier2016} studying compact extensions of Anosov systems. Our results have some intersection with the first cited work, but our methods are completely different, and allow us to have more control over the equilibrium measures, particularly in terms of their conditional measures along the invariant foliations. Climenhaga et al. obtain the equilibrium state in some more general settings that the ones considered here: they construct families of measures $\mu^u$ as the ones described in \hyperlink{theoremA}{Theorem A} (that is, satisfying $2,3,4$) and use them to construct an unique equilibrium state for the system satisfying $6$. On the other hand, they do not seem to get much control on the transverse measures (the $\mcsx$ in our theorems), and their existence is deduced only locally (disintegrating in small rectangles). Our approach is different: we first construct the measures $\mcsx$ globally, and then use them to define the equilibrium measure in cases where a family $\mu^u$ satisfying $2,3$ of \hyperlink{theoremA}{Theorem A} exists. We remark that for the potentials considered in \cite{Climenhaga2020} we could use the existence of $\mu^u$ given their work to construct $\muf$, and in particular verify the hypotheses of \hyperlink{theoremB}{Theorem B}, and perhaps \hyperlink{theoremC}{Theorem C} with some additional work (for the Bernoulli property some knowledge on the unstable measures is required). The extra control on the transverse measure is used in \cite{ContributionsErgodictheory} to deduce an strengthening of Furstenberg-Marcus theorem on unique ergodicity of the unstable foliation of mixing Anosov flows.

 In contrast, the precise characterization given in \hyperlink{theoremB}{Theorem B} for equilibrium states is (as far as we know) new in the literature. In addition, \hyperlink{theoremC}{Theorem C} establishes very fine statistical properties for the equilibrium state, using geometrical methods (julienne quasi-conformality) that probably could be used in other situations. It may be fruitful, for example, to analyze a possible synergy between these methods and the results of Climenhaga, Pesin and Zelerowicz. 

 We point out that in \cite{LiWuBernoulli} it is also shown that the equilibrium measure built by \cite{Climenhaga2020} is Bernoulli. In this work we present two proofs of the fact the system is Kolmogorov, one essentially the same as \cite{LiWuBernoulli} (which is due to Ledrappier), and the other more geometrical one based in arguments developed by Pugh and Shub to study (stably) ergodicity of conservative partially hyperbolic maps. For the Bernoulliness, we adapt the method of Ornstein and Weiss. This is also the approach followed in Li and Wu's work, however the authors do not give all details, and seem to leave some not obvious parts for the reader to complete. Due to the importance of this theorem we elaborate more on the proof. 

 Additional related work appears in the manuscript \cite{Buzzi2019} by Buzzi, Fisher and Tahzibi, containing some results similar to ours for perturbations of time-one maps of Anosov flows in dimension three in the case of the entropy maximizing measure, and in the recent preprint \cite{Bonthonneau2021} by Bonthonneau, Guillarmou and Welch, studying SRB measures for the action, instead of a regular element as in our case. The techniques in this last cited work are of functional analytic type, and it seems that our approaches may be complementary. This remains as in interesting research line to be investigated. 

\paragraph{Systems verifying the hypotheses of the Theorems and related questions} The conditions imposed on the potential $\varphi$, being constant on center leaves or equal to the geometrical potential, are used to guarantee the existence of the family $\mu^u$. As it can be seen using \hyperlink{theoremD}{Theorem D}, some additional assumptions are necessary, although at this stage it is still not clear what are those.

\begin{question}
 If $f:M\to M$ is a $\mathcal C^2$ center isometry with minimal strong foliations and $\varphi:M\to \Real$ is a Hölder potential, what are the conditions that guarantee the existence of a family $\mu^u$ satisfying $2, 3$ of \hyperlink{theoremA}{Theorem A}?
 \end{question}

In particular:

\begin{question}
Assume the hypotheses of \hyperlink{theoremA}{Theorem A} and furthermore that $f$ is accessible, meaning that any bi-saturated set (that is, a set that is union of both stable and unstable leaves). Does there exist a continuous family $\mu^u\in\mathrm{Meas}(\Fu)$ in this case?
\end{question}

We remark that the example given in the \hyperlink{sec:Appendix}{Appendix} does not satisfy this condition, but only essential accessibility (any bi-saturated set by $\Fs,\Fu$ has Lebesgue measure either zero or one).

\smallskip 
 
Requiring that $\varphi$ is constant on center leaves is usually a strong requirement, because in several situations there exist dense center leaves, which forces $\varphi$ to be constant. Although this may be true, the constant zero potential corresponds to the entropy maximizing measure of the system which is undoubtedly important. On the other hand, there are several interesting cases without dense center leaves (for example, group extensions of hyperbolic systems), where the condition of constancy on center leaves includes an infinite dimensional space of potentials. 

 For the SRB potential the existence of $\mu^u$ is automatic, and the study of the associated equilibrium state is paramount in smooth ergodic theory (including for example the conservative case). We emphasize however that given the existence of $\mu^u$ the results of this article can be applied. There are situations where this is the case, for example:
 \begin{enumerate}
 	\item $f=$ time-one map of a mixing Anosov flow (see \Cref{sub:hyperbolic_flows}).
 	\item The potentials having the $cs$-Bowen property considered by Climenhaga, Pesin and Zelerowicz in \cite{Climenhaga2020} (cf. \Cref{lem:Bowenpropertycs}). This includes in particular the family of potentials $\varphi_t=-t\log|\det Df|E^u|$.  
 \end{enumerate}
This is the reason why we have written our results in more generality, with the hope that it could be used in other situations. Observe for example that in the first case the family $\mu^s$ also exists, and therefore by \hyperlink{theoremB}{Theorem B} we get that the (unique) SRB measure is characterized by a family of measures on the stable (which are not absolutely continuous, unless the system is conservative).   

\smallskip 
 
The proof of \hyperlink{theoremB}{Theorem B} is based on the abstract methods used by Ledrappier and Young for the SRB measure, and in particular apply also for systems whose Lyapunov exponents in the center direction are zero for every invariant measure. One is led to ask whether this can be extended in the non-uniformly hyperbolic setting, although this seems to be out of reach at the moment. Perhaps one could start with the following.

\begin{question}
Suppose that $f:M\to M$ is a $\mathcal C^2$ diffeomorphism, $\varphi:M\to\Real$ is a Hölder potential and assume that the system $(f,\varphi)$ has an non-uniformly hyperbolic equilibrium state $\muf$ (in the sense of Pesin \cite{LyaPesin}). Construct a family of $\{\mu^u_x\}_{x\in M'}$ where $\muf(M')=1$ satisfying the quasi-invariance hypothesis $2$ of \hyperlink{theoremA}{Theorem A}.
\end{question}

 The fact that equilibrium states are determined by some families of measures on a dynamical structure (the invariant foliations) seem to indicate strong restrictions on the geometry of these measures. In \Cref{sec:a_characterization_of_equilibrium_states} we establish the existence of a family of measures $\{\nu^u_x\}_{x\in M}$ on the unstable leaves such that if $\mm$ is an invariant measure satisfying: given $\xi$ a $\mm$-measurable partition subordinated to $\Fu$, the conditional measure of $\mm$ on the atom $\xi(x)$ is equal to $\nux(\ \cdot\ |\xi(x))$, for $\mm-$a.e.$(x)$. Then $\mm$ is an equilibrium state of the system.

\begin{question}
Assume that $f:M\to M$ is a $\mathcal C^2$ center isometry, $\varphi: M\to\Real$ is a Hölder potential, and suppose the existence of a family $\nu^u=\{\nu^u_x\}_{x\in M}$ as explained above. If $\mm$ is a (non necessarily invariant) probability with conditionals given by $\nu^u$, is $\mm$ an equilibrium state?
\end{question}   

Even though this seems unlikely due to the lack of control in the transverse directions, in \cite{ContributionsErgodictheory} we give a positive answer to this problem for codimension-one Anosov flows.

\paragraph{Organization of the rest of the article}The rest of the paper is organized as follows. In \Cref{sec:measures_along_foliations} we develop general arguments to establish the existence of families of measures on foliations with prescribed invariance properties (conformality). These results are then used in \Cref{sec:measures_along_invariant_foliations_of_center_isometries} to construct the equilibrium state for a center isometry as in Theorem A. Then in \Cref{sec:a_characterization_of_equilibrium_states} we tackle the problem of the characterization of equilibrium states in terms of their conditional measures on stable/unstable leaves. \Cref{sec:finer_ergodic_properties} is devoted to the proof the Bernoulli property of the considered system, and in the last Section we include a discussion of the rank-one case (in particular, the proof of \hyperlink{corollaryB}{Corollary B}), together with some questions. The proof of \hyperlink{theoremD}{Theorem D} is given in an \hyperlink{sec:Appendix}{Appendix}, since it can be read almost independently of the rest of the work (although \hyperlink{theoremA}{Theorem A} and \hyperlink{theoremB}{Theorem B} are used). 

\paragraph{Acknowledgements} We are very thankful to the referee for her/his careful reading of the manuscript, giving  several thoughtful suggestions, bringing to our attention some references and pointing out errors in the original version of this work.

The first author would like to thank the hospitality of the Penn State Math department, where this project was initiated.

\section{Measures along foliations} % (fold)
\label{sec:measures_along_foliations}

For $r\geq 1$ we denote by $\diffM{M}{r}$ the group of $\mathcal{C}^r$ diffeomorphisms of a manifold $M$. Let $f\in \diffM{M}{r}$ and $\mu$ be an $f$-invariant measure. Recall that (see \cite{Rokhlin}) given a $\mu$-measurable partition $\xi$ of $M$ we can disintegrate $\mu$ in a family of conditional probabilities $\{\mu_x^{\xi}\}_{x\in M'}$ with $\mu(M')=1$ and such that for every $f\in \Lp[1](M,\mu)$ the conditional expectation of $f$ in $\mathcal{B}_{\xi}$, the $\sigma$-algebra generated by $\xi$, is given by 
\[
	\ie{\mu}{f|\mathcal{B}_{\xi}}=\int f d\mu_x^{\xi},\quad x\in M'.
\]
Furthermore, if $\xi$ is countably generated (that is, $\xi=\bigvee_n \xi_n$ with $\xi_n$ finite partition for every $n\in\Nat$) then for $x,y\in M'$, if $y$ belongs to the atom of $\xi$ containing $x$ then $\mu^{\xi}_x=\mu^{\xi}_y$. In this case, denote by $M_{\xi}=M/\xi$ the space of atoms of $\xi$ and consider $\pi_{\xi}: M\rightarrow  M_{\xi}$, $\pi_{\xi}(x)=\xi(x)$, the atom that contains $x$ (this map is well defined $\mu$-a.e.). We equip $M_{\xi}$ with the $\sigma$-algebra induced by $\pi_{\xi}$ and consider the \emph{transverse measure}\footnote{From now on we omit the $\ast$ in push forwards.} $\mu_{\xi}=(\pi_{\xi})\mu$. The considerations above imply in particular that if $\xi$ is countably generated, then $\displaystyle{\eta=\xi(x)\mapsto \mu^{\eta}:=\mu_x^{\xi}}$ defines a measurable function from $M_{\xi}$ to the set of Borel probabilities on $M$, and furthermore     
\[
A\in\BM\Rightarrow \mu(A)= \int_{M_{\xi}}\mu^{\eta}(A)\mu_{\xi}(\der\eta).
\]
This shows that $\mu$ is completely determined provided that we know $\mu_{\xi}$ and $\{\mu^{\eta}\}_{\eta\in M_{\xi}}$. 

Our interest however is constructing a particular $f$-invariant measure $\mu$ (the equilibrium state); for this we will argue backwards and construct first the transverse measure and the conditionals. Of course this presents several technical difficulties, mainly because we do not have the partition $\xi$, nor there is a well defined notion of what it means to be $\mu$-measurable. 

A way to circumvent this problem is using some (continuous) invariant structure of $f$ to define the partition. In the next part we present a general construction that allow us to build families of measures satisfying certain (quasi-)invariant properties.

The core of our construction in this part is inspired by the work of G. Margulis \cite{TesisMarg}, who considered (mixing) Anosov flows and the trivial potential $\varphi\equiv 0$.

\subsection{Families of quasi-invariant measures} % (fold)
\label{sub:families_of_quasi_invariant_measures}

Let $X$ be a locally compact metric space and $f:X\to X$ a bi-measurable map. From now on, by a measure on $X$ we mean a Radon measure, and denote the set of such measures by $\RM[X]$. The set of continuous functions with compact support on $X$ is denoted by $\CMc[X]$. The set $\CMc[X]$ is equipped with the uniform topology, while $\RM[X]$ is equipped with the vague one. 

\smallskip 
 
\noindent\textbf{Notation:} If $\phi$ is measurable we write $f\phi:=\phi\circ f$. If $\mu\in\RM[X]$ the measure $f\mu=f_{\ast}\mu$ is the unique measure $\nu$ such that 
\[
	\forall \phi\in\CMc[X],\quad \int f\phi \der\mu =\int \phi \der\nu 
\]

\smallskip 

With the above it is direct to verify that for $\mu,\nu\in\RM[X]$, $\mu=\rho\nu$ it holds $f\mu=f^{-1}\rho f\nu$. Recall that $\mu\in\RM[X]$ is quasi-invariant if $f^{-1}\mu \sim \mu$; in this case we can write $f^{-1}\mu=\mathtt{h}_{\mu} \mu$ where $\mathtt{h}_{\mu}=\frac{d f^{-1}\mu}{d \mu}$. Similarly, $f^{-n}\mu=\mathtt{h}_{\mu}^{(n)} \mu$. 

As it can be deduced directly from the uniqueness of the Radon-Nikodym derivative, the family $\{\mathtt{h}_{\mu}^{(n)}\}_{n\in \Z}$ is a multiplicative cocycle over $f$ with generator $\mathtt{h}_{\mu}$.  

\paragraph*{Multiplicative cocycles} A family $\{h^{(n)}:X\to \Real_{\geq 0}\}_{n\in\Z}$ is a multiplicative cocycle over $f$ if 
\[
	\forall n,m \in \Z, x\in M\Rightarrow h^{(n+m)}(x)=h^{(n)}(f^mx)\cdot h^{(m)}(x).
\]
Given $h:X\to \Real_{>0}$, it generates a multiplicative cocycle over $f$ (and in this case we call $h$ the generator of the cocycle) by defining
\[
h^{(n)}(x)=\begin{dcases}
\prod_{k=0}^{n-1}h(f^kx)& k>0\\
1 & k=0\\
\prod_{k=n}^{-1} h^{-1}(f^kx) &k<0.
\end{dcases}
\]
Observe that if $\{h^{(n)}\}_{n\in\Z}$ is a positive multiplicative cocycle over $f$ (with generator $h$) then $\{a^{(n)}:=\log h^{(n)}\}_{n\in\Z}$ is an additive cocycle (with generator $\log h$):
\[
	\forall n,m \in \Z, x\in M\Rightarrow a^{(n+m)}(x)=a^{(n)}(f^mx)+a^{(m)}(x).	
\] 	

\smallskip 
 
 It will be useful to introduce the following.

 \begin{definition}
 Let $M,\Lambda$ be sets. We say that $M$ is a bundle over $\Lambda$ if can be written as 
 \[
 M=\bigsqcup_{i\in\Lambda} M_i
 \]
 where $M_i$ is a locally compact metric space, for every $i\in \Lambda$. We say that $f:M\to M$ is a bundle isomorphism if there exists a bijection $q:\Lambda\to \Lambda$ such that $f|:M_i\to M_{q(i)}$ is an homeomorphism, for every $i\in \Lambda$.
 \end{definition}

\begin{remark}
In the definition above we are not assuming, a priori, any topology on $M$, and therefore we do not require any global continuity for bundle morphisms. In the cases of interest $M$ will be manifold and each $M_i$ an immersed (but not necessarily embedded) sub-manifold.
\end{remark}

For example, if $M$ is a (Riemannian) manifold and $\F=\{W(x)\}_{x\in M}$ a foliation on $M$ with smooth leaves, then the metric on $M$ induces corresponding metrics, which in turn define a natural topology on each leaf, and we can write $M=\bigsqcup_{L\in \Lambda} L$, where $\Lambda=M/\F$; thus $M$ is a bundle over $\Lambda$. If furthermore $f:M\to M$ is a diffeomorphism and $\F$ is $f$-invariant, then $f$ induces a bijection $q:\Lambda\to \Lambda, q(L)=f(L)$, and thus $f$is a bundle equivalence. 

\smallskip 

Consider the bundles over $\Lambda$ given as
\begin{align*}
&\mathrm{Con}:=\bigsqcup_{i\in\Lambda} \CMc[M_i]\\
&\mathrm{Rad}:=\bigsqcup_{i\in\Lambda} \RM[M_i]
\end{align*}
and consider also the space of sections of $\mathrm{Rad}$, 
\[
\mathrm{Meas}:=\{\nu:\Lambda\rightarrow \mathrm{Rad}:\nu_i:=\nu(i)\in \RM[M_i]\forall\ i\in\Lambda\}.
\]
The natural topology of $\mathrm{Meas}$ is the weak topology determined by $\mathrm{Con}$; it is a locally convex topology. Let $\mathrm{Con}^+:=\{\phi\in\mathrm{Con},\phi\geq 0, \phi\not\equiv 0\}$.

\begin{lemma}\label{lem:Rproperty}
	Suppose that $\mathcal{A}\subset \mathrm{Meas}$ satisfies 
	\begin{enumerate}
		\item for every $\psi\in \mathrm{Con}$ there exists $c(\psi)>0$ with the property that for every $\mu\in\mathcal{A}, \mu(\psi)\leq c(\psi)$.
		\item For every $\psi \in \mathrm{Con}^+$ there exists $d(\psi)>0$ with the property that for every $\mu\in\mathcal{A}, \mu(\psi)\geq d(\psi)$.
	\end{enumerate}
	Then $\clo{\mathcal{A}}\subset \mathrm{Meas}$ is compact, and furthermore it does not contain the zero section.
\end{lemma}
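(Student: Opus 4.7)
The plan is to exploit the fact that the weak topology on $\mathrm{Meas}$ generated by $\mathrm{Con}$ reduces to a fiber-wise condition, so that the problem decomposes into a question about bounded families of Radon measures on each $M_i$, together with a Tychonoff-type assembly step.

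First I would identify $\mathrm{Meas}$ with the product $\prod_{i\in\Lambda}\RM[M_i]$ equipped with the product of the vague topologies. Indeed, a basic neighborhood of $\mu\in\mathrm{Meas}$ has the form $\{\nu:|\nu_i(\psi)-\mu_i(\psi)|<\epsilon\}$ for $\psi\in\CMc[M_i]$, and this is exactly the sub-basis for the product of vague topologies, since any $\phi\in\mathrm{Con}$ lives in $\CMc[M_i]$ for a single $i$. Under this identification, sets of the form $\prod_i K_i$ with $K_i\subset\RM[M_i]$ vaguely compact are compact by Tychonoff.

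Next I would prove fiber-wise compactness. For each $i\in\Lambda$, let $\mathcal{A}_i:=\{\mu_i:\mu\in\mathcal{A}\}\subset\RM[M_i]$. By hypothesis (1), for every $\psi\in\CMc[M_i]$ we have $\sup_{\nu\in\mathcal{A}_i}|\nu(\psi)|\leq c(\psi)$ (applying the bound to $\psi$ and $-\psi$, or writing $\psi=\psi^+-\psi^-$). Viewing Radon measures on the locally compact space $M_i$ as the topological dual of $\CMc[M_i]$ via Riesz--Markov, the vague topology coincides with the weak-$\ast$ topology, and the Banach--Steinhaus / Banach--Alaoglu argument yields that $\overline{\mathcal{A}_i}$ is vaguely compact. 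Thus $\overline{\mathcal{A}}\subset\prod_i\overline{\mathcal{A}_i}$, and the right-hand side is compact, giving compactness of $\overline{\mathcal{A}}$.

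Finally, for the non-vanishing statement, I argue by contradiction: suppose the zero section $\mathbf{0}\in\overline{\mathcal{A}}$, so $\mathbf{0}_i=0$ for every $i$. Pick any $\psi\in\mathrm{Con}^+$; it belongs to some $\CMc[M_{i_0}]$ with $\psi\geq 0$ and $\psi\not\equiv 0$. Choose a net $\mu^\alpha\in\mathcal{A}$ with $\mu^\alpha\to\mathbf{0}$ in $\mathrm{Meas}$. By definition of the weak topology determined by $\mathrm{Con}$, we get $\mu^\alpha_{i_0}(\psi)\to 0$, contradicting hypothesis (2), which forces $\mu^\alpha_{i_0}(\psi)\geq d(\psi)>0$. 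I expect no substantive obstacle here; the only delicate point is the clean identification of the topology with a product of vague topologies, after which both conclusions are essentially immediate consequences of standard Radon-measure compactness together with the two hypotheses.
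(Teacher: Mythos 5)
Your argument is correct and is exactly the route the paper takes: the paper's proof is the one-line remark that the lemma is a direct consequence of Tychonoff's theorem, and your write-up simply fills in the details of that same argument (product-of-vague-topologies identification, fiber-wise vague compactness from hypothesis (1), Tychonoff, and hypothesis (2) to exclude the zero section).
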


\begin{proof}
	This is a direct consequence of Tychonoff's theorem.
\end{proof}

From now we assume that $f$ is a bundle equivalence.

\begin{definition}
$\nu\in\mathrm{Meas}$ is quasi-invariant if for every $i\in\Lambda$, $f^{-1}\nu_{qi}\sim \nu_{i}$ with positive continuous Radon-Nikodym derivative $\rho_i=\frac{\der f^{-1}\nu_{qi}}{\der\nu_i}$. In this case $\rho:M\to \Real_{>0}$ given by $\rho|M_i=\rho_i$ will be called the Jacobian of $\nu$. 
\end{definition}

One verifies directly that if $\nu$ is quasi-invariant then $\rho$ generates a (multiplicative) cocycle over $q$, and we write $\tilde{\rho}=\log \rho$ for the corresponding additive cocycle. \label{construcciondeseccion}

Now suppose that $\varphi:M\to \Real$ is a function such that $\varphi|M_i$ is continuous for each $i$,
and $\nu\in \mathrm{Meas}$ is quasi-invariant: we are interested in finding $\mu\in\mathrm{Meas}$ quasi-invariant with Jacobian $e^{-\varphi}$. Consider $\nu^n=e^{\SB}\cdot f^{-n}\nu $, i.e.
\begin{equation}\label{eq:nun}
	\nu^n_i=e^{\SB}\cdot f^{-n}\nu_{q^n(i)}=e^{\SB+S_n\tilde{\rho}}\cdot\nu_i,
\end{equation}

and let $\mathcal{C}^{+}$ be the positive cone generated by $\{\nu^n\}_{n\geq 0}$,
\begin{equation}
	\mathcal{C}^{+}=\Set{\sum_{j=1}^ka_i\nu^{n_j}:a_j\geq 0,k\in\mathbb{N}}.
\end{equation}

\begin{theorem}\label{thm:existenciafamiliamedidas}
Suppose that $M=\bigsqcup_{i\in\Lambda} M_i$ is a bundle over $\Lambda$, $f:M\to M$ is a bundle equivalence, $\varphi:M\to \Real$ is a function such that its restriction to each $M_i$ is continuous, and $\nu\in\mathrm{Meas}$ is quasi-invariant. Assume that $\{\nu^n=e^{\SB}\cdot f^{-n}\nu\}_{n\geq 0}$ satisfies:
\begin{itemize}
	\item if $\phi\in\mathrm{Con}^+$, then $\nu^n(\phi)>0$ for all $n$.
	\item Given $\phi, \psi \in \mathrm{Con}$ with $\phi\not\equiv 0$ non-negative, there exists $e(\phi,\psi)>0$ such that for every $n\geq 0$ it holds
	\[
	\frac{\nu^n(\psi)}{\nu^n(\phi)}\leq e(\phi,\psi).
	\]
\end{itemize}
Then there exist $P\in\Real$ and a quasi-invariant $\mu \in\cl{\mathcal{C}^{+}}$ with Jacobian $e^{P-\varphi}$. Moreover, $\mu$ has full support on each $M_i$.
\end{theorem}

\begin{proof}
Define $S:\mathrm{Meas}\rightarrow \mathrm{Meas}$ by 
\[
	S(\mu)_i=e^{\varphi}\cdot f^{-1}\mu_{q(i)}.
\]
Then $S$ is continuous and sends $\mathcal{C}^{+}$ to itself. We compute 
\begin{equation}\label{eq:Snun}
S(\nu^n)_i=e^{\varphi}\cdot f^{-1}\nu_{q(i)}^n= e^{\varphi}\cdot f^{-1}\left(e^{\SB}f^{-n}\nu_{q^{n+1}(i)}\right)=e^{\varphi+\SB\circ f}\cdot f^{-n-1}\nu_{q^{n+1}(i)}=\nu^{n+1}_i. 
\end{equation}
Fix a non-negative, non-identically zero function $\phi_0\in\mathrm{Con}$ and let $\hat{\nu}^n:=\frac{\nu^n}{\nu^n(\phi_0)}$; consider the set 
\begin{align}\label{eq:combinacionesconvexas}
\mathcal{X}&=\clo{\Set{\sum_{i=1}^ka_i\hat{\nu}^{n_i}:a_i\geq 0,k\in\mathbb{N},\sum_{i=1}^ka_i=1}}\\
		   &=\clo{\Set{\nu\in\mathcal{C}^+:\nu(\phi_0)=1}}.
\end{align}
By \Cref{lem:Rproperty} and the assumed hypotheses, $\mathcal{X}$ is a compact convex subset of $\mathrm{Meas}$, and by \Cref{eq:Snun} the normalized operator \(\tilde{S}\nu:=\frac{S\nu}{S\nu(\phi_0)}\) sends $\mathcal{X}$ to itself. Hence, by the Schauder-Tychonoff fix point theorem, there exists $\mu\in\mathcal{X}$ such that $\widetilde{S}(\mu)=\mu$. In other words, $\psi\in \mathrm{Con}$ implies
\begin{equation}\label{eq.margulis}
\mu_{q(i)}(\psi\circ f^{-1}e^{\varphi\circ f^{-1}})=e^P \cdot \mu_{i}(\psi)\quad (\Rightarrow f^{-1}\mu_{q(i)}=e^{P-\varphi}\cdot \mu_i),
\end{equation}
where $e^P=S\mu(\psi_0)>0$.

Observe that if $\psi\in \mathrm{Con}$ is non-negative and non-identically zero, then for some constant $c(\psi)>0$ it holds $\hat{\nu}^n(\psi)>0$ for all $n\geq 0$, and thus the same is valid for $\mu$. Therefore $\mu$ has full support on each $M_i$, and the proof of the Theorem is complete.
\end{proof}

% subsection families_of_quasi_invariant_measures (end)

\subsection{Quasi-invariant measures along leaves of foliations: the transverse measure} % (fold)
\label{sub:quasi_invariant_measures_along_leaves_of_foliations}

We will now apply the construction of the previous part to the following setting.

\begin{enumerate}
	\item[H-1] $M$ is a compact (closed) smooth Riemannian manifold and $\F=\{F(x)\}_{x\in M}$ is continuous foliations with smooth leaves. We consider the bundle structure of $M=\bigsqcup_{F\in \Lambda} F$ over $\Lambda=M/\F$, and denote $\mathrm{Con}(\F),\mathrm{Meas}(\F)$ the corresponding associated bundles.
	\item[H-2] $f$ is a diffeomorphism of $M$ of differentiability class $\mathcal{C}^2$.
	\item[H-3] $\F$ is $f$-invariant; the induced map by $f$ on $\Lambda$ is denoted by $q$.
	\item[H-4]  There exists a complementary continuous foliation $\mathcal{G}=\{G(x)\}_{x\in M}$ of $\F$ with smooth leaves, and such that
	\begin{enumerate}
		\item $\mathcal{G}$ is minimal: each leaf $G(x)$ is dense.
		\item $\mathcal{G}$ is $f$-invariant.
		\item $\mathcal{G}$ is contracting under $f$: if $d_{\mathcal{G}}$ denotes the intrinsic distance (on the corresponding leaf of $\mathcal{G}$) we have
		\[
		\forall x,y\in M, y\in G(x)\Rightarrow d_{\mathcal{G}}(fx,fy)\leq \lambda d_{\mathcal{G}}(x,y).
		\]
		for some $0<\lam<1$. 
	\end{enumerate}
\end{enumerate}

The idea is that if $B$ is a foliated box corresponding to $\F$, we will use the (measurable) partition $\F|B$ to play the role of the quotient space where we define our transverse measure. The existence of a complementary contracting foliation is used (crucially) to ensure the compactness property necessary for \Cref{lem:Rproperty}.

\smallskip

\noindent\textbf{Notation:} If $A\subset M, \ep>0$ let
\[
	G(A,\ep):=\{y\in M:\exists x\in A/ d_{\mathcal{G}}(x,y)<\ep\},
\]
and if $A=\{x\}$ we write $G(x,\ep)=G(\{x\},\ep)$.

\begin{definition}Let $\delta>0$.
	\begin{enumerate}
		\item Two relatively compact sets $A_1 \subset F_{i_1} ,A_2 \subset F_{i_2}$ are $\delta$-equivalent if there exists an holonomy transport (Poincaré map) $\hol[\Gcal]:F_{i_1}\to F_{i_2}$ induced by $\Gcal$ satisfying:
		\begin{enumerate}
			\item $\hol[\Gcal]|A_1$ is an homeomorphism of $A_1$ onto $A_2$.
			\item For every $x\in A_1,\hol[\Gcal](x)\in G(x,\delta)$.
		\end{enumerate}
		
		\item Two functions $\psi_1,\psi_2\in \mathrm{Con}(\F)$ are $\delta$-equivalent if $\supp(\psi_1)$ is $\delta$-equivalent to $\supp(\psi_2)$ and for every $x\in \supp(\psi_1)$, $\psi_2(\hol[G](x))=\psi_1(x)$. Here $\supp$ denotes the support of the corresponding continuous function, while $\hol[G]$ is the holonomy map connecting $\supp(\psi_1)$ with $\supp(\psi_2)$.
	\end{enumerate}
\end{definition}

Observe that given $\psi \in \mathrm{Con}(\F),\delta>0$ there exists $\gamma=\gamma(\psi,\delta)>0$ such that if 
$x\in G(\supp(\psi),\gamma)$ then 
\begin{itemize}
	\item the transverse foliation defines an homeomorphism onto its image $\hol[\Gcal]|:\supp(\psi)\to F(x)$.
	\item $\psi_{F(x)}:=\psi\circ \hol[\Gcal]$ is continuous, of compact support, and $\delta$-equivalent to $\psi$.
\end{itemize}

We seek to find a quasi-invariant $\nu\in\mathrm{Meas}(\F)$ with regular Radon-Nikodym derivative. A natural class is obtained by fixing a Riemannian metric on $M$ and considering $\Leb\in \mathrm{Meas}(\F)$ given by
\[
	F\in\Lambda\Rightarrow \Leb_F=\text{ induced Lebesgue measure on the leaf }F.
\]
By the basic change of variables theorem, $\Leb$ is quasi-invariant its Jacobian is $\rho_F=|\det Df|F|$. Due to compactness of $M$ we have that
\[
	 \rho\text{ is uniformly bounded from below: }\rho(x)\geq C_{\rho}>0\ \forall x\in M, 
\]
and since $f\in \diffM{M}{1+\alpha}$ we get that $\rho$ (and therefore, $\tilde{\rho}=\log\rho$) is $\alpha$-H\"older. 

\begin{remark}
In the abstract definition of Jacobian given before we did not consider the variation of this function for points in different fibers of $M$. Note however that in the hypotheses of this part, the globally defined map $\rho:M\to\Real_{>0}$ is continuous. If furthermore $T\F$ is Hölder (something common in applications), then $\rho$ is globally Hölder as well. 
\end{remark}

We will use the minimality of $\Gcal$ to compare sets (functions) defined on different leaves of $\F$. It is necessary however to control how families of measures $\nu=\{\nu_F\}_{F\in\F}$ vary with the transverse holonomy given by $\Gcal$. By transversality between $\F$ and $\Gcal$ and compactness of $M$ it holds that given $\delta_0>0$ there exists $\delta_1>0$ so that for every $x,y\in M, y\in G(x,\frac{\delta_0}{2})$,
\[
	F(y,\delta_1)\text{ is }\delta_0\text{-equivalent to some subset of } F(x)
\]

\begin{definition}\label{def:strongabsolutecontinuous}
$\nu\in\mathrm{Meas}(\F)$ is absolutely continuous if for every (locally defined) holonomy transport $\hol[\Gcal]:F_{i_1}\to F_{i_2}$, $\hol[\Gcal]\nu_{F_{i_1}}\sim \nu_{F_{i_2}}$. It is strongly absolutely continuous if there exist $\delta_0>0$ and $J:\{(x,y,z)\in M\times M\times M, y\in G(x,\delta_0), z\in F(y,\delta_1)\}\to \Real$ continuous such that if $B=F(y,\delta_1), A=\hol[\Gcal](B) \subset F(x)$ then
\[
   \hol[\Gcal]\nu_{F(x)}|A=J(x,y,\cdot)\nu_{F(y)}|B	
\] 
\end{definition}

\begin{proposition}\label{pro:lebesgueesabscont}
$\Leb$ is strongly absolutely continuous.
\end{proposition}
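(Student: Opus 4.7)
The plan is to follow the classical Anosov--Sinai scheme and construct the density $J$ as a telescoping infinite product obtained by pulling the holonomy back through the dynamics. Fix $x_0$ and $y_0$ with $y_0 \in G(x_0, \delta_0)$ for some small $\delta_0 > 0$ to be chosen, and let $h$ denote the locally defined $\Gcal$-holonomy from a neighborhood $U \subset F(x_0)$ of $x_0$ to $F(y_0)$. With $\rho(x) := |\det D(f|F(x))|$ the leafwise Jacobian of $f$ along $\F$ (whose logarithm $\tilde\rho$ is Lipschitz on $M$, since $f$ is $\mathcal{C}^2$), the natural candidate for the Radon--Nikodym derivative is
\[
J(x_0, y_0, h(z)) := \prod_{k=0}^{\infty}\frac{\rho(f^k h(z))}{\rho(f^k z)}.
\]
Convergence of this infinite product is immediate: the contraction hypothesis gives $d_{\Gcal}(f^k z, f^k h(z)) \leq \lambda^k \delta_0$, so Lipschitzness of $\tilde\rho$ yields $|\tilde\rho(f^k z) - \tilde\rho(f^k h(z))| \leq C \lambda^k \delta_0$, which is summable; hence $J$ is positive, bounded away from $0$ and $\infty$, and continuous in $(x_0, y_0, z)$ by uniform convergence together with the continuity of $\rho$ and of the holonomy in its basepoints.

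To identify $J$ with the density of $h_* \Leb_{F(x_0)}$, I would argue by a finite pullback. For each $n \geq 0$ the conjugation $h = f^{-n} \circ h_n \circ f^n$, with $h_n$ the $\Gcal$-holonomy between $F(f^n x_0)$ and $F(f^n y_0)$, together with the chain rule applied to the leafwise Jacobians, yields
\[
|Dh(z)| = \frac{\rho^{(n)}(z)}{\rho^{(n)}(h(z))} \cdot |Dh_n(f^n z)|,
\]
where $|D\,\cdot\,|$ denotes the Jacobian with respect to the induced leaf volumes and $\rho^{(n)}$ is the multiplicative cocycle generated by $\rho$. Since $d_{\Gcal}(f^n x_0, f^n y_0) \leq \lambda^n \delta_0$, the map $h_n$ is a holonomy between two $\F$-leaves that are exponentially close, so $|Dh_n(f^n z)| \to 1$ uniformly; letting $n \to \infty$ recovers $|Dh(z)|^{-1}$ as the infinite product defining $J$. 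A standard change of variables then gives $h_* \Leb_{F(x_0)} = J(x_0, y_0, \cdot)\,\Leb_{F(y_0)}$, which is the required identity.

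The main technical obstacle, just as in the rank-one case, is the uniform convergence $|Dh_n| \to 1$ as the transverse distance $\lambda^n \delta_0$ shrinks. This is not a formal consequence of mere continuity of $\Gcal$, and requires a uniform modulus of continuity of the leaves of $\Gcal$ with respect to the basepoint in the $\mathcal{C}^1$-topology, so that in adapted leafwise smooth charts the holonomy between two $\F$-leaves at transverse distance $\epsilon$ is a $\mathcal{C}^1$-perturbation of the identity of size $o(1)$ as $\epsilon \to 0$. In the partially hyperbolic / center isometry setting this regularity is built into the invariant foliations via the standard graph transform and the stable manifold theorem, and once it is in place the estimate above is straightforward. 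Granting this, continuity of $J$ in $(x,y,z)$ is then automatic, completing the proof.
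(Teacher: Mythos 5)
The paper does not actually spell out a proof here: it cites the classical Anosov--Sinai absolute continuity theorem, and your proposal is a sketch of exactly that classical argument (telescoping product for the density, convergence from the contraction of $\Gcal$ plus regularity of the leafwise Jacobian), so you have chosen the right route. Two remarks on the execution. First, a minor one: $\tilde{\rho}=\log|\det D(f|F)|$ is only H\"older on $M$, not Lipschitz, because the tangent distribution to the leaves of $\F$ is merely H\"older in the transverse direction; this is harmless, since $\sum_k (\lambda^k\delta_0)^{\alpha}<\infty$ still gives convergence of the product, and the paper itself only records H\"older regularity of $\rho$.

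The more substantive issue is that you manipulate $|Dh(z)|$ and $|Dh_n(f^nz)|$ as if the $\Gcal$-holonomies were differentiable maps between $\F$-leaves. A priori they are only homeomorphisms --- for a continuous foliation with smooth leaves the holonomy is in general only H\"older --- so the chain-rule identity $|Dh(z)|=\bigl(\rho^{(n)}(z)/\rho^{(n)}(h(z))\bigr)\,|Dh_n(f^nz)|$ is not literally available, and ``$|Dh_n|\to 1$'' is not even well posed. The classical argument circumvents this by replacing $h_n$ with uniformly smooth approximations $\pi_n$ (e.g.\ projections along a smooth family of disks uniformly transverse to $\F$, at transverse distance $O(\lambda^n\delta_0)$), proving that the smooth maps $f^{-n}\circ\pi_n\circ f^n$ converge uniformly to $h$ while their genuine Jacobians converge uniformly to the infinite product $J$, and then passing to the limit at the level of measures rather than derivatives. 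You gesture at the needed input (uniform $\mathcal{C}^1$-dependence of the leaves of $\Gcal$ on base points, which indeed follows from the graph transform), but the substitution of the true holonomy by smooth approximations is the step that makes your pullback computation rigorous, and it should be stated explicitly rather than absorbed into ``granting this, the rest is straightforward.''
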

\begin{proof}
This result is classical by now, and relies on the contracting behavior of $\Gcal$ together with the fact of $f$ being of class $\mathcal{C}^{1+\alpha}, \alpha>0$. See \cite{ErgAnAc}.
\end{proof}

For strongly absolutely continuous sections we have the following.

\begin{proposition}\label{pro:MargulisAbsCont}
Assume that $\nu$ is strongly absolutely continuous and of full support on every $F\in\F$. Then given $\ep>0$ there exists $\delta>0$ such that for every $\psi_1,\psi_2\in \mathrm{Con}(\F)$ that are $\delta$-equivalent and non-identically zero, it holds
\[
\left|\int \psi_1 \der\nu_{F_{i_1}}-\int \psi_2 \der\nu_{F_{i_2}}\right|<\delta \int \abs{\psi_1}  \der\nu_{F_{i_1}}.	
\]
\end{proposition}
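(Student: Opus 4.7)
The plan is to use the strong absolute continuity of $\nu$ to write both integrals against the same measure on $F_{i_2}$, thereby reducing the estimate to a pointwise bound on the deviation of the holonomy Jacobian $J$ from $1$. Assume $\delta\le \delta_0$ so that the defining formula of strong absolute continuity applies, and let $\hol[\Gcal]:\supp(\psi_1)\to\supp(\psi_2)$ be the holonomy realizing the $\delta$-equivalence. Then
\[
(\hol[\Gcal])_{\ast}\nu_{F_{i_1}}=J(x,\hol[\Gcal](x),\cdot)\,\nu_{F_{i_2}}
\]
on $\supp(\psi_2)$, and combining this with the identity $\psi_1=\psi_2\circ\hol[\Gcal]$ and the change-of-variables formula yields
\[
\int\psi_1\,d\nu_{F_{i_1}}-\int\psi_2\,d\nu_{F_{i_2}}=\int\psi_2(z)\,(j(z)-1)\,d\nu_{F_{i_2}}(z),
\]
where $j(z):=J((\hol[\Gcal])^{-1}(z),z,z)$.

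To close the estimate I would observe that when the holonomy is trivial ($y=x$) the Jacobian satisfies $J(x,x,z)\equiv 1$. The set of triples actually appearing in $j$, namely $\{(x,y,y):y\in \clo{G(x,\delta_0)}\}$, is a compact subset of $M\times M\times M$ (using compactness of $M$ together with the continuity of $\Gcal$), so the continuity of $J$ upgrades to uniform continuity on this slice. Hence for every $\eta>0$ there is $\delta_1\in(0,\delta_0]$ such that $d_{\Gcal}(x,y)<\delta_1$ implies $|J(x,y,y)-1|<\eta$. Choosing $\delta<\delta_1$ gives $|j(z)-1|<\eta$ pointwise on $\supp(\psi_2)$, whence
\[
\left|\int\psi_1\,d\nu_{F_{i_1}}-\int\psi_2\,d\nu_{F_{i_2}}\right|\le\eta\int|\psi_2|\,d\nu_{F_{i_2}}.
\]
Applying the same change of variables to $|\psi_1|,|\psi_2|$ yields $\int|\psi_1|\,d\nu_{F_{i_1}}\ge(1-\eta)\int|\psi_2|\,d\nu_{F_{i_2}}$, and after replacing $\eta$ by $\eta/(1-\eta)$ we obtain an inequality of the form $\le\epsilon\int|\psi_1|\,d\nu_{F_{i_1}}$; taking $\delta$ also below this $\epsilon$ recovers the stated bound.

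The main obstacle I foresee is the uniform control of $|j(z)-1|$ independently of the particular pair $(\psi_1,\psi_2)$. The natural domain $\{(x,y,z):y\in G(x,\delta_0),\,z\in F(y)\}$ on which $J$ is defined is \emph{not} compact because $z$ can range over an entire (non-compact) leaf of $\F$, so a priori continuity of $J$ does not yield uniform continuity. The key observation that rescues the argument is that for the triples arising here we always have $z=y$, and this restricted slice does sit inside the compact subset $\{(x,y)\in M\times M:y\in\clo{G(x,\delta_0)}\}$; this is what permits the uniform choice of $\delta_1$ and thereby closes the estimate.
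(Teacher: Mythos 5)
Your argument is correct, but it takes a genuinely different (and more self-contained) route than the paper's. The paper first extracts from strong absolute continuity the set-level statement that $\delta$-equivalent sets have $\nu$-measures whose ratio is close to $1$, and then delegates the passage from sets to arbitrary continuous functions to Lemmas $2.2$--$2.6$ of Margulis's thesis, i.e.\@ an approximation-by-simple-functions argument over $\delta$-equivalent partitions. You instead work directly at the level of functions: the change of variables $\int\psi_1\,d\nu_{F_{i_1}}=\int\psi_2\,j\,d\nu_{F_{i_2}}$ reduces the whole estimate to the pointwise bound $|j-1|<\eta$, which you get from uniform continuity of $J$ on the compact slice $\{(x,y,y):y\in G(x,\delta_0)\}$ together with $J(x,x,x)\equiv 1$. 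This is cleaner precisely because the hypothesis here hands you a continuous Jacobian, whereas Margulis's setting only provides the ratio property for sets; in fact the paper's own first step tacitly relies on the same localization-plus-compactness observation that you make explicit, so your write-up supplies details the paper leaves to the reference. Two small points. First, the re-basing identity $J(x_0,y_0,z)=J\bigl((\hol[\Gcal])^{-1}(z),z,z\bigr)$, which is exactly what lets you land on the compact slice $z=y$, is used without justification: it follows by applying the defining relation of strong absolute continuity to small $\delta_0$-equivalent neighbourhoods of $(\hol[\Gcal])^{-1}(z)$ and $z$, invoking uniqueness of the Radon--Nikodym derivative together with continuity of $J$ and full support of $\nu$; this deserves a sentence, since without it the non-compactness problem you correctly identify is not actually resolved. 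Second, your closing remark about ``taking $\delta$ also below this $\epsilon$'' goes the wrong way (shrinking $\delta$ shrinks the right-hand side $\delta\int|\psi_1|\,d\nu_{F_{i_1}}$, making the inequality harder); but the $\delta$ on the right-hand side of the stated inequality is evidently a misprint for $\ep$ --- otherwise the quantifier ``given $\ep>0$'' is vacuous, and the version actually used later (and proved for $\mu$ in the subsequent lemma) is the $\ep$-form --- and that is the form your argument establishes.
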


\begin{proof}
We start by noting that since $\nu$ is strongly absolutely continuous,  given $\ep>0$ there exists $\delta$ such that if $A,B$ are $\delta$-equivalent (with non-trivial measure) then
\begin{equation}\label{eq:derivadaRNnu}
\Big|\frac{\nu(A)}{\nu(B)}-1\Big|<\ep.
\end{equation}

The remaining part of the argument is spelled in \cite{TesisMarg}; for convenience of the reader we repeat it here. 

\noindent\textbf{Claim:} Given $\ep>0$ there exists $\delta>0$ such that for every $\psi_1,\psi_2\in \mathrm{Con}(\F)$ that are $\delta$-equivalent, non-identically zero and non-negative, it holds

\[
\left|1-\frac{\int \psi_2 \der\nu_{F_{i_2}}}{{\int \psi_1 \der\nu_{F_{i_1}}}}\right|<\delta.	
\]

\smallskip

\begin{proofw}
Given $\ep>0$ let $\delta>0$ as above, and consider $\psi_1, \psi_2$ under the hypotheses of the claim. Let $A \subset F_{i_1}, B \subset F_{i_2}$  be the supports of $\psi_1, \psi_2$. Then by the Radon-Nikodym theorem we can write
\[
	\int \psi_1 d\nu_{F_{i_1}}=\int \psi_2\cdot j \der\nu_{F_{i_2}},
\]
where, by \eqref{eq:derivadaRNnu}, $|1-j(z)|<\delta$ for every $z\in B$. Using that $\psi_1, \psi_2$ are non-negative, we deduce
\begin{align*}
(1-\delta)\cdot \int \psi_2 \der\nu_{F_{i_2}}\leq \int \psi_1 \der\nu_{F_{i_1}} \leq (1+\delta)\cdot \int \psi_2 \der\nu_{F_{i_2}}
\end{align*}
which implies the claim.
\end{proofw}

\smallskip 

Given $\psi_1,\psi_2\in \mathrm{Con}(\F)$ that are $\delta$-equivalent, non-identically zero, we decompose into their positive and negative parts, $\psi_1=\psi_1^{+}-\psi_1^{-}, \psi_1^{-}, \psi_2=\psi_2^{+}-\psi_2^{-}$ and use the previous claim:

\begin{align*}
\left|\int \psi_1 \der\nu_{F_{i_1}}-\int \psi_2 \der\nu_{F_{i_2}}\right|&\leq \left|\int \psi_1^+ \der\nu_{F_{i_1}}-\int \psi_2^+ \der\nu_{F_{i_2}}\right|+ \left|\int \psi_1^- \der\nu_{F_{i_1}}-\int \psi_2^- \der\nu_{F_{i_2}}\right|\\
&\leq\delta \int \psi_1^+ \der\nu_{F_{i_1}}+\delta \int \psi_1^- \der\nu_{F_{i_1}}=\delta \int |\psi_1| \der\nu_{F_{i_1}}.
\end{align*}
The proof is complete.
\end{proof}

In what follows we will be interested in quasi-invariant sections $\nu\in \mathrm{Meas}(\F)$. We introduce the following definition.

\begin{definition}
We say that $\nu\in \mathrm{Meas}(\F)$ is appropriate for $f$ if satisfies:
\begin{enumerate}
	\item is strongly absolutely continuous,
	\item for every leaf $F\in \F$, $\nu_F$ has full support,
	\item is quasi-invariant with H\"older Jacobian. 
\end{enumerate}
\end{definition}

\begin{remark}\label{rem:lebesgueispp}
It follows by \Cref{pro:lebesgueesabscont} that $\Leb$ is appropriate.
\end{remark}

Fix $\nu\in \mathrm{Meas}(\F)$ appropriate for $f$ and consider $\varphi:M\to\Real$ a H\"older continuous potential. Let $\rho$ be the Jacobian of $\nu$, $\hat{\rho}=\log \rho$ and $\hat{\varphi}=\varphi+\hat{\rho}$; this is a $(C_{\hat{\varphi}},\theta)$-H\"older function. For $n\in \Nat$ take $\nu^{n}=e^{\SB}\cdot f^{-n}\nu=e^{S_n\hat{\varphi}}\nu$ (cf. \Cref{eq:nun}).

\begin{lemma}\label{lem:controlnun}
	There exists a constant $D_1>0$ and $\ell:\Real_{>0}\to\Real_{>0}$ 	with the following property. For every $\delta>0$, if $\psi_1,\psi_2\in \mathrm{Con}^+$ are $\delta$-equivalent and $n\geq0$, then it holds
	\[
	\nu^n(\psi_1)\leq \ell(\delta)\cdot e^{D_1\delta^{\theta}}\nu^n(\psi_2). 
	\]
	Furthermore, $\ell(\delta)\xrightarrow[\delta\to 0]{}1$.
\end{lemma}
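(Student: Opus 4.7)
The plan is to reduce the inequality to Proposition \ref{pro:MargulisAbsCont} after absorbing the variation of the Birkhoff sum $S_n\hat{\varphi}$ along the $\Gcal$-holonomy into an explicit multiplicative factor $e^{D_1\delta^\theta}$. The key input is that $\Gcal$ contracts exponentially under $f$, so the H\"older moduli of $\hat{\varphi}$ along the iterated holonomy displacements form a convergent geometric series whose sum is independent of $n$.

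First I would exploit the $\delta$-equivalence: for $x\in\supp(\psi_1)$ and $y:=\hol[\Gcal](x)\in\supp(\psi_2)$, one has $d_{\Gcal}(x,y)<\delta$, and the contraction of $\Gcal$ gives $d_{\Gcal}(f^k x,f^k y)\leq\lambda^k\delta$ for every $k\geq 0$. Combining this with the $(C_{\hat\varphi},\theta)$-H\"older regularity of $\hat{\varphi}$ yields the $n$-uniform estimate
\[
\bigl|S_n\hat{\varphi}(x)-S_n\hat{\varphi}(\hol[\Gcal](x))\bigr|\leq C_{\hat\varphi}\sum_{k=0}^{\infty}\lambda^{k\theta}\,\delta^\theta = D_1\delta^\theta,
\]
with $D_1:=C_{\hat\varphi}/(1-\lambda^\theta)$. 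In particular, $e^{S_n\hat{\varphi}(x)}\leq e^{D_1\delta^\theta}\cdot e^{S_n\hat{\varphi}(\hol[\Gcal](x))}$ pointwise on $\supp(\psi_1)$.

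Next I would introduce the auxiliary functions $\phi_1(x):=\psi_1(x)\,e^{S_n\hat{\varphi}(\hol[\Gcal](x))}$ on $F_{i_1}$ and $\phi_2(y):=\psi_2(y)\,e^{S_n\hat{\varphi}(y)}$ on $F_{i_2}$. Both are non-negative, continuous and compactly supported, and by construction $\phi_2\circ\hol[\Gcal]=\phi_1$, so $\phi_1$ and $\phi_2$ are $\delta$-equivalent. Proposition \ref{pro:MargulisAbsCont} then gives, for $\delta$ sufficiently small, $\int\phi_1\,d\nu_{F_{i_1}}\leq (1-\delta)^{-1}\int\phi_2\,d\nu_{F_{i_2}}$; setting $\ell(\delta):=(1-\delta)^{-1}$ (with the obvious convention for large $\delta$) produces a function with $\ell(\delta)\to 1$ as $\delta\to 0$. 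Putting the two estimates together,
\[
\nu^n(\psi_1)=\int\psi_1 e^{S_n\hat{\varphi}}\,d\nu_{F_{i_1}} \leq e^{D_1\delta^\theta}\int\phi_1\,d\nu_{F_{i_1}} \leq \ell(\delta)\,e^{D_1\delta^\theta}\int\phi_2\,d\nu_{F_{i_2}} = \ell(\delta)\,e^{D_1\delta^\theta}\,\nu^n(\psi_2),
\]
which is the asserted bound.

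The delicate point I expect to verify carefully is precisely the $n$-uniformity of the H\"older control of $S_n\hat{\varphi}$ along $\Gcal$-holonomy; this is exactly where the exponential contraction of the complementary foliation combines with the H\"older hypothesis on $\varphi$ and on $\tilde{\rho}$ (hence on the $\mathcal{C}^{1+\alpha}$ regularity of $f$) to yield a bound independent of $n$. Once this telescoping is in place, the remainder is routine bookkeeping around Proposition \ref{pro:MargulisAbsCont}.
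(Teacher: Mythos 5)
Your proof is correct and follows essentially the same route as the paper: the same constant $D_1=C_{\hat\varphi}\sum_i\lambda^{i\theta}$, the same $n$-uniform telescoping bound on $S_n\hat\varphi$ along the holonomy, and then a comparison of the two leaf integrals of $\delta$-equivalent functions. The only cosmetic difference is that the paper performs that last comparison directly via an upper bound $\ell(\delta)$ on the Radon--Nikodym derivative $\frac{d\,\hol[\Gcal]\nu}{d\nu}$ (strong absolute continuity), whereas you package the same fact through Proposition \ref{pro:MargulisAbsCont}.
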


\begin{proof}
	Define $D_1:=C_{\hat{\varphi}}\sum_{i=0}^{\infty} \lambda^{i\theta}$; since $\lambda<1$, $D<+\oo$. Assume that $\psi_1,\psi_2$ are $\delta$-equivalent, $\psi_2\circ \hol[\Gcal] =\psi_1$. Observe that for every $x\in \supp(\psi_1)$, for every $n\geq 0$, it holds
\[
\abs{\SB[\hat{\varphi}](\hol[\Gcal] x)-\SB[\hat{\varphi}](x)}\leq \sum_{i=0}^{\oo} C_{\hat{\varphi}}\cdot d_{\Gcal}(f^i\hol[\Gcal] x,f^ix)^{\theta}\leq D_1\delta^{\theta}.
\]
It then follows that
\begin{align*}
\nu^n(\psi_1)&=\int e^{S_n\hat{\varphi}} \psi_2\circ \hol[\Gcal] \der\nu=\int e^{S_n\hat{\varphi}-S_n\hat{\varphi}\circ \hol[\Gcal]} \psi_2\circ \hol[\Gcal] e^{S_n\hat{\varphi}\circ \hol[\Gcal]} \der\nu\\
&\leq e^{D_1\delta^{\theta}} \int \psi_2 e^{S_n\hat{\varphi}} \der\hol[\Gcal]\nu\leq \ell(\delta)e^{D_1\delta^{\theta}} \nu^n(\psi_2)
\end{align*}
where $\ell$ is an upper bound of $\frac{d \hol[\Gcal]\nu}{d\nu}$, and therefore converges to $1$ as $\delta\to 0$.
\end{proof}

\begin{lemma}
	Let $A_1\subset F_{i_1},A_2\subset F_{i_2}$ be relatively open and pre-compact. Then there exist $\hat{e}(A_1,A_2)>0$ such that for every $n\geq 0$ it holds
	\[
	\frac{1}{\hat{e}(A_1,A_2)}\leq \frac{\nu^n(A_1)}{\nu^n(A_2)}\leq \hat{e}(A_1,A_2).
	\]
\end{lemma}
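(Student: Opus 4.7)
The idea is to exploit the minimality of $\Gcal$ to compare $A_1$ and $A_2$ via chains of short $\Gcal$-holonomies, and then reduce to the previous lemma. I would proceed as follows.

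First I would reduce to a local comparison. Since $\Gcal$ is transverse to $\F$, the $\Gcal$-saturation of any open subset of a leaf of $\F$ is open in $M$. Pick $y_{0}\in A_{2}$; by minimality of $\Gcal$, the leaf $G(y_{0})$ is dense, hence meets the $\Gcal$-saturation of $A_1$. This yields a point $x_{0}\in A_{1}$ that lies on the same $\Gcal$-leaf as $y_{0}$. Along the $\Gcal$-arc from $x_{0}$ to $y_{0}$ I would place intermediate points $x_{0}=z_{0},z_{1},\ldots,z_{N}=y_{0}$ with $d_{\Gcal}(z_{j-1},z_{j})<\delta$, where $\delta$ is chosen small enough to apply the previous lemma. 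The composition of the successive local $\Gcal$-holonomies $F(z_{j-1})\to F(z_{j})$ defines a homeomorphism $\hol[\Gcal]$ from a small open neighborhood $U_{x_{0}}\subset A_{1}$ of $x_{0}$ onto an open neighborhood of $y_{0}$ that sits inside $A_{2}$ (shrinking $U_{x_{0}}$ if necessary, which is possible since $A_{2}$ is open).

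Next I would apply \cref{lem:controlnun} $N$ times along the chain: if $\psi\in\mathrm{Con}^{+}$ is supported in $U_{x_{0}}$ and $\psi':=\psi\circ\hol[\Gcal]^{-1}$ is its transported version on $A_{2}$, then for every $n\geq 0$
\[
\nu^{n}(\psi)\leq\bigl(\ell(\delta)e^{D_{1}\delta^{\theta}}\bigr)^{N}\,\nu^{n}(\psi').
\]
The crucial point is that $N$ and $\delta$ are fixed (independent of $n$), so the constant on the right is a uniform multiplicative factor.

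Now I would globalize. The closure $\overline{A_{1}}$ is compact, so I can cover $A_{1}$ by finitely many such open sets $U_{x_{1}},\ldots,U_{x_{m}}$, each equipped with a chain of local $\delta$-holonomies sending $U_{x_{j}}$ into an open subset $V_{j}\subset A_{2}$ via at most $N_{0}:=\max_{j}N_{x_{j}}$ links. Take a continuous partition of unity $\{\psi_{j}\}_{j=1}^{m}$ subordinate to this cover on $A_{1}$, with $\sum_{j}\psi_{j}\equiv 1$ on a neighborhood of $\overline{A_{1}}$. Then
\[
\nu^{n}(A_{1})=\sum_{j=1}^{m}\nu^{n}(\psi_{j})\leq\bigl(\ell(\delta)e^{D_{1}\delta^{\theta}}\bigr)^{N_{0}}\sum_{j=1}^{m}\nu^{n}(\psi_{j}')\leq m\bigl(\ell(\delta)e^{D_{1}\delta^{\theta}}\bigr)^{N_{0}}\nu^{n}(A_{2}),
\]
where the last inequality uses $\psi_{j}'\leq\mathbf{1}_{V_{j}}\leq\mathbf{1}_{A_{2}}$. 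This gives the upper bound with $\hat{e}(A_{1},A_{2}):=m\,(\ell(\delta)e^{D_{1}\delta^{\theta}})^{N_{0}}$, and the lower bound follows by repeating the same argument with the roles of $A_{1}$ and $A_{2}$ interchanged.

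The main obstacle is Step~1, namely upgrading the long $\Gcal$-arc provided by minimality to a well-defined \emph{composition} of short local holonomies. Minimality yields $x_{0}$ and $y_{0}$ on the same $\Gcal$-leaf, but one must verify that a small plaque near $x_{0}$ can be slid rigidly along this arc without the intermediate leaves $F(z_{j})$ folding the domain; this is where the transverse product structure of the two complementary foliations, together with a uniform local triviality of $\F$, is used to guarantee that each local step $F(z_{j-1})\to F(z_{j})$ is a homeomorphism on a sufficiently small piece, and that the composition remains an honest $\delta$-equivalence in the sense required by \cref{lem:controlnun}.
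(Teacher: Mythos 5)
Your proposal is correct and follows essentially the same strategy as the paper: use minimality of $\Gcal$ to transport pieces of $A_1$ onto subsets of $A_2$ by holonomy, apply \cref{lem:controlnun} to get an $n$-independent comparison, and conclude by compactness with a finite cover. The only cosmetic differences are that the paper packages the long holonomy as a single $\delta(A_2)$-equivalence (applying \cref{lem:controlnun} once, with $\delta$ possibly large) rather than iterating over a chain of short links, and it sums over a finite plaque cover with monotone continuous approximations of indicators instead of a partition of unity.
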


\begin{proof}
Let us start noting that since $\Gcal$ is minimal (on $M$ compact) we have the following property: for any $A\subset F, F\in \F$ there exist $\delta(A),r(A)>0$ such that for every $x\in M$ one can find $B_x\subset A$ that is $\delta(A)$-equivalent to $F(x,r(A))$.

By relative compactness of $\clo{A_1}$, the closure of $A_1$ inside $F_{i_1}$, it follows that $\clo{A_1}\subset \cup_1^m F(x_j,r(A_2))$, where each plaque $F(x_j,r(A_2))$ is $\delta(A_2)$-equivalent to some $B_j\subset A_2$.

Fix two of these sets $E_1=F(x_j,r(A_2)), E_2=B_j$ and denote by $\hol[\Gcal]$ the holonomy transport such that $\one_{E_1}=\one_{E_2}\circ \hol[\Gcal]$. By regularity of $\nu_{F_{i_2}}$ we can find a non-decreasing sequence $(\phi_k)_k\subset \CMc[F_{i_2}]$ satisfying
\begin{enumerate}
	\item $k\in\mathbb{N}\Rightarrow 0\leq \phi_k\leq 1, \supp(\phi_k)\subset \clo{A_2}$.
	\item $\lim_{k\mapsto\oo}\phi_k(x)=\one_{A_2}(x)$. 
	\item $\nu^n(A)=\lim_k\uparrow\nu^n(\phi_k)$. 
\end{enumerate}
Define $\psi_k=\phi_k\circ \hol[\Gcal]$ and apply the previous lemma to deduce that for every $n$,
\begin{align*}
\frac{\nu^n(E_1)}{\nu^n(E_2)}=\frac{\lim_k \nu^n(\psi_k)}{\sup_k \nu^n(\phi_k)}\leq \sup_k \frac{\psi_k}{\phi_k}\leq \ell(\delta(A_2))e^{D_1\delta(A_2)^{\theta}}:=e_0. 
\end{align*}
Therefore,
\begin{align*}
\frac{\nu^n(A_1)}{\nu^n(A_2)}\leq \frac{\nu^n(F(x_j,r(A_2))}{\nu^n(A_2)}
\leq m\cdot\max_{j} \frac{\nu^n(F(x_j,r(A_2)))}{\nu^n(B_j)}\leq m\cdot e_0.
\end{align*}
From here the conclusion follows.
\end{proof}

\begin{corollary}\label{cor:nucompacidad}
Fix $\psi\in \mathrm{Con}^+(\F)$. Then for every $\phi\in \mathrm{Con}(\F)$ there exists $\hat{e}(\phi,\psi)>0$ such that for every $n\geq 0$ it holds
\[
\frac{\nu^n(\phi)}{\nu^n(\psi)}\leq \hat{e}(\phi,\varphi).
\]
\end{corollary}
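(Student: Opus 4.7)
The corollary is essentially an upgrade of the previous lemma from comparing measures of relatively open pre-compact sets to comparing integrals of continuous compactly supported functions, so my plan is to sandwich $\phi$ and $\psi$ between multiples of indicator functions of such sets and then invoke the previous lemma.

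Concretely, let $F_{i_1}, F_{i_2}\in\F$ be the (unique) leaves on which $\phi$ and $\psi$ are supported. First, choose a relatively compact open neighborhood $A_1\subset F_{i_1}$ of $\supp(\phi)$; by continuity and compact support, $|\phi|\leq \|\phi\|_\infty \mathbf{1}_{A_1}$, so $\nu^n(\phi)\leq \|\phi\|_\infty\,\nu^n(A_1)$ for every $n\geq 0$. Next, since $\psi\in\mathrm{Con}^+(\F)$ there exists $x_0\in F_{i_2}$ with $\psi(x_0)>0$; setting $c:=\psi(x_0)/2$ and using continuity of $\psi$ we find a relatively compact open set $A_2\subset F_{i_2}$ with $x_0\in A_2\subset \supp(\psi)$ and $\psi\geq c$ on $A_2$. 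Consequently $\nu^n(\psi)\geq c\,\nu^n(A_2)$ for every $n\geq 0$.

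Now I apply the previous lemma to the pair $A_1, A_2$, which yields a constant $\hat{e}(A_1,A_2)>0$ (independent of $n$) such that
\[
\nu^n(A_1)\leq \hat{e}(A_1,A_2)\,\nu^n(A_2)\qquad \forall\, n\geq 0.
\]
Combining the three estimates,
\[
\frac{\nu^n(\phi)}{\nu^n(\psi)}\leq \frac{\|\phi\|_\infty\,\nu^n(A_1)}{c\,\nu^n(A_2)}\leq \frac{\|\phi\|_\infty}{c}\,\hat{e}(A_1,A_2)=:\hat{e}(\phi,\psi),
\]
which is the desired bound. Note that if $\nu^n(\phi)\leq 0$ the bound is trivial once $\hat e(\phi,\psi)>0$, so the argument above covers sign-changing $\phi$ without modification.

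The only thing worth double-checking is the non-degeneracy of $\nu^n(A_2)$, so that the displayed inequality and the very definition of $\hat e(A_1,A_2)$ make sense; this is guaranteed by the first hypothesis of the existence theorem (i.e., that $\nu^n(\varphi)>0$ for every $\varphi\in\mathrm{Con}^+$), applied to a bump supported in $A_2$. No serious obstacle arises: the whole content of the comparison is already in the preceding lemma, and the corollary is simply the passage from set-comparison to function-comparison via standard approximation by indicators.
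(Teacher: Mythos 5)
Your proof is correct and follows essentially the same route as the paper: the paper also sandwiches $\phi$ above by $\norm{\phi}_{\oo}\one_{A}$ for a pre-compact open $A\supset\supp(\phi)$ and $\psi$ below by $r\,\one_{A_r}$ on a superlevel set $A_r=\psi^{-1}(r,\oo)$, then invokes the preceding lemma on the pair of sets. Your choice of a neighborhood where $\psi\geq \psi(x_0)/2$ in place of a superlevel set is an immaterial variation, and your remark on the non-vanishing of $\nu^n(A_2)$ is a point the paper leaves implicit.
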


\begin{proof}
This is essentially Lemma $2.4$ of \cite{TesisMarg}. For $r>0$ let $A_r:=\psi^{-1}(r,\oo)$ and note that since $\psi\in \mathrm{Con}$ is non-negative and non-identically zero, there exists $r>0$ such that $A_r$ is relatively open and pre-compact inside the leaf of $\F$ that contains $\supp(\psi)$. Take $A$ open, relatively compact containing $\supp(\phi)$ and use the previous Lemma to deduce
\[
\frac{\nu^n(\phi)}{\nu^n(\psi)}<\frac{\norm{\phi}_{\oo}\cdot\nu^n(A)}{r\cdot \nu^n(A_r)}<\frac{\norm{\phi}_{\oo}}{r}\ \hat{e}(A,A_r). 
\]
\end{proof}

We have thus shown that the in the assumed conditions $H-1$ to $H-4$, there exist an appropriate an $\nu$ (\Cref{rem:lebesgueispp}), and therefore by \Cref{cor:nucompacidad} we get that the hypotheses of \Cref{thm:existenciafamiliamedidas} are satisfied. Thus, we conclude the existence of $P\in\Real$ and $\mu\in \mathrm{Meas}(\F)$ such that

\begin{enumerate}
	\item $\mu$ is in the closure of the positive cone generated by $\{\nu^n\}_{n\geq 0}$ (and thus has full support on each leaf of $\F$),
	\item $\mu$ is quasi-invariant, with Jacobian $e^{P-\varphi}$.
\end{enumerate}

% subsubsection quasi_invariant_measures_along_leaves_of_foliations (end)

\paragraph{Strong absolute continuity of the quasi-invariant section} % (fold)
\label{par:strong_absolute_continuity_of_the_quasi_invariant_section} It turns out that the quasi-invariance of $\mu$ implies that is strongly absolutely continuous. We keep working with $\nu, \mu$ as in previous part, that is
\begin{itemize}
 	\item $\nu$ is appropriate for $f$,
 	\item $\mu$ is quasi-invariant with Jacobian $e^{P-\varphi}$, and in the closure of the positive cone generated by $\{\nu^n\}_{n\geq 0}$.
 \end{itemize} 

Denote for $x\in M$, $\mu_x=\mu_{F(x)}$. By quasi-invariance we get
\begin{equation}
\forall n\in\Nat,\quad f^{-n}\mu_{f^nx}=e^{nP-\SB}\mu_x.
\end{equation}

\begin{lemma}
Given $\ep>0$ there exists $\delta>0$ such that for every $\psi_1,\psi_2\in\mathrm{Con}^+$ that are $\delta$-equivalent, then
\[
	\left|\frac{\mu(\psi_1)}{\mu(\psi_2)}-1\right|<\ep.
\]
\end{lemma}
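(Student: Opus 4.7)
The plan is to transfer the estimate of \cref{lem:controlnun}, valid for each iterate $\nu^n$, to the fixed point $\mu$ by passing through convex combinations and vague limits, and then letting the equivalence parameter tend to $0$.

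First, dividing the bound $\nu^n(\psi_1) \leq \ell(\delta)\,e^{D_1\delta^\theta}\,\nu^n(\psi_2)$ supplied by \cref{lem:controlnun} by $\nu^n(\phi_0)$ yields the same inequality for the normalized sections $\hat{\nu}^n$. Since the estimate is homogeneous in the measure, it extends linearly to every finite convex combination $\sum_i a_i\,\hat{\nu}^{n_i}$, and hence to every element of the dense subset of $\mathcal{X}$ described in \eqref{eq:combinacionesconvexas}.

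Second, by the very definition of the topology on $\mathrm{Meas}(\F)$, the evaluation $\zeta \mapsto \zeta(\psi)$ is continuous for any $\psi \in \mathrm{Con}(\F)$. Thus the inequality is preserved under vague limits and passes to the entire compact set $\mathcal{X}$; in particular the fixed point $\mu$ constructed in \cref{thm:existenciafamiliamedidas} satisfies $\mu(\psi_1) \leq \ell(\delta)\,e^{D_1\delta^\theta}\,\mu(\psi_2)$. Running the same argument with the roles of $\psi_1$ and $\psi_2$ exchanged (via the inverse holonomy) yields the reverse bound. Since $\mu$ has full support on each leaf of $\F$, both $\mu(\psi_i)$ are strictly positive, so
\[
\ell(\delta)^{-1}e^{-D_1\delta^\theta}\ \leq\ \frac{\mu(\psi_1)}{\mu(\psi_2)}\ \leq\ \ell(\delta)\,e^{D_1\delta^\theta},
\]
and the desired conclusion follows because $\ell(\delta)\,e^{D_1\delta^\theta} \to 1$ as $\delta \to 0$ by \cref{lem:controlnun}.

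There is no serious analytical obstacle in this argument: everything reduces to linearity of the estimate and continuity of evaluations on $\mathrm{Meas}(\F)$. The one subtle point requiring attention is that the quantity $\ell(\delta)$ from \cref{lem:controlnun} must be genuinely uniform over \emph{all} holonomies of $\Gcal$-displacement at most $\delta$, rather than a pair-by-pair bound; this uniformity is inherited from the strong absolute continuity of $\nu$ (\cref{pro:lebesgueesabscont}) combined with compactness of $M$, which together guarantee a uniform continuity modulus for the density $d\,\hol[\Gcal]\nu/d\nu$ as the holonomy displacement shrinks to zero.
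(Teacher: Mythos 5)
Your proof is correct and follows essentially the same route as the paper: apply \cref{lem:controlnun} to each normalized $\hat{\nu}^n$, extend the inequality by linearity to convex combinations and by continuity of the evaluations to all of $\mathcal{X}$, hence to $\mu$, and conclude by letting $\delta\to 0$. The extra remarks on symmetry, positivity of $\mu(\psi_i)$, and uniformity of $\ell(\delta)$ are correct and merely make explicit what the paper leaves implicit.
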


\begin{proof}
By \Cref{lem:controlnun}, for every $n\in\Nat$ we have $\hat{\nu}^n(\psi_1)\leq \ell(\delta)\cdot e^{D_1\delta^{\theta}}\hat{\nu}^n(\psi_2)$; the inequality extends to convex combinations of $\{\hat{\nu}^n\}_n$ and therefore, to elements of $\mathcal{X}$ (cf.\@ \Cref{eq:combinacionesconvexas}). In particular $\mu$ satisfies the claim.

\end{proof}

We now fix $x_0, y_0$  in the same leaf of $\Gcal$ and consider $\hol[\Gcal]:A(x_0)\subset F(x_0)\to B(y_0)\subset F(y_0)$ the corresponding Poincaré map. For $x\in A(x_0)$ define
\begin{equation}\label{eq:Jacobianohol}
\Jac(x)=\prod_{j=0}^{\oo}\frac{e^{\varphi\circ f^j(\hol[\Gcal]x)}}{e^{\varphi\circ f^j(x)}}=\lim_n e^{\SB(\hol[\Gcal]x)-\SB(x)}.
\end{equation}
Since $\varphi$ is H\"older and $\Gcal$ is contracting, the previous formula defines a continuous function $\Jac:A(x_0)\rightarrow \Real$.

\begin{proposition}\label{pro:Jacobinanohol}
It holds $(\hol[\Gcal])^{-1}\mu_{y_0}=\Jac \cdot \mu_{x_0}$.
\end{proposition}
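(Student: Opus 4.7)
The plan is to use the quasi-invariance of $\mu$ to conjugate the problem by $f^n$: the holonomy $\hol[\Gcal]$, which sits at a fixed $\Gcal$-distance, is intertwined with a holonomy $\hol[\Gcal]^{(n)}$ whose diameter shrinks like $\lambda^n$ because $\Gcal$ is contracted. Once the holonomy is this small, the proximity of $\mu$ on different $\F$-leaves is controlled by the $\delta$-equivalence lemma established just above, and the Jacobian factor $\Jac$ is exactly what accounts for the discrepancy in the Birkhoff sums picked up by quasi-invariance.

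Concretely, fix a non-negative $\phi\in\CMc[F(x_0)]$ with $\supp\phi\subset A(x_0)$; the claim is $\int \phi\cdot\Jac\, d\mu_{x_0}=\int \phi\circ \hol[\Gcal]^{-1}\, d\mu_{y_0}$. Iterating the quasi-invariance relation $f^{-n}\mu_{f^n z}=e^{nP-\SB(z)}\mu_z$ rewrites each side as an integral on the forward leaf:
\begin{align*}
\int \phi\cdot\Jac\, d\mu_{x_0} &= \int \phi(f^{-n}w)\,\Jac(f^{-n}w)\, e^{\SB(f^{-n}w)-nP}\, d\mu_{f^n x_0}(w),\\
\int \phi\circ\hol[\Gcal]^{-1}\, d\mu_{y_0} &= \int \phi(\hol[\Gcal]^{-1} f^{-n}z)\, e^{\SB(f^{-n}z)-nP}\, d\mu_{f^n y_0}(z).
\end{align*}
Call these integrands $g_n$ (on $F(f^n x_0)$) and $h_n$ (on $F(f^n y_0)$). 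Since $\Gcal$ is $f$-invariant, the $\Gcal$-holonomy $\hol[\Gcal]^{(n)}:F(f^nx_0)\to F(f^ny_0)$ satisfies $\hol[\Gcal]\circ f^{-n}=f^{-n}\circ \hol[\Gcal]^{(n)}$, so a point $w\in F(f^n x_0)$ and $z=\hol[\Gcal]^{(n)}(w)\in F(f^n y_0)$ give
\[
\frac{g_n(w)}{h_n(\hol[\Gcal]^{(n)}(w))}\;=\;\frac{\Jac(f^{-n}w)}{\exp\!\bigl(\SB(\hol[\Gcal] f^{-n}w)-\SB(f^{-n}w)\bigr)}.
\]
By the very definition \cref{eq:Jacobianohol} of $\Jac$, this ratio tends to $1$ uniformly on $\supp\phi$ as $n\to\infty$.

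Meanwhile the uniform contraction $d_\Gcal(f^n a, f^n b)\leq\lambda^n d_\Gcal(a,b)$ implies that for every $w\in f^n\supp(\phi)$ one has $d_\Gcal(w,\hol[\Gcal]^{(n)}(w))\leq \lambda^n\delta_0$. Therefore $g_n$ and $\tilde g_n:=g_n\circ(\hol[\Gcal]^{(n)})^{-1}$ are $(\lambda^n\delta_0)$-equivalent in the sense of the preceding section; and $\tilde g_n/h_n\to 1$ uniformly by the previous paragraph. Applying the $\delta$-equivalence lemma for $\mu$ (the unnamed lemma preceding the statement) yields $\mu_{f^n x_0}(g_n)/\mu_{f^n y_0}(\tilde g_n)\to 1$, while $\mu_{f^n y_0}(\tilde g_n)/\mu_{f^n y_0}(h_n)\to 1$ by uniform convergence. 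Because the left-hand sides of the two displayed integrals do not depend on $n$, the ratio $\mu_{x_0}(\phi\cdot\Jac)/\mu_{y_0}(\phi\circ\hol[\Gcal]^{-1})$ is a constant whose limit is $1$, whence equality.

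The main obstacle is bookkeeping: verifying that the (non-compact) leaves $F(f^n x_0)$ and $F(f^n y_0)$ support the forward-pushed functions in $(\lambda^n\delta_0)$-equivalent pieces, and that both the Jacobian defect and the $\delta$-equivalence error are uniform in $n$. Once those are in hand, the argument is essentially the standard Margulis-type estimate carried out in the moving $\Gcal$-tube between $f^n A(x_0)$ and $f^n B(y_0)$.
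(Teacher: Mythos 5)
Your proposal is correct and follows essentially the same route as the paper: conjugate by $f^n$ via the quasi-invariance relation, use the contraction of $\Gcal$ so that the time-$n$ holonomy $\hol[\Gcal]^{(n)}$ connects $(\lambda^n\delta_0)$-equivalent data, invoke the $\delta$-equivalence lemma for $\mu$ to kill the error, and identify $\Jac$ as the uniform limit of $e^{\SB\circ \hol[\Gcal]-\SB}$. The paper packages this as $\hol[\Gcal]^{-1}\mu_{y_0}=J_n\cdot\mu_{x_0}+\upsilon_n$ with $\upsilon_n\to 0$ vaguely, whereas you phrase it as a ratio of $n$-independent integrals tending to $1$; the two are the same argument.
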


\begin{proof}
Denote $h=\hol[\Gcal]$, and for $n\geq 0$ let $h_n$ be the Poincaré map defined on a neighborhood of $f^nx$ inside $f^n(A(x_0))$. As $f$ preserves $\F,\Gcal$, $h=f^{-n}\circ h_n\circ f^{n}$ and therefore

\begin{align*}
\MoveEqLeft h^{-1}\mu_{y_0}=(f^{-n}\circ h_n^{-1})e^{\SB\circ f^{-n}-nP}\mu_{f^ny_0}=f^{-n}\left(e^{\SB\circ h\circ f^{-n}-nP}h_n^{-1}\mu_{f^ny_0}\right)\\
&=f^{-n}\left(e^{\SB\circ h\circ f^{-n}-nP}\left(\mu_{f^nx_0}+\left(h_n^{-1}\mu_{f^ny_0}-\mu_{f^nx_0}\right)\right)\right)\\
&=e^{\SB\circ h-\SB}\mu_{x_0}+f^{-n}e^{\SB\circ h\circ f^{-n}-nP}\left(h_n^{-1}\mu_{f^ny_0}-\mu_{f^nx_0}\right).
\end{align*}
By the lemma above, given $\ep>0$ there exists $n_0$ such that for $n\geq n_0$ it holds: for any $\psi\in \mathrm{Con}^+$ supported in $f^n(A(x_0))$,
\[
	\abs{h_n^{-1}\mu_{f^ny_0}(\psi)-\mu_{f^nx_0}(\psi)}<\ep h_n^{-1}\mu_{f^ny_0}(\psi)
\]
which in turn implies
\begin{align*}
\abs{f^{-n}e^{\SB\circ h\circ f^{-n}-nP}\left(h_n^{-1}\mu_{f^ny_0}-\mu_{f^nx_0}\right)(\psi\circ f^n)}&<\ep \mu_{f^ny_0}(\psi\circ h_n^{-1}e^{\SB\circ f^{-n}-nP})\\
    &=\ep h^{-1}\mu_{y_0}(\psi\circ f^n).
\end{align*}
Thus we can write $h^{-1}\mu_{y_0}=J_n\cdot \mu_{x_0}+\upsilon_n$, where $\upsilon_n$ converges to zero in the vague topology, and $\{J_n\}_n$ is a sequence of continuous functions that converges uniformly to $\Jac$. This concludes the proof.
\end{proof}

\begin{remark}\label{rem:jacobianoconvergeauno}
Using that $\Gcal$ is contracting under $f$ and $\varphi$ is Hölder, one verifies directly that 
$\normC{\Jac-1}{0}\xrightarrow[y_0\mapsto x_0]{}0$.
\end{remark}
% subsection strong_absolute_continuity_of_the_quasi_invariant_section (end)

% section measures_along_foliations (end)

\section{Invariant measures for center isometries} % (fold)
\label{sec:measures_along_invariant_foliations_of_center_isometries}

We will use the previous construction in the case when $f$ is a center isometry. For convenience of the reader we recall the definition and some basic facts below.

\begin{definition}\label{def:centerisometry}
	Let  $M$\ be a closed manifold. A  diffeomorphism  $f\in \diffM{M}{r}, r\geq 1$ is a center isometry if there exist a continuous splitting of the tangent bundle of the form 
	\[TM=E^u\oplus E^c\oplus E^s
	\]
	where both bundles $E^s,E^u$\ are non-trivial, a (continuous) Riemannian metric $\norm{\cdot}$ on $M$ and 
	a constant $0<\lambda<1$ such that
	\begin{enumerate}
		\item All bundles $E^u,E^s,E^c$\ are $Df$-invariant.
		\item For every $x\in M$, for every unit vector $v^{\ast}\in E^{\ast}_x, \ast=s,c,u$, 
		\begin{gather*}
		\norm{D_xf(v^{c})}=1\\
		\norm{D_xf^n(v^{s})},\norm{D_xf^{-1}(v^{u})}<\lambda.
		\end{gather*}
	\end{enumerate}
\end{definition}

Center isometries are a subclass of the so called \emph{Partially Hyperbolic Systems}. From their theory we need the following.

\begin{theorem}\label{thm:centerisometry}
	If $f\in \diffM{M}{r}$\ is a center isometry then the bundles $E^{s}, E^u, E^c$, $E^{cs}=E^c\oplus E^s,E^{cu}=E^c\oplus E^u$ are (uniquely) integrable to continuous foliations $\Fs=\{W^s(m)\}_{x\in M}$, $\Fu=\{W^u(m)\}_{x\in M},\Fc=\{W^c(m)\}_{x\in M},\Fcs=\{W^{cs}(m)\}_{x\in M},\Fcu=\{W^{cu}(m)\}_{x\in M}$\ respectively called the \emph{stable},\emph{unstable},\emph{center}, \emph{center stable} and \emph{center unstable} foliations, whose leaves are $\mathcal{C}^r$\ immersed submanifolds. Moreover, leaves of $\Fcs$ are saturated by leaves of $\Fs,\Fc$, and leaves of $\Fcu$ are saturated by leaves of $\Fu,\Fc$.
\end{theorem}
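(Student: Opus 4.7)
The result is classical in the theory of partially hyperbolic dynamics, so the plan is to identify the right pieces of that theory and apply them; the key feature making everything work in this generality is the center-isometry hypothesis.

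The plan is to handle the three families in sequence. First, $\Fs$ and $\Fu$ come from the classical Hadamard--Perron stable manifold theorem: since $E^s$ is uniformly contracted and $E^u$ is uniformly expanded under $Df$, the graph-transform argument on the local bundle $E^s \oplus (E^c \oplus E^u)$ (and its dual) produces for each $x\in M$ a unique local $\mathcal{C}^r$ immersed submanifold $W^s_{\mathrm{loc}}(x)$ tangent to $E^s_x$ and dynamically characterized as $\{y:\ d(f^n x,f^n y)\to 0\text{ exponentially}\}$; saturating by iterates of $f^{-1}$ yields the global leaves. Uniqueness is immediate from this dynamical characterization, and $\mathcal{C}^r$ regularity of leaves is the standard output of the graph-transform contraction acting on $\mathcal{C}^r$ sections.

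Next I would invoke the Hirsch--Pugh--Shub theorem on normally hyperbolic laminations to obtain $\Fcs$ and $\Fcu$. The distribution $E^c$ is normally hyperbolic, and the spectral-gap condition needed for $r$-normal hyperbolicity is trivial because $f$ acts as an isometry on $E^c$: for any integer $k$ one has $\|Df|E^c\|^k = 1$ while $\|Df|E^s\| < \lambda$ and $\|Df^{-1}|E^u\| < \lambda$, so the inequalities required by HPS hold for every $r$. To upgrade the HPS \emph{plaque families} to genuine foliations $\Fcs,\Fcu$ (dynamical coherence), I would verify plaque expansivity, which in the center-isometry case is almost automatic: two orbits whose forward iterates stay in a common center-stable plaque must converge in the $E^s$-direction, while the center component is an isometry, forcing the orbits to coincide. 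This gives $\Fcs$ and, applied to $f^{-1}$, also $\Fcu$ as continuous foliations with $\mathcal{C}^r$ immersed leaves.

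Once $\Fcs$ and $\Fcu$ are in hand, I would define the center leaves as the connected components of the intersection $W^{cs}(x)\cap W^{cu}(x)$. Transversality of $E^{cs}$ and $E^{cu}$ inside $TM$, together with the $\mathcal{C}^r$ regularity of both foliations, gives that these intersections are $\mathcal{C}^r$ immersed submanifolds tangent to $E^{cs}\cap E^{cu}=E^c$, establishing integrability of $E^c$ and producing $\Fc$; uniqueness at the leaf level follows from uniqueness of $\Fcs$ and $\Fcu$. The saturation assertions are then a consequence of $f$-invariance and uniqueness: any $W^{cs}(x)$ is $f$-invariant and tangent to $E^{cs}\supset E^s$, so for each $y\in W^{cs}(x)$ the stable manifold $W^s(y)$, being dynamically characterized by exponential contraction under $f$, must lie inside $W^{cs}(x)$; the same argument places $W^c(y)\subset W^{cs}(x)$, and analogously for $\Fcu$.

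The main obstacle I expect is the dynamical coherence step, i.e.\ passing from HPS plaque families to honest foliations. The plaque-expansivity verification is the conceptual heart: one must show that the isometric center action prevents two distinct plaques from shadowing each other under iteration. Everything else (stable/unstable manifolds, graph transform, transverse intersection) is routine, but packaging the HPS output so that it yields continuous global foliations with $\mathcal{C}^r$ immersed leaves requires some care, and is the step I would spend the most energy on before citing \cite{HPS} for the technical machinery.
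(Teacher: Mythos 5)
Your first and last steps (the stable/unstable foliations via the graph transform, the definition of $\Fc$ as connected components of $W^{cs}(x)\cap W^{cu}(x)$, and the saturation claims) are fine, and the paper itself disposes of the whole statement by citation (Theorem 4.1 of \cite{PesinLect} for $E^s,E^u$, Theorem 7.5 of \cite{PartSurv} for the rest), so you are unpacking essentially the intended references. The genuine problem is in your middle step, which you correctly single out as the heart of the matter but resolve with the wrong tool. Plaque expansivity is the hypothesis of the Hirsch--Pugh--Shub structural stability theorem for a map that is \emph{already} normally hyperbolic at a given foliation; it presupposes $\Fcs$ and cannot be used to produce it. Dynamical coherence is genuinely false for general partially hyperbolic diffeomorphisms (there are non-dynamically-coherent examples on $\Tor^3$), so some feature specific to center isometries must enter, and plaque expansivity of a not-yet-existing foliation is not it. Moreover, your verification of plaque expansiveness fails on its own terms: two orbits lying in a common center-stable plaque whose center behaviour is isometric need \emph{not} coincide --- they can remain at constant positive distance in the center direction forever. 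The conclusion of plaque expansiveness is only that the two orbits lie in the same plaque, which in your setup is the hypothesis, not something to be derived from convergence.

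What actually makes the center-isometry case work is non-expansion of $E^{cs}$: in the adapted metric with $E^s,E^c,E^u$ mutually orthogonal one has $\|Df|E^{cs}\|\le 1$, so $f$ does not increase the length of curves tangent to $E^{cs}$. Combined with the uniform expansion of $E^u$, this forces every curve tangent to $E^{cs}$ issuing from $x$ to stay inside the locally invariant center-stable plaque family furnished by \cite{HPS}: otherwise some forward iterate of the curve would acquire a definite unstable displacement while its length remains bounded. This gives unique integrability of $E^{cs}$ and the coherence of the plaques, hence the foliation $\Fcs$; applying the same argument to $f^{-1}$ gives $\Fcu$, and your intersection argument then yields $\Fc$ together with the uniqueness assertions. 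Replace the plaque-expansivity paragraph with this length-non-increase argument (or simply cite Theorem 7.5 of \cite{PartSurv}, as the paper does) and the proof is complete.
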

\begin{proof}
	Integrability of the bundles $E^s,E^u$ is consequence of the classical Stable Manifold theorem (see for example Theorem 4.1 in \cite{PesinLect}), while the rest of the assertions can be deduced from theorem 7.5 in \cite{PartSurv}. 
\end{proof}

\noindent\textbf{Standing hypotheses for the rest of the section:} $f:M\rightarrow M$ is center isometry of class $\mathcal{C}^2$ and $\varphi:M\to\Real$ is a H\"older potential. The metric in $M$ is assumed to make $E^s,E^c,E^u$ mutually perpendicular.\footnote{This can always be achieved by passing to an equivalent metric.} Both foliations $\Fs,\Fu$ are minimal.

\smallskip

We will apply the results of \Cref{sec:measures_along_foliations}: take $\F=\Fcu, \Gcal=\Fs$, $\nu=\Leb$, the Lebesgue measure on leaves of $\Fcu$. By the $\mathcal C^2$ hypothesis on $f$ and \Cref{pro:lebesgueesabscont}, $\nu$ is appropriate for $f$, and since $\Fs$ is minimal we are in the setting discussed in \Cref{sub:quasi_invariant_measures_along_leaves_of_foliations}, therefore the conclusion of \Cref{cor:nucompacidad} is valid. Then \Cref{thm:existenciafamiliamedidas} allows us to deduce the existence of an appropriate $\mu^{cu}\in \mathrm{Meas}(\Fcu)$ for $f$, with Jacobian $e^{P-\varphi}$ for some $P\in\Real$. Finally by \Cref{pro:Jacobinanohol}, $\mu$ is strongly absolutely continuous (therefore appropriate for $f$). We summarize this in the Proposition below.

\begin{proposition}\label{pro:medidacu}
	There exists a section $\mu^{cu}=\mu^{cu}_{\varphi}\in \mathrm{Meas}(\Fcu)$ and $P\in \Real$ such that:
	\begin{enumerate}
		\item For every $\psi\in \mathrm{Con}^+(\Fcu,)$ it holds  $\mu^{cu}(\psi)>0$.
		\item For every $x\in M$, $f^{-1}\mcux[fx]=e^{P-\varphi}\mcux[x]$.
		\item If $\hs=\hs_{x_0,y_0}:A(x_0)\subset \Wcu{x_0}\rightarrow B(y_0)\subset\Wcu{y_0}$ is the Poincaré map determined by the stable holonomy that sends $x_0$ to $y_0$, then
		\[
		(\hs)^{-1}\mu^{cu}_{y_0}=\Jacs\cdot \mu^{cu}_{x_0}
		\]
		where
		\begin{equation}
		\Jacs(x)=\prod_{j=0}^{\oo}\frac{e^{\varphi\circ f^j(\hs x)}}{e^{\varphi\circ f^j(x)}}.
		\end{equation} 
    \end{enumerate} 
\end{proposition}

Note that $f^{-1}$ is also a center isometry whose stable foliation coincides with the unstable of $f$, and likewise its unstable foliation coincides with the stable of $f$. By applying the previous arguments to $f^{-1}$ and the potential $\varphi\circ f^{-1}$ we conclude the following.

\begin{proposition}\label{pro:medidacs}
There exist $\mu^{cs}=\mu^{cs}_{\varphi} \in \mathrm{Meas}(\Fcu)$ and $P'\in \Real$ such that:
\begin{enumerate}
		\item For every $\psi\in \mathrm{Con}^+(\Fcs)$ it holds  $\mu^{cs}(\psi)>0$.
		\item For every $x\in M$, $f^{-1}\mcsx[fx]=e^{\varphi-P'}\mcsx[x]$.
		\item If $\hu=\hu_{x_0,y_0}:A(x_0)\subset \Wcs{x_0}\rightarrow B(y_0)\subset\Wcs{y_0}$ is the Poincaré map determined by the unstable holonomy that sends $x_0$ to $y_0$, then
		\[
		(\hu)^{-1}\mu^{cs}_{y_0}=\Jacu\cdot \mu^{cs}_{x_0}
		\]
		where
		\begin{equation}
		\Jacu(x)=\prod_{j=1}^{\oo}\frac{e^{\varphi\circ f^{-j}(\hu x)}}{e^{\varphi\circ f^{-j}(x)}}.
		\end{equation} 
    \end{enumerate} 
\end{proposition}

\subsection{The conditional measures: potentials constant along the center and SRBs} % (fold)
\label{sub:the_conditional_measures_potentials_constant_along_the_center_and_srbs}

We are interested in constructing $\mu^u\in\mathrm{Meas}(\Fu)$ satisfying analogous properties as $\mu^{cu}$. Our previous construction however does not apply in this case since the transverse foliation to $\Fu$ is not contracting. To bypass this problem we assume some condition on $\varphi$, namely that it is either
\begin{itemize}
\item constant on center leaves ($c$-constant case), or
\item $\varphi=-\log\det Df|E^u$ (SRB case). 
\end{itemize}

In the SRB case we define 
\[
\mux=\text{Lebesgue measure on }\Wu{x};
\]  
by the change of variables theorem $f^{-1}\mux[fx]=\det Df|E^u\cdot \mux=e^{-\varphi}\mux[x]$. For the $c$-constant case, one deduces by compactness of $M$ the existence of some $\ccen>0$ such that for every $x\in M$ there is a well defined projection $\pi_x^c:\Wc{\Wu{x},\ccen}\to\Wu{x}$ by sliding along local center plaques. Given $x$ we define a measure on $\Wu{x}$ by setting
\begin{align}\label{eq.defmuxcconstant}
\mux=\pi^c_x\cdot\mcux
\end{align}
Note that $\mu^u\in \mathrm{Meas}(\Fu)$ is quasi-invariant with Jacobian $e^{P-\varphi}$; indeed by using that 
$\varphi$ is constant on center leaves and that $f$ is an isometry on these, we deduce
\begin{align}\label{eq:invarianciainest}
f^{-1}\mux[fx]=f^{-1}\pi^c_{fx}\mcux[fx]=\pi^c_{x}f^{-1}\mcux[fx]=\pi^c_{x}e^{P-\varphi}\mcux[x]=e^{P-\varphi}\pi^c_x\mcux[x]=e^{P-\varphi}\mux[x].
\end{align}
Arguing with $\mu^{cs}$ instead of $\mu^{cu}$ we can construct analogously a family of measures $\mu^s=\{\mu^{cs}_x\}_{x\in M}$ on stable leaves. An important case of $c$-constant potential is when $\varphi\equiv 0$; in this case we denote $\mathscr{m}^{\ast}\in \mathrm{Meas}(\F^{\ast}),\ \ast\in\{s,u,cs,cu\}$ the corresponding sections.

\begin{definition}
$\mathscr{m}^{\ast}$ are the Margulis measures on $\F^{\ast}$.
\end{definition}

It follows that
\begin{align}\label{eq:margulisinvariance}
&f^{-1}\mathscr{m}^{cu}_{fx}=e^{h}\mathscr{m}^{cu}_x\\
&f^{-1}\mathscr{m}^{u}_{fx}=e^{h}\mathscr{m}^{u}_x
\end{align} 
for some $h\in\Real$.

\begin{proposition}\label{pro:Margulisinvariantehol}\hfill
\begin{enumerate}
	\item $\mathscr{m}^{cu}$ ($\mathscr{m}^{cs}$) is invariant under $\hs$ holonomy (respectively, $\hu$ holonomy).
	\item The constant $h$ is positive.
\end{enumerate}
\end{proposition}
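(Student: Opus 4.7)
For part (1), I observe that in the present case the potential is $\varphi\equiv 0$, so each factor in the product that defines $\Jacs$ in Proposition~\ref{pro:medidacu} reduces to $1$, whence $\Jacs\equiv 1$. Consequently $(\hs)^{-1}m^{cu}_{y_0}=m^{cu}_{x_0}$ whenever $x_0,y_0$ lie on a common stable leaf, which is exactly the claimed $\hs$-invariance of $m^{cu}$. The analogous invariance of $m^{cs}$ under $\hu$-holonomy follows by applying the dual Proposition~\ref{pro:medidacs} with $\varphi\equiv 0$.

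For part (2), my strategy is first to use part (1) together with compactness of $M$ to extract a uniform lower bound for the $m^{cu}$-mass of small $cu$-disks, and then to exploit the exponential expansion of $f$ along $E^u$ together with its isometric behaviour along $E^c$ to force $h>0$. To set up the lower bound, cover $M$ by finitely many foliation boxes $B_1,\dots,B_N$ adapted to $(\Fcu,\Fs)$, each being a topological product of a $cu$-plaque by an $s$-plaque. Inside a given $B_i$, any two $cu$-plaques are related by a local $\hs$-holonomy, and by part (1) this holonomy preserves $m^{cu}$; hence all $cu$-plaques in $B_i$ carry a common $m^{cu}$-mass $c_i>0$, with positivity coming from the full-support property established during the construction. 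A Lebesgue-number argument then produces a radius $\rho>0$ such that for every $y\in M$ the disk $D^{cu}(y,\rho)$ contains a full $cu$-plaque of some $B_i$, which yields the uniform bound $m^{cu}_y(D^{cu}(y,\rho))\geq c:=\min_i c_i>0$.

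To conclude, I fix $x\in M$ and choose $r>0$ so that $D^{cu}(x,r)$ contains a product neighbourhood $D^u(x,r')\times D^c(x,r')$ with $r'\gg\rho$. Because $f$ acts as an isometry on $E^c$ and expands $E^u$ by a factor of at least $\lambda^{-1}$, the image $f^n(D^{cu}(x,r))\subset W^{cu}(f^n x)$ contains a strip of $u$-length $\lambda^{-n}r'$ and $c$-width $r'$, which accommodates at least $\lfloor\lambda^{-n}r'/(2\rho)\rfloor$ disjoint $cu$-disks of radius $\rho$. Combining the uniform lower bound with the iterated form of~\eqref{eq:margulisinvariance}, namely $m^{cu}_{f^n x}(f^n A)=e^{nh}m^{cu}_x(A)$ for $A\subset W^{cu}(x)$, I obtain
\[
e^{nh}\,m^{cu}_x(D^{cu}(x,r))\geq c\,\lfloor\lambda^{-n}r'/(2\rho)\rfloor,
\]
and sending $n\to\infty$ forces $h\geq -\log\lambda>0$. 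I expect the main obstacle to be the geometric bookkeeping in this last step: verifying that the image strip really fits the claimed number of disjoint $cu$-balls of radius $\rho$ uses the assumption $E^c\perp E^u$ and some care in passing between the intrinsic $cu$-metric and the $(u,c)$-coordinates on the strip.
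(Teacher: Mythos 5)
Your part (1) is exactly the paper's argument: with $\varphi\equiv 0$ the Jacobian $\Jacs$ of \cref{pro:medidacu} is identically $1$, and dually for $m^{cs}$. Your part (2), however, takes a genuinely different route. The paper stays inside the construction of $m^{cu}$: writing $\nu^n=f^{-n}\Leb$ and testing against the fixed normalizing function $\psi_0$, it gets $f^{-1}\hat{\nu}^n(\psi_0)\geq \min\{\det Df|E^{cu}\}$ for every $n$, passes this inequality to every element of the compact convex set $\mathcal{X}$, and concludes $e^{h}=m^{cu}(\psi_0\circ f^{-1})\geq \min\{\det Df|E^{cu}\}>1$ (the minimum exceeds $1$ because $\det Df|E^c=1$ and $E^u$ is expanded). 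You instead argue from properties of the finished object: holonomy invariance from part (1) plus full support and compactness give a uniform mass $c>0$ for $cu$-disks of a fixed radius $\rho$; uniform expansion along $E^u$ and isometry along $E^c$ show that $f^n$ of a fixed $cu$-disk contains on the order of $\lambda^{-n}$ disjoint such $\rho$-disks; and the iterated conformality relation then forces $e^{nh}\gtrsim \lambda^{-n}$, i.e. $h\geq\log\lambda^{-1}>0$. Both are correct. The paper's computation is shorter and gives the (generally sharper, when $\dim E^u>1$) bound $e^h\geq\min\det Df|E^{cu}$, but it is tied to the specific approximation scheme producing $m^{cu}$. Your argument — essentially Margulis's original counting argument — applies to \emph{any} fully supported, $\hs$-invariant, $e^{h}$-conformal section of $\mathrm{Meas}(\Fcu)$ regardless of how it was built, which is a genuine gain in robustness; the price is the geometric bookkeeping you flag at the end (disjointness and containment of the $\rho$-disks in the image strip), which is routine given that $E^u\perp E^c$ but does need to be checked.
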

\begin{proof}
By part $3$ of \Cref{pro:medidacu} it holds $\Jacs\equiv1$, thus implying the first part. 

Recall the construction of $\mathscr{m}^{cu}$ on page \pageref{construcciondeseccion}, with $\nu=\Leb$ and $\nu^n=f^{-n}\nu$ and a fixed normalized $\psi_0\in\mathrm{Con}^+(\Fcu)$. Thus we have for every $n\geq 0$, 
	\begin{align*}
    f^{-1}\nu^n(\psi_0)\geq \min \{\det Df|E^{cu}\}\nu^{n}(\psi_0)\Rightarrow f^{-1}\hat{\nu}^n(\psi_0)\geq \min \{\det Df|E^{cu}\}. 
	\end{align*}
   	This implies the same inequality for every member of $\mathcal{X}$, and in particular 
	\[
	e^h=\mathscr{m}^{cu}(\psi_0\circ f^{-1})\geq \min \{\det Df|E^{cu}\}>1.
	\]
\end{proof}
\begin{remark}
The reader can perceive that in principle $h$ (and $P,P'$) depend on $\psi_0$. We will show in the next section that $h=h_{\mathrm{top}}(f)$ and $P=P'=P_{\mathrm{top}}(\varphi)$.
\end{remark}

We return to the general case; we will use the family $\{\mux\}_x$ to define some new measures $\{\nu^u_x\}_x$ that will serve as conditional measures on local unstable manifolds for the equilibrium state.

For each $x\in M, y\in\Wu{x}$ let 
\begin{equation}\label{eq:JacobianoDelta}
\Delta_x^u(y):=\prod_{k=1}^{\oo}\frac{e^{\varphi\circ f^{-k}(y)}}{e^{\varphi\circ f^{-k}(x)}},
\end{equation}
and note that $\Delta_x^u:\Wu{x}\to\Real$ is continuous. Define 
\begin{equation}\label{eq.nux}
\nu^u_x=\Delta_x^u\ \mux.
\end{equation}

One verifies easily that $\nux$ is a Radon measure on $\Wu{x}$ which is positive on open sets, and furthermore 
\begin{align}\label{eq:invariancianu}
f^{-1}\nux[fx]=\Delta_{fx}^u\circ f\cdot f^{-1}\mux[fx]=\Delta_{fx}^u\circ f \cdot e^{P-\varphi}\mux =e^{P-\varphi(x)}\nux.
\end{align}

For $y\in \Wu{x}$ we have $\nu^u_y=c(y,x)\cdot \nux$ with $c(y,x)>0$, hence $\{\nu^u_y\}_{y\in\Wu{x}}$ defines a projective class of measures $[\nu^u_x]$. In particular if $\xi$ is a $\mcux$-measurable partition subordinated to $\Fu$ we have that the probability measures

\begin{equation}\label{eq:condicionalxi}
\nu^{\xi}_x:=\nux(\ \cdot\ |\xi(x))
\end{equation}
satisfy $\nu^{\xi}_x=\nu^{\xi}_y$ for $y\in\xi(x)$.

Before moving on we establish the following lemma that will allow us to compare different unstable measures. Let $\ep>0$ be so that any set of diameter less than $2\ep$ is contained in a foliation chart of $\Fu$, and consider the compact space
\[
	N=\{(x_0,y_0,z)\in M^3: y_0\in\clo{\Wu{x_0,\ep/2}}, z\in \clo{\Wc{x_0,\ep/2}}\}
\]
and for $(x_0,y_0,z)\in N$ define
\begin{equation}\label{eq.Jacobianouconholonomia}
T_{x_0,y_0}(z)= \prod_{k=1}^{+\oo}\frac{e^{\varphi\circ f^{-k}(\hu_{x_0,y_0}(z))}}{e^{\varphi\circ f^{-k}(z)}}.
\end{equation}

\begin{lemma}\label{lem:JacobianoT}
It holds that:
\begin{enumerate}
\item the constant $c(y,x)$ converges uniformly to $1$ as $y\to x$, and
\item $T:N\to\Real$ is (uniformly) continuous.
\end{enumerate}
\end{lemma}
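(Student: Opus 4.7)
The plan is to reduce both claims to the uniform convergence of infinite products, which follows from Hölder continuity of $\varphi$ and the contraction of $f^{-1}$ along unstable plates.

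For (1), I would first observe that $\mu^u_y = \mu^u_x$ whenever $y \in W^u(x)$. This is immediate in the SRB case (both are Lebesgue on the common leaf $W^u(x) = W^u(y)$); and in the $c$-constant case it follows from $W^{cu}(y) = W^{cu}(x)$, $\mu^{cu}_y = \mu^{cu}_x$ (since $\mu^{cu}$ is a section on leaves of $\Fcu$), $W^u(y) = W^u(x)$ and $\pi^c_y = \pi^c_x$. Consequently \eqref{eq.nux} gives
\[
\frac{d\nu^u_y}{d\nu^u_x}(z) = \frac{\Delta^u_y(z)}{\Delta^u_x(z)} = \prod_{k=1}^{\infty} e^{\varphi(f^{-k}x) - \varphi(f^{-k}y)},
\]
a quantity independent of $z$, and therefore equal to the scalar $c(y,x)$. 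A Hölder estimate (with constant $C_\varphi$ and exponent $\theta$) combined with the contraction $d_{W^u}(f^{-k}x, f^{-k}y) \leq \lambda^k d_{W^u}(x,y)$ yields $|\log c(y,x)| \leq \frac{C_\varphi}{1-\lambda^\theta}\, d_{W^u}(x,y)^\theta$, so $c(y,x) \to 1$ as $y \to x$ uniformly in $x$.

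For (2), I would present $T$ as the uniform-on-$N$ limit of the partial products
\[
T^{(N)}_{x_0,y_0}(z) = \prod_{k=1}^{N} \frac{e^{\varphi(f^{-k}(\hu_{x_0,y_0}(z)))}}{e^{\varphi(f^{-k}(z))}}.
\]
Each $T^{(N)}$ is continuous on $N$ by continuity of the unstable holonomy $(x_0,y_0,z) \mapsto \hu_{x_0,y_0}(z)$ on the compact region cut out by $N$ (a standard fact for partially hyperbolic systems), combined with continuity of $f^{-k}$ and $\varphi$. For the uniform convergence, since $\hu_{x_0,y_0}(z)$ lies on the unstable leaf through $z$ and $N$ is compact, there is a uniform bound $d_{W^u}(z, \hu_{x_0,y_0}(z)) \leq D$; hence $d(f^{-k}z, f^{-k}\hu_{x_0,y_0}(z)) \leq \lambda^k D$, and by the same Hölder estimate the tail past index $N$ is $O(\lambda^{N\theta})$, uniformly on $N$. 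Continuity of $T$ on the compact set $N$ then upgrades to uniform continuity.

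The calculation is routine; the one observation worth singling out is the identity $\mu^u_y = \mu^u_x$ for $y \in W^u(x)$ underlying part (1), which is what turns the Radon–Nikodym derivative into a genuine scalar rather than a function on the leaf. I do not anticipate any substantive obstacles.
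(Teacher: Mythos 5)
Your proposal is correct and follows essentially the same route as the paper: part (1) rests on the identification $c(y,x)=\Delta^u_y(x)$ (which you justify by the observation $\mu^u_y=\mu^u_x$ for $y\in W^u(x)$, left implicit in the paper) together with the Hölder/contraction tail estimate, and part (2) is the paper's Weierstrass-test argument for the logarithm of the partial products. No gaps; only a cosmetic remark that you reuse the letter $N$ both for the compact domain and for the truncation index.
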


\begin{proof}
The first follows directly since $c(y,x)=\Delta_y^u(x)$. For the second, for ever $k\geq 0$ consider the continuous function $T_k:N\to\Real$ defined as
\[
T_k(x_0,y_0,z)= \sum_{i=1}^{k}\varphi\circ f^{-i}(\hu_{x_0,y_0}(z))-\varphi\circ f^{-i}(z)
\]
and note that by the Weierstrass' test, $T_k$ converges uniformly to 
\[
\sum_{k=1}^{+\oo}\varphi\circ f^{-k}(\hu_{x_0,y_0} z)-\varphi\circ f^{-k}(z)
\]
which implies that $\log T$ is continuous, and thus $T$ is continuous.
\end{proof}

% subsection the_conditional_measures_potentials_constant_along_the_center_and_srbs (end)

\subsection{The equilibrium state}\label{sub:theequilibrium}

It is seldom the case that for a given invariant measure $\mu$ for $f$, the partition by unstable manifolds is $\mu$- measurable, therefore we will have to work in sets $B$ where $\Fu|B$ defines a sufficiently nice partition. Since we are trying to use the family $\{\mcsx\}_{x\in M}$ as a substitute for the transverse measure of the equilibrium state, we will have to deal with the technical difficulty of having to show coherence between different choices $B,B'$; this will be bypassed using our control on the behavior of $\{\mcsx\}_{x\in M}$ under unstable (local) holonomies. We will now make some geometrical preparations to deal with this problem.

\paragraph{Dynamical boxes}To take advantage of the local product structure between $\Fcs$ and $\Fu$, we introduce some new notation. Due to the fact that the invariant bundles are mutually perpendicular there exists $\clps>0$ such that for any $0<\epsilon\leq\clps$ it holds: for every $x,y\in M, d(x,y)<\ep$ implies
\begin{align}\label{eq:productstructure}
&\#\Ws{x,2\ep}\cap\Wcu{y,2\ep}=1\\
&\#\Wu{x,2\ep}\cap\Wcs{y,2\ep}=1
\end{align}

\begin{definition}\label{def:dynamicalboxes}
For $0<\epsilon\leq \clps$ and $x\in M$ we consider the sets
\begin{align}
\label{eq:Pcs}\Pcs{x,\epsilon}&:=\Wc{\Ws{x,\epsilon},\epsilon}\\
\label{eq:Pcu}\Pcu{x,\epsilon}&:=\Wc{\Wu{x,\epsilon},\epsilon}\\
\label{eq:dynbox}B(x,\epsilon)&:=\bigcup_{\crampedclap{y\in \Wu{x,\epsilon}}}\ \hu_{x,y}(\Pcs{x,\epsilon})
\end{align}
where $\hu_{x,y}:\Wcs{x}\rightarrow \Wcs{y}$ is the locally defined Poincaré map sending $x$ to $y$.

Sets $B(x,\epsilon)$ constructed in this fashion will be referred as \emph{dynamical boxes}, while $\hu_{x,y}(\Pcs{x,\epsilon})$ will be referred as the $cs$-plaques of $B(x,\ep)$.
\end{definition}

By shrinking $\clps$ if necessary can guarantee also that for $0<\ep\leq \clps$, for every $x,x'\in M$,   
\begin{itemize}[leftmargin=*]
\item $\Pcu{x,\epsilon}\cap \Pcu{x',\epsilon}\neq\emptyset\Rightarrow \Pcu{x,\epsilon}\cap \Pcu{x',\epsilon}\subset \Pcu{x'',3\epsilon}$ for some $x''\in M$.
\item $B(x,\ep)\cap B(x',\epsilon)\neq\emptyset\Rightarrow B(x,\epsilon)\cap B(x',\epsilon)\subset B(x'',5\epsilon)$ for some $x''\in M$.	
\end{itemize}

\smallskip 
 
To understand better the structure of dynamical boxes we note the following.

\begin{lemma}\label{lem:holisometry}
Fix $x\in M$ and for $y\in\Ws{y}$ consider the locally defined holonomy $\hs_{x,y}:\Wcu{x}\to\Wcu{y}$. Then $\hs$ sends center leaves into center leaves and $\hs|\Wc{x}$ is an isometry. Similarly for unstable holonomies.
\end{lemma}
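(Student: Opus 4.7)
The plan is to prove the two assertions separately. First I would establish that $\hs$ maps center leaves into center leaves as a purely structural consequence of the saturation properties of $\Fcs$ and $\Fcu$. Then I would establish that the restriction to a center leaf is an isometry by a dynamical argument combining the stable contraction with the center isometry property $\norm{Df|E^c}=1$.

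For the structural part, I would pick $z\in\Wc{x}$ and set $w:=\hs_{x,y}(z)=\Ws{z}\cap\Wcu{y}$. By \cref{thm:centerisometry} the leaves of $\Fcs$ are saturated by stable and center leaves, so from $z\in\Wc{x}\subset\Wcs{x}$ one gets $\Ws{z}\subset\Wcs{x}$, and hence $w\in\Wcs{x}\cap\Wcu{y}$. Since $E^c=E^{cs}\cap E^{cu}$ and the center foliation is uniquely integrable, a connected component of $\Wcs{x}\cap\Wcu{y}$ is locally a plaque of $\Wc{w}$. Thus $\hs_{x,y}$ carries the plaque of $\Wc{x}$ through $x$ into a plaque of $\Wc{w}$.

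For the isometric property, fix $z_1,z_2\in\Wc{x}$ near $x$ and let $w_i:=\hs_{x,y}(z_i)\in\Ws{z_i}$. Because $f$ preserves both $\Fs$ and $\Fcu$, the holonomies intertwine with the dynamics:
\[
f^n\circ \hs_{x,y}=\hs_{f^nx,f^ny}\circ f^n \quad\forall n\geq 0.
\]
Stable contraction yields $d(f^nz_i,f^nw_i)\leq C\lambda^n d(z_i,w_i)\to 0$, while the center isometry condition gives, for the intrinsic center-leaf distance $d_c$, the identities $d_c(f^nz_1,f^nz_2)=d_c(z_1,z_2)$ and $d_c(f^nw_1,f^nw_2)=d_c(w_1,w_2)$ for every $n$. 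On the other hand, as $d(f^nx,f^ny)\to 0$, continuity of $\Fc$ makes the plaques $\Wc{f^nx}$ and $\Wc{f^ny}$ (of fixed small size) arbitrarily $C^0$-close, so the stable holonomy between them is uniformly close to a local isometry. This forces $\abs{d_c(f^nz_1,f^nz_2)-d_c(f^nw_1,f^nw_2)}\to 0$, and since both sides are independent of $n$, they must coincide, yielding $d_c(z_1,z_2)=d_c(w_1,w_2)$.

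The main technical obstacle will be making precise the assertion that the stable holonomy between nearby center plaques is close to a local isometry; this ultimately reduces to the $C^0$ continuity of foliation charts compatible with the splitting $TM=E^s\oplus E^c\oplus E^u$. The analogous statement for unstable holonomies is then obtained by applying the same argument to $f^{-1}$, which is again a center isometry whose stable foliation is $\Fu$.
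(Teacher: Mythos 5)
Your proof is correct, and the second (isometry) half is essentially the paper's argument: conjugate the holonomy by $f^n$ via $\hs_{x,y}=f^{-n}\circ\hs_{f^nx,f^ny}\circ f^n$, use stable contraction to make the base points $f^nx,f^ny$ converge, observe that the holonomies between the resulting nearby center plaques tend uniformly to the identity (equivalently, to local isometries), and use $\norm{Df|E^c}=1$ to conclude that the $n$-independent intrinsic distances must agree. The technical point you flag --- that $\mathcal{C}^0$-closeness of the holonomy forces closeness of intrinsic center distances --- is exactly the step the paper also leaves at the level of an assertion, so you are not missing anything the paper supplies. Where you genuinely diverge is in the first assertion: the paper deduces that $\hs$ sends center plaques into center plaques dynamically, again from the intertwining relation together with the facts that $f|\Fc$ is an isometry and $f|\Fu$ expands, whereas you argue structurally that the image point lies in $\Wcs{x}\cap\Wcu{y}$ (by $\Fs$-saturation of $\Fcs$), that this intersection is transverse with tangent space $E^{cs}\cap E^{cu}=E^c$, and that unique integrability of $E^c$ (guaranteed by \cref{thm:centerisometry}) forces the local connected component to be a center plaque. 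Your route is cleaner and uses no dynamics for this part; its only cost is that you must invoke transversality of the $cs$- and $cu$-leaves and the unique integrability of $E^c$, and note that the continuous image of the connected plaque $\Wc{x,\delta}$ lands in a single component of the intersection. The reduction of the unstable case to $f^{-1}$ is exactly as in the paper.
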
	

\begin{proof}
Let $\delta>0$ sufficiently small so that $\hs_{x,y}:A=\Wcu{x,\delta}\to B\subset\Wcu{y}$ is a well defined homeomorphism. Since $f$ preserves $\Fcu,\Fc,\Fs$ it follows \[\hs_{x,y}=f^{-n}\circ \hs_{f^nx,f^ny}\circ f^n\quad \forall n\geq 0\]
where $\hs_{f^nx,f^ny}:f^n(A)\to f^n(B)\subset \Wc{f^ny}$. Using that $f|\Fc$ is an isometry and $f|\Fu$ expands distances, it follows that $\hs$ sends $\Wc{x,\delta}$ to $\Wc{y}$ and that the differentiable map $\hs_{f^nx,f^ny}|\Wc{f^nx}$  approaches uniformly the identity for large $n$, hence the claim.
\end{proof}

We remark that in general $\hs$ does not send $\Fu$ to itself. See figure \ref{fig:pcu}.

\begin{figure}[h]
	\centering
	\includegraphics[width=0.7\linewidth]{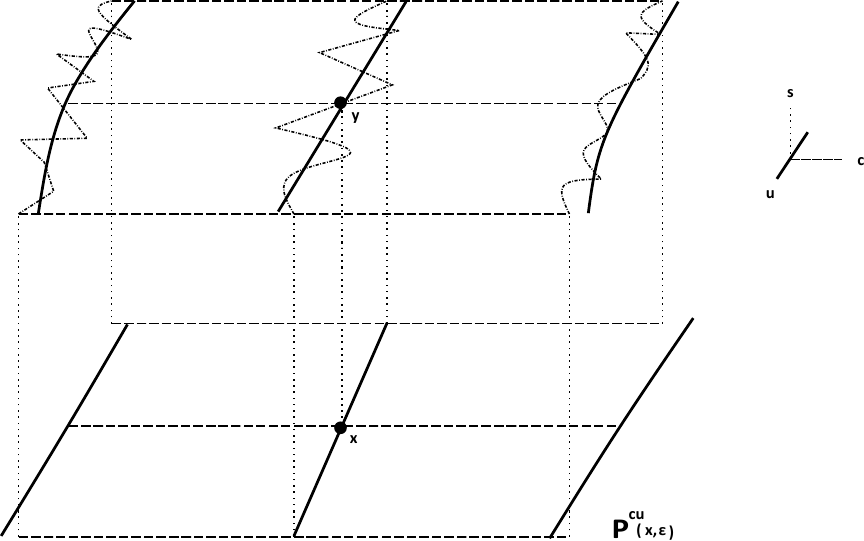}
	\caption{Comparison between $\Pcu{x,\epsilon},\Pcu{y,\epsilon}$ for $y\in\Ws{x}$. The stable holonomy sends each center plaque $\Wc{x',\ep}\in \Pcu{x,\epsilon}$ to a center plaque $\Wc{y',\ep}\in \Pcu{y,\epsilon}$, but the image of unstable plaques in $\Pcu{x,\epsilon}$  are only H\"older submanifolds in $\Pcu{y,\epsilon}$. Nevertheless, $\hs_{x,y}(\Wu{x,\epsilon})\to\Wu{x,\epsilon}$ in the $\mathcal{C}^0$ topology as $y\to x$}
	\label{fig:pcu}
\end{figure}

\begin{corollary}\label{cor:dynamicalboxsymmetric}
For every $0<\ep\leq \clps$ and $x\in M$ it holds
\[
B(x,\ep)=\bigcup_{\crampedclap{y\in \Ws{x,\epsilon}}}\ \ \hs_{y,x}(\Pcu{x,\epsilon})
\]
where $\hs_{x,y}:\Wcu{x}\rightarrow \Wcu{y}$ is the locally defined Poincaré map sending $x$ to $y$.
\end{corollary}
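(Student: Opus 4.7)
The plan is to rewrite both sides as unions of center plaques of radius $\epsilon$ via Lemma~\ref{lem:holisometry}, and then match the resulting families using the local transversality of $\Fcu$ and $\Fcs$. Since $\Pcs{x,\epsilon}=\bigcup_{s\in\Ws{x,\epsilon}}\Wc{s,\epsilon}$ and Lemma~\ref{lem:holisometry} gives $\hu_{x,y}(\Wc{s,\epsilon})=\Wc{\hu_{x,y}(s),\epsilon}$, we obtain
\[
B(x,\ep)\;=\;\bigcup_{(s,y)\,\in\,\Ws{x,\epsilon}\times\Wu{x,\epsilon}}\Wc{\hu_{x,y}(s),\epsilon}.
\]
Applying the same reasoning to the stable holonomy $\hs_{x,y'}$, which by Lemma~\ref{lem:holisometry} also preserves center plaques isometrically, the right-hand side of the claimed identity equals
\[
\bigcup_{(y',u)\,\in\,\Ws{x,\epsilon}\times\Wu{x,\epsilon}}\Wc{\hs_{x,y'}(u),\epsilon}.
\]

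The core of the argument is then to show that the two families of center plaques indexed by $\Ws{x,\epsilon}\times\Wu{x,\epsilon}$ have the same union. For a fixed pair $(s,u)$ set $p:=\hu_{x,u}(s)$ and $q:=\hs_{x,s}(u)$. Directly from the definitions, $p\in\Wu{s}\subset\Wcu{s}$ and $p\in\Wcs{u}$, so $p\in\Wcu{s}\cap\Wcs{u}$; the symmetric argument gives $q\in\Wcu{s}\cap\Wcs{u}$. By Theorem~\ref{thm:centerisometry} together with the dimension count $\dim\Fcu+\dim\Fcs=\dim M+\dim\Fc$, the local intersection $\Wcu{s}\cap\Wcs{u}$ is, for $\epsilon\leq\epsilon_0$, a single center plaque. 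Consequently $\Wc{p}=\Wc{q}$, so the two families of plaques share a common collection of center leaves.

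The main obstacle is to upgrade this identification of center leaves to an equality of the unions of $\epsilon$-center balls, since $\Wc{p,\epsilon}$ and $\Wc{q,\epsilon}$ share the same leaf but may be centered at different points. I would handle this by introducing foliated coordinates at $x$ in which $\Fs,\Fc,\Fu$ become local coordinate planes (using the product structure \eqref{eq:productstructure} and the center-isometry hypothesis to keep center plaque sizes under control), so that both parametrizations correspond to successive coordinate translations of $x$ and describe the \emph{same} box in these coordinates; the center discrepancy between $p(s,u)$ and $q(s,u)$ is then absorbed by reparametrizing the free variables $(s,u)$. Here the isometric action of $f$ on centers is essential, as it prevents distortion of the $\epsilon$-radius along the center holonomies that mediate the two orderings ``first $\hu$, then $\hs$'' versus ``first $\hs$, then $\hu$''. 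Once this reparametrization is in place, the desired equality of unions follows immediately.
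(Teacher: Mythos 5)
Your reduction of both sides to unions of $\ep$-center plaques via Lemma~\ref{lem:holisometry}, and the observation that $p=\hu_{x,u}(s)$ and $q=\hs_{x,s}(u)$ lie on the same local center leaf $C(s,u)=\Wcu{s}\cap\Wcs{u}$, is correct and is precisely the content of the paper's one-line proof (Lemma~\ref{lem:holisometry} plus the fact that $\Fc$ subfoliates $\Fcu$ and $\Fcs$). You have also correctly isolated the genuine difficulty; the problem is that your final step does not resolve it. First, the foliated coordinates you invoke, in which $\Fs,\Fc,\Fu$ simultaneously become coordinate planes, do not exist in general: their existence is equivalent to the local vanishing of the $su$-holonomy defect $d(p(s,u),q(s,u))$, i.e.\ to a local joint integrability of the invariant foliations, and this fails for the basic examples covered by the standing hypotheses. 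For the time-one map of the geodesic flow of a hyperbolic surface one computes in $PSL(2,\Real)$, taking $s=h^s_\sigma$ and $u=h^u_\upsilon$, that $q=p\cdot g_{t_0}$ with $t_0=-2\log(1-\sigma\upsilon)\neq 0$, so $p\neq q$ whenever $\sigma\upsilon\neq 0$. Second, the proposed reparametrization of $(s,u)$ cannot absorb the discrepancy: distinct pairs $(s,u)$ label pairwise disjoint local center leaves $C(s,u)$, so the left-hand side meets $C(s,u)$ in exactly the single plaque $\Wc{p(s,u),\ep}$ and the right-hand side in exactly $\Wc{q(s,u),\ep}$. Equality of the two unions would therefore force $p(s,u)=q(s,u)$ for every pair; there is no residual freedom in the indexing to exploit, and the isometric action of $f$ on centers does not help because the defect is a displacement \emph{along} the center, not a distortion of center length.

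The conclusion that survives is the approximate one: the defect $d(p(s,u),q(s,u))$ vanishes when $s=x$ or $u=x$ and is of order $o(\ep)$ (it is $O(\ep^2)$ in the homogeneous example above), so one obtains a two-sided inclusion of the right-hand side between $B(x,\ep')$ and $B(x,\ep'')$ with $\ep',\ep''=\ep(1\pm o(1))$, which is what the subsequent measure estimates (e.g.\ \cref{pro:estructuraproducto}) actually require, since there only multiplicative constants depending on $\ep$ matter. An exact identity would require either integrability of $E^s\oplus E^u$ or a modification of one of the two definitions of the box. So: your decomposition and your diagnosis of the obstacle are right, but the step meant to close the argument would fail; be aware that the paper's own proof passes over exactly this point.
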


\begin{proof}
This is direct consequence of Lemma \ref{lem:holisometry} above plus the fact that $\Fc$ sub-foliates $\Fcu,\Fcs$.
\end{proof}

In view of this Corollary, sets $\hs_{x,y}(\Pcu{x,\epsilon})$ will be referred as the $cu$-plaques of $B(x,\ep)$. Another consequence is the following.

\begin{corollary}
For every $0<\ep\leq \clps$ and $x\in M$, $B(x,\ep) \subset M$ is open.
\end{corollary}

\begin{proof}
Fix $x\in M, \ep>0$ and define $b:\Wc{x,\ep}\times \Ws{x,\ep}\to \Wcs{x}$ by
\[
	b(y,z)=\Ws{y,2\ep}\cap \Wc{z,2\ep}.
\]
This map is continuous, injective and by \Cref{lem:holisometry} $\Im(b)=\Pcs{x,\epsilon}$. The invariance of domain theorem now implies that $\Pcs{x,\epsilon}$ is an open subset of $\Wcs{x}$.

Analogously, define $\tilde{b}: \Pcs{x,\ep}\times\Wu{x,\ep}\to M$,
\[
	\tilde b(y,z)=\hu_{x,z}(y);
\]
$\tilde b$ is continuous, injective and by definition $B(x,\ep)=\Im(\tilde b)$. Another application of the invariance of domain theorem implies that $B(x,\ep)$ is open.
\end{proof}

\smallskip 
 
\paragraph{The equilibrium state.}We now construct the equilibrium state. Fix $B=B(x_0,\epsilon)$ a dynamical box with $0<\ep<\clps/2$. By the choice of $\epsilon$ the set $\Wu{x_0,B}=\Wu{x_0,2\epsilon}\cap B$ consists of a unique unstable plaque. Similar considerations hold for the other foliations.

If $W$ is a $cs$-plaque of $B$ and $x\in B$ denote $x_w=\Wu{x,B}\cap W$. For $U\subset B$  open sub-box, define the function $\alpha_{W,B}^U:W\rightarrow \Real$ by
\begin{equation}\label{eq.alphaWB}
\alpha_{W,B}^U(w)=\nu^u_{w}(U\cap \Wu{w,B}).
\end{equation}

\begin{lemma}\label{lem:semicont}
$\alpha_{W,B}^U$ is upper semi-continuous, hence measurable.
\end{lemma}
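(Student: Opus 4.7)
The plan is to establish upper semi-continuity via a center-stable holonomy argument combined with a portmanteau-type inequality, exploiting the fact that the complement $W^u(w,2\epsilon)\setminus U$ is closed. The key move is to transport the measures $\nu^u_{w_n}$ back to $W^u(w)$ via holonomy and then pass to the limit on a closed (rather than open) target.

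Given $w\in W$ and a sequence $w_n\to w$ in $W$, note that all the $w_n$ lie in the center-stable leaf of $w$, so the local center-stable holonomy $h_n\colon W^u(w_n,\delta_n)\to W^u(w,2\epsilon)$ is a well-defined homeomorphism for $n$ large (with $\delta_n\to 2\epsilon$) and converges uniformly to the identity on compact subsets. I would first establish a transformation rule $(h_n)^{-1}_{\ast}\nu^u_w = J_n\cdot\nu^u_{w_n}|_{\text{dom}(h_n)}$ with $J_n$ continuous on its domain and $J_n\to 1$ uniformly on compacta. This is obtained by decomposing the $cs$-holonomy into stable and center pieces: the $s$-part acts on $\mu^{cu}$ with continuous Jacobian by \cref{pro:medidacu}, the $c$-part is an isometry preserving the center foliation by \cref{lem:holisometry}, and the twist $\Delta^u$ from \cref{eq:JacobianoDelta} propagates continuously as guaranteed by \cref{lem:JacobianoT}.

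Next, let $K_w:=W^u(w,2\epsilon)\setminus U$, a set that is closed in $W^u(w,2\epsilon)$ because $U$ is open. For any $y\in K_w$, since $U^c$ is a neighborhood of $y$ and $h_n(y)\to y$, one has $h_n(y)\in W^u(w_n,2\epsilon)\setminus U$ for $n$ sufficiently large. A standard Portmanteau-style argument using the transformation rule then yields
\[
\liminf_{n\to\infty}\nu^u_{w_n}\bigl(W^u(w_n,2\epsilon)\setminus U\bigr)\geq \nu^u_w\bigl(W^u(w,2\epsilon)\setminus U\bigr).
\]
Since $U\subset B$ is interior to $W^u(w,2\epsilon)$ (as $B$ has radius at most $\epsilon<\epsilon_0/2$), the total plaque measure $w\mapsto \nu^u_w(W^u(w,2\epsilon))$ is continuous at $w$ by applying the same transformation rule to the whole plaque with room to spare. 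Subtracting, one obtains $\limsup_n \alpha_{W,B}^U(w_n)\leq \alpha_{W,B}^U(w)$, as required.

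The main obstacle I anticipate is Step 2 — establishing the transformation rule with uniform continuity of the Jacobian $J_n$ — because $\nu^u$ is a twice-twisted object: it is $\Delta^u$ times either Lebesgue (SRB case) or the center-projection of $\mu^{cu}$ ($c$-constant case), and the $cs$-holonomy must be dealt with factor by factor. The SRB case requires invoking absolute continuity of the $cs$-holonomy with its classical H\"older Jacobian, while the $c$-constant case requires combining the $s$-holonomy Jacobian for $\mu^{cu}$ with the fact that the $c$-holonomy commutes with the center projection and with the $\Delta^u$-normalization. A secondary issue — that $\nu^u_w$ should not charge the boundary $\partial W^u(w,2\epsilon)$ — is handled by the fact that $U$ is strictly contained in $B$, so a slight shrinking of the plaque radius provides the necessary buffer.
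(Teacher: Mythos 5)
You take a genuinely different route from the paper. The paper never establishes an exact transformation rule for the $cs$-holonomy acting on $\nu^u$; instead it writes $\nux[y](A)=\int_{\Wc{A,\epsilon}}\Delta^u_{y}\circ\pi^c_y\,d\mcux[y]$ and bounds $\limsup_m\nux[y_m](A_m)$ directly by $(1+\tau)^2$ times the $\mcux[y]$-integral over a $\gamma$-collar of $\Wc{A,\epsilon}$, using only the approximate $\rho$-equivalence control coming from \cref{pro:medidacu}, and then sends $\gamma,\tau\to 0$. Your scheme (exact holonomy Jacobian, portmanteau on the closed complement, continuity of the total plaque mass) is cleaner if it can be carried out, and your Step 1 is in fact achievable in the $c$-constant case --- but not by the factorization you describe. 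The stable holonomy does \emph{not} send unstable leaves to unstable leaves (the paper warns about exactly this right after \cref{lem:holisometry}), so there is no ``$s$-part acting between unstable plaques.'' The correct factorization is $h^{cs}=\pi^c_w\circ\hs$ with $\hs:\Wcu{w_n}\to\Wcu{w}$, and to push the Jacobian $\Jacs$ of \cref{pro:medidacu} through $\pi^c_w$ and land on $\mu^u_w$ you must first observe that $\Jacs$ is constant on center plaques --- which holds precisely because $\varphi$ is $c$-constant and $\hs$ carries center leaves to center leaves isometrically. Without that observation your transformation rule is unproven, and it is the actual content of the step you flag as the main obstacle.

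Second, your portmanteau step points at the wrong boundary. For $y\in K_w=\Wu{w,2\epsilon}\setminus U$ lying on $\partial U$, the set $U^c$ is \emph{not} a neighborhood of $y$, so the pointwise claim that $h_n^{-1}(y)\notin U$ eventually fails there; Fatou then only yields $\liminf_n\nux[w_n](F_n)\geq\nux[w](\Wu{w,2\epsilon}\setminus\clo{U})$, hence after subtraction $\limsup_n\alpha^U_{W,B}(w_n)\leq\nux[w](\clo{U}\cap\Wu{w,2\epsilon})$. To conclude you need $\nux[w](\partial U\cap\Wu{w,2\epsilon})=0$; this is the genuine boundary issue, not the boundary of the plaque $\Wu{w,2\epsilon}$, which you do address and which is anyway disposable by shrinking the radius. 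In fairness, the paper's own ``take $\gamma\to0$ and use regularity'' step faces the same closure-versus-interior point, so this is a shared subtlety rather than a defect unique to your argument; but as written your proposal asserts a false pointwise inclusion and locates the boundary problem in the wrong place.
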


\begin{proof}
The SRB case is direct, so we will consider only when $\varphi$ is constant along center leaves. 

We use the following notation: for $\gamma>0$ and $C$ contained in a $cu$-plaque $Z\subset B$, we write 
\[
\mathrm{Col}_{\gamma}(C):=\{y\in Z:\exists y'\in C\text{ s.t. }d_{\Fcu}(y,y')<\gamma\}.
\]
Consider a sequence $(y_m)_m\subset W$, $y_m\xrightarrow[m\mapsto\oo]{}y$, let
\begin{align*}
A=\Wu{y,B}\cap U,\\
A_m=\Wu{y_m,B}\cap U.
\end{align*}
 By definition of $\mu^u_y$ \eqref{eq.defmuxcconstant}, $\mu_y^u(A)=\mcux[y](\Wc{A,\ccen})$, and since $\mcux[y]$ is a Radon measure, 
\[
 	\mu_y^u(A)=\inf_{\gamma>0} \mcux[y]\left(\mathrm{Col}_{\gamma}(\Wc{A,\ccen})\right).
\]
Similar considerations apply to $\mu_{y_m}^u(A_m)$, and to the measures $\nu^u_y, \nu^u_{y_m}$; in particular, if 
\[
A_{\gamma}=\mathrm{Col}_{\gamma}(\Wc{A,\ccen}\cap \Wu{y}),
\]
then $A_{\gamma}$ is an open neighborhood of $A$ inside $\Wu{y}$, and
\[
	\nu^u_y(A)=\inf_{\gamma>0} \int_{\Wc{A_{\gamma},\ccen}} \Delta_y^u\circ \pi^c_y \der \mcux[y].
\]
Fix $0<\gamma<\clps-\ccen$. By \Cref{lem:JacobianoT} the function $y'\mapsto \Delta_{y_w'}(y')$ is positive and uniformly continuous on $B$, hence given $\tau>0$ there exists $0<\rho<\tau$ such that $y'\in B$, \(d(y,y')<\rho\Rightarrow \left|\frac{\Delta_{y_w}(y)}{\Delta_{y_w'}(y')}-1\right|<\tau\). Due to continuity of the unstable and center unstable foliations, there exists $0<\rho'<\rho$ so that if $y'\in B, d(y,y')<\rho'$, then 
\begin{itemize}
	\item the local stable holonomy between $\Wcu{y',\ccen+\gamma}$ and $\Wcu{y}$ is well defined, with 
	 $\Jacs[y,y'](z)<1+\tau$, for every $z\in \hs_{y,y'}(\Wcu{y,\ccen+\gamma})$ (cf.\@ \Cref{rem:jacobianoconvergeauno}).
	 \item $\Wc{\Wu{y',B}\cap U,\ccen}$ is $\rho$-equivalent to some subset $E_{y'}\subset\mathrm{Col}_{\gamma}(\Wc{A,\ccen})$.
\end{itemize}
Using continuity of the product of functions we can further assume 
\begin{itemize}
 	\item $|\Delta^u_{y'}\circ \pi^c_{y'}\circ \hs_{y',y}(z)-\Delta^u_y\circ \pi^c_y(z)|<\tau$ for every $z\in E_{y'}$.
 \end{itemize}

Take $m_0$ so that $m\geq m_0$ implies $d(y_m,y)<\rho'$ and write $E_m=E_{y_m}$, 
\[
g_m:=\hs_{y_m,y}| : E_m\rightarrow \Wc{A_m,\ccen}.
\]

Then
\begin{align*}
\MoveEqLeft \nu^{u}_{y_m}(A_m)=\int_{\Wc{A_m,\ccen}}\Delta^{u}_{y_m}\circ \pi^c_{y_m}\der\mcux[y_m]=\int_{g_m(E_m)}\Delta^{u}_{y_m}\circ \pi^c_{y_m}\der\mcux[y_m]\\
&=\int_{E_m}\Delta^{u}_{y_m}\circ \pi^c_{y_m}\circ g_m\cdot \Jacs[y,y_m]\der\mcux[y]\leq (1+\tau)\int_{E_m}\Delta^{u}_{y_m}\circ \pi^c_{y_m}\circ g_m \der\mcux[y]\\
&<(1+\tau)\left(\int_{E_m}\Delta^{u}_{y}\circ \pi^c_{y}\der\mcux[y]+\tau\cdot\mcux[y](E_m)\right) \\
&\leq(1+\tau)\left(\int_{\Wc{A_{\gamma},\ccen}}\Delta^{u}_{y}\circ \pi^c_{y}\der\mcux[y]+\tau\cdot\mcux[y](\mathrm{Col}_{\gamma}(\Wc{A,\ccen})\right)\\
\end{align*}
hence
\begin{align*}
&\limsup_m \alpha_{W,B}^U(y_m)=\limsup_m \nu^{u}_{y_m}(A_m)\leq (1+\tau)\left(\int_{\Wc{A_{\gamma},\ccen}}\Delta^{u}_{y}\circ \pi^c_{y}\der\mcux[y]+\tau\cdot\mcux[y](\mathrm{Col}_{\gamma}(\Wc{A,\ccen})\right).
\end{align*}
Taking limit as $\tau\mapsto 0$ we obtain
\begin{align*}
&\limsup_m \alpha_{W,B}^U(y_m)\leq \int_{\Wc{A_{\gamma},\ccen}}\Delta^{u}_{y}\circ \pi^c_{y}\der\mcux[y] \shortintertext{which in turn implies}\\
& \limsup_m \alpha_{W,B}^U(y_m)\leq \inf_{\gamma>0}\int_{\Wc{A_{\gamma},\ccen}}\Delta^{u}_{y}\circ \pi^c_{y}\der\mcux[y]=\alpha_{W,B}^U(y),
\end{align*}
as we wanted to show.
\end{proof}

Standard arguments of measure theory together with the previous Lemma permit us to define a Borel measure $\mathrm{m}^{W,B}$ on $B$ by the condition
\begin{equation*}
U\subset B \text{ open sub-box }\Rightarrow \mathrm{m}^{W,B}(U):=\int \alpha_{W,B}^U(w)\mcsx[w_0](\der w)
\end{equation*}
where $w_0\in W$. 

Let $W'$ be another $cs$-plaque of $\Fcs|B$ and consider $\hu_{w_0,w_0'}:W\rightarrow W'$ the corresponding Poincaré map. Fix $U\subset B$ open, and to simplify the notation write $U_w^u:= U\cap \Wu{w,B}$. We compute
\begin{align*}
\mathrm{m}^{W',B}(U)&=\int_{W'} \nu^u_{w'}(U^u_{w'})\mcsx[w_0'](\der w')=
\int_{W} \nu^u_{\hu_{w_0,w_0'}(w)}(U^u_{\hu_{w_0,w_0'}(w)})
\Jacu[w_0,w_0'](w)\mcsx[w_0](\der w)\\
&=\int_{W} \left(\nu^u_{\hu_{w_0,w_0'}(w)}(U^u_{w})\prod_{k=1}^{+\oo}\frac{e^{\varphi\circ f^{-k}\hu_{w_0,w_0'}(w)}}{e^{\varphi\circ f^{-k}(w)}}\right)\mcsx[w_0](\der w)\\
&=\int_W \nu^u_w(U_w)\mcsx[w_0](\der w)=\mathrm{m}^{W,B}(U),
\end{align*}
hence $\mathrm{m}^{W',B}=\mathrm{m}^{W,B}$: we write $\mathrm{m}^B=\mathrm{m}^{W,B}$ for any plaque $W\in\Fcs|B$. The following is now clear.

\begin{lemma}\label{lem:independencebox}
	Consider $0<\epsilon<\epsilon'<\frac{\clps}{2}$ and $x,x'\in M$ such that $B=B(x,\epsilon)\subset B(x',\epsilon')$. Then $\mathrm{m}^B=\mathrm{m}^{B'}|B$.
\end{lemma}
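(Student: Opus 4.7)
The strategy is to express both $\mathrm{m}^{B}(U)$ and $\mathrm{m}^{B'}(U)$, for an arbitrary open sub-box $U\subset B$, as integrals against a \emph{common} transverse measure on a \emph{common} $cs$-leaf, and then compare the two integrands pointwise via the local product structure. Exploiting the independence of $\mathrm{m}^{B}$ (resp.\@ $\mathrm{m}^{B'}$) on the chosen $cs$-plaque, already established above, I would take as base point $w_0=x\in B\subset B'$ in both cases. Let $W=P^{cs}(x,\epsilon)\subset\Wcs{x}$ be the $cs$-plaque of $B$ through $x$, and let $W'\subset\Wcs{x}$ be the $cs$-plaque of $B'$ through $x$. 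By the local product structure, each of $W,W'$ is the connected component of $x$ inside $B\cap\Wcs{x}$, resp.\@ $B'\cap\Wcs{x}$, so the inclusion $B\subset B'$ forces $W\subset W'$. Hence
\[
\mathrm{m}^{B}(U)=\int_{W}\nux[w](U\cap\Wu{w,2\epsilon})\,\mcsx[x](dw),\qquad \mathrm{m}^{B'}(U)=\int_{W'}\nux[w](U\cap\Wu{w,2\epsilon'})\,\mcsx[x](dw),
\]
both integrated against the same transverse measure $\mcsx[x]$.

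Next I would split the outer domain $W'=W\sqcup(W'\setminus W)$ in the second integral and show that the integrand vanishes on $W'\setminus W$. If $w\in W'\setminus W$ and $z\in U\cap\Wu{w,2\epsilon'}$, then $z\in U\subset B$, so by the parametrization $B=\bigcup_{y\in\Wu{x,\epsilon}}\hu_{x,y}(P^{cs}(x,\epsilon))$ we may write $z=\hu_{x,y_z}(w_z)$ with $w_z\in W$; but then both $w$ and $w_z$ lie on $\Wu{z}\cap\Wcs{x}$, and the local transversality of $\Fu$ and $\Fcs$ from \eqref{eq:productstructure} forces $w=w_z\in W$, contradicting $w\notin W$. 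On the remaining piece $w\in W$ the same computation yields $U\cap\Wu{w,2\epsilon'}=U\cap\Wu{w,2\epsilon}$: any $z$ in the left-hand set is of the form $z=\hu_{x,y_z}(w)$ and hence lies in the unstable plaque of $w$ in $B$, which by definition satisfies $\Wu{w,B}=\Wu{w,2\epsilon}\cap B\subset\Wu{w,2\epsilon}$. The integrands therefore agree on $W$, giving $\mathrm{m}^{B'}(U)=\mathrm{m}^{B}(U)$, and since $\mathrm{m}^{B}$ and $\mathrm{m}^{B'}|B$ are Borel measures agreeing on the generating class of open sub-boxes, they are equal.

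The step that requires genuine care is the local identification of $\Wu{z}\cap\Wcs{x}$ as a single point, on which both arguments rely: this is where the standing choice of $\epsilon_0$ ensuring the local product structure \eqref{eq:productstructure} at scale $2\epsilon_0$ is used decisively, and one must verify that $w$ and $w_z$ indeed sit within the range where \eqref{eq:productstructure} applies. The remaining parts of the argument are routine bookkeeping with the definitions of dynamical boxes and the already-proved invariance of $\mathrm{m}^{B}$ under a change of $cs$-plaque.
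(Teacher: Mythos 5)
Your argument is correct and is exactly the fleshing-out of what the paper leaves implicit (the paper offers no proof, declaring the lemma "now clear" after establishing independence of the choice of $cs$-plaque within a single box): one passes to the $cs$-plaque of $B'$ through $x$, notes it contains $\Pcs{x,\epsilon}$, and checks via the local product structure that the fiberwise integrand of $\mathrm{m}^{B'}$ vanishes off $\Pcs{x,\epsilon}$ and agrees with that of $\mathrm{m}^{B}$ on it. The only caveat — that the uniqueness statement \eqref{eq:productstructure} must be applied at a scale comparable to $\epsilon'$, which requires $\epsilon_0$ small enough — is one you flag yourself and is consistent with the paper's own handling of such constants.
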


Take a finite covering $\mathscr{B}=\{B_1=B(x_1,\ccov),\ldots, B_R=B(x_R,\ccov)\}$ of $M$ with $0<\ccov<\frac{\clps}{5}$ and define the measure $\mathrm{m}$ by the condition: $A\subset B_i$ is Borel $\Rightarrow$ $\mathrm{m}(A):=\mathrm{m}^{B_i}(A)$. If $A\subset B_i\cap B_j$ then by our choice of $\ccov$ there exists $x\in M$ with $A\subset B(x,\epsilon_0)$, and thus by the previous lemma $\mathrm{m}^{B_i}(A)=\mathrm{m}^{B_j}(A)$, hence $\mathrm{m}$ is a well defined Borel measure on $M$. With no loss of generality we assume further that for every $i$ there exists $x_i'$ such that $f(B_i)\subset B(x_i',\clps)$. 

\begin{proposition}\label{pro:mesinvariante}
	The measure $\mathrm{m}$ is $f$-invariant.
\end{proposition}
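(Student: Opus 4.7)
The plan is to check invariance on open sub-boxes of the dynamical boxes $B_i$, since such sub-boxes generate the Borel $\sigma$-algebra of $M$; along the way, the equality $P=P'$ of the two exponents appearing in \cref{pro:medidacu} and \cref{pro:medidacs} will drop out as a byproduct. Fix $U\subset B=B(x,\epsilon)$ open with $f(B)\subset B'=B(x_i',\epsilon_0)$, which is guaranteed by the choice of the cover $\mathscr{B}$. Since $f$ preserves $\Fcs$, $\Fu$ and $\Fc$, \cref{lem:holisometry} together with the local product structure gives: if $W$ is a $cs$-plaque of $B$ with reference point $w_0$, then $fW$ lies inside a $cs$-plaque of $B'$, and for every $w\in W$ the restriction $f|\Wu{w,B}$ is a homeomorphism onto $\Wu{fw,B'}\cap f(B)$, so that $f(U\cap \Wu{w,B})=f(U)\cap \Wu{fw,B'}$.

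Starting from the definition of $\mathrm{m}^{B'}$ with plaque $fW$ and reference point $fw_0$,
\[
\mathrm{m}(f(U)) = \int_{fW} \nux[w']\bigl(f(U)\cap \Wu{w',B'}\bigr)\,\mcsx[fw_0](dw'),
\]
I perform two changes of variable that transport both integrations back to $W$. The outer one uses \cref{pro:medidacs}: $f^{-1}\mcsx[fw_0]=e^{\varphi-P'}\mcsx[w_0]$ yields a factor $e^{\varphi(w)-P'}$. The inner one exploits the crucial content of \cref{eq:invariancianu}, namely $f^{-1}\nux[fw]=e^{P-\varphi(w)}\,\nux[w]$, which is a \emph{constant} multiple (independent of the point of evaluation on $\Wu{w}$); hence
\[
\nux[fw]\bigl(f(U\cap \Wu{w,B})\bigr) = e^{P-\varphi(w)}\,\nux[w](U\cap \Wu{w,B}).
\]
Substituting both, the potential-dependent factors cancel exactly, leaving
\[
\mathrm{m}(f(U)) = \int_W e^{P-P'}\,\nux[w](U\cap \Wu{w,B})\,\mcsx[w_0](dw) = e^{P-P'}\,\mathrm{m}(U).
\]

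This identity extends, via the cover $\mathscr{B}$ and standard measure-theoretic arguments, to $\mathrm{m}(f(A))=e^{P-P'}\mathrm{m}(A)$ for every Borel set $A\subset M$. Taking $A=M$ and noting $f(M)=M$ with $0<\mathrm{m}(M)<\infty$ forces $P=P'$, whence $\mathrm{m}$ is $f$-invariant.

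The main technical obstacle is the foliated bookkeeping: one must check that $f$ sends $cs$-plaques of $B$ inside $cs$-plaques of $B'$, that the unstable plaques match under $f$ as stated, and that the outer change of variables is indeed the one dictated by \cref{pro:medidacs}. Once this is in place, the conceptual point is striking: the rescaling that defines $\nux$ from $\mux$ via the cocycle $\Delta^u_x$ is calibrated precisely so that the conformal factors of the leafwise and transverse families combine into a pure constant, which the total mass argument then pins down to $1$, forcing $P=P'$.
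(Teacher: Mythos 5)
Your proof is correct and follows essentially the same route as the paper's: it combines the constant-on-the-leaf rescaling $f^{-1}\nux[fw]=e^{P-\varphi(w)}\nux[w]$ from \eqref{eq:invariancianu} with the transverse quasi-invariance of $\mu^{cs}$ from \cref{pro:medidacs}, obtains $f_{\ast}\mathrm{m}=e^{P'-P}\mathrm{m}$ on sub-boxes, and then uses finiteness and positivity of $\mathrm{m}(M)$ to force $P=P'$. The only cosmetic difference is that you compute $\mathrm{m}(f(U))$ for $U\subset B_i$ while the paper computes $f_{\ast}\mathrm{m}(U)$ for $U\subset f(B_i)$, which is the same identity read in the opposite direction.
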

\begin{proof}
	Fix $\mathrm{m}^{B_i}=\mm^{W_i,B_i}$, let $B'=B(x_i',\clps)$ so that $f(B_i)\subset B(x_i',\clps)$ and denote $W'$ the $cs$-plaque of $B'$ containing $f(W_i)$. Consider also $U\subset f(B_i)$ open sub-box of $B'$. Then $f^{-1}U \subset B_i$ is an open sub-box, and for $w\in B_i$, $(f^{-1}U)^u_w=f^{-1}(U^u_{fw})$, therefore
    \[
    f\mm^{W_i,B_i}(U)=\mm^{W_i,B_i}(f^{-1}U)=\int_{W_i} \nu^u_w(f^{-1}U_{fw}^u)\mcsx[w_0](\der w).
    \]
    The quasi-invariance of $\nu^u_w(f^{-1}U_{fw}^u)$, formula \eqref{eq:invariancianu} permits us to write
    \[
    \nu^u_w\left(f^{-1}(U_{fw}^u)\right)=e^{\varphi(w)-P}\nu^u_{fw}(U_{fw}^u)
    \]
    thus
    \begin{align*}
    f\mm^{W_i,B_i}(U)=\int_{W_i}  e^{\varphi(w)-P}\nu^u_{fw}(U_{fw}^u)\mcsx[w_0](\der w)=e^{-P}\int_{W_i}  e^{\alpha^{W',U}(fw)\varphi(w)}\mcsx[w_0](\der w) \shortintertext{which by 2. of \Cref{pro:medidacs} can be written as}\\
    e^{P'-P}\int_{fW_i} \alpha^{W',U}(z)\mcsx[fw_0](\der z)=e^{P'-P}\int_{W'} \alpha^{W',U}(z)\mcsx[fw_0](\der z)=e^{P'-P}\mm^{W',B'}(U),
    \end{align*}
    where in the last equality we have used that $U \subset f(B_i)$. Since $U$ is an arbitrary open sub-box, we have proven that $f\mm^{W_i,B_i}=e^{P'-P}\mm^{W',B'}$, and this implies $f\mm=e^{P'-P}\mm$. The measure $\mm$ is finite on compact sets and $fM=M$, thus $P=P'$ and $\mm$ is $f$-invariant.
\end{proof}

\begin{corollary}\label{cor:PigualP}
	It holds $P=P'$.
\end{corollary}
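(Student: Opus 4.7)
The plan is to extract the identity $P = P'$ directly from the invariance relation established in the proof of \cref{pro:mesinvariante}. Recall that the computation there shows
\[
f_{\ast}\mathrm{m} = e^{P' - P}\mathrm{m},
\]
so my task reduces to showing that this scalar factor must equal $1$.

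First I would verify that $\mathrm{m}$ is a nonzero finite Borel measure on $M$. Finiteness follows from the construction: $M$ is compact, so it is covered by the finite collection $\mathscr{B} = \{B_1, \dots, B_R\}$ of dynamical boxes, and on each box $\mathrm{m}^{B_i}$ is given by the formula
\[
\mathrm{m}^{B_i}(B_i) = \int \alpha^{B_i}_{W_i, B_i}(w)\, \mu^{cs}_{w_0}(dw),
\]
which is finite since $\mu^{cs}_{w_0}$ is Radon, $W_i$ has compact closure inside the center-stable leaf, and $w \mapsto \nu^u_w(\Wu{w, B_i})$ is bounded on $B_i$ by continuity of $\Delta^u$ and local finiteness of the family $\{\mu^{cu}\}$. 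Strict positivity $\mathrm{m}(M) > 0$ comes from \cref{pro:medidacu} and \cref{pro:medidacs} (both sections have full support on every leaf), together with the fact that the integrand $\alpha^{B_i}_{W_i, B_i}$ is strictly positive on the $cs$-plaque $W_i$.

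Next, since $f: M \to M$ is a homeomorphism of the compact manifold onto itself, $f_{\ast}\mathrm{m}(M) = \mathrm{m}(f^{-1}M) = \mathrm{m}(M)$. Combining this with the invariance relation gives
\[
\mathrm{m}(M) = (f_{\ast}\mathrm{m})(M) = e^{P' - P} \mathrm{m}(M),
\]
and dividing by the nonzero finite quantity $\mathrm{m}(M)$ yields $e^{P' - P} = 1$, i.e.\ $P = P'$.

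There is no real obstacle here; the only subtle point is making sure the measure $\mathrm{m}$ defined via the local boxes is genuinely finite and nonzero, which is immediate from compactness of $M$ and the positivity/regularity properties of the building blocks $\mu^{cu}, \mu^{cs}, \nu^u$ already established. Once that is in hand, the corollary is a one-line consequence of \cref{pro:mesinvariante}.
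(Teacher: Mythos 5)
Your argument is correct and is exactly the one the paper uses: the relation $f_{\ast}\mathrm{m}=e^{P'-P}\mathrm{m}$ from \cref{pro:mesinvariante}, combined with $f(M)=M$ and the fact that $\mathrm{m}$ is a finite nonzero measure on the compact manifold $M$, forces $e^{P'-P}=1$. Your extra care in checking finiteness and positivity of $\mathrm{m}$ is the only (minor) elaboration beyond what the paper writes.
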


\begin{definition}\label{def.gibbsmeasure}
	We denote $\muf$ the probability measure $\frac{\mathrm{m}}{\mathrm{m}(M)}$.
\end{definition}

\begin{remark}\label{rem:otroeq}
Assuming that existence of $\{\mu^s_x\}_{x\in M}$ satisfying $f^{-1}\msx[fx]=e^{\varphi-P'}\msx[x]$ (for example, in the $c-$constant case), we can argue similarly and construct another invariant measure $\tmuf$ which is given locally as
\[
\tmuf(U)=\int_V \nu_v^s(U_v^s)\mcux[z_0](\der v).
\]
where $V=$ is a $cu$-plaque of some dynamical box containing $U$, $z_0\in V$.
\end{remark}

\smallskip

\noindent\textbf{Convention:} The family $\mathscr{B}=\{B_1,\ldots B_R\}$ together with corresponding center stable plaques $W_1,\ldots W_R$ is considered to be fixed.

\smallskip 
 
Directly from the construction we get:

\begin{corollary}\label{cor:condicionalesdem}
If $\xi$ is a $\muf$ measurable partition subordinated to $\Fu$ then for $\muf$-almost every $x$ it holds
\[
	(\muf)_x^{\xi}=\frac{\nu^u_x(\cdot|\xi(x))}{\nu^u_x(\xi(x))}.
\]
Assuming the existence of $\tmuf$, similarly we get that if $\xi$ is a $\tmuf$-measurable partition subordinated to $\Fs$, then for $\tmuf$-almost every $x$ it holds
\[
	(\tmuf)_x^{\xi}=\frac{\nu^s_x(\cdot|\xi(x))}{\nu^s_x(\xi(x))}.
\]
\end{corollary}

\begin{remark}
We will prove later that $\muf=\tmuf$, provided that these exist.
\end{remark}

\subsection{Product structure and the Gibb's property} % (fold)
\label{sub:product_structure_and_the_gibb_s_property}

We start establishing the following.

\begin{proposition}[Product structure of $\muf$]\label{pro:estructuraproducto}
There exists $0<\cpsm\leq\ccov$ such that for every $0<\ep\leq\cpsm$ there exists $c(\ep)>0$ satisfying for every $x\in M$,
\[
\frac{1}{c}\leq \frac{\muf(B(x,\ep))}{\mux(\Wu{x,B(x,\ep)})\cdot\mcsx{(\Pcs{x,\epsilon}})}\leq c.
\]
\end{proposition}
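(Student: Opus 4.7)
The plan is to start from the local formula for $\mm$ on a dynamical box, established in~\cref{sub:theequilibrium}, and to reduce the product estimate to a uniform comparison of the unstable measures along the box. Taking $W=\Pcs{x,\epsilon}$ as the $cs$-plaque through $x$ of the box $B=B(x,\epsilon)$, the construction of $\mm^{B}$ yields
\[
\mm(B(x,\epsilon))\;=\;\int_{\Pcs{x,\epsilon}} \nu^{u}_{w}\!\bigl(\Wu{w,B(x,\epsilon)}\bigr)\,d\mcsx(w),
\]
so, up to the global factor $\mm(M)^{-1}$, the claim reduces to finding $c_{1}(\epsilon),c_{2}(\epsilon)>0$ with
\[
c_{1}(\epsilon)\,\mux\!\bigl(\Wu{x,B(x,\epsilon)}\bigr) \;\le\; \nu^{u}_{w}\!\bigl(\Wu{w,B(x,\epsilon)}\bigr) \;\le\; c_{2}(\epsilon)\,\mux\!\bigl(\Wu{x,B(x,\epsilon)}\bigr)
\]
uniformly in $x\in M$ and $w\in\Pcs{x,\epsilon}$.

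The first reduction is to replace $\nu^{u}_{w}$ by $\mu^{u}_{w}$. The density $\Delta^{u}_{w}$ from~\eqref{eq:JacobianoDelta} is an absolutely convergent infinite product whose factors are controlled by the H\"older regularity of $\varphi$ together with the exponential contraction of $f^{-1}$ on unstables, so $\Delta^{u}_{w}$ is bounded above and below on $\Wu{w,B(x,\epsilon)}$ by constants depending only on $\epsilon$ and $\varphi$. Hence it suffices to show
\[
\mu^{u}_{w}\!\bigl(\Wu{w,B(x,\epsilon)}\bigr)\;\asymp\;\mu^{u}_{x}\!\bigl(\Wu{x,B(x,\epsilon)}\bigr)
\]
uniformly. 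In the SRB case $\mu^{u}$ is leafwise Riemannian volume; the plaques $\Wu{w,B}$ and $\Wu{x,B}$ are identified by the $cs$-holonomy $\phi_{w}(y):=\hu_{x,y}(w)$, and the classical $\mathcal{C}^{1+\alpha}$ absolute continuity of $cs$-holonomies (the fact invoked in~\cref{pro:lebesgueesabscont}) gives a Jacobian uniformly bounded on $B$. In the $c$-constant case, use $\mu^{u}_{\bullet}=\pi^{c}_{\bullet}\mu^{cu}_{\bullet}$ and decompose the displacement $x\to w$ inside the $cs$-plaque into a stable leg $x\to x'\in\Ws{x,\epsilon}$ followed by a center leg $x'\to w\in\Wc{x',\epsilon}$. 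The stable leg is handled by~\cref{pro:medidacu}(3), which transports $\mu^{cu}$ along the $\Fs$-holonomy with continuous Jacobian $\Jacs$; the center leg is handled by~\eqref{eq:invarianciainest} together with the fact that $f$ acts as an isometry on center leaves, so that $\pi^{c}$ intertwines the relevant measures without introducing leafwise distortion.

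The main obstacle is the $c$-constant case: each ingredient (holonomy invariance of $\mu^{cu}$, center isometry, continuity of the Jacobian $T$ from~\cref{lem:JacobianoT}) is already available, but combining them to yield bounds that are \emph{simultaneously} uniform in $x\in M$ and $w\in\Pcs{x,\epsilon}$ requires compactness of $M$ in order to upgrade the relevant continuous Jacobians into uniform multiplicative estimates on small boxes. Once this is done, integration in $w$ over $\Pcs{x,\epsilon}$ yields
\[
c_{1}(\epsilon)\,\mux\!\bigl(\Wu{x,B(x,\epsilon)}\bigr)\,\mcsx(\Pcs{x,\epsilon}) \;\le\; \mm(B(x,\epsilon)) \;\le\; c_{2}(\epsilon)\,\mux\!\bigl(\Wu{x,B(x,\epsilon)}\bigr)\,\mcsx(\Pcs{x,\epsilon}),
\]
which after dividing by $\mm(M)$ gives the statement with $c(\epsilon):=\max\{c_{2}(\epsilon)/\mm(M),\,\mm(M)/c_{1}(\epsilon)\}$.
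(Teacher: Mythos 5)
Your plan matches the paper's proof: the paper likewise reduces the statement to the defining integral $\mm(B(x,\ep))=\int_{\Pcs{x,\ep}}\nu^u_w(\Wu{w,B(x,\ep)})\,d\mcsx(w)$ plus a uniform comparison of unstable plaque measures, which it isolates as \cref{lem:comparacionentretamanhos} and proves with exactly the ingredients you list (uniform bounds on $\Delta^u$ from H\"older continuity and contraction, full support of the Radon measures, continuity of the Jacobians via \cref{lem:JacobianoT}, and compactness of $M$). The only cosmetic difference is that the paper's lemma compares $\nux(\Wu{x,\ep})$ and $\nux[x'](\Wu{x',\ep})$ for \emph{arbitrary} pairs $x,x'\in M$ by a global compactness argument, whereas you compare only along the $cs$-plaque through $x$ via holonomy quasi-invariance; both suffice.
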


The proof follows directly by the definition of $\muf$ with the following lemma.

\begin{lemma}\label{lem:comparacionentretamanhos}
There exists $0<\cpsm\leq\ccov$ such that for $0<\ep\leq \cpsm$ we can find $c(\ep)>0$ such that for all $x,x'\in M$ the following holds.
\begin{align*}
&(\ast)\quad c(\ep)^{-1}\leq \frac{\mcux(\Wcu{x,\ep})}{\mcux[x'](\Wcu{x',\ep})}, \frac{\mcsx(\Wcs{x,\ep})}{\mcsx[x'](\Wcs{x',\ep})}\leq c(\ep)\\
&(\ast\ast)\quad c(\ep)^{-1}\leq \frac{\mux(\Wu{x,\ep})}{\mux[x'](\Wu{x',\ep})} \leq c(\ep)\\
&(\ast\ast\ast)\quad c(\ep)^{-1}\leq \frac{\nux(\Wu{x,\ep})}{\nux[x'](\Wu{x',\ep})} \leq c(\ep)
\end{align*}
\end{lemma}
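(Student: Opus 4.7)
All three inequalities will follow from a common principle: the quantity of interest is a strictly positive function of $x\in M$, and it suffices to establish lower semi-continuity together with local boundedness above, after which compactness of $M$ yields the uniform bounds.

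I begin with $(\ast)$ for $\mcux$ (the $\mcsx$ case being symmetric, with unstable holonomies replacing stable ones). Define $g_\ep(x):=\mcux(\Wcu{x,\ep})$. Given a sequence $x_n\to x$, for $\ep<\ep_0$ the local product structure guarantees that $y_n:=\Ws{x_n,2\ep}\cap\Wcu{x,2\ep}$ is uniquely defined for all large $n$, that $y_n\to x$ in $M$, and hence $d_s(y_n,x_n)\to 0$. Since $y_n\in\Wcu{x}$ and $y_n\in\Ws{x_n}$, the stable holonomy $\hs_{y_n,x_n}:\Wcu{x}\to\Wcu{x_n}$ sending $y_n\mapsto x_n$ is well defined on a neighborhood of $y_n$ and converges uniformly on compact pieces to the identity. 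By Proposition \ref{pro:medidacu}(3),
\[
\mcux[x_n](\Wcu{x_n,\ep})=\int_{\hs_{y_n,x_n}^{-1}(\Wcu{x_n,\ep})}\Jacs[y_n,x_n]\,\mathrm{d}\mcux.
\]
The H\"older regularity of $\varphi$ together with the exponential stable contraction makes the infinite product defining $\Jacs$ jointly continuous in its three arguments and identically equal to $1$ on the diagonal, so that $\Jacs[y_n,x_n]\to 1$ uniformly in the argument. Combined with the inclusions
\[
\Wcu{x,\ep-\delta_n}\subset\hs_{y_n,x_n}^{-1}(\Wcu{x_n,\ep})\subset\overline{\Wcu{x,\ep+\delta_n}},
\]
valid for some $\delta_n\to 0$ by the uniform continuity of the foliations, a standard sandwich argument produces $\liminf_n g_\ep(x_n)\geq\mcux(\Wcu{x,\ep-\delta})$ for every $\delta>0$; letting $\delta\to 0$ yields the lower semi-continuity of $g_\ep$, and symmetrically $\limsup_n g_\ep(x_n)\leq \mcux(\overline{\Wcu{x,\ep}})<+\infty$, i.e.\ local boundedness above. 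Since $g_\ep>0$ pointwise (full support of $\mcux$), compactness of $M$ gives constants $c_-(\ep),c_+(\ep)>0$ with $c_-(\ep)\leq g_\ep(x)\leq c_+(\ep)$ for all $x\in M$, establishing $(\ast)$.

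For $(\ast\ast)$: In the SRB case $\mux$ is Lebesgue measure on the smoothly immersed leaf $\Wu{x}$, whose $\ep$-ball has volume varying continuously and strictly positively in $x$, so compactness of $M$ suffices directly. In the $c$-constant case, $\mux=\pi^c_x\mcux$ yields $\mux(\Wu{x,\ep})=\mcux(\Wc{\Wu{x,\ep},\ep_0})$, and the set $\Wc{\Wu{x,\ep},\ep_0}$ is sandwiched between $\Wcu{x,c_1\ep}$ and $\Wcu{x,c_2\ep}$ for constants $c_1,c_2>0$ depending only on the uniform angle between $E^c$ and $E^u$; hence $(\ast\ast)$ reduces to $(\ast)$. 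For $(\ast\ast\ast)$, the H\"older regularity of $\varphi$ and the exponential unstable contraction of $f^{-1}$ yield
\[
|\log\Delta_x^u(y)|\leq C_\varphi\sum_{k\geq 1}\lambda^{k\theta}\ep^\theta\leq K\ep^\theta \qquad \forall\,x\in M,\ y\in\Wu{x,\ep},
\]
so $e^{-K\ep^\theta}\mux(\Wu{x,\ep})\leq\nux(\Wu{x,\ep})\leq e^{K\ep^\theta}\mux(\Wu{x,\ep})$, and $(\ast\ast\ast)$ follows from $(\ast\ast)$.

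The main obstacle is the leaf-dependence of the measures: different $\mcux$ live on different $cu$-leaves, so comparing their values on similarly sized balls requires an external mechanism to transport one leaf to another. The local product structure combined with the Jacobian formula of Proposition \ref{pro:medidacu}(3) furnishes such a transport locally, with Jacobian uniformly close to $1$ for small stable displacement; the upgrade from this local statement to a global uniform estimate is precisely what the semi-continuity plus compactness argument supplies.
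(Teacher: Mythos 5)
Your proof is correct in substance and follows essentially the same route as the paper: a local comparison furnished by the Radon property and full support of the leafwise measures, transport between nearby leaves via stable holonomy whose Jacobian $\Jacs$ tends uniformly to $1$ (the paper invokes this through \cref{lem:JacobianoT} and \cref{pro:medidacu}), and compactness of $M$ to make the constants uniform; the reductions of $(\ast\ast)$ and $(\ast\ast\ast)$ to $(\ast)$ via $\mux=\pi^c_x\mcux$ and the uniform bound on $\log\Delta^u_x$ are also exactly the paper's. One inaccuracy worth fixing: in the $c$-constant case the set $\Wc{\Wu{x,\ep},\ep_0}$ is a tube of unstable radius $\ep$ but center radius $\ep_0$, so it is \emph{not} sandwiched between $cu$-balls of radius comparable to $\ep$ with constants depending only on the angle between the bundles --- the outer ball must have radius at least $\ep_0$, so any such $c_2$ blows up as $\ep\to 0$. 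The clean reduction, which is what the paper intends by ``this follows as in the previous part,'' is to run your holonomy-plus-semicontinuity-plus-compactness argument directly on the congruent tubes $x\mapsto\mcux(\Wc{\Wu{x,\ep},\ep_0})$ rather than comparing them to balls; alternatively your sandwich works with $\ep$-dependent constants together with a within-leaf comparison of balls of the two radii, but not as you stated it.
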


\begin{proof}
Let $\cpsm$ be the Lebesgue number associated to the covering $\mathscr{B}$. We start with $(\ast)$; it suffices to consider the first set of inequalities. Since $\mu^{cu}_x$ is a Radon measure of full support, we have the following: for $\ep>0$ fixed, there exists $\ga>0$ and $c_0(x,\ep)>0$ such that
\[
	x'\in\Wcu{x,\ga}\Rightarrow c_0(\ep,x)^{-1}\leq \frac{\mcux(\Wcu{x,\ep})}{\mcux[x'](\Wcu{x',\ep})}\leq c_0(\ep,x).
\]
By \Cref{lem:JacobianoT} it is no loss of generality to assume that the same holds for $x'\in D(x,\ga)$, and hence due to the compactness of $M$ we get a uniform $c(\ep)$ independent of $x$.

For $(\ast\ast)$, the claim is immediate for the SRB case, so we just need to consider when $\varphi$ is constant along centrals. But this follows from as in the previous part since $\mux(A)=\mcux[x](\Wc{A,\ccen})$.

Finally, $(\ast\ast\ast)$ is consequence of $(\ast\ast)$ together with \Cref{lem:JacobianoT}.
\end{proof}

\begin{remark}\label{rem:estructuraproducto}
Arguing similarly and re-defining $c,\cpsm$ if necessary we can guarantee that for every $x\in M$,
\[
\frac{1}{c(\ep)}\leq \frac{\tmuf(B(x,\ep))}{\msx(\Ws{x,B(x,\ep)})\cdot\mcux{(\Pcu{x,\epsilon}})}\leq c(\ep).
\]
\end{remark}

Next we establish a Gibbs's type property for $\muf$ (resp. $\tmuf$) that will allow us to show that it is an equilibrium state for the potential $\varphi$. By reducing $\cpsm$ if necessary, we can assume that for every $0<\ep<\cpsm$, for every $\epsilon$-ball $D\subset M$ the distance between $x,y\in D$ is given by
\begin{align*}
d(x,y)=\inf\{&\mathrm{length}(\alpha):\alpha:[0,1]\rightarrow D \text{ piecewise }\mathcal{C}^1,\alpha(0)=x,\alpha(1)=y, \alpha'\in E^{\ast}\ \ast=s,c,u\}.
\end{align*}
With this metric and using that $f$ is a center isometry we have 
\begin{align}\label{eq:discoBowen}
&\Benu=\Ben|\Wu{x}=f^{-n}\Wu{f^nx,\epsilon}\\
\nonumber &\Ben=\bigcup_{\mathclap{y\in \Wcs{x,\epsilon}}}\ \Benu[y] 
\end{align}

We note the following classical remark. 

\begin{lemma}\label{lem:Bowenproperty}
Given $\epsilon>0$ there exists $K(\ep)>0$ with the property that for every $x,y\in M, n\geq0$ it holds
\begin{enumerate}
\item \(y\in \Benu\Rightarrow |\SB(x)-\SB(y)|<K\).
\item \(y\in \Ben\Rightarrow |\SB(x)-\SB(y)|<nK\). If $\varphi$ is constant on central leaves, then \(y\in \Ben\Rightarrow |\SB(x)-\SB(y)|<K\).
\end{enumerate}	
Moreover, $K(\epsilon)\mapsto 0$ as $\epsilon\mapsto0$. 
\end{lemma}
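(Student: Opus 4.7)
The plan is to exploit the contraction of $\mathcal{F}^u$ under $f^{-1}$ (and of $\mathcal{F}^s$ under $f$), together with the H\"older continuity of $\varphi$ and the $f$-invariance of $\mathcal{F}^u, \mathcal{F}^c, \mathcal{F}^s$. Write $\varphi$ as $(C_\varphi,\theta)$-H\"older and recall from the center isometry hypothesis that $f^{-1}$ contracts $\mathcal{F}^u$ and $f$ contracts $\mathcal{F}^s$ by factor $\lambda < 1$ along leaves (measured with the intrinsic distance, which locally dominates the ambient one near the diagonal).

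\smallskip

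\textbf{Part (1).} Suppose $y \in \Benu = f^{-n}W^u(f^n x,\epsilon)$. Then $f^n y \in W^u(f^n x,\epsilon)$, and applying $f^{-(n-j)}$ inside $\mathcal{F}^u$ gives $d(f^j x, f^j y) \leq \lambda^{n-j}\epsilon$ for $0 \leq j \leq n$. By H\"older continuity,
\[
|\SB(x) - \SB(y)| \leq \sum_{j=0}^{n-1} C_\varphi \lambda^{(n-j)\theta}\epsilon^\theta \leq C_\varphi\epsilon^\theta \sum_{k=1}^{\infty} \lambda^{k\theta} =: K_1(\epsilon),
\]
and clearly $K_1(\epsilon)\to 0$ as $\epsilon\to 0$.

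\smallskip

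\textbf{Part (2), general case.} If $y \in \Ben$ then $d(f^j x, f^j y) < \epsilon$ for $j = 0,\dots,n-1$, so the trivial H\"older bound gives $|\SB(x) - \SB(y)| \leq n\,C_\varphi \epsilon^\theta$, which is of the form $nK(\epsilon)$ with $K(\epsilon) = C_\varphi\epsilon^\theta \to 0$.

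\smallskip

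\textbf{Part (2), center-constant case.} Using the decomposition recorded in \eqref{eq:discoBowen}, given $y \in \Ben$ pick $z \in \Wcs{x,\epsilon}$ with $y \in \Benu[z]$, and then decompose $z$ via the local product structure of $\Fcs$: write $z \in \Wc{x',\epsilon}$ for some $x' \in \Ws{x,\epsilon}$ (this uses that $\Fcs$ is locally saturated by $\Fs$ and $\Fc$, as in Theorem~\ref{thm:centerisometry}). Split
\[
\SB(y) - \SB(x) = [\SB(y) - \SB(z)] + [\SB(z) - \SB(x')] + [\SB(x') - \SB(x)].
\]
The first bracket is bounded by Part (1) applied to $z,y$, hence by $K_1(\epsilon)$. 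For the second, $f$-invariance of $\Fc$ gives $f^j z \in \Wc{f^j x'}$ for every $j$, and since $\varphi$ is constant on center leaves the bracket is zero. For the third, $x' \in \Ws{x,\epsilon}$ and the contraction of $\mathcal{F}^s$ yields $d(f^j x, f^j x') \leq \lambda^j \epsilon$, so H\"older continuity bounds it by $C_\varphi\epsilon^\theta \sum_{j=0}^{\infty}\lambda^{j\theta} =: K_2(\epsilon)$. Hence $|\SB(x) - \SB(y)| \leq K_1(\epsilon) + K_2(\epsilon) =: K(\epsilon)$, with $K(\epsilon)\to 0$.

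\smallskip

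The only subtlety is verifying that the local product decomposition $z = \hol^{c}\circ\hol^{s}(x)$ can be made with both holonomies of comparable size to $\epsilon$ (so that the stable leg has length $O(\epsilon)$), which follows from \eqref{eq:productstructure} after possibly shrinking $\epsilon_0$ once more; everything else is routine. The main structural point is that on the center direction the potential contributes nothing, on the stable direction forward iteration kills the contribution, and on the unstable direction backward iteration from time $n$ does.
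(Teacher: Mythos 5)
Your proof is correct and follows essentially the same strategy as the paper's: decompose the displacement from $x$ to $y$ into a stable leg (killed by forward contraction), an unstable leg (killed by backward contraction from time $n$), and a center leg (zero contribution in the $c$-constant case, trivial $nC_\varphi\epsilon^\theta$ bound in general). The only difference is the order in which you peel off the legs — the paper first projects $y$ along $\Fs$ onto $f^{-n}\Wcu{f^nx,\epsilon}$ and then splits the $cu$-part, while you first isolate the unstable Bowen disc via \eqref{eq:discoBowen} and then split the $cs$-part — which is immaterial.
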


\begin{proof}
	We will prove only the second part, as the arguments for the first are included in the proof of this case. Let $y_s:=\Ws{y,\epsilon}\cap f^{-n}\Wcu{f^nx,\epsilon}$. Points in the same stable leaf are exponentially contracted and $\varphi$ is H\"older, thus
	\[
	|\SB(x)-\SB(y)|\leq |\SB(x)-\SB(y_s)|+C_1(\epsilon)
	\]
for some constant $C_1(\epsilon)$ that converges to zero as $\epsilon$ tends to zero. Since $f^ny_s\in \Wcu{f^nx,\epsilon}$, the point $z_u:=\Wu{f^ny,\epsilon}\cap \Wc{f^nx,\epsilon}$ is well defined. Observe then that the distance $d(f^{-j}z_u,f^{n-j}y_s)$ is exponentially contracted under $f^{-1}$, and thus for some other similar constant $C_2(\epsilon)$ we have
	\[
	|\SB(x)-\SB(y)\leq |\SB(x)-\SB(f^{-n}z_u)|+C_2(\epsilon).
	\]
Since $f$ is a center isometry and $\varphi$ is H\"older it follows that
\[
|\SB(x)-\SB(f^{-n}z_u)|\begin{dcases}
=0 & c\text{-constant case}\\
\leq C_{\varphi}n\ep^{\theta}:=C_{3}(\ep) & \text{ in general}. 
\end{dcases}
\]
Defining $K(\ep):=C_{1}(\ep)+C_{2}(\ep)+C_{3}(\ep)$ we finish the proof.
\end{proof}

Therefore, in general, the Birkhoff's sums for points in the same $(\ep,n)$-Bowen ball differ by a factor that grows linearly with $n$. This is not that convenient; to circumvent this the authors in \cite{Climenhaga2020} work with a type of potential that satisfy for sufficiently small $\ep$,
\[
	\exists\ Q_{\varphi}>0: \forall x,y\in M, y\in \Wcs{x,\ep}, \forall n\in\Nat \Rightarrow |\SB(x)-\SB(y)|<Q_{\varphi}. 
\]
They call this the \emph{$cs$-Bowen property for $\varphi$}, and they prove that the geometric potential satisfies this condition. In fact, inspection of their arguments shows that the following holds. 

\begin{lemma}\label{lem:Bowenpropertycs}
If $\mu^u$ is strongly absolutely continuous\footnote{\Cref{def:strongabsolutecontinuous}}, then $\varphi$ satisfies the $cs$-Bowen property.

In particular, the geometrical potential satisfy the $cs$-Bowen property.
\end{lemma}

\begin{proof}
The proof is similar as the proof of Theorem $5.2$ of the above cited article. For convenience of the reader we re-phrase these arguments in our setting.

Since $\mu^u$ is strongly absolutely continuous, given $\tau>0$ there exists $\rho>0$ so that for every $x,y\in M$, it holds that if $A\subset \Wu{x}, B \subset \Wu{y}$ are $\rho$-equivalent then $\hol[cs]_{x,y}\mu^u_y|B=J(x,y,\cdot)\mu^u_{x}|A$, and $|J(x,y,z)-1|<\tau$ for every $z\in A$. Take $\ep>0$ sufficiently small so that $y\in \Wcs{x,\rho/2}$ implies that $\Wu{y,\ep}$ is $\rho$-equivalent to some subset of $\Wu{x}$. Recalling that $D^u(y,\ep,n)=f^{-n}\Wu{f^ny,\ep}$ and since $f$ is a center isometry, it follows that if $y\in \Wcs{x,\rho/2}$ then for every $n\geq 0$, $D^u(y,\ep,n)$ is $\rho$-equivalent to some subset of $J(x,\ep,n) \subset\Wu{x}$, and $f^nJ(x,\ep,n)$ $\rho$-equivalent to $\Wu{f^ny,\ep}$. By reducing $\rho$ if necessary we can guarantee that $f^n(J(x,\ep,n)) \subset \Wu{f^nx,2\ep}$.

Quasi-invariance of $\mu^u$ and \Cref{lem:Bowenproperty} now implies
\begin{align*}
\frac{\mux[f^ny](\Wu{f^ny,\ep})}{\mux{f^n(J(x,\ep,n))}}&=\frac{\int_{D^u(y,\ep,n)} e^{-\SB(z)}\der\mux[y](z)}{\int_{J(x,\ep,n)} e^{-\SB(w)}\der\mux(w)}=e^{\SB(x)-\SB(y)}\frac{\int_{D^u(y,\ep,n)} e^{\SB(y)-\SB(z)}\der\mux[y](z)}{\int_{J(x,\ep,n)} e^{\SB(x)-\SB(w)}\der\mux(w)}\\
&\begin{dcases}
\leq  e^{\SB(x)-\SB(y)+2K(\ep)}\frac{\mux[y](D^u(y,\ep,n))}{\mux[x](J(x,\ep,n))}\\
\geq  e^{\SB(x)-\SB(y)-2K(\ep)}\frac{\mux[y](D^u(y,\ep,n))}{\mux[x](J(x,\ep,n))}
\end{dcases}
\end{align*}
and therefore
\[
	\frac{1-\tau}{1+\tau}e^{-2K(\ep)}\leq e^{\SB(x)-\SB(y)}\leq \frac{1+\tau}{1-\tau}e^{2K(\ep)},
\]
for every $n\geq 0$.
\end{proof}

\begin{remark}
If $\varphi$ satisfies the $cs$-Bowen property then it is easy to see that one can change $nK(\ep)$ in \Cref{lem:Bowenproperty} by $K(\ep)$.
\end{remark}

\smallskip 
 
Now we can deduce: 

\begin{corollary}[Gibbs-like property of $\muf$]\label{cor:gibbslineal}
	Given $\epsilon>0$ there exists $d(\epsilon)>0$ such that for every $n\in\mathbb{N},x\in M$ it holds
	\begin{align*}
	d^{-1}(\ep)\leq &\frac{\nux(\Benu)}{e^{\SB(x)-nP}} \leq d(\ep)\\
	d^{-1}(\ep)e^{-K(\ep)n}\leq &\frac{\muf(\Ben)}{e^{\SB(x)-nP}} \leq d(\ep)e^{K(\ep)n}
	\end{align*}
	where $K(\ep)$ is the constant given in \Cref{lem:Bowenproperty}. 

	If $\varphi$ does not depend on centrals, or if $\varphi$ is the geometric potential, then in the second line of inequalities we can substitute $nK(\ep)$ by $K(\ep)$.
\end{corollary}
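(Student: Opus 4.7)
The upper and lower bounds on $\nux(\Benu)$ follow directly from iterating the quasi-invariance relation in \cref{eq:invariancianu}: by induction $f^{-n}\nux[f^nx]=e^{nP-\SB(x)}\,\nux$ as Radon measures on $\Wu{x}$, and evaluating both sides on $\Benu=f^{-n}\Wu{f^nx,\ep}$ (see \cref{eq:discoBowen}) yields
\[
\nux(\Benu)=e^{\SB(x)-nP}\,\nux[f^nx](\Wu{f^nx,\ep}).
\]
Part $(\ast\ast\ast)$ of \cref{lem:comparacionentretamanhos} sandwiches $\nux[f^nx](\Wu{f^nx,\ep})$ between two constants depending only on $\ep$, which gives the first pair of inequalities.

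For the bound on $\muf(\Ben)$, the plan is to combine the fiberwise estimate just established with the product-structure description of $\muf$ developed in \cref{sub:theequilibrium}. First, $\Ben\subset D(x,\ep)$ sits inside a dynamical box $B$ of size $O(\ep)$, so the formula
\[
\mathrm{m}^{W,B}(U)=\int\nu^u_w(U\cap\Wu{w,2\ep})\,\mcsx[w_0](dw)
\]
applied to $U=\Ben$, using the slicing $\Ben=\bigcup_{y\in \Wcs{x,\ep}}\Benu[y]$ from \cref{eq:discoBowen} and transferring the cs-plaque of $B$ to $\Wcs{x,\ep}$ via unstable holonomy (whose Jacobian is continuous and uniformly bounded by \cref{pro:medidacs}), gives up to a constant depending only on $\ep$
\[
\muf(\Ben)\asymp \int_{\Wcs{x,\ep}}\nux[w](\Benu[w])\,\mcsx[x](dw).
\]
The first pair of inequalities applied fiberwise yields $\nux[w](\Benu[w])\asymp_{d(\ep)}e^{\SB(w)-nP}$. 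Since $\Fc$ is $f$-isometric and $\Fs$ is $\lambda$-contracting, $\Wcs{x,\ep}\subset\Ben$, so \cref{lem:Bowenproperty} furnishes $|\SB(w)-\SB(x)|\leq nK(\ep)$ for all $w\in\Wcs{x,\ep}$. Combining this with the uniform two-sided bound for $\mcsx[x](\Wcs{x,\ep})$ from $(\ast)$ of \cref{lem:comparacionentretamanhos} gives the estimate announced in the corollary.

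The subtle point is justifying the product-structure reduction in the display above, since $\Ben$ is not itself a dynamical box. I would address this by fixing a box $B\supset\Ben$ of center-stable size $O(\ep)$, observing that $\Ben\cap\Wu{w,2\ep}=\Benu[w]$ exactly when $w$ projects along unstables into $\Wcs{x,\ep}$ (and is empty otherwise), and absorbing the comparison between $\mcsx$ on the cs-plaque of $B$ and on $\Wcs{x,\ep}$ (controlled by the Jacobian of \cref{pro:medidacs}, bounded on fixed-size boxes) into the constant $d(\ep)$. The improved statement in the $c$-constant case requires no new idea: replacing $nK(\ep)$ by the uniform constant $K(\ep)$ from the second part of \cref{lem:Bowenproperty}, the exponential factor $e^{\pm nK(\ep)}$ becomes $e^{\pm K(\ep)}$ and is absorbed into $d(\ep)$.
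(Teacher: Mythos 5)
Your proof is correct and follows essentially the same route as the paper's: the fiberwise estimate from the quasi-invariance relation \eqref{eq:invariancianu} together with part $(\ast\ast\ast)$ of \cref{lem:comparacionentretamanhos}, followed by the disintegration $\mathrm{m}^{W,B}(\Ben)=\int_{\Wcs{x,\ep}}\nux[y_w](\Benu[y])\,\mcsx(dy)$ combined with \cref{lem:Bowenproperty} and part $(\ast)$. One small imprecision: the displayed identity $\nux(\Benu)=e^{\SB(x)-nP}\,\nux[f^nx](\Wu{f^nx,\ep})$ is not exact, since the density $e^{nP-\SB}$ varies over $\Benu$; it holds only up to a factor $e^{\pm K(\ep)}$ controlled by part 1 of \cref{lem:Bowenproperty}, which is then absorbed into $d(\ep)$.
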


\begin{proof}
	To simplify the notation we will write $a\simeq b$ if the quotient $\frac{a}{b}$ is bounded above and below with constants only depending on $\epsilon$. The first part is direct consequence of \Cref{lem:comparacionentretamanhos} and \eqref{eq:invariancianu}. For the second part we consider $B_i$ so that 
    $D(x,\ep,n)\subset B_i$: by the considerations made above \Cref{lem:independencebox} it is no loss of generality to assume that $W_i=\Wcs{x,B}$. Thus we have $\mathrm{m}^{W_i,B_i}(\Ben)=\int_{\Wcs{x,\epsilon}} \nux[y_w](\Benu[y])\mcsx(\der y)$, and therefore
	\begin{align*}
	\mathrm{m}^{W_i,B_i}(\Ben)&\simeq\int_{\Wcs{x,\epsilon}} e^{\SB(y)-nP}\mcsx(\der y)\quad &\text{(by the first part)}\\
	&\simeq e^{\SB(x)-nP\pm nK}\mcsx(\Wcs{x,\epsilon})\quad&\text{(by \Cref{lem:Bowenproperty})}\\
	&\simeq e^{\SB(x)-nP\pm nK} \quad &\text{(by \Cref{lem:comparacionentretamanhos})}.
	\end{align*}
	The last claim follows from \Cref{lem:Bowenproperty,lem:Bowenpropertycs}. 
\end{proof}

We are ready to show that $\muf$ is an equilibrium state.

\begin{proposition}\label{pro:PigualPresion}
	It holds $P=\Ptop(\varphi)=P_{\mu_{\varphi}}(\varphi).$
\end{proposition}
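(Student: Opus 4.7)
The plan is to deduce both equalities $P=\Ptop(\varphi)=P_{\muf}(\varphi)$ directly from the Gibbs-like control
\[
d(\ep)^{-1} e^{-K(\ep)n} \leq \frac{\muf(\Ben)}{e^{\SB(x)-nP}} \leq d(\ep)\, e^{K(\ep)n}
\]
of \cref{cor:gibbslineal}, combined with the variational principle. Since $K(\ep)\to 0$ as $\ep\to 0$, these bounds simultaneously control $h_{\muf}(f)$ through Brin--Katok's local entropy formula and $\Ptop(\varphi)$ through standard $(n,\ep)$-separated/spanning set estimates. The bulk of the work is already encapsulated in the Gibbs property, so this final step is essentially bookkeeping.

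First I would show $P_{\muf}(\varphi)=P$. Taking logarithms and dividing by $n$ in the Gibbs inequality gives
\[
\Bigl|-\tfrac{1}{n}\log\muf(\Ben) - P + \tfrac{1}{n}\SB(x)\Bigr|\leq K(\ep)+\tfrac{\log d(\ep)}{n}.
\]
By Birkhoff's theorem, for $\muf$-a.e.\ $x$ the average $\tfrac{1}{n}\SB(x)$ converges to some $\tilde\varphi(x)$ with $\muf(\tilde\varphi)=\muf(\varphi)$. Letting $n\to\infty$ and then $\ep\to 0$ the right-hand side vanishes, so Brin--Katok's formula yields $h_{\muf}(f)=P-\muf(\varphi)$, i.e.\ $P_{\muf}(\varphi)=P$. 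By the variational principle, $\Ptop(\varphi)\geq P_{\muf}(\varphi)= P$.

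For the reverse inequality I would use separated sets. Let $E$ be a maximal $(n,\ep)$-separated set; then the balls $D(x_i,\ep/2,n)$, $x_i\in E$, are pairwise disjoint, while $\{D(x_i,\ep,n)\}_{x_i\in E}$ covers $M$ by maximality. Summing the lower Gibbs bound at scale $\ep/2$ over $E$,
\[
1\geq\sum_{x_i\in E}\muf(D(x_i,\ep/2,n))\geq d(\ep/2)^{-1}e^{-K(\ep/2)n - nP}\sum_{x_i\in E}e^{\SB(x_i)},
\]
so $S(\ep,n)\leq \sum_{x_i\in E}e^{\SB(x_i)}\leq d(\ep/2)\, e^{K(\ep/2)n+nP}$. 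Taking $\limsup_n\tfrac{1}{n}\log$ and then $\ep\to 0$ yields $\Ptop(\varphi)\leq P$. Combined with the previous step, all three quantities coincide, and as a bonus $\muf$ is automatically an equilibrium state for $\varphi$.

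The only mild technical nuisance is the factor $e^{K(\ep)n}$ in the Gibbs control for a general H\"older potential (in the $c$-constant case the bound is uniform in $n$). However this contributes only an additive $K(\ep)$ or $K(\ep/2)$ to the relevant $\limsup$, which disappears in the final limit $\ep\to 0$; no essential obstacle appears. The hard part has already been the construction of $\muf$ and the verification of its Gibbs-like behavior, so the identification of $P$ with both the topological and measure-theoretic pressures is a clean consequence.
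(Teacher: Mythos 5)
Your proof is correct, and it reaches the conclusion by a mildly different route than the paper. The overall architecture is the same — everything is squeezed out of the Gibbs-like control of \cref{cor:gibbslineal} and then closed with the variational principle — but the two halves are executed with different standard tools. For the inequality $P_{\muf}(\varphi)\geq P$ (equivalently $h_{\muf}(f)\geq P-\muf(\varphi)$), the paper builds explicit Borel partitions $A$ with $D(x,\epsilon)\subset A_x\subset D(x,2\epsilon)$, so that the atoms of $\bigvee_{i=0}^{n-1}f^{-i}A$ are sandwiched between Bowen balls, and then bounds $H_{\muf}(\bigvee_{i=0}^{n-1}f^{-i}A)$ from below term by term using the Gibbs estimate; you instead invoke the Brin--Katok local entropy formula together with Birkhoff's theorem. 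Your version is shorter but imports an external theorem (note that Brin--Katok in its non-ergodic form is needed here, since ergodicity of $\muf$ is only established later in the paper; this is fine because the local entropy integrates to $h_{\muf}(f)$ without any ergodicity hypothesis, and your argument only uses the integrated statement). The paper's version is self-contained given the tools already on the table, and the adapted partitions $A_x^n$ it constructs are recycled almost verbatim in the uniqueness argument of the following subsection, which is presumably why that route was chosen. For the inequality $\Ptop(\varphi)\leq P$ the two arguments are essentially identical counting arguments: the paper covers $M$ by the atoms $A_x^n\supset D(x,\epsilon,n)$ coming from a maximal disjoint family of boxes, while you use a maximal $(n,\epsilon)$-separated set; in both cases disjointness of the smaller Bowen balls plus the lower Gibbs bound forces $\sum_x e^{\SB(x)-nP}$ to stay bounded by $e^{nK}$ up to a constant, and the error $K(\epsilon)$ dies in the final limit $\epsilon\to 0$. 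Both proofs are complete and correct.
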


\begin{proof}
	Fix $0<\epsilon<\cpsm$ small. By the previous Corollary, and since $\muf$ is invariant, one has
    \[
    \frac{\log d(\ep)}{n}-K(\ep) \leq P-(\int \log \muf(\Ben) \der\muf +\int \varphi \der\muf) \leq \frac{\log d(\ep)}{n}+K(\ep)
    \]
    which by the Brin-Katok formula \cite{brinkatok} gives
    \[
    -K(\ep)\leq P-(h_{\muf}(f;A)+\int \varphi \der\muf)\leq K(\ep)\Rightarrow P_{\muf}(\varphi)=P.
    \]
    Next we consider a maximally $(n,2\ep)$-separated set $E_n$: then for $\{\Ben\}_{x\in E_n}$ are pairwise disjoint and $M=\bigcup_{x\in E_n} D(x,2\ep,n)$. Take $A^n=\{A^n_x:x\in E_n\}$ partition by Borel sets so that for every $x\in E_n$, $D(x,\ep,n)\subset A_x^n\subset D(x,2\epsilon,n)$, and compute\footnote{Recall the definition of $S(\ep,2)$ on page $2$.}
    \begin{align*}
	S(2\epsilon,n)\leq \sum_{x\in E_n} e^{\SB(x)}\leq \sum_{x\in E_n}\muf(D(x,\ep,n))  e^{nP+nK(\ep)}\leq e^{nP+nK(\ep)}\sum_{x\in E_n}\muf(A_x^n)= e^{nP+nK(\ep)}.
	\end{align*}
	It follows that $\Ptop(\varphi)\leq P$, and therefore by the variational principle $P=\Ptop(\varphi)= P_{\muf}(f)$.
\end{proof}

\begin{remark}
The Gibbs property given in \Cref{cor:gibbslineal} is presented in more generality than needed. Still, since the theory is still under development, there is perhaps some value in presenting the property in this form. In particular, it shows that the presence of the linear term $nK(\ep)$ instead of $K(\ep)$ is enough to establish \Cref{pro:PigualPresion}.
\end{remark}

Since $P=0$ by construction in the SRB case, we get:

\begin{corollary}\label{cor.obvioSRB}
If $\mm_{\scriptscriptstyle SRB}$ is the measure associated to $\varphi=-\log\det Df|E^u$, then $P_{\mm_{\scriptscriptstyle SRB}}(f)=0$.
\end{corollary}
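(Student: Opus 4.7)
The plan is to apply Proposition \ref{pro:PigualPresion} together with the observation that in the SRB case the constant $P$ appearing throughout the construction is automatically zero, so nothing more is required. Recall that in the SRB setup $\mux$ was defined directly as Lebesgue measure on $\Wu{x}$. The ordinary change-of-variables formula therefore yields
\[
f^{-1}\mu^u_{fx} = \det(Df|E^u)\cdot \mu^u_x = e^{-\varphi}\mu^u_x.
\]
On the other hand, the invariance relation \eqref{eq:invariancianu}, which was derived in a way that covers both the $c$-constant and the SRB cases, asserts $f^{-1}\mu^u_{fx} = e^{P-\varphi}\mu^u_x$. Comparing the two identities forces $P=0$.

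Once this is in hand, Proposition \ref{pro:PigualPresion} gives $\Ptop(\varphi) = P_{\mm_{\scriptscriptstyle SRB}}(\varphi) = P = 0$, which is precisely the stated conclusion (using the convention that $P_\nu(f)$ and $P_\nu(\varphi)$ denote the same measure-theoretic pressure once the potential is fixed). There is no real obstacle here: the content of the corollary is a bookkeeping remark that the normalization constant produced by the abstract variational construction of Theorem \ref{thm:existenciafamiliamedidas} vanishes when one starts from the Lebesgue reference measure, after which the pressure identity from Proposition \ref{pro:PigualPresion} is simply invoked.
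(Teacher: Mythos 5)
Your argument is correct and is exactly the paper's proof: the paper disposes of this corollary in one line by noting that $P=0$ by construction in the SRB case (since $\mu^u_x$ is Lebesgue on $\Wu{x}$ and change of variables gives the quasi-invariance exponent $-\varphi$ directly), and then invokes \cref{pro:PigualPresion}. The only cosmetic slip is that \eqref{eq:invariancianu} concerns the modified measures $\nu^u_x$ rather than $\mu^u_x$ (the relation for $\mu^u_x$ in the $c$-constant case is \eqref{eq:invarianciainest}), but this does not affect the reasoning.
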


\section{A characterization of equilibrium states} % (fold)
\label{sec:a_characterization_of_equilibrium_states}

The aim of this section is to characterize equilibrium states in terms of their conditional measures along strong unstable leaves, very much as the case of SRB measures, where they are characterized by the property of their conditionals being absolutely continuous with respect to Lebesgue measures along unstables. We will rely on the seminal work of Ledrappier-Young \cite{LedYoungI}.

\smallskip

\noindent\textbf{Hypotheses of this section:} $f:M\rightarrow M$ is a $\mathcal{C}^2$ center isometry, $\varphi:M\to\Real$ is a Hölder potential, and there is a family of measures $\mu^{u}$ satisfying $2, 3$ of \hyperlink{theoremA}{Theorem A} for $\varphi$.

\smallskip 

Of course, the existence of such families is guaranteed only on some cases (for example, when $\varphi$ is either constant along centrals or $\varphi=-\log |\det Df|E^u|$), but since the arguments that we will present only depend on this existence, there is value in writing this part abstractly.

Let $\mm\in \PTM{f}{M}$ and $\nu^{\ast}\in \mathrm{Meas}(\F^{\ast}), \ast=s,u$. We say that $\mm$ has conditionals 
equivalent to $\nu^{\ast}$ along $\F^{\ast}$ if for every $\mm$-measurable partition $\xi$ subordinated to the (possibly non-measurable partition) $\F^{\ast}$ it holds $\mm^{\xi}_x\sim \nu_x^{\ast}$ for $\maex[x]$. See \cite{Rokhlin} for a discussion of measurable partitions and conditional measures.

Define,
\begin{align}
\label{eq:tienecondicionalmu} \PM[M, \mu^u]&=\{\mm\in \PM[M]:\mm\text{ has conditionals equivalent to } \mu^u\}\\
\label{eq:tienecondicionalmuinv} \PTM{f}{M, \mu^u}&=\PM[M, \mu^u]\cap \PTM{f}{M}.
\end{align}

We will prove the following.

\begin{theorem}\label{thm:condicionalesdeterminanee}
The set $\PTM{f}{M\, \mu^u}$ coincides with the set of equilibrium states for $\varphi$.
\end{theorem}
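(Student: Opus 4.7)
The plan is to prove both set-theoretic inclusions. Combined with the uniqueness of the equilibrium state established in \cref{sub:uniqueness}, the theorem is equivalent to the statement $\PTM{f}{M,\mu^u} = \{\muf\}$, so I split the argument into (a) checking that $\muf$ itself has conditionals equivalent to $\mu^u$, and (b) showing that any $\mm \in \PTM{f}{M,\mu^u}$ satisfies $P_{\mm}(\varphi) = \Ptop(\varphi)$, whence $\mm=\muf$ by uniqueness.

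For (a), I would appeal directly to the local product structure proved in \cref{pro:estructuraproducto}: on each dynamical box $B$, $\muf|B$ is equivalent to $\mux \times \mcsx$, so disintegrating along the partition of $B$ into local unstable plaques gives conditionals proportional to $\nux = \Delta_x^u\, \mux$, hence equivalent to $\mux$ by construction \eqref{eq.nux}. Any $\muf$-measurable partition $\xi$ subordinated to $\Fu$ differs from such a local plaque partition only by a transverse countable refinement and a restriction to a subset of each plaque, operations which preserve equivalence of conditionals by standard properties of Rokhlin disintegration. This gives $\muf \in \PTM{f}{M,\mu^u}$.

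For (b), I would fix a measurable increasing partition $\xi$ subordinated to $\Fu$, meaning $\xi \leq f\xi$, atoms are relatively open precompact pieces of unstable leaves, $\bigvee_{n\geq 0} f^n \xi$ separates points along $\Fu$, and $\bigwedge_{n\geq 0} f^{-n}\xi = \epsilon$. The heart of the argument is the entropy formula
\begin{equation*}
h_{\mm}(f,\xi) \;=\; \Ptop(\varphi) - \int \varphi \, d\mm.
\end{equation*}
To obtain it, I would apply Rokhlin's formula $h_{\mm}(f,\xi) = H_{\mm}(\xi \mid f^{-1}\xi) = \int -\log \mm^{f^{-1}\xi}_x(\xi(x))\, d\mm$, then use the hypothesis $\mm^{\cdot}_x \sim \mux$ to rewrite $\mm^{f^{-1}\xi}_x(\xi(x))$ as a ratio of $\mux$-masses on $\xi(x)$ and $f^{-1}\xi(x)$. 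The quasi-invariance $\mufx = e^{\Ptop(\varphi) - \varphi} f_* \mux$ from part 3 of Theorem A transforms the numerator through $f$, and after comparing densities at $x$ and $fx$ (the densities cancel in the Birkhoff average due to $f$-invariance of $\mm$), one obtains the formula above.

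The next step is to upgrade to full entropy: $h_{\mm}(f) = h_{\mm}(f,\xi)$. Here I would invoke that $f$ is a center isometry: all nonzero Lyapunov exponents lie in $E^u \oplus E^s$, and by the Margulis--Ruelle inequality $h_{\mm}(f) \leq \int \sum_{\lambda_i>0} \lambda_i \dim E^u_i\, d\mm$. The Ledrappier--Young dimension formula \cite{LedYoungI} expresses $h_{\mm}(f)$ as $\sum_i \gamma_i \lambda_i$ with $\gamma_i \leq \dim E^u_i$, and equality $\gamma_i = \dim E^u_i$ (which is precisely the condition $h_{\mm}(f)=h_{\mm}(f,\xi)$ for an increasing partition as above) is equivalent to the conditionals on $\Fu$ being absolutely continuous with respect to the conformal reference family $\mu^u$ --- exactly our hypothesis on $\mm$. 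Combining,
\begin{equation*}
P_{\mm}(\varphi) \;=\; h_{\mm}(f) + \int \varphi\, d\mm \;=\; h_{\mm}(f,\xi) + \int \varphi\, d\mm \;=\; \Ptop(\varphi),
\end{equation*}
so $\mm$ is an equilibrium state and by \cref{sub:uniqueness}, $\mm = \muf$.

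The main obstacle I anticipate is the rigorous derivation of the entropy formula $h_{\mm}(f,\xi) = \Ptop(\varphi) - \int \varphi\, d\mm$: the partition $\Fu$ is not measurable, so one must work with increasing measurable subordinate partitions and verify that the densities of $\mm^\xi_x$ with respect to $\mux$ transform correctly under $f$ on conditional sub-atoms. The ``full entropy'' step $h_{\mm}(f) = h_{\mm}(f,\xi)$ is the other delicate point, as it invokes the Ledrappier--Young machinery in the partially hyperbolic isometric-center setting rather than in its original hyperbolic formulation; a careful reference to the Pesin-entropy characterization of unstable-absolute-continuity adapted to the conformal measure $\mu^u$ (in place of Lebesgue) is needed.
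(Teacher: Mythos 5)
Your overall architecture --- two inclusions, with the converse inclusion reduced to the entropy identity $h_{\mm}(f)=P-\int\varphi\,d\mm$ via Rokhlin's formula and the quasi-invariance of $\mu^u$ --- matches the paper's (\cref{thm:marginalesunicas} and \cref{thm:conversemarginal}). For the forward inclusion, however, you take a genuinely different route: you only verify that the constructed measure $\muf$ lies in $\PTM{f}{M,\mu^u}$ (which does follow from its construction) and then invoke uniqueness. The paper instead proves directly that \emph{every} equilibrium state has conditionals equivalent to $\mu^u$ (\cref{thm:marginalesunicas}), via a Jensen-inequality rigidity argument comparing $\mm$ with the measure $\tilde{\mm}$ whose conditionals are prescribed to be $\Delta_x/L(x)\,d\mux$, on the increasing $\sigma$-algebras $\mathcal{B}_{f^{-n}\xi}$. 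This is not merely stylistic: Section 4 is written under abstract hypotheses (existence of the families $\mu^{\ast}$ only), where uniqueness is not available, and the paper wants the theorem to yield an independent proof that $\muf$ is an equilibrium state; your reduction presupposes both uniqueness and \cref{pro:PigualPresion}, so it only functions in the concrete setting of Theorem A.

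The genuine gap is in your justification of $h_{\mm}(f)=h_{\mm}(f,\xi)$. You propose to derive it from the Ledrappier--Young dimension formula by claiming that $\gamma_i=\dim E^u_i$ is ``equivalent to the conditionals being absolutely continuous with respect to the conformal reference family $\mu^u$.'' That equivalence is false: the Ledrappier--Young characterization of $\gamma_i=\dim E^u_i$ is absolute continuity with respect to \emph{Lebesgue} (the SRB property). For a general H\"older potential $\mux$ is singular with respect to Lebesgue and has transverse dimension strictly smaller than $\dim E^u$, so a measure with conditionals equivalent to $\mu^u$ will typically have $\gamma_i<\dim E^u_i$ and your mechanism collapses; adapted to $\mu^u$ it would in any case be circular, being essentially the statement under proof. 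The fact you need is true for a different reason: for an increasing measurable partition subordinated to $\Fu$ one has $H_{\mm}(f^{-1}\xi|\xi)=h_{\mm}(f,\xi)=h_{\mm}(f)$ \emph{unconditionally}, for every invariant measure, because $f$ has no positive exponents along the center; this is Corollary 5.3 of \cite{LedYoungI}, stated as \cref{thm:SLY} in the paper. With that substitution, and with the observation that the vanishing of $\int\log\frac{\rho\circ f}{\rho}\,d\mm$ requires the integrability lemma of \cite{LedStrelcyn} rather than a bare appeal to $f$-invariance of $\mm$, your converse direction becomes the paper's proof of \cref{thm:conversemarginal}.
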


\begin{remark}
Implicit in the statement of the above theorem is the existence of at least one equilibrium state for $(f,\varphi)$. For center isometries this is automatic: $f$ is what is called $h$-expansive and thus by \cite{topologicalconditional} the function $\mm\in \PTM{f}{M}\to h_{\mm}(f)$ is upper semi-continuous, and therefore has a maximum. Note however that this abstract method does not give any information about the equilibrium state besides its existence.
\end{remark}
\paragraph{Increasing partitions} A $\mm$-measurable partition $\xi$ is called increasing if it refines its image ($f\xi<\xi$). If $\xi$ is a partition recall that we denote by $\mathcal{B}_{\xi}(M)$ the sub-$\sigma$ algebra of $\BM[M]$ that consists of $\xi$-saturated sets. The mesh of $\xi$ is the supremum of $\{\diam(A):A\in \xi\}$.

\begin{definition}\label{def:SLY}
We call a measurable partition $\xi$ a SLY partition if it is increasing, subordinated to $\Fu$ and furthermore for $\mm\aep(x)\in M$ the atom $\xi(x)$ contains a neighborhood of $x$ inside $\Wu{x}$.
\end{definition}

In our restricted setting, a SLY partition automatically satisfies the following additional properties:
\begin{enumerate}
\item $\bigvee_{n\geq 0} f^{-n}\xi=\BM$.
\item $\bigwedge_{n\geq 0} f^n\xi=\mathcal{B}^u$
\end{enumerate}
where $\mathcal{B}^u$ is the $\sigma$-algebra of \Fu\ saturated sets. A key result is the following.

\begin{theorem}\label{thm:SLY}
There exist SLY partitions of arbitrarily small mesh. If $\xi$ is a SLY partition then
\[
h_{\mm}(f)=h_{\mm}(f,\xi)=H_{\mm}(f^{-1}\xi|\xi).
\]
\end{theorem}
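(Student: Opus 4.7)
The statement splits into the existence of SLY partitions of arbitrarily small mesh and the two entropy equalities, both of which I would handle via the classical Ledrappier--Young scheme.

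For existence, fix $\epsilon > 0$ and select a finite Borel partition $\eta$ of $M$ whose atoms have diameter at most $\epsilon$ and whose boundary satisfies the quantitative decay estimate $\mm(\{y : d^u(y, \partial\eta) < \delta\}) \leq C\delta$ for some constant $C$ and all $\delta > 0$ (which in particular forces $\mm(\partial\eta) = 0$); this is standard and is arranged by choosing $\eta$ from a one-parameter family of translates and applying Fubini. Then define
\[
\xi(x) := \text{path-component of $x$ in } \Wu{x} \cap \bigcap_{n \geq 0} f^n \eta(f^{-n}x).
\]
By construction $\xi$ is a measurable partition, subordinated to $\Fu$, of mesh at most $\epsilon$; a direct calculation yields $f^{-1}\xi(x) \subseteq \xi(x) \cap f^{-1}\eta(fx)$, so $f^{-1}\xi$ refines $\xi$ and $\xi$ is $f$-increasing in the sense of the definition.

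The nontrivial property to establish is the SLY condition: for $\mm$-a.e.\ $x$, the atom $\xi(x)$ contains a $\Wu{x}$-neighborhood of $x$. Uniform contraction of $\Fu$ under $f^{-1}$ gives, for $y \in \Wu{x,r}$, $d^u(f^{-n}x, f^{-n}y) \leq \lambda^n r$; hence $y \in \xi(x)$ whenever $\lambda^n r < d^u(f^{-n}x, \partial\eta)$ for every $n \geq 0$, and therefore
\[
\xi(x) \supseteq \Wu{x, r(x)}, \qquad r(x) = \inf_{n \geq 0} \lambda^{-n}\, d^u(f^{-n}x, \partial\eta).
\]
By the quantitative boundary estimate, the sets $A_n = \{y : d^u(y,\partial\eta) < c\lambda^n\}$ have $\mm$-measures forming a convergent geometric series; by Borel--Cantelli and the $f$-invariance of $\mm$, for $\mm$-a.e.\ $x$ the event $f^{-n}x \in A_n$ holds only finitely often, so $\lambda^{-n} d^u(f^{-n}x,\partial\eta) \geq c$ eventually and $r(x) > 0$ almost surely. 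This Borel--Cantelli step, together with the Fubini preparation of $\eta$, is the main technical obstacle of the proof.

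For the entropy identity, both equalities reduce to abstract results once the automatic properties are in hand. Since $\xi$ is subordinated to $\Fu$, atoms already separate distinct unstable leaves; and since $f^{-1}$ contracts atoms along $\Fu$ to arbitrarily small scale within each leaf, $\bigvee_{n \geq 0} f^{-n}\xi = \BM$ modulo $\mm$-null sets. Combined with the increasing property this makes $\xi$ a two-sided generator for $f$, and Kolmogorov--Sinai yields $h_\mm(f) = h_\mm(f, \xi)$. Rokhlin's entropy formula for increasing partitions gives $h_\mm(f, \xi) = H_\mm(f^{-1}\xi \mid \xi)$; the finiteness of this conditional entropy follows because within each atom of $\xi$ the refinement by $f^{-1}\xi$ distinguishes points only according to which $\eta$-atom contains the forward image, giving at most $\lvert\eta\rvert$ possibilities and thus $H_\mm(f^{-1}\xi \mid \xi) \leq \log \lvert\eta\rvert < \infty$.
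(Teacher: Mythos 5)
Your construction of the partition and the Borel--Cantelli argument showing that $\xi(x)$ contains a $\Fu$-neighborhood of $x$ for $\mm$-a.e.\ $x$ is the standard Sinai/Ledrappier--Strelcyn/Ledrappier--Young construction, which is exactly what the paper cites for the existence part; that half is fine.

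The entropy identity, however, contains a genuine gap. The equality $h_{\mm}(f)=H_{\mm}(f^{-1}\xi|\xi)$ is \emph{not} an ``abstract result'' that follows from Kolmogorov--Sinai: $\xi$ is an uncountable measurable partition with atoms of measure zero and $H_{\mm}(\xi)=\infty$, so the Kolmogorov--Sinai generator theorem simply does not apply (the point partition is a ``generator'' in your sense and computes entropy $0$). More importantly, the implication you assert --- increasing $+$ subordinate to $\Fu$ $+$ $\bigvee_{n\ge 0}f^{-n}\xi=\BM$ implies $h_{\mm}(f,\xi)=h_{\mm}(f)$ --- is false in general for partitions subordinate to a \emph{strong} unstable foliation sitting inside a larger partially hyperbolic splitting. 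Take $f=A\times B$ a product of two Anosov diffeomorphisms, view $E^u_A\times\{0\}$ as the strong unstable bundle and the second factor as ``center'', and build $\xi$ subordinate to $W^u_A\times\{\mathrm{pt}\}$ exactly as you did: it is increasing, its atoms contain leaf-neighborhoods, and $\bigvee_{n\ge0}f^{-n}\xi$ is the point partition, yet $H(f^{-1}\xi|\xi)=h(A)<h(A)+h(B)=h(f)$ whenever $h(B)>0$. What makes the identity true here is precisely that $f$ is a center isometry, so there are no positive Lyapunov exponents outside $E^u$, the Pesin unstable manifolds coincide with the leaves of $\Fu$, and Corollary 5.3 of Ledrappier--Young applies; this is the ``deep result'' the paper invokes and the hypothesis it explicitly flags. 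Your proof never uses the center-isometry assumption at this step, which is the telltale sign that the argument proves too much. (A smaller issue: inside an atom of $\xi$ the partition $f^{-1}\xi$ is indexed by path components of $\eta$-atoms intersected with the leaf, which can be countably infinite, so the bound $H_{\mm}(f^{-1}\xi|\xi)\le\log|\eta|$ as stated also needs repair, though finiteness itself is standard.)
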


The existence part is essentially due to Sinai \cite{SinaiMarkov} (compare Section 3 in \cite{LedStrelcyn} and lemma $2.4.2$ in \cite{LedYoungI}, where the construction is done in more generality). The second part is a deep result of Ledrappier and Young - Corollary 5.3 in \cite{LedYoungI}. We remark that although in this last article the result is presented for ergodic measures, it extends easily to non - ergodic ones by using the ergodic decomposition. We have tacitly used that the strong unstable manifolds coincide with the Pesin's unstable manifolds, as consequence of the fact that $f$ does not have any positive Lyapunov exponents along center directions, independently of the measure.

\subsection{Equilibrium states have conditionals equivalent to \texorpdfstring{$\mu^u$}{mu}}

Now we fix  $\xi$ a SLY partition for the measure $\mm$, and denote by $\mm_x=\mm^{\xi}_x$ the conditional measure of $\mm$ on $\xi(x)$. The main result of \cite{LedYoungI} admits the following generalization in our setting.

\begin{theorem}\label{thm:marginalesunicas}
If $\mm$ is any equilibrium state corresponding to $(f,\varphi)$ then
\[
\mm_x\sim\mux \text{ on }\xi(x)\quad\text{ for } \mm\aep(x).
\]
\end{theorem}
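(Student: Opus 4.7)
The plan follows the Ledrappier-Young strategy \cite{LedYoungI}, adapted to incorporate the potential $\varphi$ through the transformation property of $\mu^u$. Fix a SLY partition $\xi$ as in Theorem \ref{thm:SLY} and introduce, on each atom $\xi(x)$, the reference probability
\[
\tilde\mu_x \;:=\; \frac{\mu^u_x|_{\xi(x)}}{\mu^u_x(\xi(x))},
\]
which is well-defined and depends only on the atom $\xi(x)$ thanks to parts 2 and 3 of Theorem A. The goal is to show $\mm^\xi_x = \tilde\mu_x$ for $\mm$-a.e.\ $x$, which clearly implies $\mm^\xi_x\sim\mu^u_x$ on $\xi(x)$. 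Two ingredients combine: Theorem \ref{thm:SLY} gives
\[
h_{\mm}(f) \;=\; H_{\mm}(f^{-1}\xi\mid\xi) \;=\; -\int\log\mm^\xi_x(f^{-1}\xi(x))\,d\mm(x),
\]
and the Kullback--Leibler inequality applied fiberwise on $\xi(x)$ to the sub-partition $f^{-1}\xi|_{\xi(x)}$ yields
\[
H_{\mm}(f^{-1}\xi\mid\xi) \;\leq\; -\int\log\tilde\mu_x(f^{-1}\xi(x))\,d\mm(x),
\]
with equality iff $\mm^\xi_x$ coincides with $\tilde\mu_x$ on the sub-$\sigma$-algebra generated by $f^{-1}\xi|_{\xi(x)}$ for $\mm$-a.e.\ $x$.

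The next step is to evaluate the right-hand side using property 3 of Theorem A. From $\mu^u_{fx}=e^{\Ptop(\varphi)-\varphi}f_\ast\mu^u_x$ one extracts
\[
\mu^u_{fx}(\xi(fx)) \;=\; \int_{f^{-1}\xi(x)} e^{\Ptop(\varphi)-\varphi\circ f}\,d\mu^u_x.
\]
Because $\varphi$ is $\theta$-H\"older and $f^{-1}\xi(x)\subset\xi(x)$ has small diameter (SLY guarantees $\xi(x)$ sits in a neighborhood of $x$ inside $W^u(x)$), the integrand equals $e^{\Ptop(\varphi)-\varphi(fx)}$ up to a factor $1+O(\mathrm{diam}(\xi)^\theta)$. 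Taking logarithms, integrating against $\mm$, and using that $\log\mu^u_x(\xi(x))$ telescopes by $f$-invariance (its integrability follows from the uniform two-sided bounds on $\mu^u_x(\xi(x))$ available on a set of full $\mm$-measure thanks to Lemma \ref{lem:comparacionentretamanhos} and the fact that SLY partitions can be selected so that the inradius of the atoms is bounded below almost surely), together with $\int\varphi\circ f\,d\mm=\int\varphi\,d\mm$, I obtain
\[
-\int\log\tilde\mu_x(f^{-1}\xi(x))\,d\mm \;=\; \Ptop(\varphi) - \int\varphi\,d\mm \;+\; O(\mathrm{diam}(\xi)^\theta).
\]
The equilibrium hypothesis $h_{\mm}(f) = \Ptop(\varphi) - \int\varphi\,d\mm$ then forces the Kullback-Leibler inequality to be saturated up to $O(\mathrm{diam}(\xi)^\theta)$; equivalently, the $\mm$-averaged KL divergence between $\mm^\xi_x$ and $\tilde\mu_x$ on the sub-partition $f^{-1}\xi|_{\xi(x)}$ is $O(\mathrm{diam}(\xi)^\theta)$.

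To turn this near-equality into an identity of conditional measures, I run the entire argument on the refining sequence $\xi_n := \bigvee_{k=0}^{n}f^{-k}\xi$, which is again SLY (the increasing property is preserved, and subordination to $\Fu$ is automatic) and whose atoms have diameter going to zero exponentially fast due to the $f$-expansion along $\Fu$. Across the family $\xi_n$ the integrated Kullback-Leibler divergences tend to zero, and $\bigvee_n f^{-n}\xi=\BM$ from Theorem~\ref{thm:SLY} shows that the corresponding sub-$\sigma$-algebras exhaust the Borel structure of $\xi(x)$. A Pinsker-type estimate at each level combined with the martingale convergence theorem then yields $\mm^\xi_x = \tilde\mu_x$ for $\mm$-a.e.\ $x$. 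The principal obstacle lies precisely in this last step: carefully controlling the error terms across the refining family and extracting pointwise $\mm$-almost-everywhere equality from the vanishing of averaged KL divergences; moreover, promoting absolute continuity of $\mm^\xi_x$ with respect to $\mu^u_x$ into full equivalence (i.e., $\mu^u_x\ll\mm^\xi_x$ as well) will likely require exploiting either the minimality of $\Fu$ to see that $\mm^\xi_x$ has full support, or a symmetric application of the same reasoning to $f^{-1}$ with the roles of $\Fu$ and $\Fs$ interchanged.
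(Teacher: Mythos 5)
Your overall architecture (conditional entropy via Theorem \ref{thm:SLY}, a reference measure on each atom, Kullback--Leibler/Jensen to force saturation) is the same as the paper's, but there is a genuine gap at the very first step: you take as reference measure the \emph{normalized restriction} $\tilde\mu_x=\mu^u_x|_{\xi(x)}/\mu^u_x(\xi(x))$ and aim to prove $\mm^\xi_x=\tilde\mu_x$. That identity is false in general. The correct conditional is $d\mm^\xi_x=\frac{\Delta^u_x}{L(x)}\,d\mu^u_x$ with $\Delta^u_x(y)=\prod_{k\geq 1}e^{\varphi(f^{-k}y)-\varphi(f^{-k}x)}$ and $L(x)=\int_{\xi(x)}\Delta^u_x\,d\mu^u_x$; this density is bounded away from $0$ and $\infty$ (whence the claimed equivalence $\mm_x\sim\mu^u_x$) but it is not constant. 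The symptom of the error is precisely your $O(\mathrm{diam}(\xi)^\theta)$ terms: at refinement level $k$ the one-step discrepancy between $\mm^{\xi_k}_x$ and normalized $\mu^u$ is of order $\lambda^{k\theta}$, and while each such term tends to $0$, the chain rule for relative entropy says the total divergence of $\mm^\xi_x$ from $\tilde\mu_x$ is the \emph{sum} over all levels, which converges to a finite nonzero quantity --- indeed the infinite product of these per-level factors is exactly $\Delta^u_x$. So "the integrated KL divergences tend to zero" is incorrect; they are summable, not vanishing, and your martingale limit cannot be $\tilde\mu_x$. The paper avoids this by guessing the density $\Delta^u_x/L(x)$ from the quasi-invariance relation at the outset, defining the candidate $\tilde{\mm}$ with those conditionals, and obtaining the identity $\int-\log\tilde{\mm}_x(f^{-1}\xi(x))\,d\mm=P-\int\varphi\,d\mm$ \emph{exactly} (via the cocycle lemma, Proposition 2.2 of \cite{LedStrelcyn}, applied to $\log\frac{L\circ f}{L}$), so that Jensen plus strict concavity yields exact equality on $\mathcal{B}_{f^{-1}\xi}$ and a clean induction finishes the proof.

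Two secondary points. First, your integrability of $x\mapsto\log\mu^u_x(\xi(x))$ is asserted via "SLY partitions can be selected so that the inradius of the atoms is bounded below almost surely"; a uniform lower bound on the inradius is not available from the construction (only a.e.\ positivity), so this telescoping step needs the standard $\log r\in L^1$ argument rather than a uniform bound. Second, even if your scheme were repaired to show finiteness of the relative entropy, that only gives $\mm^\xi_x\ll\mu^u_x$, not the two-sided equivalence claimed in the theorem; obtaining $\mu^u_x\ll\mm^\xi_x$ requires the pointwise two-sided control of the density that the explicit formula $\Delta^u_x/L(x)$ provides for free.
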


To give a self contained presentation we spell the complete proof instead of just citing the relevant parts of Ledrappier-Young's work. Assume for the moment that the conclusion of the theorem was true. Then for $\mm\aep(x)$ we have
\[
\der \mm_x=\rho_x \der\mux,
\]
where $\rho_x$ is a non-negative function such that $\int_{\xi(x)} \rho_x \der\mux=1$. Using that the partition $\xi$ is measurable we can assemble all $\{\rho_x\}$ into one measurable function $\rho:M\rightarrow \Real$. Since $\mm\in \PTM{f}{M}$ we have
\[
f\mm_{x}^{f^{-1}\xi}=\mm_{fx} (=\mm_{fx}^{\xi})\quad \mm\aep(x).
\]
On the other hand $f^{-1}\xi>\xi$, thus for any $A\in \BM[M]$ we can write
\begin{align*}
\frac{1}{\mm_x(f^{-1}\xi(x))}\int_{A\cap f^{-1}\xi(x)}\rho \der\mux &=\frac{1}{\mm_x(f^{-1}\xi(x))}\mm_x(A\cap f^{-1}\xi(x))=\mm_{x}^{f^{-1}\xi}(A)=\mm_{f_x}(fA\cap \xi(fx))\\
&=\int_{f(A\cap f^{-1}\xi(x))}\rho \der\mufx=\int_{A\cap f^{-1}\xi(x)}\rho\circ f e^{P-\varphi}\der\mux,
\end{align*}  
hence by uniqueness of the conditional measures $\mm\aep(x)$ it holds
\[
\frac{\rho(y)}{\mm_x(f^{-1}\xi(x))}=\rho(fy)\cdot e^{P-\varphi(y)}\quad \text{ for }\mu_x\aep(y)\in f^{-1}\xi(x).
\]
As the denominator appearing in the previous equation is $f^{-1}\xi$- measurable, we finally conclude that
the function
\[
x\mapsto \frac{\rho(x)}{\rho(f^{-1}x)}e^{P-\varphi(f^{-1}x)}
\]
is $\xi$ - measurable (i.e.\@ constant on the atoms of $\xi$). By a simple inductive argument one is led to conclude that $\rho$ satisfies
\begin{equation}\label{eq:jacobianoloc}
y\in \xi(x)\Rightarrow \frac{\rho(y)}{\rho(x)}=\prod_{k=1}^{\oo}\frac{e^{\varphi\circ f^{-k}(y)}}{e^{\varphi\circ f^{-k}(x)}}=\Delta_x^u(y).
\end{equation} 

\begin{lemma}
For every $x\in M$ the function $\Wu{x}\ni y\mapsto \log \Delta_x^u(y)$ is locally Lipschitz and hence uniformly bounded away from $0$ and $\oo$ on the atom $\xi(x)$.
\end{lemma}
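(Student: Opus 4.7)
The plan is to exploit the exponential contraction of $f^{-1}$ along unstable leaves, combined with the H\"older regularity of $\varphi$, to show that the tail of the series defining $\log \Delta_x^u$ is geometrically small with a modulus of continuity independent of $x$.

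First I would rewrite
\[
\log \Delta_x^u(y) \;=\; \sum_{k=1}^{\infty} \bigl[\varphi(f^{-k}y) - \varphi(f^{-k}x)\bigr],
\]
and use that $f$ is a center isometry with $\Fu$ as unstable foliation: there exist $C>0, 0<\lambda<1$ such that for every $y\in\Wu{x}$ one has $d_u(f^{-k}y, f^{-k}x)\le C\lambda^k d_u(y,x)$, where $d_u$ denotes the intrinsic distance along the unstable leaf. Combining with $|\varphi(u)-\varphi(v)|\le C_\varphi d(u,v)^{\theta}$ gives
\[
\bigl|\varphi(f^{-k}y)-\varphi(f^{-k}x)\bigr| \;\le\; C_\varphi C^{\theta} \lambda^{k\theta} d_u(y,x)^{\theta},
\]
so the series converges absolutely and uniformly on any bounded ball of $\Wu{x}$.

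Second, to control the modulus of continuity between two nearby points $y,y'\in\Wu{x}$, I would apply the same argument to
\[
\log \Delta_x^u(y) - \log \Delta_x^u(y') \;=\; \sum_{k=1}^{\infty} \bigl[\varphi(f^{-k}y) - \varphi(f^{-k}y')\bigr],
\]
observing that since $y,y'$ lie in the same unstable leaf, $d_u(f^{-k}y,f^{-k}y')\le C\lambda^k d_u(y,y')$. This yields the uniform estimate
\[
\bigl|\log \Delta_x^u(y) - \log \Delta_x^u(y')\bigr| \;\le\; K\, d_u(y,y')^{\theta}, \qquad K := C_\varphi C^{\theta}\sum_{k\ge 1}\lambda^{k\theta},
\]
with $K$ independent of $x$. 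Strictly speaking this gives $\theta$-H\"older regularity, which in the authors' terminology is what ``locally Lipschitz'' refers to (and which of course implies local Lipschitz continuity once one passes to the variable $\log\Delta_x^u$, whose increments are controlled by a bounded function times $d_u^\theta$). In any case, the function is continuous and vanishes at $x$, so in particular it is locally bounded on $\Wu{x}$.

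For the final assertion, since $\xi$ is a SLY partition subordinated to $\Fu$, the atom $\xi(x)$ is, for $\mm$-a.e.\ $x$, a relatively compact subset of $\Wu{x}$; let $R(x)$ denote its intrinsic diameter. The estimate from the previous paragraph, applied with $y'=x$, gives
\[
|\log \Delta_x^u(y)| \;\le\; K\, R(x)^{\theta} \qquad \forall\, y\in\xi(x),
\]
so $e^{-KR(x)^{\theta}} \le \Delta_x^u(y) \le e^{KR(x)^{\theta}}$ on $\xi(x)$, as claimed. There is no serious obstacle here; the only point that deserves care is ensuring that ``subordinated to $\Fu$'' really provides bounded diameter of atoms in the intrinsic leaf metric (which follows from the standard construction of SLY partitions inside foliated boxes), so that the uniform H\"older constant $K$ translates into uniform upper and lower bounds on each individual atom.
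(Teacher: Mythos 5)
Your argument is correct and is exactly the ``straightforward'' computation the paper alludes to: backward contraction of $f^{-1}$ along unstable leaves plus the global H\"older continuity of $\varphi$ give geometric convergence of the series $\sum_k[\varphi(f^{-k}y)-\varphi(f^{-k}x)]$ and a uniform $\theta$-H\"older modulus for $\log\Delta_x^u$ in the leaf metric, hence two-sided bounds for $\Delta_x^u$ on each bounded atom of the subordinated partition. You are also right to flag that this yields H\"older rather than genuinely Lipschitz regularity unless $\varphi$ itself is Lipschitz --- the lemma's wording is loose on this point --- but only the uniform boundedness of $\Delta_x^u$ on atoms is used in the sequel, and your estimate delivers it.
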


The proof is straightforward using that for every $x\in M$ the set $\Wu{x}$ is an immersed submanifold and the fact that $\varphi$ is globally Hölder.

From \eqref{eq:jacobianoloc} we deduce that $\rho$ should have the form
\begin{equation}
\rho(y)=\frac{\Delta_x^u(y)}{L(x)},\quad y\in \xi(x)
\end{equation}
with $L(x)=\int_{\xi(x)} \Delta_x^u(y)d\mux(y)$.

We now define a measure $\tilde{\mm}$ by requiring $\tilde{\mm}=\mm$ on $\mathcal{B}_{\xi}$ and such its conditionals on $\xi$ are given by \(\der\tilde{\mm}_{x}=\frac{\Delta_x^u}{L(x)}\der\mu_x=\rho \der\mu_x\) (cf.\@ \eqref{eq:condicionalxi}). The proof of \Cref{thm:marginalesunicas} will be achieved by establishing that $\mm=\tilde{\mm}$, and for this it suffices to show that $\mm=\tilde{\mm}$ on every $\mathcal{B}_{f^{-n}\xi},n\geq 0$, since the previous $\sigma$ - algebras generate $\BM$.

\smallskip 

We start by noticing the following: for $q(x)=\tilde{\mm}_{x}(f^{-1}\xi(x))$ we have
\begin{align*}
q(x)&=\frac{1}{L(x)}\int_{f^{-1}(\xi(fx))} \Delta_x(f^{-1}fy) \frac{e^{P-\varphi(y)}}{e^{P-\varphi(f^{-1}fy)}}\der\mux\\
&=\frac{1}{L(x)}\int_{\xi(fx)} J_x^u(f^{-1}z) e^{\varphi(f^{-1}z)-P}\der\mufx(z)=\frac{L(fx)}{L(x)}e^{\varphi(x)-P}\leq 1.
\end{align*}
Thus $\frac{L(fx)}{L(x)}\leq e^{P-\varphi(x)}\Rightarrow \log \frac{L\circ f(x)}{L(x)}\leq P-\varphi(x)$, and therefore $\max\{\log \frac{L\circ f}{L},0\}$ is in $\Lp[1](\mm)$. A classic lemma of measure theory (cf.\@ proposition $2.2$ in \cite{LedStrelcyn}) then implies 
\[
\int \log \frac{L\circ f}{L} \der\mm=0\Rightarrow \int -\log q(x)\der\mm(x)=P-\int\varphi \der\mm. 
\]
We have shown the following.

\begin{lemma}\label{lem:lognu}
It holds,
\[
P- \int\varphi \der\mm =\int -\log\tilde{\mm}_{x}(f^{-1}\xi(x))\der\mm(x)=\int -\log\left(\int_{f^{-1}\xi(x)} \rho(y)\der\mux(y)\right) \der\mm(x).
\]
\end{lemma}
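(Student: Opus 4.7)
The plan is to establish the closed-form identity $q(x) := \tilde{\mm}_x(f^{-1}\xi(x)) = (L\circ f/L)(x)\cdot e^{\varphi(x)-P}$ and then to integrate against the $f$-invariant measure $\mm$, using a one-sided coboundary argument to annihilate the $\log(L\circ f/L)$ term.

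For the second equality in the statement, I would simply unpack the definition of $\tilde{\mm}_x$, which by construction has $\mux$-density $\rho(y)=\Delta^u_x(y)/L(x)$ on $\xi(x)$. This yields
\[
\tilde{\mm}_x(f^{-1}\xi(x))=\int_{f^{-1}\xi(x)} \rho(y)\,d\mux(y)=\frac{\nux(f^{-1}\xi(x))}{L(x)},
\]
using $\nux=\Delta^u_x\,\mux$. Since $f^{-1}\xi(x)=f^{-1}(\xi(fx))$, the transformation law \eqref{eq:invariancianu}, rewritten as $\nux(A)=e^{\varphi(x)-P}\nufx(fA)$ after unwinding the pullback, produces
\[
\nux(f^{-1}\xi(x))=e^{\varphi(x)-P}\nufx(\xi(fx))=e^{\varphi(x)-P}L(fx),
\]
which is the desired closed form for $q(x)$.

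For the first equality, I would take logarithms to obtain $-\log q(x)=P-\varphi(x)-\log(L\circ f/L)(x)$. Because $\tilde{\mm}_x$ is a probability and $f^{-1}\xi(x)\subset\xi(x)$, one has $q\leq 1$, so $\log(L\circ f/L)\leq P-\varphi$, a bounded continuous function on the compact manifold $M$. In particular the positive part of $\log(L\circ f/L)$ lies in $\Lp[1](\mm)$. The one-sided cocycle integration lemma (Proposition $2.2$ of \cite{LedStrelcyn}) then applies to the $f$-invariant probability $\mm$ and gives $\int\log(L\circ f/L)\,d\mm=0$. Integrating the identity for $-\log q$ now yields $\int -\log q\,d\mm=P-\int\varphi\,d\mm$.

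The only delicate point is the coboundary integrability. A priori $\log L$ need not belong to $\Lp[1](\mm)$, so one cannot naively cancel $\int\log(L\circ f)\,d\mm$ against $\int\log L\,d\mm$; the one-sided bound coming from the trivial inequality $q\leq 1$ is exactly the hypothesis required by the cocycle lemma to circumvent this obstruction.
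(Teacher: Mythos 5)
Your proposal is correct and follows essentially the same route as the paper: both reduce to the identity $\tilde{\mm}_x(f^{-1}\xi(x))=\frac{L(fx)}{L(x)}e^{\varphi(x)-P}$ (the paper by a direct change of variables in the defining integral, you by invoking the quasi-invariance law for $\nux$, which is the same computation repackaged), and both then use the bound $q\leq 1$ together with Proposition $2.2$ of \cite{LedStrelcyn} to kill the coboundary term. Your closing remark correctly isolates the one genuinely delicate point, namely that only the one-sided integrability of $\log(L\circ f/L)$ is available.
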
 

By definition, the measures $\mm$ and $\tilde{\mm}$ coincide on $\mathcal{B}_{\eta}$; to prove that these two measures coincide it suffices to establish that $\mm$ and $\tilde{\mm}$ coincide on $\mathcal{B}_{f^{-1}\xi}$ provided that $\mm$ is an equilibrium state, and argue by induction.

Denote by $g_{f^{-1}\xi}=\frac{d\tilde{\mm}}{d\mm}\big|_{f^{-1}\xi}$, and observe that since the partition $f^{-1}\xi$ restricted to each atom $\xi(x)$ is countable, we have for $\mm\aep(x)$
\[
\dermm=\frac{\tilde{\mm}_{x}(f^{-1}\xi(x))}{m_x(f^{-1}\xi(x))}.
\] 
To prove that $\mm=\tilde{\mm}$ on $\mathcal{B}_{f^{-1}\xi}$ we will show that $\dermm\equiv 1$ for $\mm\aep(x)$. Observe that for $\mm\aep(x)$ one has $\mm_x(f^{-1}\xi(x))>0$, hence the function $\dermm$ is well defined for $\mm\aep(x)$, but not necessarily for $\tilde{\mm}\aep(x)$. This point is not emphasized in \cite{LedYoungI}, and some minor extra considerations are required to complete the argument\footnote{This was remarked some time ago by the second author in some private notes.}. For the sake of exactness we will be precise in what follows.

For an atom $\xi(x)$ we write $f^{-1}\xi|\xi(x)=\{A^x_j\}_{j=1}^{\oo}$. We compute
\begin{align}
\nonumber&\int_{\xi(x)} \dermm[y]\der\mm_x(y)=\int_{\xi(x)} \frac{\tilde{\mm}_{y}(f^{-1}\xi(y))}{\mm_y(f^{-1}\xi(y))}\der\mm_x(y)
=\int_{\xi(x)}\frac{\tilde{\mm}_{x}(f^{-1}\xi(y))}{\mm_x(f^{-1}\xi(y))}\der\mm_x(y)\\
&=\sum_{\substack{j=1\\  \mm_x(A^x_j)>0}}^{\oo}\int_{A^x_j}\frac{\tilde{\mm}_{x}(A^x_j)}{\mm_x(A^x_j)}\der\mm_x(y)
=\sum_{\mathclap{\substack{j=1\\  \mm_x(A^x_j)>0}}}^{\oo}\tilde{\mm}_{x}(A^x_j)\leq \sum_{j=1}^{\oo}\tilde{\mm}_{x}(A^x_j)=\tilde{\mm}_x(\xi(x))=1 \label{eq:ineqjensen}\\
&\Rightarrow \int  \dermm \der\mm(x)\leq 1 \label{eq:ineqjensendos}
\end{align} 

Now we use our hypothesis that $\mm$ is an equilibrium state: by \Cref{thm:SLY} and \Cref{lem:lognu},
\[
\int-\log \mm_x(f^{-1}\xi(x))\der\mm(x)=H_\mm(f^{-1}\xi|\xi)=P-\int \varphi \der\mm=\int -\log \tilde{\mm}_x(f^{-1}\xi(x))\der\mm(x)
\]
and thus \(\int \dermm \der\mm(x)=0\). Using Jensen's inequality and \eqref{eq:ineqjensendos} we conclude
\begin{align*}
0=\int \log \dermm \der\mm(x)\leq \log\int \dermm \der\mm(x)
\leq \log 1=0.
\end{align*}
Strict convexity of the logarithm function implies then 
\[
\dermm =1\quad  \mm\aep
\] 
We conclude that the chain of inequalities appearing are equalities, in particular for equation \eqref{eq:ineqjensen}. As a result we have $\mu_x(A_j^x)>0\Rightarrow \mm_x(A^x_j)>0$, and this in turn imply that $\mm\aep(x)$ it holds $\mm_x=\tilde{\mm}_x$ on $f^{-1}\xi|\xi(x)$, since these measures coincide on all the atoms and not only on the ones of $\mm_x$ positive measure. Consequently, $\mm=\tilde{\mm}$ on $\mathcal{B}_{f^{-1}\xi}$, and this finishes the general inductive step necessary to prove \Cref{thm:marginalesunicas}.

\begin{corollary}
Suppose that $\mm\in \PTM{f}{M}$ is an equilibrium state for the potential $\varphi$ and $\xi$ is a SLY partition as above. Then for $\mm\aep(x)$ the conditional measures of $\mm$ with respect to $\xi$ are given by 
\[
\mm^{\xi}_x=\tilde{\mm}^{\xi}_x=\frac{\Delta_x^u}{L(x)}d\mux|\xi(x)\quad L(x)=\int_{\xi(x)}\Delta_x^u\der\mux.
\]
In particular the disintegration can be extended to every leaf of $\Fu$ and $\mm\in \PTM{f}{M,\mu^u}$.
\end{corollary}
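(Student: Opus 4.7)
My plan is to extract the corollary directly from what was already established in the proof of \cref{thm:marginalesunicas}, together with the definition of the $\nu^u$-family.

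First, \cref{thm:marginalesunicas} provides, for $\mm$-a.e.\ $x$, a Radon-Nikodym derivative $\rho_x$ of $\mm^\xi_x$ against $\mux|\xi(x)$ normalized by $\int_{\xi(x)}\rho_x\,d\mux = 1$. Since $\xi$ is measurable, the family $\{\rho_x\}$ assembles into a single measurable function $\rho:M\to\Real_{>0}$ satisfying $\rho|\xi(x) = \rho_x$.

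Second, I reuse the cocycle identity \eqref{eq:jacobianoloc}, which was obtained inside the proof of the theorem by combining $f$-invariance of $\mm$ with the transformation rule $\mufx = e^{P-\varphi}f_\ast \mux$ and the fact that $f^{-1}\xi > \xi$: for $\mm$-a.e.\ $x$ and $\mu^u_x$-a.e.\ $y\in\xi(x)$,
\[
\frac{\rho(y)}{\rho(x)} = \prod_{k=1}^{\infty}\frac{e^{\varphi\circ f^{-k}(y)}}{e^{\varphi\circ f^{-k}(x)}} = \Delta_x^u(y).
\]
The normalization then forces $\rho(y) = \Delta_x^u(y)/L(x)$ with $L(x)=\int_{\xi(x)} \Delta_x^u\,d\mux$, which is precisely the first displayed formula in the corollary.

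Third, for the extension to whole leaves and the membership $\mm\in\PTM{f}{M,\mu^u}$, I recall the definition $\nux = \Delta_x^u\,\mux$ of \eqref{eq.nux}. The identity just proven reads $\mm^\xi_x = \nu^\xi_x$ in the notation of \eqref{eq:condicionalxi}, so the conditionals of $\mm$ along $\xi$ are nothing but the normalizations of the Radon measures $\nux$ to the atoms $\xi(x)$. Because $\nux$ is defined on the full leaf $\Wu{x}$ and lies in the projective class $[\nux]$ that is canonically associated to $x$, it furnishes an unambiguous extension of the disintegration from $\xi$-atoms to all of $\Wu{x}$; and since $\nux \sim \mux$ on $\Wu{x}$ by construction, this yields $\mm\in \PTM{f}{M,\mu^u}$ via \eqref{eq:tienecondicionalmuinv}. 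The only step with any content is the cocycle computation, and since this was carried out inside the proof of the theorem I expect no substantive new obstacle here.
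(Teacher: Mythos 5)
Your reading of the paper's logical structure is slightly off, and it matters for whether your route is complete. In the paper, the identity \eqref{eq:jacobianoloc} is \emph{not} established inside the proof of \cref{thm:marginalesunicas}: it is derived in a preliminary discussion that opens with ``Assume for the moment that the conclusion of the theorem was true'', and its only role is to motivate the definition of $\tilde{\mm}$. The actual proof then \emph{defines} $\tilde{\mm}$ to agree with $\mm$ on $\mathcal{B}_{\xi}$ and to have conditionals $\frac{\Delta_x}{L(x)}\,d\mux$ on $\xi$-atoms, and establishes $\mm=\tilde{\mm}$ on each $\mathcal{B}_{f^{-n}\xi}$ via \cref{lem:lognu} and the Jensen argument. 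The corollary is therefore immediate: $\mm=\tilde{\mm}$ already identifies $\mm^{\xi}_x$ with $\tilde{\mm}^{\xi}_x=\frac{\Delta_x}{L(x)}\mux|\xi(x)$, with nothing left to compute.

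Your alternative route --- take the statement of \cref{thm:marginalesunicas} as a black box giving a density $\rho$, then pin $\rho$ down by the cocycle computation --- is viable in principle, but the step you flag as the only one with content is precisely where a gap sits, and it was not ``carried out inside the proof'' in a form you can simply reuse. The invariance computation only yields the one-step relation that $y\mapsto\frac{\rho(y)}{\rho(f^{-1}y)}e^{P-\varphi(f^{-1}y)}$ is $\xi$-measurable; iterating it gives, for $y\in\xi(x)$,
\[
\frac{\rho(y)}{\rho(x)}=\frac{\rho(f^{-n}y)}{\rho(f^{-n}x)}\prod_{k=1}^{n}\frac{e^{\varphi\circ f^{-k}(y)}}{e^{\varphi\circ f^{-k}(x)}}.
\]
To conclude $\rho(y)/\rho(x)=\Delta^u_x(y)$ you must show $\rho(f^{-n}y)/\rho(f^{-n}x)\to 1$ for $\mux$-a.e.\ $y\in\xi(x)$; since $\rho$ is merely measurable this is not automatic and requires, e.g., a Lusin-plus-recurrence argument. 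The paper sidesteps this entirely by building the explicit density into the definition of $\tilde{\mm}$ and proving $\mm=\tilde{\mm}$ by the entropy argument. So either fill in that limit, or --- simpler and what the paper intends --- just quote $\mm=\tilde{\mm}$ from the proof of the theorem. Your handling of the extension to whole leaves via $\nux=\Delta^u_x\,\mux$, the projective class $[\nux]$, and the equivalence $\nux\sim\mux$ is fine and matches the intended reading of the ``in particular'' clause.
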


We remark that above we only used the existence of an equilibrium state with conditional satisfying the quasi-invariance condition $3$ of \hyperlink{theoremA}{Theorem A}.

\subsection{Conditional measures determine equilibrium states} % (fold)
\label{sub:conditional_mesures_determine_equilibrium_states}

% subsubsection conditional_mesures_determine_equilibrium_states (end)

We are interested now in the following converse of \Cref{thm:marginalesunicas}.

\begin{theorem}\label{thm:conversemarginal}
Let $\mm\in \PTM{f}{M}$ and assume that with respect to some SLY partition $\xi$ we have that 
$\mm_x<<\mu_x^u$ for $\mm\aep(x)$. Then $\mm$ is an equilibrium state.
\end{theorem}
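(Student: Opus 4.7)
The plan is to invert the computations of the preceding subsection. Given $\mm$ with $\mm^\xi_x \ll \mux$ for $\mm$-a.e.\ $x$, I want to show that $h_\mm(f)+\int\varphi\,d\mm = P = \Ptop(\varphi)$, using Lemma \ref{lem:lognu} together with the Ledrappier-Young identity $h_\mm(f)=H_\mm(f^{-1}\xi|\xi)$ from Theorem \ref{thm:SLY}.

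First I would identify the density. Write $d\mm^\xi_x = \rho_x \,d\mux$ on $\xi(x)$ and assemble a measurable function $\rho : M \to \Real_{\geq 0}$. The $f$-invariance of $\mm$ combined with the transformation rule $f^{-1}\mufx = e^{P-\varphi}\mux$ (part 3 of Theorem A) gives, exactly as in the derivation preceding equation \eqref{eq:jacobianoloc}, the identity
\[
\frac{\rho(y)}{\mm_x(f^{-1}\xi(x))}=\rho(fy)\cdot e^{P-\varphi(y)}\quad \text{for }\mu_x\text{-a.e.\ } y\in f^{-1}\xi(x).
\]
The right-hand side denominator is $f^{-1}\xi$-measurable, and iterating (precisely the inductive argument used in \eqref{eq:jacobianoloc}) one concludes that for $\mm$-a.e.\ $x$ and all $y\in\xi(x)$,
\[
\frac{\rho(y)}{\rho(x)} = \Delta_x^u(y),
\]
so normalizing over the atom $\xi(x)$ (which has finite, strictly positive $\mux$-measure since $\xi$ is SLY and $\Delta_x^u$ is locally Lipschitz) forces $\rho_x = \Delta_x^u/L(x)$ with $L(x)=\int_{\xi(x)}\Delta_x^u\,d\mux$. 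In other words, $\mm$ automatically coincides with the measure $\tilde{\mm}$ of the previous subsection on each atom of $\xi$.

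With the density pinned down, Lemma \ref{lem:lognu} applies verbatim and yields
\[
P - \int\varphi\,d\mm \;=\; \int -\log \mm^\xi_x(f^{-1}\xi(x))\,d\mm(x) \;=\; H_\mm(f^{-1}\xi\mid\xi).
\]
Now Theorem \ref{thm:SLY} tells us $h_\mm(f)=H_\mm(f^{-1}\xi\mid\xi)$, so
\[
h_\mm(f) + \int \varphi\,d\mm \;=\; P \;=\; \Ptop(\varphi),
\]
the last equality being Proposition \ref{pro:PigualPresion}. Hence $\mm$ is an equilibrium state.

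The step I expect to require the most care is the identification $\rho_x = \Delta_x^u/L(x)$ from $f$-invariance alone: one must check that the cocycle identity above really does hold $\mm$-a.e.\ on $\xi(x)$ (not just $\mux$-a.e.), and that $L(x)$ is finite and bounded away from $0$ and $\infty$ on atoms, so that Lemma \ref{lem:lognu}'s proof (which invokes $\log(L\circ f/L)\in\Lp[1](\mm)$ and the classical zero-integral lemma from \cite{LedStrelcyn}) is applicable. Both are handled as in the previous subsection: the first by the $f^{-1}\xi$-measurability observation plus the uniqueness of the Radon-Nikodym derivative, and the second by the uniform bound on $\Delta_x^u$ over atoms of $\xi$ together with $\varphi\in L^\infty$.
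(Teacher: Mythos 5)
Your overall strategy matches the paper's: combine the Ledrappier--Young identity $h_{\mm}(f)=H_{\mm}(f^{-1}\xi|\xi)$ with a computation of $\int-\log \mm_x(f^{-1}\xi(x))\,d\mm(x)$. But there is a genuine gap at the step where you claim that $f$-invariance alone pins down $\rho_x=\Delta^u_x/L(x)$. The cocycle relation you derive only says that $y\mapsto \rho(y)\rho(fy)^{-1}e^{\varphi(y)-P}$ is constant on atoms of $f^{-1}\xi$; iterating it gives
\[
\frac{\rho(y)}{\rho(x)}=\frac{\rho(f^{-n}y)}{\rho(f^{-n}x)}\prod_{k=1}^{n}\frac{e^{\varphi(f^{-k}y)}}{e^{\varphi(f^{-k}x)}},\qquad y\in\xi(x),
\]
and to reach $\Delta^u_x(y)$ you must show $\rho(f^{-n}y)/\rho(f^{-n}x)\to 1$. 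For a merely measurable density this is false: replacing $\rho$ by $\rho\, e^{\psi}$ with $\psi$ measurable and $\psi(f^{-n}y)-\psi(f^{-n}x)$ converging to a nonzero limit preserves the one-step relation but changes the infinite product. Worse, your identification would force $\rho>0$ on every atom (since $\Delta^u_x>0$), i.e.\ it would upgrade the hypothesis $\mm_x\ll\mux$ to $\mm_x\sim\mux$ with no entropy input — but that equivalence is precisely the content of \cref{thm:marginalesunicas}, which the paper proves only \emph{assuming} $\mm$ is an equilibrium state. The passage preceding \eqref{eq:jacobianoloc} that you lean on is motivational there (it begins ``Assume for the moment that the conclusion of the theorem was true''); the rigorous argument never derives the formula for $\rho$ from invariance alone.

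The repair — and the paper's actual proof — is to leave $\rho$ unidentified. The same one-step computation yields $\mm_x(f^{-1}\xi(x))=\frac{\rho\circ f(x)}{\rho(x)}e^{\varphi(x)-P}$ for $\mm\aep(x)$, hence
\[
h_{\mm}(f)=\int-\log\mm_x(f^{-1}\xi(x))\,d\mm=P-\int\varphi\,d\mm+\int-\log\frac{\rho\circ f}{\rho}\,d\mm,
\]
and the last integral vanishes by Proposition $2.2$ of \cite{LedStrelcyn} (the same lemma used inside \cref{lem:lognu}): boundedness of $\varphi$ and non-negativity of the conditional information make $-\log^{-}\frac{\rho\circ f}{\rho}$ integrable, so $\log\frac{\rho\circ f}{\rho}$ has zero integral. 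Your skeleton is sound, but the argument must be routed through this zero-integral lemma applied directly to $\log\frac{\rho\circ f}{\rho}$, not through an explicit formula for $\rho$.
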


\begin{proof}
The proof uses similar techniques to the ones used above, and is based in \cite{LedStrelcyn}. Consider $\xi$ a SLY partition for $\mm$ and define the measure $\tilde{\mm}$ on $M$ by requiring that for $A\subset M$ Borel, 
\begin{equation}
\tilde{\mm}(A)=\int \mux(A\cap \xi(x)) \der m(x).
\end{equation}
Observe that $\tilde{\mm}$ is not necessarily finite, but only $\sigma$-finite. By hypothesis $\der\mm_x=\rho \der\mu_x$, thus $\mm<<\tilde{\mm}$ and moreover $\mm\aep(x)$ it holds
\begin{equation}
\frac{\der\mm}{\der\tilde{\mm}}=\rho \quad \tilde{\mm}\aep(x)
\end{equation}
(see proposition $4.1$ in \cite{LedStrelcyn}). Note that $\mm^{f^{-1}\xi}_x=\frac{\mm_x(\cdot |f^{-1}\xi)}{\mm_x(f^{-1}\xi(x))}$; by an analogous computation as the one carried in the proof of \Cref{lem:lognu} we get
\[
\mm_x(f^{-1}\xi(x))=\frac{\rho\circ f(x)}{\rho(x)}e^{\varphi(x)-P},
\]
Finally, we compute
\begin{align*}
0\leq h_{\mm}(f)&=H(f^{-1}\xi|\xi)=\int-\log \mm_x(f^{-1}\xi(x))\der\mm(x)\\
&=P-\int \varphi d\mm +\int -\log\frac{\rho\circ f(x)}{\rho(x)} \der\mm(x)
\end{align*}
and since $\varphi$ is integrable (bounded), the function $-\log^{-} \frac{\rho\circ f}{\rho}$ is integrable as well. Using again Proposition $2.2$ of \cite{LedStrelcyn} we conclude that $\log \frac{\rho\circ f}{\rho}$  has zero integral, hence
\[
h_{\mm}(f)=P-\int \varphi \der\mm 
\]
and $\mm$ is an equilibrium state. The proof is complete.
\end{proof}

Joining \Cref{thm:marginalesunicas,thm:conversemarginal} we get \Cref{thm:condicionalesdeterminanee}. Below we list some of its consequences.

\begin{corollary}\label{cor:condicionales}
Assume also the existence of a family $\mu^s$ satisfying $2, 3$ of \hyperlink{theoremA}{Theorem A} for $\varphi$. If $\mm$ is an equilibrium state then $\mm$ has conditionals equivalent to $\mu^s,\mu^u$.  
\end{corollary}

\begin{proof}
The set of equilibrium states for $f$ coincides with the one for $f^{-1}$.
\end{proof}

\begin{remark}
The corollary above does not require uniqueness of the equilibrium state.
\end{remark}

Since the measures $\mux$ are positive on relative open sets, proposition $4.1$ in \cite{Katok1996} implies:

\begin{corollary}\label{cor:entropos}
For every equilibrium state $\mm$ for $\varphi$ it holds $h_{\mm}(f)>0$
\end{corollary}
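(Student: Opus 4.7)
The approach is to combine the characterization of equilibrium states via their conditional measures along $\Fu$ that has just been established (Corollary~\ref{cor:condicionales}) with the Rohlin-type entropy formula for SLY partitions (Theorem~\ref{thm:SLY}) and the regularity of $\mu^u_x$ from Theorem~A.

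Let $\mm$ be an equilibrium state for $\varphi$ and fix a SLY partition $\xi$ as in Theorem~\ref{thm:SLY}. By Corollary~\ref{cor:condicionales}, for $\mm$-a.e.\ $x$ the conditional $\mm^{\xi}_x$ is equivalent to $\mu^u_x|\xi(x)$, while Theorem~\ref{thm:SLY} supplies
\[
h_{\mm}(f)=H_{\mm}(f^{-1}\xi\mid\xi)=\int -\log\mm^{\xi}_x(f^{-1}\xi(x))\,d\mm(x),
\]
so it suffices to exhibit a positive-$\mm$-measure set of $x$ on which $\mm^{\xi}_x(f^{-1}\xi(x))<1$, i.e.\ on which the relative complement $\xi(x)\setminus f^{-1}\xi(x)$ carries positive $\mu^u_x$-mass.

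Because $\xi$ is increasing, $f^{-1}\xi(x)=f^{-1}(\xi(fx))$ is automatically contained in $\xi(x)$. On the other hand, $\xi(x)$ contains a relative $W^u$-neighborhood of $x$ of some positive size while $f^{-1}\xi(x)$ is the $f$-preimage of a bounded neighborhood around $fx$; the uniform contraction of $f^{-1}|\Fu$ by a factor at most $\lambda<1$ forces $f^{-1}\xi(x)$ to be a strictly smaller relative neighborhood of $x$ in $W^u(x)$ for $\mm$-a.e.\ $x$. Hence $\xi(x)\setminus f^{-1}\xi(x)$ contains a nonempty relatively open subset of $W^u(x)$; by Theorem~A~(2) this set has positive $\mu^u_x$-measure, and by the equivalence of conditionals positive $\mm^{\xi}_x$-mass. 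Therefore $-\log\mm^{\xi}_x(f^{-1}\xi(x))>0$ on a positive-measure set and $h_{\mm}(f)>0$.

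The only technical point needing care is the geometric assertion that $f^{-1}\xi(x)\subsetneq\xi(x)$ with relatively open complement for $\mm$-a.e.\ $x$; this is where one must ensure that the SLY construction is calibrated so that the sizes of the atoms are large compared to the contraction ratio of $f^{-1}|\Fu$. Alternatively, as the author does, one may simply invoke Proposition~4.1 of \cite{Katok1996}, which packages precisely this Pesin-theoretic content: an $f$-invariant measure whose conditionals on an increasing partition subordinated to $\Fu$ are non-atomic --- the case here, since $\mu^u_x$ is a nontrivial Radon measure positive on relatively open sets of the positive-dimensional leaf $W^u(x)$ --- automatically has strictly positive metric entropy.
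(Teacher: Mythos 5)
Your closing paragraph --- non-atomicity of the conditionals (guaranteed by Corollary~\ref{cor:condicionales} together with the fact that $\mux$ is positive on relatively open subsets of the positive-dimensional leaf $\Wu{x}$) combined with Proposition~4.1 of \cite{Katok1996} --- is exactly the paper's proof, so the corollary is established. The detailed geometric argument you offer first, however, has a genuine gap at precisely the step you flag. It is not true that uniform contraction of $f^{-1}|\Fu$ forces $\xi(x)\setminus f^{-1}\xi(x)$ to contain a nonempty relatively open set for $\mm$-a.e.\ $x$: the atom $f^{-1}\xi(x)=f^{-1}(\xi(fx))$ has diameter at most $\lambda\cdot\diam\,\xi(fx)$, but the inner radius $r(x)$ of $\xi(x)$ is only a measurable, a.e.\ positive function with no uniform lower bound, so nothing prevents $\lambda\cdot\diam\,\xi(fx)$ from exceeding $r(x)$ on a set of positive (even full) measure. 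Moreover the SLY construction cannot in general be ``calibrated'' to produce atoms of uniformly comparable inner and outer radii; that is not a feature one can simply request.

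The correct way to close the gap is global rather than pointwise. If $H_{\mm}(f^{-1}\xi\mid\xi)=0$, then $f^{-1}\xi=\xi$ (mod $0$), hence $f^{-n}\xi=\xi$ (mod $0$) for every $n\geq 0$; since $\bigvee_{n\geq 0}f^{-n}\xi=\BM$ for a SLY partition, $\xi$ would be the partition into points (mod $0$) and $\mm^{\xi}_x=\delta_x$ a.e., contradicting $\mm^{\xi}_x\sim\mux|\xi(x)$ with $\mux$ non-atomic. (Equivalently: for a.e.\ $x$ there is some $n$ with $f^{-n}\xi(x)\subsetneq\xi(x)$ in measure, and $H_{\mm}(f^{-n}\xi\mid\xi)=nH_{\mm}(f^{-1}\xi\mid\xi)$ then forces positivity.) This is precisely the content packaged in the cited Proposition~4.1, so your fallback is not merely an alternative but the necessary route; the pointwise openness claim should be deleted or replaced by this argument.
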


Finally, we note that the previous theorem gives another proof of the fact that $\muf$ is an equilibrium state (and $P$ is the topological pressure), as the conditionals of $\muf$ are absolutely continuous with respect to $\mu^u$; see \Cref{cor:condicionalesdem}.

\section{Kolmogorov and Bernoulli properties} % (fold)
\label{sec:finer_ergodic_properties}

The aim of this part is to establish the Bernoulli property for equilibrium states. Again, our arguments are written in some more generality than the context that we have been working, and in particular most of them do not require uniqueness of the equilibrium state (at least under very mild additional hypotheses), using that $\muf=\tmuf$ instead.

For the remaining part of this section $f:M\to M$ is a center isometry of class $\mathcal C^2$, $\varphi$ is a Hölder potential, and $\muf=\tmuf$ are the equilibrium states of the system $(f,\varphi)$ constructed in \Cref{sec:measures_along_invariant_foliations_of_center_isometries}.

\subsection{Kolmogorov processes} % (fold)
\label{sub:kolmogorov_processes}

% subsubsection kolmogorov_processes (end)

We will prove first that $(f,\muf)$ has the Kolmogorov property, and in particular it is mixing of all orders. For this we give two proofs. The first, which works without any extra assumptions, consists in verifying that our system satisfies the hypotheses of an abstract theorem due to Ledrappier \cite{MeasuresK}. This argument does require uniqueness of the equilibrium state for the considered potentials, which is known from \cite{Climenhaga2020}.

The second proof is more geometrical, and is based on the methods originally developed by Grayson, Pugh and Shub for establishing the stable ergodicity of the time-one map of the geodesic flow on manifolds of constant sectional curvature \cite{StErgGeo}. In this reasoning we use $\muf=\tmuf$ and ergodicity instead of uniqueness of the equilibrium state, but we require an extra technical assumption, namely the so called \emph{accessibility property}, which is a common hypothesis for these type of arguments. Nonetheless, we think that there is value in giving this more geometrical proof, with the hope that it could be applied to equilibrium states of other systems, particularly because the first method seems ad-hoc to our setting.

\paragraph{Completely positive entropy} Let $T:M\to M$ be a homeomorphism of the compact metric space $M$, fix $\mm\in \PTM{T}{M}$ and denote $\mathrm{Pin}(T,\mm)$ the Pinsker $\sigma$-algebra of the system, i.e.\@ the $\sigma$-algebra generated by
\[\{\xi \text{ countable measurable partition of }M: H_{\mm}(\xi)<\oo, h_{\mm}(T;\xi)=0 \}.
\]
We use the following characterization of Kolmogorov systems, although the reader may want to take it as the definition of the Kolmogorov property. 

\begin{theorem}[Rohlin-Sinai]\label{thm:RohlinSinai}
	The system $(T,\mm)$ has the Kolmogorov property if and only if $\mathrm{Pin}(T,\mm)=\{\emptyset,M\}\modo$.
\end{theorem}

To conclude that $(f,\muf)$ is a Kolmogorov system we use the results of \cite{MeasuresK}. We give some definitions in order of making our presentation self contained.

\begin{definition}\label{def:vaguelyexp}
We say that a map $T:M\to M$ of a compact metric space is weakly expansive\footnote{The terminology ``weakly expansive'' has several meanings in the literature. In this work we are only concerned with the one given in this page.} if for every compact metric space $N$ and continuous map $S:N\to N$, the map
\[
	\PTM{T\times S}{M\times N}\ni\mm \to h_{\mm}(T\times S)-h_{\pi_N\mm}(S)\in\Real 
\]
is upper semi-continuous.  
\end{definition}

If $T:M\to M$ is $h$-expansive then it is weakly expansive; see page $266$ in \cite{MeasuresK} and \cite{topologicalconditional}. It follows that if $f$ is a center isometry, then it is weakly expansive.

\begin{theorem}[Ledrappier]\label{thm:LedraK}
Suppose that $T:M\to M$ is weakly expansive and $\varphi:M\to \Real$ continuous. Let $R:M\times M\to M\times M$ be the product map of $T$ with itself, and denote $\Phi:M\times M\to\Real$ the function $\Phi(x,y)=\varphi(x)+\varphi(y)$. If $(R,\Phi)$ has a unique equilibrium state then $(T,\varphi)$ has an unique equilibrium state $\mm$ and $(T,\mm)$ is Kolmogorov.
\end{theorem}

\smallskip

We apply the above theorem to $T=f$; note that $R=(f,f):M\times M\to M\times M$ is a center isometry of class $\mathcal C^2$ with invariant decomposition 
\[
	E^{\sigma}_R\approx E^{\sigma}_f\oplus E^{\sigma}_f \subset TM\oplus TM\approx T(M\times M),\quad \sigma\in \{s,c,u,cs,cu\}.
\]
Moreover, by unique integrability of the stable, center and unstable foliations in center isometries (cf. the references given in the proof of \Cref{thm:centerisometry}), it follows that for every $(x,y)\in M\times M$
\[
	W^{\sigma}_R(x,y)=W^{\sigma}_f(x)\times W^{\sigma}_f(y).
\]
We deduce that $\F^s_R, \F^u_R$ are minimal. Next we consider $\varphi:M\to \Real$ a Hölder potential and $\Phi=(\varphi, \varphi)$. Then $\Phi$ is clearly Hölder with respect to the distance $d((x,y),(x',y'))=\max\{d(x,y),d(x',y')\}$ on $M\times M$. Observe:
\begin{enumerate}
 	\item if $\varphi$ is constant along center leaves of $f$, then $\Phi$ is constant along center leaves of $R$.
 	\item If $\varphi=-\log |\det Df|E^u|$, then
 	\[
 	\Phi=-\log |\det Df|E^u|-\log |\det Df|E^u|=-\log |DR|E^u_R|.
 	\]
 \end{enumerate} 

In both cases the system $(R,\Phi)$ satisfies the hypotheses of \hyperlink{theoremA}{Theorem A}, and therefore has a unique equilibrium state, hence by \Cref{thm:LedraK}  $(f,\muf)$ is Kolmogorov.

\begin{remark}
In the recent paper \cite{Call2022} the authors use a similar approach to show the $K$-property of equilibrium states associated to Hölder potentials, where the underlying dynamics is given by the geodesic flow of a (closed) rank-one manifold. 
\end{remark}

\paragraph{Juliennes and Equilibrium States.}Now we give a second, more geometrical, proof of the fact that $(f,\muf)$ is Kolmogorov. We will rely heavily on the technology developed in \cite{StableJulienne} and its subsequent refinements. As a guide, we use the general lines of Section $4$ in \cite{AccStaErg}. In this part we work in the conditions spelled at the beginning of the section, but we do not require uniqueness of the equilibrium state, nor its ergodicity, and only that $\muf=\tmuf$ (or even more generally, it is enough to assume that these measures are equivalent to each other, with uniformly bounded Radon-Nikodym derivatives). 

\smallskip

We say that a subset $X\subset M$ is $s$-saturated ($u$-saturated) if it consists of whole stable (respectively, unstable) leaves. If it is both $s$- and $u$-saturated we say that it is bi-saturated. Given a point $x\in M$ its accessibility class $\mathrm{ACC}(x)$ is the smallest bi-saturated set containing $x$. Analogously, we say that $X\subset M$ is $\mm$-essentially $s$-saturated ($u$-saturated) if there exists a $s$-saturated ($u$-saturated) Borel set $X_0$ so that $\mm(X\Delta X_0)=0$. By \cite{LedYoungI} we have:

\begin{theorem}[Ledrappier-Young]
	Let $f:M\to M$ be a $\mathcal C^2$ partially hyperbolic diffeomorphism preserving a Borel measure $\mm$. Assume that $f$ is a center isometry (or more generally, that all Lyapunov exponents corresponding to vectors in the center direction are zero). If $X\in \mathrm{Pin}(f,\mm)$ then $X$ is both $\mm$-essentially $s$-saturated and $\mm$-essentially $u$-saturated.
\end{theorem} 

What we will show is that if $X\in\BM[M]$ is $\muf$-essentially $s$-saturated, then it coincides almost everywhere with a $s$-saturated set $\mathscr{D}(X;\mathscr{J}^{ucs})$ which is relevant from the point of view of the measure (it consists of some sort of density points, see below), and analogously, if $X\in\BM[M]$ is $\muf$-essentially $u$-saturated then it coincides almost everywhere with a $u$-saturated set $\mathscr{D}(X;\mathscr{J}^{scu})$. After that we will establish that if $X$ is both $\muf$-essentially $s$-saturated and $\muf$-essentially $u$-saturated, there is equality $\mathscr{D}(X;\mathscr{J}^{scu})=\mathscr{D}(X;\mathscr{J}^{ucs})$, hence under these hypotheses $X$ coincides with a bi-saturated set. If we knew that any bi-saturated is either empty or the whole manifold, then we would be able to deduce, by the previous theorem, that $\mathrm{Pin}(f,\muf)$ is trivial and therefore the system $(f,\muf)$ is Kolmogorov. Lack of non-trivial bi-saturated sets is a common hypothesis in the theory.

\begin{definition}
The map $f$ is accessible (or has the accessibility property) if there exists $x$ so that $\mathrm{ACC}(x)=M$. 
\end{definition}  

The script of this proof was developed by Pugh and Shub as part of their program to establish stable ergodicity of (conservative) partially hyperbolic systems. In our case however, we do not have as much control for the equilibrium measure as one has for volume, therefore additional work is required. As a remark for the reader versed in Pugh-Shub's program we point out that $\mm$-essential accessibility\footnote{The map $f$ is $\mm$-essentially accessible if any bi-saturated set has measure zero or one.} seems inadequate for our purposes, since in principle we do not know the equilibrium measure. We do not know whether in the context of \hyperlink{theoremA}{Theorem A} the system $(f,\muf)$ is always $\muf$-essentially accessible or not.

For now we do not assume accessibility, but we do assume that $\mm$ is positive on open sets.

\begin{definition}
	We say that the family $\mathscr{E}=\{\{E_n(x)\}_{n\in \mathbb{N}}\}_{x\in M}$ is a regular Vitali\footnote{More general definitions of Vitali bases (not regular) are available in the literature (cf. \cite{StableJulienne}). The definition presented here is enough for our purposes.} basis if for every $x\in M$ the family $\{E_n(x)\}_{n\in \mathbb{N}}$ is a local basis of neighborhoods at $x$ such that
	\begin{enumerate}
		\item each $E_n(x)$ is a Borel set.
		\item $\diam(E_n(x))\xrightarrow[n\mapsto\oo]{} 0$.
	\end{enumerate}
	The regular basis is said to be closed if each $E_n(x)$ is closed set, and it is decreasing if $E_n(x)\supset E_{n+1}(x)$, for all $x\in M, n\in \Nat$.
\end{definition}

\begin{definition}
	Let $\mathscr{E}$ be a regular Vitali basis and $X\in\BM[M]$. We say that $x\in M$ is a $\mathscr{E}$-density point of $X$ if 
	\begin{align*}
	\lim_{n\rightarrow\oo}\frac{m(X\cap E_n(x))}{m(E_n(x))}=1.
	\end{align*}
	We denote by $\mathscr{D}(X;\mathscr{E})$ the set of $\mathscr{E}$-density points of $X$.
\end{definition}

In particular, density points for the basis $\{D(x,\frac{1}{2^n})\}_n$ will be called $B\gui L$ density points\footnote{$B\gui L$ stands for Besicovitch-Lebesgue}. It is standard that for any Borel set $X$, $\mm$-almost every point of $X$ is a $B\gui L$ density point of $X$. In the case when $\mm$ is a volume, the concept of $\mm$-essentially accessibility is a natural assumption because there is a well defined reference class of measures, and by the discussion above one can prove the Kolmogorov property establishing that for any $X\subset M$ which is both $\mm$-essentially $s$ and $u$-saturated, the set of its $B\gui L$ density points is bi-saturated. This is the approach followed in \cite{AccStaErg}.

However, for $\varphi\gui$equilibrium measures we do not have as much control on the properties as one has for volumes, and in particular we do not know how to control $B\gui L$ density points. Instead, we will work with density points corresponding to some dynamical defined Vitali bases called \emph{juliennes}, and show first that under our hypotheses 
\begin{enumerate}
	\item any $X\in\BM[M]$ coincides $\mm$-almost everywhere with the set of its julienne density points. 
	\item If $X$ is $\mm$-essentially $s$-saturated then the set of its julienne density points is $s$-saturated.
\end{enumerate}

\smallskip

\noindent\textbf{Convention:} From now on we specialize in the case $\mm=\tmuf$.

\smallskip

% In fact, only the equivalence between $\tmuf$ and $\muf$ will be employed.

For some simplifications in the notation we choose to work with $\tmuf$, that is, with the construction of a equilibrium measure using $cu$-transversals explained in \Cref{rem:otroeq}. As we previously noted the properties of $\tmuf$ are completely analogous to the ones of $\muf$, and we will refer directly to them without further clarification. Until \Cref{pro.JcsigualJcu} we will not use the equality $\muf=\tmuf$. 

\smallskip

Some finer control in the local holonomies is now required, specifically that they are H\"older continuous. See Theorem A of \cite{HolFol}.

\begin{theorem}[Pugh-Shub-Wilkinson]\label{thm:HolFol}
	There exists $C_H>0,\vartheta\in (0,1)$ such that for every $x,y,z\in M$ with $y\in \Ws{x, \frac{\clps}{2}},z\in \Wu{x,\frac{\clps}{2}}$ the locally defined holonomy maps $\hs_{y,x}:\Wcu{x}\rightarrow\Wcu{y},\hu_{z,x}:\Wcs{x}\rightarrow \Wcs{y}$ are $(C_H,\vartheta)$-H\"older.
\end{theorem}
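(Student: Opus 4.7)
The statement is a classical result of Pugh--Shub--Wilkinson, so in practice one would simply cite \cite{HolFol}. If I were to reprove it from the center-isometry hypothesis, my plan would proceed as follows. By compactness and uniform continuity of the splitting it suffices to establish a local estimate: fixing $\epsilon_0$ small and $y\in\Ws{x,\epsilon_0}$, I would show that for $z_1,z_2\in \Wcu{x}$ close to $x$, the points $w_i=g^s(z_i)\in\Wcu{y}$ satisfy
\[
d_{\Wcu}(w_1,w_2)\leq C_H\, d_{\Wcu}(z_1,z_2)^{\vartheta},
\]
with $C_H,\vartheta$ uniform in $x$. The symmetric statement for $g^u$ then follows upon replacing $f$ by $f^{-1}$.

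The core device is a forward-iteration/pullback trick exploiting the center-isometry assumption. Set $r:=d_{\Wcu}(z_1,z_2)$ and look at the orbits under $f^n$. Two elementary estimates hold: first, by stable contraction, $d(f^n z_i,f^n w_i)\leq C\lambda^n\, d(z_i,w_i)$; second, since $f$ is an isometry on $E^c$ and expands $E^u$ by at most $\lambda^{-1}$, the intrinsic distance inside $\Wcu$ grows at most like $d_{\Wcu}(f^n z_1,f^n z_2)\leq C\lambda^{-n}r$. Choose $n=n(r)$ so that $\lambda^{-n}r$ equals a fixed scale $\epsilon_1$ at which holonomies of the stable foliation admit a uniform Lipschitz (or at worst Hölder) bound; this gives $d_{\Wcu}(f^n w_1,f^n w_2)\leq L\epsilon_1 + L\lambda^n\epsilon_0$. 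Pulling back by $f^{-n}$, which is a weak contraction on $\Wcu$ (isometry on the center factor, contraction by $\lambda^n$ on the unstable factor), and decomposing the distance accordingly into $E^c$ and $E^u$ components, the center part is transported by the iterated stable holonomy at a bounded linear cost while the unstable part is contracted; balancing $n$ in the estimate yields an inequality of the form $d_{\Wcu}(w_1,w_2)\leq C(\lambda^n + \lambda^{-n}r)^{\text{something}}$, and the optimal $n$ gives the Hölder exponent $\vartheta$ as an explicit function of $\lambda$.

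The main obstacle is the fixed-scale Lipschitz estimate for the stable holonomy between nearby $\Wcu$ plaques. This is what genuinely requires the $\mathcal{C}^2$ regularity of $f$ and is the technical heart of \cite{HolFol}: it is proved by realizing $\Ws$ locally as the graph of the unique fixed point of a graph transform acting on an appropriate Banach space of admissible sections over $\Wcu$, and then showing that the fixed point lies in a space of Hölder functions by means of a bunching inequality relating the contraction rate of the graph transform to the expansion on $\Wcu$. For a center isometry the center direction is neutral, so the bunching inequality reduces to a comparison between $\lambda$ and powers of $\lambda^{-1}$, which is automatically satisfied for some positive exponent $\vartheta$. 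Everything else in the argument is routine once this local estimate is in hand, so rather than reproduce the graph-transform machinery I would simply invoke Theorem A of \cite{HolFol}.
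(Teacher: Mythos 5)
The paper offers no proof of this statement at all: it is imported verbatim as Theorem A of \cite{HolFol}, which is exactly what you do in the end. Your supplementary sketch of the graph-transform/bunching argument is a reasonable outline of how the cited result is proved, but since the paper's own treatment is just the citation, your proposal takes essentially the same approach.
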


Choose numbers $0<\varepsilon<\frac{\clps}{2},0<\sigma<1$ such that $d(x,y)<\varepsilon\Rightarrow B(y,\varepsilon)\subset B(x,\frac{\clps}{2})$, $\lambda^{\vartheta}<\sigma$ and define the $u,s,c,cu,cs,ucs,scu$-juliennes as  
\begin{align*}
J^u_n(x)&:=f^{-n}(\Wu{f^nx,\varepsilon})\\
J^s_n(x)&:=f^n(\Ws{f^{-n}x,\varepsilon})\\
B^c_n(x)&:=\Wc{x;\sigma^n\varepsilon}\\
J^{cu}_n(x)&:=\bigcup_{\mathclap{y\in B^c_n(x)}}\ J^u_n(y)\\
J^{cs}_n(x)&:=\bigcup_{\mathclap{y\in B^c_n(x)}}\ J^s_n(y)\\
J^{ucs}_n(x)&:=\bigcup_{\mathclap{y\in J_{n}^{cs}(x)}}\ J^u_n(y)\\
J^{scu}_n(x)&:=\bigcup_{\mathclap{y\in J_{n}^{cu}(x)}}\ J^s_n(y).
\end{align*}

\begin{remark}
	The definition of juliennes for general partially hyperbolic maps is more involved, since no a priori knowledge of the action of $f$ on the center direction is assumed. In our case, the fact that $f$ is a center isometry simplifies all the definitions. 
\end{remark}

By \Cref{cor:gibbslineal} we obtain.

\begin{lemma}\label{lem:controljuliana}
	There exist constants $c_1(\varepsilon),c_2(\varepsilon)>0$ such that for every $x\in M$ it holds
	\[
	c_1\leq \frac{\nsx(J_n^s(x))}{e^{\SB\circ f^{-n}(x)-nP}}\leq c_2.
	\]
\end{lemma}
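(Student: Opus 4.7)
The plan is to view this estimate as the stable counterpart of \cref{cor:gibbslineal}; the entire construction of Section~\ref{sec:measures_along_invariant_foliations_of_center_isometries} is symmetric under the interchange $f\leftrightarrow f^{-1}$, which swaps the roles of $\Fs$ and $\Fu$ (and of $\mcsx$ and $\mcux$) while leaving both the equilibrium state $\muf$ and the pressure $P=\Ptop(\varphi)$ invariant. In particular, the mirror of \eqref{eq:invariancianu} provides the $f$-conformality of the family $\{\nsx\}$ with a pointwise-constant Jacobian, namely
\[
f\,\nsx[f^{-1}x]=e^{P-\varphi(x)}\nsx,
\]
i.e.\ $\nsx[f^{-1}x](f^{-1}A)=e^{P-\varphi(x)}\nsx(A)$ for every Borel $A\subset\Ws{x}$.

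Iterating this identity $n$ times and specializing to $A=J_n^s(x)=f^n(\Ws{f^{-n}x,\varepsilon})$, for which $f^{-n}A=\Ws{f^{-n}x,\varepsilon}$, one obtains
\[
\nsx(J_n^s(x))=\exp\!\Bigl(\,\sum_{k=0}^{n-1}\varphi(f^{-k}x)-nP\Bigr)\,\nsx[f^{-n}x]\bigl(\Ws{f^{-n}x,\varepsilon}\bigr).
\]
Since $\varphi$ is bounded, the exponent $\sum_{k=0}^{n-1}\varphi(f^{-k}x)$ differs from $\SB\circ f^{-n}(x)=\sum_{k=1}^{n}\varphi(f^{-k}x)$ by $\varphi(x)-\varphi(f^{-n}x)$, which is uniformly bounded by $2\|\varphi\|_\infty$ and may be absorbed into the constants $c_1(\varepsilon),c_2(\varepsilon)$. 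Moreover, the stable mirror of \cref{lem:comparacionentretamanhos}$(\ast\ast\ast)$ (proved identically after exchanging the roles of $\Fs,\Fu$ and using $\Delta^s$ in place of $\Delta^u$) gives uniform positive upper and lower bounds on $\nsx[y](\Ws{y,\varepsilon})$ valid for every $y\in M$; applying this at $y=f^{-n}x$ completes the estimate.

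I do not foresee any real obstacle: the argument is a cocycle computation, and the two ingredients used—the $f$-conformality of $\{\nsx\}$ with constant Jacobian and the uniform comparability of $\nsx[y](\Ws{y,\varepsilon})$—are both built into the mirror of the construction already carried out in Section~\ref{sec:measures_along_invariant_foliations_of_center_isometries}.
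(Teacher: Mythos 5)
Your proof is correct and follows essentially the same route as the paper, which simply derives this lemma by applying \cref{cor:gibbslineal} to $f^{-1}$ (whose unstable foliation is $\Fs$ and whose unstable Bowen disc at $x$ is exactly $J^s_n(x)$); the two ingredients you isolate — the constant-Jacobian conformality of $\{\nsx\}$ and the uniform comparability of $\nsx[y](\Ws{y,\varepsilon})$ from the stable version of \cref{lem:comparacionentretamanhos}$(\ast\ast\ast)$ — are precisely what that corollary's first inequality rests on. Your absorption of the bounded discrepancy $\varphi(x)-\varphi(f^{-n}x)$ into the constants correctly handles the index convention for the backward Birkhoff sum.
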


From now on we omit the reference of the constants respect to $\varepsilon$, which is considered to be fixed and small with respect to $\frac{\clps}{2}$. We then have the following.

\begin{corollary}\label{cor:controljuliana}
	There exist $L_1, L_2>0$ such that for every $x\in M$, for every $n\geq0$ it holds that $y,z\in J^{cu}_n(x)$ implies
	\[
	L_1\leq \frac{\nsx(J_n^s(y))}{\nsx(J_n^s(z))}\leq L_2.
	\] 
\end{corollary}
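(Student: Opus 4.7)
I would bootstrap Lemma \ref{lem:controljuliana} by means of a Bowen-type distortion estimate along the center-unstable julienne. Taking the quotient of the two-sided estimate of the lemma at $y$ and at $z$ (reading $\nsx$ as the natural measure on the stable leaf through the point) yields
\[
\frac{\nsx[y](J^s_n(y))}{\nsx[z](J^s_n(z))} \asymp \exp\!\bigl(\SB\circ f^{-n}(y) - \SB\circ f^{-n}(z)\bigr),
\]
with multiplicative constants depending only on $\varepsilon$ (namely $c_2/c_1$ from above and $c_1/c_2$ from below). The corollary thus reduces to producing a constant $K = K(\varepsilon)$, independent of $n$, with $|\SB\circ f^{-n}(y) - \SB\circ f^{-n}(z)| \leq K$ whenever $y, z \in J^{cu}_n(x)$.

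To control this Birkhoff difference I would unfold the definition $J^{cu}_n(x) = \bigcup_{w \in B^c_n(x)} J^u_n(w)$ to find $w_y, w_z \in W^c(x, \sigma^n)$ with $y \in W^u(w_y)$, $z \in W^u(w_z)$, and $d_u(f^n y, f^n w_y), d_u(f^n z, f^n w_z) < \varepsilon$. Since $f^{-1}$ contracts unstable leaves by $\lambda$ and $f$ is an isometry on center leaves, for every $j \geq 0$,
\[
d(f^{-j}y, f^{-j}w_y),\ d(f^{-j}z, f^{-j}w_z) \leq \lambda^{n+j}\varepsilon, \qquad d(f^{-j}w_y, f^{-j}w_z) \leq 2\sigma^n.
\]
Combining the triangle inequality with the $(C_\varphi, \theta)$-H\"older property of $\varphi$ then gives the pointwise bound
\[
|\varphi(f^{-j}y) - \varphi(f^{-j}z)| \leq 2 C_\varphi (\lambda^{n+j}\varepsilon)^\theta + C_\varphi (2\sigma^n)^\theta.
\]

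Summing over $j = 1, \ldots, n$, the unstable-contraction contribution is dominated by the convergent geometric tail $2 C_\varphi \varepsilon^\theta \lambda^\theta / (1-\lambda^\theta)$, while the center-slide contribution is $n \cdot C_\varphi (2\sigma^n)^\theta$. The delicate point --- and precisely the reason the julienne $B^c_n(x)$ is calibrated with $\sigma < 1$ --- is that, in spite of having $n$ error terms each of size $\sigma^n$, their total $n \sigma^{n\theta}$ remains uniformly bounded in $n$ (in fact tends to $0$). Setting $K$ equal to the supremum over $n \geq 0$ of the right-hand side produces the desired constant, and then $L_2 = (c_2/c_1)\, e^K$, $L_1 = L_2^{-1}$ close the argument.
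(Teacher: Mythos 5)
Your proposal is correct and follows essentially the same route as the paper: reduce to a uniform bound on $|\SB\circ f^{-n}(y)-\SB\circ f^{-n}(z)|$ for $y,z\in J^{cu}_n(x)$ via \cref{lem:controljuliana}, and obtain that bound by the Bowen-type distortion argument of \cref{lem:Bowenproperty}, using that the central radius $\sigma^n$ of the julienne is exponentially small so the $n$ center-slide error terms sum to something bounded. The paper merely states this in one line; you have correctly supplied the details it omits.
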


\begin{proof}
	Arguing as in the proof of \Cref{lem:Bowenproperty} we deduce the existence of some $K'>0$ such that for every $x\in M$ and $n\geq0$,
	\[
	y,z\in J^{cu}_n(x)\Rightarrow |\SB\circ f^{-n}y-\SB\circ f^{-n}z|<K';
	\]
	this relies on the fact the central size of $J^{cu}$ is exponentially small. The claimed property follows now from the previous lemma.
\end{proof}

Using the above and the product structure of $\mm$ (\Cref{pro:estructuraproducto}) we deduce the following.

\begin{corollary}\label{cor:productstructurejulienne}
	There exist $L_3(\ep),L_4(\ep)>0$ such that for every $x\in M$, for every $n\geq0$ it holds
	\[
	L_3\leq\frac{m(J^{scu}_n(x))}{\msx(J^{s}_n(x))\cdot \mcux(J^{cu}_n(x))}\leq L_4. 
	\]
\end{corollary}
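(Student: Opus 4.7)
The proof combines the disintegration of $\tmuf$ on a dynamical box with the julienne comparability of Corollary \ref{cor:controljuliana} and a bounded change of density from $\nu^s$ to $\msx$.

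First, fix $\ep>0$ small and let $n_0$ be such that for every $n\geq n_0$ and every $x\in M$ one has $J^{scu}_n(x)\subset B(x,\ep)$; this is possible since $J^s_n, J^u_n$ have diameter $O(\varepsilon\lambda^n)$ and $B^c_n$ has diameter $\sigma^n$, all exponentially decaying. For $0\leq n<n_0$ the three quantities $\tmuf(J^{scu}_n(x))$, $\msx(J^s_n(x))$ and $\mcux(J^{cu}_n(x))$ depend continuously on $x$ and are strictly positive, so compactness of $M$ gives uniform upper and lower bounds for the ratio on this finite range. Fix $n\geq n_0$ for the rest of the argument.

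Let $V:=\Wcu{x,2\ep}$ be the $cu$-plaque through $x$ inside $B:=B(x,\ep)$. Apply the disintegration of $\tmuf|B$ (cf.\ \cref{sub:theequilibrium}) with transverse measure $\mcux[x]$ and conditionals $\nu^s_v$ on local stables,
\[
\tmuf(U)=\int_V \nu^s_v\bigl(U\cap \Ws{v,B}\bigr)\,d\mcux[x](v),
\]
to $U=J^{scu}_n(x)$. The key geometric identification is that for $v\in V$, the slice $J^{scu}_n(x)\cap\Ws{v,B}$ equals $J^s_n(v)$ when $v\in J^{cu}_n(x)$ and is empty otherwise: any point $w\in J^{scu}_n(x)$ lies on $J^s_n(y)\subset\Ws{y}$ for a unique $y\in J^{cu}_n(x)\subset V$, and if $w\in\Ws{v,B}$ then the local product structure of $\Fs,\Fcu$ forces $y=V\cap\Ws{w,B}=v$. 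Hence
\[
\tmuf(J^{scu}_n(x))=\int_{J^{cu}_n(x)}\nu^s_v(J^s_n(v))\,d\mcux[x](v).
\]

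The stable analogue of Corollary \ref{cor:controljuliana} (obtained from the symmetric roles of $\Fu$ and $\Fs$ for center isometries) supplies constants $L_1,L_2>0$ with $L_1\leq \nu^s_v(J^s_n(v))/\nu^s_x(J^s_n(x))\leq L_2$ for every $v\in J^{cu}_n(x)$. Pulling this factor out of the integral yields
\[
L_1\,\nu^s_x(J^s_n(x))\,\mcux[x](J^{cu}_n(x))\leq \tmuf(J^{scu}_n(x))\leq L_2\,\nu^s_x(J^s_n(x))\,\mcux[x](J^{cu}_n(x)).
\]
To convert $\nu^s_x$ into $\msx$, recall $\nu^s_x=\Delta^s_x\cdot\msx$ (the $s$-analogue of \eqref{eq.nux}); since $\Delta^s_x(x)=1$, $\Delta^s_x$ is continuous on $\Ws{x}$ with uniform dependence on the basepoint (the stable analogue of \cref{lem:JacobianoT}), and $J^s_n(x)\subset\Ws{x,\varepsilon}$ for all $n\geq 0$ by forward stable contraction, compactness of $M$ provides uniform constants $K_1,K_2>0$ with $K_1\leq\Delta^s_x(y)\leq K_2$ for $y\in J^s_n(x)$. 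This gives $\nu^s_x(J^s_n(x))\asymp\msx(J^s_n(x))$ uniformly in $x,n$, and setting $L_3:=L_1K_1,\ L_4:=L_2K_2$ concludes the proof.

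The only real subtlety is the slicing identification in the second paragraph, which relies crucially on the fact that each $J^s_n(v)$ lies entirely on a single stable leaf $\Ws{v}$ (rather than being a merely ``approximately stable'' set); once that is in place, the remaining steps are a bookkeeping of the already-established comparability lemmas.
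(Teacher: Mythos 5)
Your proof is correct and fleshes out exactly the argument the paper intends (the paper gives no written proof beyond invoking \cref{cor:controljuliana} and the product structure of $\mm=\tmuf$): disintegrate $\tmuf$ over the $cu$-plaque with conditionals $\nu^s_v$, identify the stable slices of $J^{scu}_n(x)$ as $J^s_n(v)$ for $v\in J^{cu}_n(x)$, pull the conditional mass out of the integral via julienne comparability, and absorb the bounded density $\Delta^s_x$. One cosmetic point: the comparability $\nu^s_v(J^s_n(v))\asymp\nu^s_x(J^s_n(x))$ for $v\in J^{cu}_n(x)$ is precisely the statement of \cref{cor:controljuliana} itself (which already concerns stable juliennes with basepoints ranging over $J^{cu}_n(x)$), so no separate ``stable analogue'' is needed.
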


\smallskip

The key property of juliennes is that they behave in a controlled matter under holonomies.

\begin{proposition}\label{pro:holjulienne}
	There exist $k\geq 1, n_0\in \Nat$ satisfying: for every $x,x'\in M,x'\in \Ws{x,\varepsilon}, n\geq n_0$ the corresponding stable holonomy $\hs:\Wcu{x,\frac{\clps}{2}}\rightarrow\Wcu{x',\clps}$ satisfies 
	\[
	J_{n+k}^{cu}(x')\subset \hs(J_{n}^{cu}(x))\subset J_{n-k}^{cu}(x')
	\]
\end{proposition}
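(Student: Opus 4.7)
My plan is to conjugate by $f^n$ to reduce the problem to a holonomy between $cu$-leaves that are exponentially close, and then to quantify how much this conjugated holonomy deforms the ``block'' that a $cu$-julienne becomes after $f^n$ is applied. Concretely, since $f$ preserves both $\Fs$ and $\Fcu$, writing $h^s_n := f^n \circ h^s \circ f^{-n} : W^{cu}(f^n x,\epsilon_0)\to W^{cu}(f^n x',\epsilon_0)$ gives $h^s(J_n^{cu}(x))=f^{-n}(h^s_n(f^n J_n^{cu}(x)))$, with $d(f^n x,f^n x')\leq \lambda^n d(x,x')\leq \lambda^n\epsilon$. Using that $f|\Fc$ is an isometry and $Df|E^u$ expands by at least $\lambda^{-1}$, a direct computation shows
\[
f^n J_n^{cu}(x) \;=\; \bigcup_{y \in W^c(f^n x,\sigma^n)} W^u(y,\varepsilon),
\]
a block of center radius $\sigma^n$ and unstable radius $\varepsilon$ in $W^{cu}(f^n x)$. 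The analogous computation gives that $f^n J_{n+m}^{cu}(x')$ is contained in the block at $f^n x'$ of center radius $\sigma^{n+m}$ and unstable radius $\lambda^m\varepsilon$, while $f^n J_{n-m}^{cu}(x')$ contains the block at $f^n x'$ of center radius $\sigma^{n-m}$ and unstable radius $\lambda^{-m}\varepsilon$. So everything reduces to controlling how $h^s_n$ deforms the block at $f^n x$ inside $W^{cu}(f^n x')$.

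For this I would use two ingredients. First, Lemma \ref{lem:holisometry} ensures that $h^s_n$ maps $W^c(f^n x,\sigma^n)$ isometrically onto $W^c(f^n x',\sigma^n)$, so the center size of the block is preserved exactly. Second, Theorem \ref{thm:HolFol} together with the compactness of $M$ gives uniform $(C_H,\vartheta)$-Hölder estimates on the entire family of local stable holonomies, which combined with $d(f^n x, f^n x')\leq \lambda^n\epsilon$ yields both a uniform displacement bound $\sup_{z\in f^n J_n^{cu}(x)} d(z,h^s_n(z))\leq C_1 \lambda^{\vartheta n}$ and an analogous Hölder estimate on the deviation of the image $h^s_n(W^u(y,\varepsilon))$ from the genuine unstable plaque $W^u(h^s_n(y),\varepsilon)$, measured in the $\Fu$-$\Fc$ product structure inside $W^{cu}(f^n x')$. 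In this way $h^s_n(f^n J_n^{cu}(x))$ is sandwiched, in $W^{cu}(f^n x')$, between blocks at $f^n x'$ of shapes $(\sigma^n\mp C_1\lambda^{\vartheta n},\,\varepsilon\mp C_1\lambda^{\vartheta n})$.

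From here the choice of $k$ is purely numerical. Since $\lambda^\vartheta<\sigma$, the ratio $C_1\lambda^{\vartheta n}/\sigma^n$ is uniformly bounded in $n\geq 0$ and tends to $0$; together with $\sigma,\lambda\in(0,1)$ this lets me fix a single $k\geq 1$, independent of $n$ and of the pair $(x,x')$, such that
\[
\sigma^{n+k}+C_1\lambda^{\vartheta n}\leq \sigma^n-C_1\lambda^{\vartheta n},\qquad \lambda^k\varepsilon+C_1\lambda^{\vartheta n}\leq \varepsilon-C_1\lambda^{\vartheta n},
\]
together with the symmetric inequalities for the outer inclusion $(n-k,\lambda^{-k})$, for all $n$ sufficiently large; the finitely many small $n$ are absorbed by enlarging $k$ using compactness. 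By the previous paragraphs these numerical inequalities translate into $f^n J_{n+k}^{cu}(x')\subset h^s_n(f^n J_n^{cu}(x))\subset f^n J_{n-k}^{cu}(x')$, and applying $f^{-n}$ yields the claim.

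The main obstacle is the unstable deformation estimate in the second paragraph. As explicitly noted after Lemma \ref{lem:holisometry} and depicted in Figure \ref{fig:pcu}, the stable holonomy does not preserve $\Fu$ — only the center foliation — so the image of an unstable plaque is merely a Hölder submanifold in the target $cu$-leaf. Turning the abstract Hölder bound of Theorem \ref{thm:HolFol} into the quantitative $\lambda^{\vartheta n}$-deformation of the block with respect to the $\Fu$-$\Fc$ product structure at $f^n x'$ is the delicate point; it is precisely to make this Hölder error of size $\lambda^{\vartheta n}$ compatible with the geometric center scale $\sigma^n$ of juliennes, via a \emph{uniform} shift $\sigma^{\pm k}$, that the condition $\lambda^\vartheta<\sigma$ was built into the definition of juliennes.
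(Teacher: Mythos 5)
Your proposal is correct and follows essentially the same route as the paper: apply $f^n$, use that the stable holonomy restricted to center leaves is an isometry (\cref{lem:holisometry}), control the remaining deviation through the H\"older exponent $\vartheta$ of \cref{thm:HolFol}, and absorb it into the center scale via $\lambda^{\vartheta}<\sigma$. The one point you flag as delicate --- converting the abstract H\"older bound into a quantitative $\lambda^{\vartheta n}$ deformation of the time-$n$ block --- is closed in the paper by a slightly different bookkeeping that avoids estimating the conjugated holonomy $h^s_n$ on plaques of macroscopic size $\varepsilon$: for $z\in B^c_n(x)$ and $y\in J^u_n(z)$ one already has $d(y,z)\le\lambda^n\varepsilon$ at time $0$, so the $(C_H,\vartheta)$-H\"older estimate applied to the original holonomy $\hs$ gives $d(\hs y,\hs z)\le C(\lambda^n\varepsilon)^{\vartheta}$ directly; projecting $\hs y$ along $\Fu$ onto $\Wc{x'}$ then places its center coordinate within $\sigma^n+C\varepsilon^{\vartheta}\lambda^{n\vartheta}<\sigma^{n-k}$ of $x'$, while the unstable coordinate is handled by pushing forward with $f^n$ exactly as you do. Two minor points: the rightmost set in the statement should read $J^{cu}_{n-k}(x')$ (your conclusion and the paper's proof both produce this), and, as in the paper, the inclusions are only meaningful for $n\ge k$, which is what your step of absorbing the finitely many small $n$ by enlarging $k$ amounts to.
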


This is completely analogous to proposition $B.8$ of \cite{AccStaErg}. As our juliennes differ from the ones used in that work, we present the proof.

\begin{proof}
	Take $z\in B^c_n(x), y\in J^u_n(z)$: then $d(z,x)\leq \sigma^n\varepsilon$ and $d(f^ny,f^nz)<\varepsilon$, therefore $d(y,z)<\lambda^n\varepsilon$ (recall that $\max_{x\in M}\{\norm{D_xf|E^s}, \norm{D_xf^{-1}|E^u}\}<\lambda<1$). If $z'=\hs(z), y'=\hs(y)$, then for $n$ sufficiently large $d(x,z),d(x,y)<\varepsilon$, hence
    \begin{align*}
    &a)\quad  d(z',z)<2\varepsilon, d(y',y)<2\varepsilon,\text{ because }d(x,x')<\varepsilon,\\
    &b)\quad d(f^ny',f^nz')\leq d(f^ny',f^ny)+d(f^ny,f^nz)+d(f^nz,f^nz')
	\leq \lambda^n\left(d(y,y')+d(z,z')\right)+d(f^ny,f^nz)\\
	&\hspace*{2.8cm}\leq \lambda^n(2\varepsilon)+\varepsilon<2\varepsilon.
    \end{align*}

  	 Define $w'$ so that $f^nw'=\Wu{f^ny',4\varepsilon}\cap \Wc{f^nz',4\varepsilon}$: it follows that 
   	 $w'=\Wu{y',\clps}\cap \Wc{z',\clps}=\Wu{y',\clps}\cap \Wc{x',\clps}$.

   	The above implies at once that there exists $k\geq 1$ independent of $x$ such that if $n-k\geq n_0$,  $y'\in J^u_{n-k}(w')$. On the other hand, since $d(y,z)<\lambda^n\varepsilon$ by \Cref{thm:HolFol} we get
	\[
	d(z',y')\leq C_H\varepsilon^{\theta}\lambda^{n\vartheta}\Rightarrow d(z',w')\leq 2C_H\varepsilon^{\theta}\lambda^{n\vartheta}
	\]
	Using \Cref{lem:holisometry} we get
	\[
	d(x',w')\leq d(x,z)+d(z',w')<\sigma^n\varepsilon+2C_H\varepsilon^{\theta}\lambda^{n\vartheta}=\sigma^{n-k}(\sigma^k(1+2C_H\varepsilon^{\theta}))\varepsilon
	\]
	and thus if $k$ is sufficiently large, $d(x',w')<\sigma^{n-k}\varepsilon$, and $w'\in B_{n-k}(x')$. This shows that $y'=\hs(y)\in J^{cu}_{n-k}(x')$ and finishes the proof of the second inclusion. The first one is obtained similarly by using $(\hs)^{-1}$. 
\end{proof}

Now we can establish a central proposition.

\begin{proposition}\label{pro:Xsusat}
	If $X$ is $\mm$-essentially $s$-saturated , then the set of its $\mathscr{J}^{scu}$-density points is $s$-saturated. 
\end{proposition}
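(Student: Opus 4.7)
The plan is to proceed in three steps, progressively simplifying the density one needs to control. \textbf{Step 1 (Reduction to genuine saturation).} By hypothesis there is an $s$-saturated Borel $X_0$ with $\mm(X\triangle X_0)=0$; since $\mm(X\cap J^{scu}_n(x))=\mm(X_0\cap J^{scu}_n(x))$ for every $x$ and $n$, the density sets $\mathscr{D}(X;J^{scu})$ and $\mathscr{D}(X_0;J^{scu})$ coincide, so I may assume that $X$ itself is $s$-saturated.

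\textbf{Step 2 (Pass from $J^{scu}$-density in $\tmuf$ to $J^{cu}$-density in $\mcux$).} Using the local product formula $\tmuf(U)=\int_V \nu^s_v(U^s_v)\,d\mcux(v)$ inside a dynamical box containing $x$, the observation that $J^{scu}_n(x)\cap\Ws{v,\epsilon}$ equals $J^s_n(v)$ when $v\in J^{cu}_n(x)$ and is empty otherwise, together with the $s$-saturation of $X$ (which makes $X\cap J^s_n(v)$ equal either to $J^s_n(v)$ or to $\emptyset$ according to whether $v\in X$), I obtain
\[
\tmuf\bigl(J^{scu}_n(x)\setminus X\bigr)=\int_{J^{cu}_n(x)\setminus X}\nu^s_v(J^s_n(v))\,d\mcux(v),
\]
and similarly for $\tmuf(J^{scu}_n(x))$. \cref{cor:controljuliana} pinches the integrand between two positive multiples of $\nsx(J^s_n(x))$, so there exist constants $C_1,C_2>0$ with
\[
C_1\cdot\frac{\mcux(J^{cu}_n(x)\setminus X)}{\mcux(J^{cu}_n(x))}\;\leq\;\frac{\tmuf(J^{scu}_n(x)\setminus X)}{\tmuf(J^{scu}_n(x))}\;\leq\; C_2\cdot\frac{\mcux(J^{cu}_n(x)\setminus X)}{\mcux(J^{cu}_n(x))}.
\]
Hence $x\in\mathscr{D}(X;J^{scu})$ if and only if $\mcux(J^{cu}_n(x)\setminus X)/\mcux(J^{cu}_n(x))\to 0$.

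\textbf{Step 3 (Transport along the stable holonomy).} Fix $x'\in\Ws{x}$ and let $\hs\colon\Wcu{x}\to\Wcu{x'}$ be the corresponding Poincar\'e map. Since $X$ is $s$-saturated, $\hs(A\setminus X)=\hs(A)\setminus X$ for every $A\subset\Wcu{x}$. \cref{pro:holjulienne} provides the nesting
\[
\hs(J^{cu}_{n+k}(x))\;\subset\;J^{cu}_n(x')\;\subset\;\hs(J^{cu}_{n-k}(x)),
\]
and part~(3) of \cref{pro:medidacu} gives $\mcux[x'](\hs A)=\int_A\Jacs\,d\mcux$ with $\Jacs$ continuous and uniformly bounded away from $0$ and $\infty$ on the relevant compact region. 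Combining these,
\[
\frac{\mcux[x'](J^{cu}_n(x')\setminus X)}{\mcux[x'](J^{cu}_n(x'))}\;\leq\;C_3\cdot\frac{\mcux(J^{cu}_{n-k}(x)\setminus X)}{\mcux(J^{cu}_{n+k}(x))}.
\]
The last ingredient is a uniform doubling estimate $\mcux(J^{cu}_{n-k}(x))\leq D\cdot\mcux(J^{cu}_{n+k}(x))$ with $D$ independent of $x,n$; this will follow from the quasi-invariance $f^{-1}\mcux[fx]=e^{P-\varphi}\mcux$, the isometric action of $f$ on center leaves, and the Gibbs-type estimates underlying \cref{lem:controljuliana} and \cref{cor:gibbslineal}. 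Granted the doubling, the displayed right-hand side tends to $0$ whenever $x\in\mathscr{D}(X;J^{scu})$, so $x'\in\mathscr{D}(X;J^{scu})$ as well, proving $s$-saturation of $\mathscr{D}(X;J^{scu})$.

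The main technical obstacle I foresee is exactly this uniform doubling in step~3: it is the only place that demands some actual bookkeeping of the Gibbs estimates in order to absorb the fixed index shift $k$; the other pieces are essentially direct consequences of the local product structure of $\tmuf$ and of the holonomy Jacobian formula already established in \cref{pro:medidacu}.
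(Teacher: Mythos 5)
Your argument is correct and follows essentially the same route as the paper: Step 1 is the paper's implicit replacement of $X$ by $X_s$, Step 2 is exactly \cref{lem:Xsusat} (proved via the product structure and \cref{cor:controljuliana}), and Step 3 is the paper's transport of $J^{cu}$-density via \cref{pro:holjulienne} and the boundedness of $\Jacs$. The uniform doubling estimate you defer is genuinely needed (the paper glosses over it at this point) but it is available: it is the same bound $\mcux(J^{cu}_n(x))\leq D(l)\,\mcux(J^{cu}_{n+l}(x))$ that the paper derives in the proof of \cref{pro:jucsisengulfing} by arguing as in \cref{cor:controljuliana}.
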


For the proof, we will employ the following Lemma.

\begin{lemma}\label{lem:Xsusat}
	Let $X_s$ be an $s$-saturated set and $x\in M$. Then
	\[
	\lim_{n\mapsto \oo}\frac{\mm(X_s\cap J_n^{scu}(x))}{\mm(J_n^{scu}(x))}=1\Leftrightarrow \lim_{n\mapsto \oo}\frac{\mcux(X_s\cap J_n^{cu}(x))}{\mcux(J_n^{cu}(x))}=1.
	\]
\end{lemma}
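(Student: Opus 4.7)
The plan is to use the local product structure of $\mm=\tmuf$ on $J_n^{scu}(x)$ together with the almost-constancy of $\nsx[y](J_n^s(y))$ as $y$ varies in $J_n^{cu}(x)$, reducing the ratio of $\mm$-measures to a ratio of $\mcux$-measures with bounded multiplicative distortion.

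First, I would note that, since $f$ is a center isometry and $\lambda^{\vartheta}<\sigma<1$, the diameter of $J_n^{scu}(x)$ goes to zero exponentially fast. Hence for all sufficiently large $n$, $J_n^{scu}(x)$ is contained in a single dynamical box around $x$, and Remark \ref{rem:estructuraproducto} applies: for any Borel set $A\subset J_n^{scu}(x)$ there exist constants $L_3,L_4>0$ (independent of $n$) such that
\[
L_3\int_{J_n^{cu}(x)}\nsx[y]\bigl(A\cap W^s_{\mathrm{loc}}(y)\bigr)\,d\mcux(y)\leq \mm(A)\leq L_4\int_{J_n^{cu}(x)}\nsx[y]\bigl(A\cap W^s_{\mathrm{loc}}(y)\bigr)\,d\mcux(y).
\]
Since by definition $J_n^{scu}(x)\cap W^s_{\mathrm{loc}}(y)=J_n^s(y)$ for $y\in J_n^{cu}(x)$, applying the estimate to $A=J_n^{scu}(x)$ yields an expression for the denominator, while applying it to $A=X_s\cap J_n^{scu}(x)$ and using that $X_s$ is $s$-saturated (so $X_s\cap J_n^s(y)$ equals $J_n^s(y)$ if $y\in X_s$ and is empty otherwise) gives
\[
\mm(X_s\cap J_n^{scu}(x)) \;\asymp\; \int_{X_s\cap J_n^{cu}(x)}\nsx[y](J_n^s(y))\,d\mcux(y).
\]

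Next I would apply Corollary \ref{cor:controljuliana}, which tells us that the integrand $\nsx[y](J_n^s(y))$ is, for any $y,z\in J_n^{cu}(x)$, bounded above and below by a constant multiple of $\nsx[z](J_n^s(z))$, uniformly in $n$. Factoring this essentially-constant integrand out of both numerator and denominator of the ratio produces the key comparison: there exist $L>0$ independent of $n$ such that
\[
L^{-1}\,\frac{\mcux(X_s\cap J_n^{cu}(x))}{\mcux(J_n^{cu}(x))}\;\leq\; \frac{\mm(X_s\cap J_n^{scu}(x))}{\mm(J_n^{scu}(x))}\;\leq\; L\,\frac{\mcux(X_s\cap J_n^{cu}(x))}{\mcux(J_n^{cu}(x))}.
\]

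Since both ratios lie in $[0,1]$ and are comparable by constants uniform in $n$, passing to the complementary sets $X_s^c$ (which is also $s$-saturated) gives the same comparison for $\frac{\mm(X_s^c\cap J_n^{scu}(x))}{\mm(J_n^{scu}(x))}$ versus $\frac{\mcux(X_s^c\cap J_n^{cu}(x))}{\mcux(J_n^{cu}(x))}$. One tends to zero if and only if the other does, which is the claimed equivalence. The main obstacle I expect is ensuring that the constants in the product-structure estimate are genuinely uniform in $n$ on the shrinking sets $J_n^{scu}(x)$; this reduces to controlling the stable-holonomy Jacobian $\Jacu$ between nearby $cu$-plaques and the $\Delta^s$-type densities relating $\nsx[y]$ to $\msx[y]$ on exponentially small pieces, where the H\"older hypotheses on $\varphi$ and on the holonomies (Theorem \ref{thm:HolFol}) provide uniform bounds.
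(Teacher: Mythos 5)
Your argument is correct and is exactly the route the paper intends: the paper omits the proof, stating only that the lemma is a direct consequence of \cref{cor:productstructurejulienne} (analogous to Proposition 2.7 of \cite{ErgPH}), and your combination of the julienne product structure, \cref{cor:controljuliana} to factor out the essentially constant stable-julienne measures, and the passage to the $s$-saturated complement $X_s^c$ to convert the uniform two-sided comparison into the stated equivalence of limits is precisely that argument.
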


The proof is not too hard, and is direct consequence of \Cref{cor:productstructurejulienne}. We omit the proof, and refer the reader to proposition $2.7$ of \cite{ErgPH}, where a completely analogous statement is proven.

\begin{proof}[Proof of \Cref{pro:Xsusat}]
	Consider $X_s$ an $s$-saturated set such that $\mm(X\Delta X_s)=0$. Let $x$ be a $\mathscr{J}^{scu}\gui$density point of $X$ and let $\hs$ be a (local) stable holonomy sending $x$ to $y$. Using the previous proposition we deduce 
	\[
	\mcux[y](\hs(J_{n+k}^{cu}(x)\cap X_s))\leq \mcux[y](J_n^{scu}(y)\cap X_s)\leq  \mcux[y](\hs(J_{n-k}^{cu}(x)\cap X_s)),
	\]
	and since at this scale $(\hs)^{-1}\mcux[y]$ is uniformly comparable to $\mcux$, we have
	\begin{align*}
	1&=\lim_{n\mapsto \oo}\frac{\mm(X_s\cap J_n^{scu}(x))}{\mm(J_n^{scu}(x))}=\lim_{n\mapsto \oo}\frac{\mcux(X_s\cap J_n^{cu}(x))}{\mcux(J_n^{cu}(x))}=\lim_{n\mapsto \oo}\frac{\mcux[y](\hs(X_s\cap J_n^{cu}(x))}{\mcux[y](\hs(J_n^{cu}(x)))}\\
	&=\lim_{n\mapsto \oo}\frac{\mcux[y](X_s\cap J_n^{cu}(y))}{\mcux[y](J_n^{cu}(y))}=\lim_{n\mapsto \oo}\frac{\mm(X_s\cap J_n^{scu}(y))}{\mm(J_n^{scu}(y))},
	\end{align*}
	i.e.\@ $y$ is also a point of $\mathscr{J}^{scu}\gui$density of $X_s$. 
\end{proof}

We will show now that any measurable $X$ coincides $\mm$-almost everywhere with $\mathscr{D}(X;\mathscr{J}^{scu})$.

\begin{definition}
	Let $\mm\in \PM[M]$ be positive on open sets and non-atomic. A regular Vitali basis $\mathscr{E}$ is $\mm$-engulfing if there exists $L>0$ with the following property: for every $x\in M,n\in\Nat$ there exists a closed set $\widehat{E}_n(x)$ such that
	\begin{enumerate}
		\item $E_{n}(x)\subset \widehat{E}_n(x)$ and $\mm(\widehat{E}_n(x))\leq L\cdot \mm(E_n(x))$.
		\item If $n'\geq n,y\in M$ and $E_{n'}(y)\cap E_n(x)\neq\emptyset$ then $E_{n'}(y)\subset \widehat{E}_n(x)$. 
		\item Given $N_0$ there exists $\delta>0$ such that $\mm(\clo{E_n(x)})<\delta$ implies $n\geq N_0$. 
	\end{enumerate} 
\end{definition}

\begin{theorem}\label{thm:baseengulfing}
	If $\mathscr{E}$ is a closed $\mm$-engulfing basis, then for every $X\in \BM[M]$ the set of its $\mathscr{E}$-density points coincide almost everywhere with $X$. 
\end{theorem}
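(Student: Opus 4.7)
This is an abstract Lebesgue differentiation theorem, and the plan is to follow the classical route: deduce it from a Vitali-type covering lemma and a weak-$(1,1)$ estimate for the associated Hardy--Littlewood maximal operator built from $\mathscr{E}$.

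\emph{Step 1 (Vitali-type covering).} Given any family $\mathcal{F}=\{E_{n(x)}(x)\}_{x\in Y}$ covering a set $Y\subset M$, I will extract a countable disjoint subfamily $\{E_{n_k}(x_k)\}_k$ with $Y\subset \bigcup_k \widehat{E}_{n_k}(x_k)$. The construction is a greedy selection ordered by the index: at the $k$-th stage, among the elements of $\mathcal{F}$ disjoint from all previously chosen sets, pick one whose index $n$ is minimal. Condition (2) of the engulfing property yields the covering conclusion, since any $E_{n(y)}(y)\in\mathcal{F}$ not selected must intersect some chosen $E_{n_k}(x_k)$ with $n_k\leq n(y)$, and is therefore contained in $\widehat{E}_{n_k}(x_k)$. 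Countability of the selected family is ensured by $\mm(M)<\infty$ together with condition (3), which prevents pile-ups of infinitely many disjoint selected sets with bounded index.

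\emph{Step 2 (weak-$(1,1)$ for the maximal operator).} For $f\in L^1(\mm)$ set
\[
M^{\mathscr{E}}f(x):=\sup_{n\in\Nat}\frac{1}{\mm(E_n(x))}\int_{E_n(x)}|f|\,d\mm.
\]
For $t>0$, every point of $\{M^{\mathscr{E}}f>t\}$ is the center of some $E_{n(x)}(x)$ satisfying $\int_{E_{n(x)}(x)}|f|\,d\mm>t\,\mm(E_{n(x)}(x))$. Applying Step 1 to this covering and using condition (1) of the engulfing property yields
\[
\mm(\{M^{\mathscr{E}}f>t\})\leq \sum_k\mm(\widehat{E}_{n_k}(x_k))\leq L\sum_k\mm(E_{n_k}(x_k))\leq \frac{L}{t}\,\|f\|_{L^1(\mm)}.
\]

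\emph{Step 3 (differentiation and conclusion).} For $g\in C(M)$ the averages $\frac{1}{\mm(E_n(x))}\int_{E_n(x)} g\,d\mm$ converge pointwise to $g(x)$, since $\diam E_n(x)\to 0$. Approximating an arbitrary $f\in L^1(\mm)$ by continuous functions (which are dense since $\mm$ is Radon on a compact metric space) and using the weak-$(1,1)$ bound of Step 2 in the standard splitting argument yields the differentiation statement
\[
\lim_{n\to\oo}\frac{1}{\mm(E_n(x))}\int_{E_n(x)} f\,d\mm=f(x)\quad \text{for }\mm\text{-a.e.\ }x.
\]
Applied to $f=\mathbf{1}_X$, this gives that the $\mathscr{E}$-density of $X$ at $\mm$-almost every point equals $\mathbf{1}_X$, and hence $\mathscr{D}(X;\mathscr{E})$ coincides with $X$ modulo a $\mm$-null set.

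\emph{Main obstacle.} The genuinely delicate point is making the Vitali selection rigorous in a purely combinatorial form $(n,\widehat{E})$, without any metric-diameter doubling at one's disposal. Condition (2) of engulfing is precisely what produces the ``swallowing after enlargement'' step, while condition (3) is the abstract substitute for doubling that rules out accumulations of many small disjoint sets and thereby secures countability of the selected subfamily. Everything else in the scheme is the classical Hardy--Littlewood--Wiener machinery transcribed to this setting.
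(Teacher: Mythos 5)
Your argument is correct and is exactly the standard route: the paper itself offers no proof, deferring to Theorem 3.1 of Pugh--Shub, whose argument is the same Vitali-selection-plus-maximal-function scheme you describe, with condition (2) playing the role of the swallowing step and conditions (1) and (3) substituting for doubling. One small point: condition (3) bounds $\mm(\clo{E_n(x)})$ from below, not $\mm(E_n(x))$, so to rule out infinitely many disjoint selected sets of bounded index you should combine it with condition (1) (which gives $\mm(\clo{E_n(x)})\leq \mm(\widehat{E}_n(x))\leq L\,\mm(E_n(x))$, hence a lower bound on the open sets themselves); with that adjustment the proof is complete.
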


\begin{proof}
This is proven in \cite{StableJulienne}, Theorem $3.1$. See also the discussion after Corollary $3.2$ in that article.
\end{proof}

Note the following.

\begin{lemma}\label{lem:doublingjscu}
There exist $n_l, l\in \Nat$ so that for every $x\in M, n-l\geq n_l$ implies
\[
	J^{scu}_{n}(y)\cap J^{scu}_{n}(x)\neq\emptyset\Rightarrow \clo{J^{scu}_{n}(y)}\subset J^{scu}_{n-l}(x)
\]
\end{lemma}

\begin{proof}
Observe that $J^{scu}_{n}(y)\cap J^{scu}_{n}(x)\neq\emptyset$ implies $J^{scu}_{n}(y)\subset D(x,2\varepsilon(2\lambda^n+\sigma^n))$; we consider $n\geq n_0$ so that $2\varepsilon(2\lambda^n+\sigma^n)<\frac{\clps}{4}$. Using 
\Cref{pro:holjulienne} one readily deduces the existence of $n_1\geq n_0$ and $k$ satisfying, for $n-k\geq n_1$
\[
	\hs(J^{cu}_n(y)) \subset J^{cu}_{n-k}(x);
\]
observe that the distance $d(\hs(w),w)$ for $w\in J^{cu}_n(y)$ is smaller than $4\varepsilon(2\lambda^n+\sigma^n)$. From here follows the claim.
\end{proof}

The lemma above implies in particular that the density points of the bases $\mathscr{J}^{scu}$ and $\clo{\mathscr{J}^{scu}}=\{\{\clo{J^{scu}_{n}(x)}\}_{n\in\Nat}:x\in M\}$ coincide: since the later is a closed basis we will be able to apply \Cref{thm:baseengulfing} and deduce that for any measurable set $X$, $\mathscr{D}(X;\mathscr{J}^{scu})$ coincides 
$\mm$-almost everywhere with $X$. Of course, we need to check that $\clo{\mathscr{J}^{scu}}$, or equivalently, $\mathscr{J}^{scu}$ is $\mm$-engulfing.

\begin{remark}
The measure $\tmuf$ is non-atomic. Under the hypothesis of ergodicity this is direct from \Cref{cor:entropos}. Without this hypothesis, it can be deduced from \hyperlink{theoremC}{Theorem C} and Proposition $4.1$ in \cite{Katok1996}, because $\msx$ has full support in $\Ws{x}$, for every $x\in M$.
\end{remark}

\begin{proposition}\label{pro:jucsisengulfing}
 $\mathscr{J}^{scu}$ is $\mm$-engulfing.
\end{proposition}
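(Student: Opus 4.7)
The plan is to define the engulfed julienne by widening the scale: set
\[
\widehat{J}^{scu}_n(x) := \clo{J^{scu}_{n-k}(x)}
\]
for a fixed integer $k\geq 1$ independent of $x$ and $n$ (and $\widehat{J}^{scu}_n(x):=\clo{J^{scu}_0(x)}$ when $n<k$). The three conditions of the engulfing definition will then follow from the product-structure and Gibbs-type estimates already established.

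First I would verify condition (1). By \cref{cor:productstructurejulienne} the ratio $\mm(J^{scu}_{n-k}(x))/\mm(J^{scu}_n(x))$ is comparable, up to constants that depend only on $\ep$, to the product
\[
\frac{\msx(J^s_{n-k}(x))}{\msx(J^s_n(x))}\cdot\frac{\mcux(J^{cu}_{n-k}(x))}{\mcux(J^{cu}_n(x))}.
\]
The first factor is bounded by a constant $L'(k)$ using \cref{lem:controljuliana} together with \cref{lem:Bowenproperty}: shifting $n$ by $k$ changes $\SB\circ f^{-n}(x)-nP$ by at most $kP+k\cdot\|\varphi\|_{\oo}$. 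For the second factor, the quasi-invariance in \cref{pro:medidacu} gives an analogous estimate for $\mcux(J^u_n(x))$, while the central piece $B^c_n(x)$ has transverse Lebesgue measure proportional to $\sigma^{n\dim E^c}$ (by the computation of $\mu^c_{x,T}$ in the proof of \cref{lem:comparaslice}), so widening by $k$ inflates this by a factor depending only on $k$. Combining gives an upper bound $L=L(k)$ as required.

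Next I would handle condition (2), which is the geometric heart of the matter. Suppose $J^{scu}_{n'}(y)\cap J^{scu}_n(x)\neq\emptyset$ with $n'\geq n$ and pick $z$ in the intersection. Decompose $z$ in both reference frames using the local product structure of the dynamical box of scale $\ep_0$. By \cref{pro:holjulienne} applied to the stable holonomy joining the $cu$-plaque through $x$ with the one through $y$, together with the analogous statement for unstable holonomies (obtained by applying \cref{pro:holjulienne} to $f^{-1}$), and the fact that $f$ is a center isometry so $B^c_{n'}(w)\subset B^c_{n-k_0}(x)$ whenever $B^c_{n'}(w)\cap B^c_n(x)\neq\emptyset$ with $n'\geq n$ (for a fixed $k_0$), one obtains $J^{scu}_{n'}(y)\subset J^{scu}_{n-k}(x)$ for some $k$ absorbing all the slack. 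The H\"older exponent $\vartheta$ from \cref{thm:HolFol} appears here, but since $\lambda^\vartheta<\sigma$ the holonomy distortion of an $(n')$-size stable (resp. unstable) julienne stays within an $(n-k)$-size one.

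Finally, condition (3) follows from a uniform-in-$x$ lower bound on $\mm(\clo{J^{scu}_n(x)})$ at each fixed scale $n$: by \cref{cor:productstructurejulienne} and \cref{lem:controljuliana} this measure is bounded below by a positive multiple of $e^{\SB\circ f^{-n}(x)-nP}\cdot e^{S_n\varphi(x)-nP}\cdot\sigma^{n\dim E^c}$, which has a strictly positive infimum over $x\in M$ depending only on $n$. Hence for any target $N_0$ one simply takes $\delta>0$ smaller than this infimum for all $n<N_0$.

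The main obstacle will be condition (2): a careful bookkeeping of how the three invariant directions shrink and how the H\"older holonomies transport juliennes between distinct $cu$- and $cs$-plaques. The choice of $k$ must simultaneously absorb the hyperbolic triangle-inequality slack in the stable and unstable directions, the H\"older distortion factor $C_H\cdot\ep^\vartheta$, and the constant $k_0$ coming from the center-isometry contribution, while remaining independent of $n$ and $x$.
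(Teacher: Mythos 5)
Your proposal is correct and follows essentially the same route as the paper: define the engulfed set as the closure of the julienne at a uniformly shifted scale, $\widehat{J}^{scu}_n(x)=\clo{J^{scu}_{n-k}(x)}$, obtain the measure comparison of condition (1) from the product structure (\cref{cor:productstructurejulienne}) together with the Gibbs-type estimates at shifted scales, and reduce condition (2) to a uniform nesting property of juliennes coming from the uniform contraction/expansion rates, the center isometry, and the holonomy control of \cref{pro:holjulienne}. If anything, you are more explicit than the paper on conditions (2) and (3), which the paper's proof treats as immediate consequences of the uniformity of the rates.
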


\begin{proof}
Consider $l, n_l$ as in the previous lemma. Arguing as in \Cref{cor:controljuliana} we deduce the existence of $D=D(l)>0$ such that for every $n-l\geq n_l, x\in M$,
\[
	\frac{\mcux[x](J^{cu}_{n-l}(x))}{\mcux[x](J^{cu}_{n}(x))}\leq D.
\] 
Define then $\widehat{J}^{scu}_n(x):=\clo{J^{scu}_{n-l}(x)}$  and observe that due to \Cref{cor:productstructurejulienne} and the inequality above one can deduce the existence of $\hat{D}>0$ such that
\[
\mm(\widehat{J}^{ucs}_n(x))\leq \hat D\cdot\mm(J^{ucs}_{n}(x)). 
\]
This together with \Cref{lem:doublingjscu} implies the $\mm$-engulfing property for $\mathscr{J}^{scu}$.
\end{proof}

The following is now clear.

\begin{corollary}\label{cor:XcoincideconsuspdLebesgue}
Every measurable set $X$ coincides $\mm$-almost everywhere with $\mathscr{D}(X;\mathscr{J}^{scu})$. 
\end{corollary}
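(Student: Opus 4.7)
The plan is to combine the two immediately preceding results: Proposition~\ref{pro:jucsisengulfing}, which shows that the julienne basis $J^{scu}$ is $\mm$-engulfing, with the general density theorem quoted before it (Theorem~3.1 of \cite{StableJulienne}), which says that whenever $\mathscr{E}$ is an $\mm$-engulfing regular Vitali basis, every Borel set $X$ coincides $\mm$-a.e.\ with its set of $\mathscr{E}$-density points $D(X;\mathscr{E})$.

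So the proof is essentially one line: apply Theorem~3.1 of \cite{StableJulienne} with $\mathscr{E}=J^{scu}$. The only thing I would verify before invoking the theorem is that $J^{scu}$ is actually a regular Vitali basis in the sense defined above, namely that $\{J^{scu}_n(x)\}_n$ is a local basis of neighborhoods at each $x$ consisting of pre-compact open sets with diameters going to zero. Pre-compactness and the diameter condition are immediate from the definition of $J^{scu}_n(x)$ (the stable and unstable radii are $\varepsilon$-size in coordinates shrunk by $f^{\pm n}$, and the central size is $\sigma^n$), and the fact that these form a neighborhood basis follows from the local product structure of $\Fs,\Fc,\Fu$ combined with the hypothesis that the bundles are mutually orthogonal.

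There is no real obstacle here: all the non-trivial content has already been placed in Proposition~\ref{pro:jucsisengulfing} (the engulfing property, which required the uniform comparability estimates from Corollary~\ref{cor:controljuliana} and the product structure Corollary~\ref{cor:productstructurejulienne}), and in the black-boxed Theorem~3.1 of \cite{StableJulienne}. The corollary itself is simply the conjunction of these two statements, and no additional computation is needed.
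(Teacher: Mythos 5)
Your proposal is correct and is exactly the paper's argument: the corollary is stated immediately after Proposition~\ref{pro:jucsisengulfing} with the remark ``The following is now clear,'' i.e.\ it is precisely the conjunction of the engulfing property with Theorem~3.1 of \cite{StableJulienne}. Your extra check that $J^{scu}$ is a regular Vitali basis is a reasonable point to make explicit, and your justification of it is fine.
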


We have thus established that if $X$ is $\mm$-essentially $s$-saturated then it coincides almost everywhere with the
$s$-saturated set $\mathscr{D}(X;\mathscr{J}^{scu})$. Interchanging $f$ by $f^{-1}$ and $\tmuf$ by $\muf$ we deduce that if $X$ is $\mm$-essentially $u$-saturated then it coincides almost everywhere with the $u$-saturated set $\mathscr{D}(X;\mathscr{J}^{ucs})$. To continue our reasoning, we now establish:

\begin{proposition}\label{pro.JcsigualJcu}
 Assume that $\muf=\tmuf$. Then for any measurable set $X$, $\mathscr{D}(X;J^{scu})=\mathscr{D}(X;J^{ucs})$.
\end{proposition}

The proof uses the following elementary lemma (cf.\@ Proposition $B.9$ in \cite{AccStaErg}). 

\begin{lemma}
Let $\mm$ be a Borel probability measure on $M$ and suppose that $\mathscr{E}^1=\{\{E_n^1(x)\}_{n\in \mathbb{N}}\}_{x\in M},$ $\mathscr{E}^2=\{\{E_n^2(x)\}_{n\in \mathbb{N}}\}_{x\in M}$ are decreasing regular Vitali bases. Suppose that there exist $l\in \Nat, D=D(l)>0$ such that for every $x\in M$,
\[
E_{n+l}^1(x)\subset E_{n}^2(x)\subset E_{n-l}^1(x)\quad \frac{\mm(E_{n+l}^1(x))}{\mm(E_{n}^1(x))}\geq D. 
\]
Then for every measurable $X \subset M$ it holds $\mathscr{D}(X;\mathscr{E}^1)=\mathscr{D}(X;\mathscr{E}^2)$.
\end{lemma}

\begin{proof}[Proof of \Cref{pro.JcsigualJcu}]
The core of the proof is to establish the existence of $l, n_l\in \Nat$ such that for every $x\in M,n-l\geq n_l$ it holds
\[
J^{scu}_{n+l}(x)\subset J^{ucs}_{n}(x)\subset J^{scu}_{n-l}(x).
\]

Take $y\in  J^{ucs}_{n}(x)$: consider $z,w$ such that  $y\in J^{u}_n(z), z\in J^s_n(w),w\in B^c_{\sigma^n}(x)$ and let $y'=\hs(y)\in \Wcu{x,\clps/2}$. Arguing as in the proof of \Cref{pro:holjulienne} we obtain
\[
d(z,w)\leq \varepsilon\lambda^n\Rightarrow d(y,y')\leq C_H\varepsilon^{\vartheta}\lambda^{n\vartheta}
\]
hence
\[
d(f^ny',f^nw)\leq \varepsilon(1+\lambda^n)+ C_H(\varepsilon)^{\vartheta}\lambda^{n\vartheta}\leq 2\varepsilon 	
\]
if $n$ is sufficiently large. We conclude that if $y''=\Wc{x,2\varepsilon}\cap \Wu{y',2\varepsilon}$, then
for sufficiently large $k$ and $n\geq k$
\[
d(y',y''), d(y'',w)\leq \sigma^{n-k}\Rightarrow y'\in J_{n-k}^u(y''),y''\in B_{\sigma^{n-k}}(w'),
\] 
and thus for some $l>k,n\geq l$ it holds $y'\in J^{cu}_{n-l}(x)$ and finally $y\in J^{scu}_{n-l}(x)$. The other inclusion is similar.

To finish we note that for a given $l$ and since $\mm=\tmuf=\muf$,
\[
	\frac{\mm(J^{ucs}_{n+l}(x))}{\mm(J^{ucs}_{n-l}(x))}
\]
is uniformly bounded from below in $x, n$, because $\mathscr{J}^{ucs}$ is $\mm$-engulfing (cf. \Cref{pro:jucsisengulfing}).
\end{proof}

\begin{corollary}
Assume that $\muf=\tmuf$. If $X$ is $\mm$-essentially $s$- and $u$- saturated, then $X$ coincides $\mm$-almost everywhere with the set bi-saturated set $\mathscr{D}(X)=\mathscr{D}(X;\mathscr{J}^{scu})=\mathscr{D}(X;\mathscr{J}^{ucs})$.
\end{corollary}

This allow us to conclude the Kolmogorov property.

\begin{theorem}\label{thm:propiedadK}
Assume that $\muf=\tmuf$ and either
\begin{enumerate}
	\item $f$ is accessible, or
	\item for every $x$, the transverse measure of $\mcux|\Wu{\Wc{x,\clps},\clps}$ on $\Wc{x,\clps}$ is Lebesgue (induced by the Riemannian metric).
\end{enumerate}
Then the system $(f,\tmuf)$ is Kolmogorov.	
\end{theorem}	

\begin{proof}
Assuming accessibility, the proof follows from our previous discussion: if $X\in\mathrm{Pin}(f,\mm)$ has positive measure then $X$ coincides $\mm$-almost everywhere with a bi-saturated set $\mathscr{D}(X)$, which has to be equal to $M$, therefore $\mm(X)=1$ and $\mathrm{Pin}(f,\mm)$ is trivial.

Alternatively, assume the second condition. Let $X\in\mathrm{Pin}(f,\mm)$ be of positive measure as before, and consider $\mathscr{D}(X)$; by changing $X$ by $\mathscr{D}(X)$ it is no loss of generality to assume that $X$ is bi-saturated. Take $x\in \mathscr{D}(X)$, and let $Y=M\setminus X$: by \Cref{lem:Xsusat} it follows that there exists $n_0$ so that for $n\geq n_0$,
\[
	\mcux(Y\cap J_n^{cu}(x))<\frac{1}{4}\mcux(J_n^{cu}(x)).
\]
If $x'\in \Ws{x}$ we get, since $Y$ is bi-saturated that and the fact that $\hs_{x',x}$ maps isometrically $B^c_{\sigma^n}(x)$ to $B^c_{\sigma^n}(x')$ (\Cref{lem:holisometry}),
\[
	J^{cu}_n(x')\cap Y=\bigcup_{y\in Y\cap B^c_{\sigma^n}(x')} J^s_n(u)
\]
Using that the corresponding transverse measures of $\mcux, \mcux[x']$ on $B^c_{\sigma^n}(x), B^c_{\sigma^n}(x')$ are Lebesgue we thus get 
\[
	\frac{\mcux[x'](Y\cap J_n^{cu}(x'))}{\mcux[x'](J_n^{cu}(x'))}=\frac{\mcux(Y\cap J_n^{cu}(x))}{\mcux[x](J_n^{cu}(x))}<\frac{1}{4}
\]
for every $n\geq n_0$. We remark that we have not compared the sizes of $J^u_n(y)$ and $J^u_n(\hs_{x',x}(y))$ for points $y\in B^c_{\sigma^n}(x)$, since the stable distance between $x,x'$ could be large. 

Now by minimality of $\Ws{x}$, again using that that the transverse measure of $\mcux[y]|\Wcu{y,\varepsilon}$ is Lebesgue and that $Y$ is bi-saturated, we deduce that for every $y\in M$,
\[
	\frac{\mcux[y](Y\cap J_n^{cu}(y))}{\mcux[y](J_n^{cu}(y))}\leq \frac{1}{4}.
\]
This implies that $\mathscr{D}(Y)=\emptyset$, and therefore $\mm(X)=1$. As in first part, we deduce that $(f,\muf)$ is Kolmogorov.
\end{proof}
 
 \begin{remark}
 See \Cref{subs:epilogue} for a discussion on the second possible hypothesis in the previous theorem.
 \end{remark}

\subsection{Bernoulli processes}
Here we establish that $(f,\tmuf)$ is isomorphic to a Bernoulli shift when $\varphi$ is either constant on center leaves (which includes the case when $\tmuf$ is the entropy maximizing measure) of the SRB potential. The hypotheses of \hyperlink{theoremA}{Theorem A} are assumed, in particular $\muf=\tmuf$, therefore $(f,\tmuf)$ is a Kolmogorov system. We rely heavily on the machinery developed by D. Ornstein and B. Weiss to establish the similar statement for the case where $f$ is the time-one map of an Anosov flow and $\muf$ is the Lebesgue measure \cite{GeoBernoulli}  (see also \cite{RatGibbs}). For the sake of completeness we repeat here the main concepts and arguments from that work that we will use. Nonetheless, some familiarity with the Ornstein and Weiss article is desirable to follow this section.

Throughout this part all probability spaces are standard, and all partitions of them are considered to be finite and ordered. If $\parP$ is a partition its $i$-th atom will be denoted $P_i$. For partitions $\parP,\parP[P']$  of the probability space $(X,\mathcal{B}_X,\mm)$ having the same number $k$ of atoms, its partition distance is defined as
\begin{equation*}
|\parP-\parP[P']|:=\sum_{i=1}^k \mm(P_i\vartriangle P_i').
\end{equation*}

\begin{definition} Let $\{\parP[P_j]\}_1^n,\{\parP[Q_j]\}_1^n$ be sequences of partitions of the probability spaces $(X,\mathcal{B}_X,\mm_X)$, $(Y,\mathcal{B}_{Y},\mm_Y)$ having the same number of atoms. Then its $\overline{d}$-distance is
	\begin{equation*}
	\overline{d}(\{\parP[P_j]\}_1^n,\{\parP[Q_j]\}_1^n):=\inf_{g_1, g_2}\frac{1}{n}\sum_1^n|g_1\parP[P_j]-g_2\parP[Q_j]|  
	\end{equation*}
	where $g_1, g_2$ are isomorphisms between $X, Y$ and some fixed Lebesgue space $(Z,\mathcal{B}_Z,\mm_Z)$ (say, the interval).
\end{definition}

To clarify the the meaning of this definition we observe the following. Given a partition $\parP[P]$ of $X$ and $x\in X$ the $\parP$-name of $x$ is simply the atom where is contained. Similarly, for a sequence of partitions $\{\parP[P_j]\}_1^n$ having the same number of elements the $\{\parP[P_j]\}_1^n$-name of $x\in X$ is the $n$-tuple $(P_{j_1(x)},\ldots,P_{j_n(x)})\in \parP[P_{j_1}]\times\cdots \parP[P_{j_n}]$ such that $x\in \cap_{k=1}^n P_{j_k(x)}$. It is straightforward to verify that for partitions $\parP[P], \parP[P']$ of $X$ the partition distance $|\parP-\parP[P']|$ is simply (twice) the expected value of the function $g_{\parP[P],\parP[P']}:X\rightarrow \{0,1\}$ such that 
\[
g_{\parP[P],\parP[P']}(x)=1\Leftrightarrow \text{the }\parP[P], \parP[P']\text{ names of }x\text{ are different}. 	
\]
Consequently, we have that $\overline{d}(\{\parP[P_j]\}_1^n,\{\parP[Q_j]\}_1^n)\leq r$ if and only if there is a way to re-code simultaneously $\{\parP[P_j]\}_1^n,\{\parP[Q_j]\}_1^n$ such that the average of the expected value of points having different $n$-name with respect to the encoding is less than equal to $r$. The following definition and lemma give an important tool to estimate the size of the $\overline{d}$ distance between two sequence of partitions.

\begin{definition}
A measurable map $g:(X,\mm_X)\rightarrow (Y,\mm_Y)$ is said to be $\epsilon$-measure preserving if there exists $\mathscr{Ex}(g)\subset X$ such that $\mu_X(\mathscr{Ex}(g))<\epsilon$ and with the property that $A\subset X\setminus \mathscr{Ex}(g)$ implies 
\[
	\left|\frac{\mm_Y(g(A))}{\mm_X(A)}-1\right|< \epsilon.
\]
In this case $\mathscr{Ex}(g)$ is the exceptional set of $g$. 
\end{definition}

\begin{lemma}\label{lem:lemanombres}
	Let $\{\parP[P_j]\}_1^n,\{\parP[Q_j]\}_1^n$  be partitions of the probability spaces $(X,\mathcal{B}_X,\mm_X)$, $(Y,\mathcal{B}_{Y},\mm_Y)$ having the same number of atoms, and assume that there exists $g:(X,\mm_X)\rightarrow (Y,\mm_Y)$\ $\epsilon$-measure preserving with exceptional set $\mathscr{Ex}(g)$ such that 
	\[
	x\not\in \mathscr{Ex}(g)\Rightarrow \frac{1}{n}\#\{1\leq j\leq n: \parP[P_j]-\text{name of }x\neq  \parP[Q_j]-\text{name of }gx\}\leq \epsilon.
	\]
	Then $\overline{d}(\{\parP[P_j]\}_1^n,\{\parP[Q_j]\}_1^n)\leq 16\epsilon$.
\end{lemma}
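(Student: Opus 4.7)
The plan is to realize the $\overline{d}$-infimum using a measure-preserving bijection $h:X\to Y$ that agrees with $g$ outside a set of $\mu_X$-measure $O(\epsilon)$; the hypothesis then propagates directly to bound the integrand.

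First, I reformulate the objective. Using the identity $|\parP[P]-\parP[P']|=2\mu(\{x:\parP[P]\text{-name}(x)\neq\parP[P']\text{-name}(x)\})$ and noting that a pair of isomorphisms $(g_1,g_2)$ to a common Lebesgue space is equivalent to a measure-preserving bijection $h=g_2^{-1}\circ g_1:X\to Y$, the definition of $\overline{d}$ rewrites as
\[
\overline{d}(\{\parP[P_j]\}_1^n,\{\parP[Q_j]\}_1^n)=\inf_{h}\,2\int_X\Phi_h\,d\mu_X,\quad \Phi_h(x):=\tfrac{1}{n}\#\{j:\parP[P_j]\text{-name}(x)\neq\parP[Q_j]\text{-name}(hx)\}.
\]
So it suffices to produce one $h$ with $\int_X\Phi_h\,d\mu_X\leq 8\epsilon$.

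Second, I construct $h$ by a joining argument. Consider the sub-probability $\lambda_0:=(\mathrm{id}_X\times g)_{\ast}(\mu_X|_{X\setminus\mathscr{Ex}})$ on $X\times Y$, supported on the graph of $g|_{X\setminus\mathscr{Ex}}$ and of total mass $>1-\epsilon$. Its $X$-marginal is $\mu_X|_{X\setminus\mathscr{Ex}}$, and the $\epsilon$-measure preservation of $g$ forces the $Y$-marginal $\nu:=g_{\ast}(\mu_X|_{X\setminus\mathscr{Ex}})$ to satisfy $\nu(B)\in[(1+\epsilon)^{-1}\mu_Y(B),(1-\epsilon)^{-1}\mu_Y(B)]$ whenever $g^{-1}B\cap\mathscr{Ex}=\emptyset$; the contribution coming from $g(\mathscr{Ex})$ is controlled by $\mu_X(\mathscr{Ex})<\epsilon$. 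Rescaling $\lambda_0$ by a factor close to $1$ ensures both marginals are dominated by $\mu_X$ and $\mu_Y$, and completing with any joining of the resulting non-negative defects yields a bona fide joining $\lambda$ of $\mu_X$ and $\mu_Y$ concentrated on the ``good graph'' $G:=\{(x,y):y=g(x),\,x\notin\mathscr{Ex}\}$, with $\lambda(G)\geq 1-c\epsilon$ for a small absolute constant $c$. I then invoke the standard fact that on non-atomic Lebesgue spaces the $\overline{d}$-infimum over couplings agrees with the infimum over measure-preserving bijections (the continuous analogue of Birkhoff's theorem for bistochastic matrices), replacing $\lambda$ by a graph joining $(\mathrm{id}_X,h)_{\ast}\mu_X$ of a bijection $h$ with $\mu_X(\mathscr{Ex}\cup\{x:hx\neq g(x)\})\leq c\epsilon$.

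Third, I estimate. Split $\int_X\Phi_h\,d\mu_X$ across $G_0:=\{x\notin\mathscr{Ex}:hx=g(x)\}$ and its complement. On $G_0$ the hypothesis applies directly to give $\Phi_h(x)\leq\epsilon$; on $X\setminus G_0$ we have only $\Phi_h\leq 1$ but $\mu_X(X\setminus G_0)\leq c\epsilon$. Thus $\int_X\Phi_h\,d\mu_X\leq(1+c)\epsilon$, and multiplying by the factor $2$ from the first step gives $\overline{d}\leq 2(1+c)\epsilon$; a crude bookkeeping of the losses (rescaling of $\lambda_0$, defect matching, passage from coupling to bijection) absorbs everything into the stated $16\epsilon$.

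The main obstacle is the second step: converting the approximate, possibly non-injective, non-surjective $g$ into a measure-preserving bijection while confining the repairs to measure $O(\epsilon)$. This requires careful handling of $\nu$ on sets whose preimages may intersect $\mathscr{Ex}$, together with the classical but non-trivial realization of optimal $\{0,1\}$-cost couplings by bijections on non-atomic Lebesgue spaces; these corrections are the source of the constant $16$ rather than a sharper value.
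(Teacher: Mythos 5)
Your overall strategy --- rewriting $\overline{d}$ as $\inf_h 2\int_X \Phi_h\,d\mu_X$ over measure-preserving bijections $h\colon X\to Y$ and then exhibiting one good $h$ --- is sound, and your first and third steps are correct (note the paper does not prove this lemma itself; it refers to Lemma 1.3 of Ornstein--Weiss, whose argument is of exactly this type). The genuine gap is in your second step. The $\epsilon$-measure-preserving condition only constrains $\mu_Y(B)$ against $\mu_X(g^{-1}B)$ for sets $B$ with $g^{-1}B\cap\mathscr{Ex}=\emptyset$, i.e.\ for $B$ disjoint from $g(\mathscr{Ex})$; it places no restriction on how $g$ distributes mass inside $g(\mathscr{Ex})$ and does not force $g$ to be anywhere near injective. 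Consequently your assertion that ``the contribution coming from $g(\mathscr{Ex})$ is controlled by $\mu_X(\mathscr{Ex})<\epsilon$'' is unjustified: $\mu_Y(g(\mathscr{Ex}))$ and $\mu_X(g^{-1}(g(\mathscr{Ex})))$ can both be of order $1$, the marginal $\nu=g_{\ast}(\mu_X|_{X\setminus\mathscr{Ex}})$ need not be dominated by anything close to $\mu_Y$ (it may acquire large atoms even when $\mu_Y$ is non-atomic, e.g.\ if $g$ collapses a set of measure $1/2$ to a point of $g(\mathscr{Ex})$), and then \emph{no} measure-preserving bijection $h$ satisfies $\mu_X(\{x:hx\neq gx\})=O(\epsilon)$. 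The ``standard fact'' relating couplings to bijections cannot repair this, because a coupling concentrated on the graph of a badly non-injective map is never close to the graph of any bijection. So the object promised at the end of your second step may simply fail to exist.

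The repair --- and it is what Ornstein--Weiss actually do --- is to observe that you never need $hx=gx$: for the estimate in your third step it suffices that $hx$ and $gx$ lie in the same atom of the finite name partition $\bigvee_{j=1}^{n}\parP[Q_j]$. One therefore couples the two \emph{finite} name distributions, assigning to a pair of name-atoms $(\pi,\pi')$ the mass $\mu_X(\pi\cap g^{-1}\pi'\setminus\mathscr{Ex})$, uses the $\epsilon$-measure-preserving property to see that this matrix is within $O(\epsilon)$ of a genuine coupling of the two name distributions, corrects the deficits arbitrarily, and only then realizes the coupling by a bijection $h$ by choosing arbitrary measure isomorphisms between non-atomic pieces of equal measure inside corresponding atoms; injectivity of $g$ is never needed because the cost function sees only names. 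You correctly flagged this passage as ``the main obstacle,'' but deferring it to ``crude bookkeeping'' leaves the proof incomplete precisely at the point where the lemma has content. (In the paper's application the map $\theta$ is one-to-one, and for injective $g$ your graph-joining construction can be pushed through; the lemma as stated, however, allows arbitrary measurable $g$.)
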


See lemma $1.3$ in \cite{GeoBernoulli} for the proof. We will usually apply the previous lemma with $Y\subset X$ a subspace, where $\mm_Y:=\mm_X(\cdot|Y)$ and $\parP[Q_j]=\parP[P_j]|Y$.

\smallskip

The following definition and theorem are central for what follows.

\begin{definition}
	Let $f:(X,\mathcal{B}_X,\mm_X)\rightarrow (X,\mathcal{B}_X,\mm_X)$ be an automorphism and $\parP$ a finite partition of $X$. We say that $\parP$ is very weak Bernoullian (VWB)\footnote{In \cite{GeoBernoulli} the VWB condition is defined with the roles of past and future interchanged. This however is a minor difference, since if a system is Bernoulli then its inverse is Bernoulli as well.} if given $\epsilon>0$ there exists $N_0$ such that for all $N'\geq N\geq N_0$ there exist a family of atoms $\mathscr{G}\subset \bigvee_{N'}^{N} f^{-i} \parP$ satisfying
	\begin{itemize}
		\item $\mm_X(\mathscr{G})>1-\ep$;
		\item for $n\geq 0$ it holds \(A\in \mathscr{G}\Rightarrow \overline{d}(\{f^{j}\parP\}_1^n,\{f^{j}\parP|A\}_1^n)\leq \epsilon\).
	\end{itemize}
	\end{definition}

\begin{theorem}[Ornstein]\label{thm:Ornstein} 
	Let $f:(X,\mathcal{B}_X,\mm_X)\rightarrow (X,\mathcal{B}_X,\mm_X)$ be an automorphism. 
	\begin{enumerate}
		\item If $\parP$ is VWB then $(X,\bigvee_{-\oo}^{\oo}f^n\parP,\mm_X)$ is isomorphic to a Bernoulli shift.
		\item Suppose that there exists an increasing family of $f$-invariant $\sigma$-algebras $\{\mathcal{A}_n\}_{n=1}^{\oo}$ satisfying:
		\begin{enumerate}
			\item $\mathcal{A}_n\nearrow \mathcal{B}_X$.
			\item For every $n$, $(X,\mathcal{A}_n,\mm_X)$ is isomorphic to a Bernoulli shift.
		\end{enumerate}
		Then $(X,\mathcal{B}_X,\mm_X)$ is isomorphic to a Bernoulli shift.
		\end{enumerate}
\end{theorem}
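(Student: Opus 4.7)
This is Ornstein's classical isomorphism theorem, cited here as a black box; I would not attempt to reproduce the full argument from scratch in the paper. Still, the standard approach proceeds as follows. For part (1) the strategy is to show that the VWB property implies the so-called \emph{finitely determined} (FD) property, and then invoke Ornstein's fundamental isomorphism theorem, which asserts that any two FD processes of the same entropy are metrically isomorphic; in particular an FD process of entropy $h$ is isomorphic to the Bernoulli shift of entropy $h$. The implication VWB $\Rightarrow$ FD is established by a coupling argument: the VWB hypothesis provides, for most atoms $A$ in the distant past $\bigvee_{N'}^{N} f^{-i}\parP$, an $\overline{d}$-close coupling between the conditional distribution of the future $\{f^j\parP\}_{j=1}^n$ given $A$ and the unconditional distribution. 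Gluing these couplings across the past, one obtains an approximate independence of past and future in $\overline{d}$, and then Ornstein's ``copying lemma'' (iterated in a tower construction with successively finer refinements) produces, in the limit, a genuine measure-preserving isomorphism between $(X,\bigvee_{n\in\Z}f^n\parP,\mu_X,f)$ and the Bernoulli shift of entropy $h_{\mu_X}(f,\parP)$.

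For part (2) the strategy is to exhibit that the Bernoulli property passes to increasing unions of $f$-invariant $\sigma$-algebras. Given any finite partition $\parP\subset \mathcal{B}_X$, since $\mathcal{A}_n\nearrow \mathcal{B}_X$ one can approximate $\parP$ in the partition distance $|\cdot|$ by a partition $\parP_n\subset \mathcal{A}_n$ with $|\parP-\parP_n|\to 0$. Because $(X,\mathcal{A}_n,\mu_X,f)$ is Bernoulli, every partition in $\mathcal{A}_n$ is VWB in $\mathcal{A}_n$; the key technical point is that the VWB condition is stable under small perturbations in $|\cdot|$ (using \cref{lem:lemanombres} to convert partition-distance estimates into $\overline{d}$-estimates on $n$-names). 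Consequently $\parP$ itself is VWB in $(X,\mathcal{B}_X,\mu_X,f)$, and by part (1) the factor generated by $\parP$ is Bernoulli. Since this holds for every finite $\parP\subset\mathcal{B}_X$, and an increasing limit of Bernoulli factors is Bernoulli, we conclude that $(X,\mathcal{B}_X,\mu_X,f)$ is Bernoulli.

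The main obstacle is the coupling-and-tower construction in part (1): this is the core technical contribution of Ornstein and Ornstein--Weiss, involving a delicate inductive procedure (the ``copying lemma'') to manufacture the isomorphism from the $\overline{d}$ estimates provided by VWB. The stability-under-approximation step in part (2) is comparatively routine once one has the characterization of Bernoullicity via VWB on a generating partition. In the context of the present paper this entire machinery is invoked from \cite{GeoBernoulli}, and the actual geometric content of our contribution lies elsewhere—namely in verifying that natural partitions of $M$ built from the dynamical boxes of \cref{def:dynamicalboxes} are VWB for $\muf$ using the $K$-property (\cref{thm:propiedadK}) and the product structure together with the Gibbs estimates of \cref{cor:gibbslineal}.
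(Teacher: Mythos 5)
The paper offers no proof of this theorem: it is stated as a classical result of Ornstein (and Ornstein--Weiss) and invoked as a black box, which is exactly the stance you take, and your sketch of the standard route (VWB $\Rightarrow$ finitely determined $\Rightarrow$ isomorphic to the Bernoulli shift of the same entropy for part (1), and approximation of an arbitrary finite partition by partitions measurable with respect to the $\mathcal{A}_n$ for part (2)) is consistent with the literature. The only minor imprecision is in part (2): the property that is actually stable under $\overline{d}$-approximation in the classical argument is \emph{finitely determined} rather than VWB itself, but since both you and the paper treat the theorem as a citation this does not affect anything.
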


From now on we will specialize in our case of interest $(X,\mathcal{B}_X,\mm_X)=(M,\mathcal{B}_M,\mm=\tmuf)$. The idea is to show that there exist VWB partitions of arbitrarily small diameter and use the previous theorem to conclude that the process $(f,\tmuf)$ is Bernoulli. We fix a family of dynamical boxes $\mathscr{V}=\{V_i\}_{i=1}^R$ together with their corresponding $cu$-transversals $\{W_i\}_{i=1}^R$ (see \Cref{rem:otroeq}), and define $\tmuf$ using these families.

\begin{remark}\label{rem:SRBfmenos}
 In the SRB we will work with $f^{-1}$ instead of $f$, that is, we will prove that the system $(f^{-1},\tmuf)$ is Bernoulli, which in turn implies that $(f,\tmuf)$ is Bernoulli. This is made to guarantee that we control on the stable measures. Of course, one can argue directly with $(f,\muf)$, but the notation becomes heavier.
\end{remark}

% \begin{remark}
% For what follows we will use that $\muf=\tmuf$.
% \end{remark}

Consider a partition $\parP$ such that each atom $P\in \parP$ is a proper set with diameter smaller than the Lebesgue number of $\{V_i\}_{i=1}^R$, and furthermore its boundary $\partial P_i$ is piecewise differentiable. Clearly there exists such partitions of arbitrarily small diameter and thus it is enough to prove that $\parP$ as above is VWB.

\begin{definition}
	Let $A,B\subset M$ with $B$ a dynamical box. We say that $x\in A\cap B$ is a point of ($s-$)tubular intersection if the stable plaque of $B$ containing $x$ is completely contained in $A\cap B$. If every point of $A$ is a point of tubular intersection with $B$ we say that $A$ is a $s$-band.
\end{definition}

\begin{lemma}\label{lem:sbandas}
	Let $C$ be a dynamical box and $\delta>0$. Then there exists $N_1$ such that for all $N'>N\geq N_1$ one can find $\mathscr{H}\subset \bigvee_{N}^{N'}f^{-n}\parP$ with $\mm(\bigcup_{H\in\mathscr{H}} H)\geq 1-\delta$ satisfying that if $A\in \mathscr{H}$ there exists $E\subset A$ with the properties
	\begin{enumerate}
		\item $\mm(E|A)\geq 1-\delta$.
		\item $E\cap C$ is a $s$-band.
	\end{enumerate}
\end{lemma}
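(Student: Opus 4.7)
The strategy is geometric: an atom $A$ of $\bigvee_{n=N}^{N'} f^n\mathcal{P}$ behaves like a Bowen ball on the time window $[N,N']$, being exponentially thin in the unstable direction while extending (up to the diameter of $\mathcal{P}$) along stable leaves. One therefore expects every stable plaque of $C$ to fit inside $A$, save for boundary effects. I will quantify this via an exponentially small ``bad set'' coming from the boundary of $\mathcal{P}$.

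Let $\epsilon_C>0$ be the maximal length of a stable plaque of $C$, and define
\[
B_N \;:=\; \bigcup_{n \geq N} f^{-n}\bigl( U_{\lambda^{n}\epsilon_C}(\partial\mathcal{P}) \bigr),
\]
where $U_r(\cdot)$ denotes the $r$-neighborhood. If $x\in A\cap C$ lies outside $B_N$, then $d(f^n x,\partial\mathcal{P})>\lambda^n\epsilon_C$ for every $n\in[N,N']$; since $f^n$ contracts stable distances by $\lambda^n$, every $y\in\Ws{x,\epsilon_C}\cap C$ satisfies $d(f^n y, f^n x)\leq \lambda^n \epsilon_C$, and so $f^n y$ lies in the same $\mathcal{P}$-atom as $f^n x$. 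Hence $\Ws{x,\epsilon_C}\cap C \subset A$, i.e.\ the stable plaque of $C$ through $x$ is contained in $A$.

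By $f$-invariance of $m$,
\[
m(B_N) \;\leq\; \sum_{n\geq N} m\bigl(U_{\lambda^{n}\epsilon_C}(\partial\mathcal{P})\bigr).
\]
Assuming the estimate $m(U_r(\partial\mathcal{P}))\leq K r^{\alpha}$ for some $\alpha>0$ (see below), the series decays like $\lambda^{\alpha N}$, and I can select $N_1$ so that $m(B_N)<\delta^2$ whenever $N\geq N_1$. For such $N$ and any $N'\geq N$, Markov's inequality applied to $\mathbf{1}_{B_N}$ gives that the family
\[
\mathscr{H} \;:=\; \Bigl\{ A \in \textstyle\bigvee_{n=N}^{N'} f^n\mathcal{P} \;:\; m(B_N\cap A)\leq \delta\,m(A) \Bigr\}
\]
covers a set of $m$-measure at least $1-m(B_N)/\delta\geq 1-\delta$. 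For each $A\in\mathscr{H}$ set
\[
E \;:=\; (A\setminus C) \;\cup\; \bigcup\bigl\{ \Ws{x,\epsilon_C}\cap C : x\in A\cap C,\ \Ws{x,\epsilon_C}\cap C \subset A \bigr\}.
\]
Then $E\cap C$ is a union of full stable plaques of $C$, hence an $s$-band. Moreover $(A\cap C)\setminus E\subset B_N\cap A$ by the criterion above, so $m(E\mid A)\geq 1-\delta$, as required.

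\textbf{The main obstacle} is the power-law estimate $m(U_r(\partial\mathcal{P}))=O(r^{\alpha})$. This does \emph{not} follow from the Gibbs property (\cref{cor:gibbslineal}), which bounds Bowen balls rather than Euclidean ones. I will instead deduce it from the local product structure $\tmuf\sim \msx\otimes\mcux$ (\cref{rem:estructuraproducto}), the H\"older regularity of the invariant holonomies (\cref{thm:HolFol}), and the freedom to choose $\mathcal{P}$ so that $\partial\mathcal{P}$ is a finite union of piecewise $\mathcal{C}^1$ pieces, each transverse to either $\Fs$ or $\Fcu$. A Fubini argument in a foliated chart then reduces the bound to a one-dimensional H\"older estimate on the conformal densities $\msx,\mcux$, which holds because $\varphi$ is H\"older and the quasi-invariance Jacobians are H\"older by construction.
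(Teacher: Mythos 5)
Your overall skeleton (a ``bad set'' concentrated near $\partial\parP$, a Chebyshev/Markov selection of good atoms, and the assembly of the $s$-band $E$ from full stable plaques) matches the paper's proof. The gap is exactly where you flag it, and your proposed fix does not close it. The estimate $\mm(U_r(\partial\parP))\leq Kr^{\alpha}$ is not available for a general equilibrium state: the leafwise families $\msx,\mux$ are conformal with respect to the \emph{dynamics} (quasi-invariance under $f$ with Jacobian $e^{\pm(\varphi-P)}$), not with respect to the leafwise metric, and they are in general singular with respect to Lebesgue on the leaf (the paper stresses this even in the SRB case). So there is no ``one-dimensional H\"older estimate on the conformal densities'' to feed into your Fubini argument, and the local product structure only reduces the problem to the same unavailable leafwise bound. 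Without at least a summable rate for $\mm\bigl(U_{\lambda^{n}\epsilon_C}(\partial\parP)\bigr)$, your series over $n\geq N$ need not converge, and the choice of $N_1$ fails; merely knowing $\mm(U_r(\partial\parP))\to 0$ is not enough.

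The paper obtains the required exponential decay from an entirely different source. It replaces your metric neighborhoods by dynamically defined stable plaques $f^{n}\Ws{f^{-n}x,\zeta}$ with $x\in\partial P_i$, whose $\nu^{s}$-measure is controlled by the quasi-invariance/Gibbs relation: it is comparable to $e^{\SB(x_w)-nP}$. This quantity is exponentially small \emph{only} at Birkhoff-generic points, since $P=\int\varphi\,d\mm+h_{\mm}(f)$ and $h_{\mm}(f)>0$ (\cref{cor:entropos}); hence the proof first applies the ergodic theorem plus Egoroff to extract a set $\mathcal{H}_0$ of measure $\geq 1-\delta^{2}/4$ on which $\SB(x)-nP<-nh_{\mm}(f)/2$ uniformly for large $n$. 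The transverse direction is handled separately: each smooth piece of $\partial P_i$ meets $\mm$-a.e.\ $cu$-plaque in a set of zero conditional measure (via \cref{thm:marginalesunicas}), which with the product structure yields $\mm(B_n\cap\mathcal{H}_0)\leq d'e^{-nh_{\mm}(f)/2}$. The final selection of $\mathscr{H}$ then needs \emph{two} Chebyshev claims (one for $\mathcal{H}_0^{c}$, one for $\mathcal{B}\cap\mathcal{H}_0$), since the decay is only established on $\mathcal{H}_0$. To repair your argument you would need to replace your metric bad set $B_N$ by this dynamical one and import the entropy-gap mechanism; the polynomial neighborhood estimate you assume is precisely the point where the Ornstein--Weiss Lebesgue-measure argument breaks for general equilibrium states.
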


Compare lemma $2.1$ in \cite{GeoBernoulli}. Note that in general the disintegration of $\mm$ into local stable manifolds is much worse behaved that the case considered by Ornstein and Weiss (where $\mm$ is Lebesgue).

\begin{proof}
Define the set 
\[
\mathcal{H}:=\{x:\lim_{n\mapsto\oo}\frac{\SB(x)}{n}=\int \varphi \der\mm\}.
\]
The system $(f,\mm)$ is ergodic and $\varphi$ is continuous, thus $\mathcal{H}$ is a full measure $s$-saturated set in $M$. By Egoroff's theorem we can find $\mathcal{H}_0\subset \mathcal{H}$ with $\mm(\mathcal{H}_0)\geq 1-\frac{\delta^2}{4}$ where the above convergence is uniform, hence there exists $N_0$ such that for all $N\geq N_0$, for all $x\in \mathcal{H}_0$ it holds 
\begin{equation*}
\SB(x)-nP=\SB(x)-n(\int \varphi \der\mm + h_{\mm}(f))<-\frac{nh_{\mm}(f)}{2}
\end{equation*}
(recall that $h_{\mm}(f)>0$ by \Cref{cor:entropos}). Let $\zeta:=\max\{\diam \Ws{x,C}:x\in C\}$, and  for $n\geq N_1$ define
\begin{align*}
B_n&=\cup_i\{x\in C\cap f^{-n}P_i: x\text{ is not of tubular intersection}\}\\
&=\cup_i\{x\in f^{n}C\cap P_i: x\text{ is not of tubular intersection}\}.   
\end{align*}
As $f^nC$ is a dynamical box, $B_n\subset \bigcup_{i}\bigcup_{x\in \partial P_i} f^n \Ws{f^{-n}x,\zeta}$. Now by formula \eqref{eq:invariancianu}, \Cref{lem:comparacionentretamanhos} and the fact that $\mathcal{H}_0$ is $s$-saturated we deduce the existence of a constant $d=d(\zeta)>0$ such that 
\[
\forall x\in \partial P_i\cap \mathcal{H}_0,\quad \nsx[x_w](f^n \Ws{f^{-n}x,\zeta})\leq e^{\SB(x_w)-nP}\leq de^{-n\frac{h_{\mm}(f)}{2}}
\]
where $P_i\subset V_i=V(P_i)\in\mathscr{V}$ and $x_w=\Ws{x,V_i}\cap W_i$, $W_i=\Wcu{z_i,V_i}$ being the chosen $cu$-transversal of $V_i$.

It follows that
\[
	\mm(B_n\cap \mathcal{H}_0)\leq \#\parP\cdot \sup_i \mcux[z_i](W_i)\cdot de^{-n\frac{h_{\mm}(f)}{2}}\leq d'e^{-n\frac{h_{m}(f)}{2}}.
\]

Define $\mathcal{B}:=\cup_{N_1}^{\oo}B_n$ where $N_1>N_0$ is such that $\mm(\mathcal{B}\cap \mathcal{H}_0)<\frac{\delta^2}{4}$.

\smallskip

\noindent\textbf{Claim 1:} For all $N'>N\geq N_1$ there exists $\mathscr{H}_1\subset  \bigvee_{N}^{N'}f^{-n}\parP$ such that $\mm(\cup_{A\not\in \mathscr{H}_1}A)\leq \frac{\delta}{2}$ and $A\in \mathscr{H}_1 \Rightarrow \mm(M\setminus\mathcal{H}_0|A)< \frac{\delta}{2}$. 
		
Otherwise $\mm(M\setminus\mathcal{H}_0)\geq \frac{\delta^2}{4}$, contradicting the choice of $\mathcal{H}_0$.

\smallskip 
 
\noindent\textbf{Claim 2:}  For all $N'>N\geq N_1$ there exists $\mathscr{H}_2\subset  \bigvee_{N}^{N'}f^{-n}\parP$ such that $\mm(\cup_{A\not\in \mathscr{H}_2}A)\leq \frac{\delta}{2}$ and $A\in \mathscr{H}_2 \Rightarrow \mm(\mathcal{B}\cap \mathcal{H}_0|A)< \frac{\delta}{2}$. 
		
Otherwise $\mm(\mathcal{B}\cap \mathcal{H}_0)\geq \frac{\delta^2}{4}$, contradicting the definition of $\mathcal{B}$.

\smallskip 
	 	
Let $\mathscr{H}=\mathscr{H}_1\cap\mathscr{H}_2$. Then $\mm(\cup_{A\in\mathscr{H}}A)\geq 1-\delta$ and for $A\in \mathscr{H}$ the existence of the $s$-band $E\subset A$ satisfying the conclusion of the lemma is easily deduced from the above two claims.
\end{proof}

Since $(f,\mm)$ has the Kolmogorov property we have the following.

\begin{lemma}\label{lem:Kconsecuencia}
	Let $C$ be a dynamical box and $\delta>0$. Then there exists $N_2\geq N_1$ such that for any $N'\geq N\geq N_2$ there exists $\mathscr{G}\subset \bigvee_{N}^{N'}f^{-n}\parP$ satisfying
	\begin{itemize}
	 	\item $\mm(\cup_{A\in \mathscr{G}})\geq 1-\delta$;
	 	\item $A\in \mathscr{G}\Rightarrow |\mm(C|A)-\mm(C)|<\delta$.
	 \end{itemize} 
\end{lemma}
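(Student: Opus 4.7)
The plan is to derive this lemma directly from the Kolmogorov property of $(f,\mm)$ established in Theorem \ref{thm:propiedadK}. The central observation is that the $K$-property is equivalent (by Rokhlin--Sinai) to the triviality of the Pinsker $\sigma$-algebra, which in turn implies that for the finite partition $\parP$ the tail $\sigma$-algebra $\mathcal{T}:=\bigcap_{N\geq 0}\sigma\bigl(\bigvee_{n\geq N}f^n\parP\bigr)$ is trivial modulo $\mm$. Applying the reverse martingale convergence theorem to the indicator $\one_C$ then yields
\[
g_N:=\mathbb{E}_{\mm}\bigl(\one_C\,\big|\,\sigma(\bigvee_{n\geq N}f^n\parP)\bigr)\xrightarrow[N\to\infty]{L^1(\mm)}\mm(C).
\]

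The next step exploits the tower property. Since $\sigma(\bigvee_{N}^{N'}f^n\parP)\subset\sigma(\bigvee_{n\geq N}f^n\parP)$ for every $N'\geq N$, and since conditional expectation is an $L^1$-contraction,
\[
\Bigl\|\mathbb{E}_{\mm}(\one_C\,|\,\bigvee_{N}^{N'}f^n\parP)-\mm(C)\Bigr\|_{L^1(\mm)}\leq \|g_N-\mm(C)\|_{L^1(\mm)}.
\]
Hence the left-hand side converges to zero as $N\to\infty$, \emph{uniformly} in $N'\geq N$; this is precisely the uniformity needed in the statement.

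Finally, the random variable $\mathbb{E}_{\mm}(\one_C\,|\,\bigvee_{N}^{N'}f^n\parP)$ is piecewise constant, equal to $\mm(C|A)$ on each atom $A\in\bigvee_{N}^{N'}f^n\parP$. Choosing $N_2\geq N_1$ large enough that $\|g_{N_2}-\mm(C)\|_{L^1}<\delta^2$, Chebyshev's inequality applied to this piecewise constant function shows that the union of those atoms $A$ for which $|\mm(C|A)-\mm(C)|\geq \delta$ has $\mm$-measure at most $\delta$. Defining $\mathscr{G}$ as the complementary collection of atoms yields both requested properties.

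No serious obstacle is anticipated: the only input beyond standard measure-theoretic machinery is the triviality of the tail $\sigma$-algebra of the finite partition $\parP$, a classical equivalent of the $K$-property of $(f,\mm)$ already established in Theorem \ref{thm:propiedadK}. The one subtlety worth highlighting is the uniformity in $N'$ when $N'\geq N$, which is the reason the $L^1$-contraction of conditional expectation must be invoked to pass from the full future $\sigma$-algebra to the finite product $\bigvee_{N}^{N'}f^n\parP$, rather than working directly with the latter.
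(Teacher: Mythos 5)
Your proof is correct and follows exactly the route the paper intends: the paper simply asserts the lemma "since $(f,\mm)$ has the $K$-property" without supplying details, and your argument (triviality of the tail $\sigma$-algebra of the finite partition $\parP$ for a $K$-system, reverse martingale convergence, the $L^1$-contraction to get uniformity in $N'$, and Chebyshev on the atoms) is the standard way to fill in that assertion. The only cosmetic point is to note that atoms of $\mm$-measure zero, for which $\mm(C|A)$ is undefined, can be discarded from $\mathscr{G}$ without affecting the measure bound.
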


Until now we did not make explicit assumptions for the potentials. The conclusion of the lemma below, a refinement of \Cref{pro:medidacs} for the conditional measures on $\Fs$, seems to require to some extra control on the potential, and in particular on the stable measures.

\begin{lemma}\label{lem:jacobianomedidas}
If $\varphi \equiv 0$ or $\varphi=-\log |\det Df|E^u|$ then given $\ep>0$ there exists $\delta>0$ so that for every $x, y\in$, if $A \subset \Ws{x, \frac{\clps}{4}}, B\subset \Ws{y, \frac{\clps}{4}}$ are $\delta-$equivalent relatively open and pre-compact sets, then
\[
	\left|\frac{\nsx(A)}{\nsx[y](B)}-1\right|<\ep.
\]
\end{lemma}

\begin{proof}
In the SRB case we interchange $f$ and $f^{-1}$ as explained in \Cref{rem:SRBfmenos}, and then the $\nsx$ are smooth volumes on the leaves, hence the conclusion of the Lemma follows. Therefore, we only consider the case when $\varphi$ is constant on center leaves. Due to \Cref{lem:JacobianoT}, it is enough the prove the same property for the measures $\msx$ instead of the $\nsx$.   

Recall that  
\[
	\mu^s_x=\pi^c_x\mcsx
\]
where $\pi_x^c:\Wc{\Ws{x},\ccen}\to\Ws{x}$ is the projection collapsing center plaques, $0<\ccen<\frac{\clps}{4}$. By the local product structure of the foliations, if $0<\delta<\frac{\clps}{8}$ then it follows that for every $x,y \in M$, if $A \subset \Ws{x,\frac{\clps}{4}}, B \subset \Ws{y,\frac{\clps}{4}}$ are $\delta$-equivalent then 
\[
	\hu(\Wc{B,\ccen-\delta}) \subset \Wc{A,\ccen} \subset \hu(\Wc{B,\ccen+\delta}), 
\]
where $\hu:\Wcs{y,\clps}\to \Wcs{x}$ is the local holonomy by unstable leaves, and $d(\hu(z),z)\leq 2\delta$ for every $z\in \Wc{B,\ccen+\delta}$. Now applying part $3$ of \Cref{pro:medidacu} we get the result.
\end{proof}

We are ready to prove the analogue of the Main Lemma of \cite{GeoBernoulli} (compare Lemma $8.5$ in \cite{LyaPesin}). 

\begin{proposition}\label{pro:mainOW}
Given $\delta>0$ there exists $\rho>0$ such that if $C$ is a dynamical box with diameter less than $\rho$ and $E\subset C$ is a $s$-band then there exists a bijection $\theta:E\rightarrow C$ such that 
\begin{enumerate}
	\item $\theta\mm(\cdot|E)$ is equivalent to $\mm(\cdot|C)$, with Jacobian $J(x)$ satisfying $|J(x)-1|<\delta$, for every $x\in C$.
	\item For all $n\geq 0,x\in E$ it holds $d(f^{-n}x,f^{-n}\theta(x))\leq \delta$.
	\end{enumerate}
\end{proposition}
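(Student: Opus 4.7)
The map $\theta$ will move each $x\in E$ a small distance along the local center-unstable leaf through it. Because $f$ is a center isometry and $f^{-n}$ contracts $\Fu$-directions for $n\ge 0$, any displacement inside a local $cu$-leaf is non-expanding under the iterates $f^{-n}$, so condition (2) will follow automatically once we ensure that the $cu$-displacement is uniformly bounded by $\delta$.

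\textbf{Geometric setup.} Let $V$ denote the $cu$-transversal of $C$ and $\pi:C\to V$ the projection along stable plaques; since $E$ is an $s$-band, $E=\pi^{-1}(\bar E)$ with $\bar E:=\pi(E)\subset V$. For $v,v'\in V$ sufficiently close, the local product structure \eqref{eq:productstructure} makes the map
\[
H_{v,v'}\colon \Ws{v,C}\longrightarrow \Ws{v',C},\qquad H_{v,v'}(x):=\Wcu{x,2\epsilon_0}\cap\Ws{v',C},
\]
a well-defined homeomorphism whose image-point lies in the $cu$-leaf of $x$. Applying \cref{lem:holisometry} and the non-expansion of $f^{-n}$ on $\Fcu$-leaves yields a uniform bound $d(f^{-n}x,f^{-n}H_{v,v'}(x))\le C_0\,d(v,v')$ valid for every $n\ge 0$, with $C_0$ depending only on the metric.

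\textbf{Measure-preserving base map on $V$.} By the product structure of $\tmuf$ (\cref{pro:estructuraproducto} and \cref{rem:estructuraproducto}), the push-forward $m_V:=\pi_\ast\tmuf|_C$ equals $\nu^s_v(\Ws{v,C})\,\mcux(dv)$, which is a non-atomic Radon measure of full support on $V$. Fix $\eta\in(0,\rho)$ with $2C_0\eta<\delta$, partition $V$ into countably many Borel cells of diameter $<\eta$, and split each cell further into finitely many sub-cells of arbitrarily small $m_V$-mass. A Halmos--Rokhlin Lebesgue-space isomorphism argument then produces a Borel bijection $\bar\theta\colon\bar E\to\bar\theta(\bar E)\subset V$ with $\bar\theta_\ast(m_V|_{\bar E})=m_V|_{\bar\theta(\bar E)}$ and $d(v,\bar\theta(v))<2\eta$ for every $v\in\bar E$; the non-atomicity of $m_V$ allows one to match masses cell by cell.

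\textbf{Lift, measure preservation, and main obstacle.} Define $\theta(x):=H_{\pi(x),\bar\theta(\pi(x))}(x)$. Injectivity is inherited from $\bar\theta$ and the $H_{v,v'}$, and condition (2) follows from the $cu$-estimate in the geometric setup with the choice of $\eta$. The non-trivial point is the exact identity $\theta_\ast\tmuf|_E=\tmuf|_{\theta(E)}$. By arguments analogous to those in the proof of \cref{pro:Jacobinanohol}, together with \eqref{eq.Jacobianouconholonomia}, the push-forward $(H_{v,v'})_\ast\nu^s_v$ equals $\nu^s_{v'}$ up to a continuous Jacobian $T(v,v',\cdot)$ that is within $O(\rho^{\theta})$ of $1$. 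Hence the naive lift is only \emph{quasi}-measure preserving. The fix is to revisit the matching of the previous step: when pairing a sub-cell $A\subset\bar E$ with its target $B\subset V$, match not the $m_V$-mass but the $T$-weighted mass against the $m_V$-mass on the target; by continuity of $T$, non-atomicity of $m_V$ and the uniform bounds of \cref{lem:comparacionentretamanhos}, this adjustment changes $\bar\theta$ only by a perturbation of order $O(\rho^{\theta})$ inside each cell, which is absorbed by a further subdivision provided $\rho$ is small enough. This balancing of the geometric error $O(\eta)$ against the Jacobian error $O(\rho^{\theta})$, both of which must be dominated by $\delta$, is the central technical obstacle, and determines how small $\rho$ must be chosen in terms of $\delta$ and the H\"older data of $\varphi$.
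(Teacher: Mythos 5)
You correctly identify that condition (2) forces $\theta$ to move each point inside its local center--unstable leaf (any nonzero stable displacement is expanded by $f^{-n}$ and eventually exceeds $\delta$), and your reduction to a base map plus fiber holonomies has the same skeleton as the paper's argument. The gap is in the last step. Once you insist that $\theta$ carries the stable plaque $\Ws{v,C}$ onto $\Ws{\bar\theta(v),C}$ while moving points along $\Fcu$, the fiber map is \emph{forced} to be the $cu$-holonomy $H_{v,\bar\theta(v)}$, and $(H_{v,v'})_\ast\nu^s_v=J_{v,v'}\cdot\nu^s_{v'}$ with a Jacobian $J_{v,v'}$ that is a non-constant function \emph{on the fiber} $\Ws{v',C}$: it is an infinite product of the type \eqref{eq.Jacobianouconholonomia}, depending on the point $z$ and not only on the pair $(v,v')$, and there is no analogue here of the exact cancellation used to prove $\mathrm{m}^{W',B}=\mathrm{m}^{W,B}$, which relied on the holonomy base point staying inside the same leaf. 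Your proposed repair reweights only the base matching, so it can only adjust the total mass assigned to each fiber; it cannot correct the discrepancy between the normalized conditionals $(H_{v,v'})_\ast\hat\nu^s_v$ and $\hat\nu^s_{v'}$ within a fiber. The only way to correct that discrepancy is to compose with a nontrivial self-map of $\Ws{v',C}$, which reintroduces a stable displacement and destroys (2). Hence no choice of $\bar\theta$, however weighted, makes your $\theta$ exactly measure preserving (there is also a circularity in your weighting, since $T(v,\bar\theta(v),\cdot)$ depends on the map being constructed).

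The paper's proof turns the roles around precisely to avoid this. The arbitrary Lebesgue-space isomorphism is placed on a single $cu$-transversal $W_x$ with respect to $\mu^{cu}_x$, and is transported to every other $cu$-plaque $W_y$ by stable holonomies corrected so as to carry $\mu^{cu}_x|W_x$ exactly onto $\mu^{cu}_y|W_y$ (\cref{pro:Jacobinanohol}); measure preservation is then arranged through the product structure of $\tmuf$ over the $cu$-transversal (\cref{pro:estructuraproducto}) rather than through a fibrewise Jacobian estimate. The point is that all the measure-theoretic slack --- the arbitrary isomorphism and the holonomy corrections --- is absorbed \emph{inside} the $cu$-plaques, where displacements are harmless for (2) because $f^{-n}$ does not expand $\Fcu$. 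To salvage your scheme you must likewise give up the requirement that stable plaques go to stable plaques.
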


\begin{proof}
Consider $\rho>0$ smaller than the Lebesgue number of the covering $\{V_i\}_{i=1}^R$. If $C$ is a dynamical box with $\diam C\leq \rho$ then it is contained in some $V=V_i$, and hence using \Cref{lem:independencebox} it is no loss of generality to assume $C=V$. We thus can write 
\[
C=\bigcup_{y\in \Ws{x_0,\epsilon}}\ \ \hs_{y,x_0}(W), W=\Pcu{x_0,\epsilon}	
\]
for some $x_0\in M, \ep>0$ (cf. \Cref{cor:dynamicalboxsymmetric}). Observe that if $E \subset C$ is an $s$-band, then
\[
	\mm(A|E)=\frac{1}{\mm(E)}\int_W \frac{\nu^s_w(\Ws{w}\cap A)}{\nu^s_w(\Ws{w,C})}\der \mcux{w}.
\]

Fix $E \subset C$ an $s-$band and let $\theta_{x_0}:W\cap E\to W$ be a bijective map that sends $\mcux[x_0]|W\cap E$ to $\mcux[x_0]|W$. For $x\in E$ define $\theta(x)=\hs_{x,x_0}\circ \theta_{x_0}\circ \hs_{x_0,x}(x)$: $\theta$ preserves $cu$-plaques of $C$, therefore $d(f^{-n}x,f^{-n}(\theta(x)))\leq d(x,\theta(x))\leq \rho$ for every $n\geq 0$. Since $E$ is an $s$-band, $\theta$ is bijective (and clearly measurable).

\smallskip 
 
For every $w\in W$, $\theta \nux[w](\cdot|\Ws{w,C})$ is sent to a measure $\tilde{\nu}^s_{\theta_{x_0}(w)}$ which, due to the previous Lemma, is uniformly comparable with $\nux[\theta_{x_0}(w)](\cdot|\Ws{\theta_{x_0}(w),C})$. From this the statement follows easily.
\end{proof}

Now we can apply Lemma $2.3$ of \cite{GeoBernoulli} to obtain the following.

\begin{lemma}\label{lem:mapadebandas}
	Let $\varepsilon,\delta>0$ be given. Then there exists $N$ with the property that for all $N'\geq N$ one can find $\mathscr{G}\subset \bigvee_{N}^{N'}f^{-n}\parP, E \subset M$ and $\theta:E\to M$ satisfying
	\begin{enumerate}
		\item $\mm(\cup_{A\in \mathscr{G}}A)\geq 1-\varepsilon$;
		\item $\theta$ is a bijective onto $\varepsilon$-measure preserving map from $(E,\mm(\cdot|E))$ to $(M,\mm)$; 
		\item for $A\in \mathscr{G}$ then $\theta(E \cap A)=A$ and 
	\begin{enumerate}
		\item $\mm(E\cap A|A)\geq 1-\varepsilon$;
		\item for all $n\geq 0,x\in E\cap A$ it holds $d(f^{-n}x,f^{-n}\theta(x))\leq \delta$.
	\end{enumerate}
	\end{enumerate}
\end{lemma}

For convenience of the reader we spell the proof.

\begin{proof}
Consider $0<\rho$ corresponding to $\delta$ in the previous proposition. Using Besicovitch covering lemma we construct a partition $\{U_0,U_1,\ldots,U_l\}$ such that 
\begin{enumerate}
	\item for $1\leq j\leq l$ the set $U_j$ is a dynamical box of diameter less than $\rho$.
	\item $\mm(U_0)\leq \frac{\varepsilon}{10}$. 
\end{enumerate}
Then using \Cref{lem:sbandas,lem:Kconsecuencia} we deduce that there exists $N$ such that for all $N'\geq N$ we can find $\mathscr{G}\subset \bigvee_{N}^{N'}f^{-n}\parP$ with $\mm(\cup_{A\in\mathscr{G}}A)\geq 1-\varepsilon$ and such that for $A\in \mathscr{G}, 1 \leq j \leq l$ there exists $E_A\subset A$ of relative measure bigger than $1-\varepsilon$ with the properties: 
	\begin{enumerate}
		\item $\sum_1^l|\mm(E_A|U_j)-\mm(B_j)|<\varepsilon$.
		\item $E_A\cap U_j$ is a $s$-band for $1\leq j\leq l$, $E_A\cap U_0=\emptyset$.
	\end{enumerate}
	We let $E:=\bigcup_{A\in \mathscr{G}} E_A\cup U_0$. Finally we use the previous proposition in each $E_A\cap U_j$, and by defining $\theta|U_0$ as the identity we get $\theta:E\rightarrow M$ satisfying the conclusion of the lemma.
\end{proof}

Now we finish the proof.

\begin{theorem}\label{thm:VWB}
	The partition $\parP$ is VWB.
\end{theorem}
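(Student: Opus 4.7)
The plan is to combine the preceding lemma, which produces for each atom $A$ in a large-measure subfamily of $\bigvee_N^{N'} f^{-i}\mathcal{P}$ a near measure-preserving bijection $\theta:A\to M$ with $d(f^{-n}x,f^{-n}\theta(x))\leq\delta$ on a set $E\subset A$ of almost full relative measure, with \cref{lem:lemanombres}, which converts such an orbit-matching map into a $\overline{d}$-estimate on the sequence of partitions $\{f^j\mathcal{P}\}_1^n$. The delicate point is to choose the tolerances so that the matching is good along most of the time window $1,\dots,n$ \emph{simultaneously for all} $n\geq 0$.

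First I would choose $\delta>0$ so small that the $\delta$-neighborhood $U_\delta$ of $\partial\mathcal{P}$ has $\mm(U_\delta)<\epsilon^2/K$ for a large absolute constant $K$. The boundary $\partial\mathcal{P}=\bigcup_i\partial P_i$ is a finite union of piecewise $\mathcal{C}^1$ pieces transverse to $\Fcu$; since $\mm$ has conditionals on $\Fu$ equivalent to $\mu^u$ by \cref{thm:marginalesunicas} (and $\mm$ has the local product structure with respect to $(\mu^u,\mu^{cs})$), these pieces are $\mm$-null, exactly as in the proof of \cref{lem:sbandas}. By continuity from above, $\mm(U_\delta)\to 0$ as $\delta\to 0$.

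Next, since $(f,\mm)$ is ergodic (indeed Kolmogorov, by \cref{thm:propiedadK}), Birkhoff applied to $\mathbf{1}_{U_\delta}$ and Egoroff produce a set $\mathcal{H}\subset M$ with $\mm(\mathcal{H})\geq 1-\epsilon/4$ and an $N_0$ such that for all $n\geq N_0$ and $x\in\mathcal{H}$,
\[
\frac{1}{n}\#\{1\leq j\leq n\colon f^{-j}x\in U_\delta\}\leq \epsilon/K.
\]
Now apply the preceding lemma with parameters $(\epsilon',\delta)$ for $\epsilon'$ sufficiently small (depending on $\epsilon$), obtaining $N\geq N_0$ and, for each $N'\geq N$, a family $\mathscr{G}'\subset\bigvee_N^{N'}f^{-i}\mathcal{P}$ of $\mm$-measure at least $1-\epsilon'$ together with the associated $E_A\subset A$ and $\theta_A:A\to M$. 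A Chebyshev-type refinement gives
\[
\mathscr{G}:=\{A\in\mathscr{G}'\colon \mm(\mathcal{H}\mid A)\geq 1-\epsilon/4\},\qquad \mm\Bigl(\bigcup_{A\in\mathscr{G}}A\Bigr)\geq 1-\epsilon.
\]

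Finally, fix $A\in\mathscr{G}$ and any $n\geq N_0$, and set $\mathscr{Ex}:=A\setminus(E_A\cap\mathcal{H})$, which has relative $\mm(\cdot\mid A)$-measure at most $\epsilon/2$. For $x\in A\setminus\mathscr{Ex}$, one has $d(f^{-j}x,f^{-j}\theta_A(x))\leq\delta$ for every $j\geq 1$; whenever $f^{-j}x\notin U_\delta$ this forces $f^{-j}x$ and $f^{-j}\theta_A(x)$ to lie in the same atom of $\mathcal{P}$, so the $f^j\mathcal{P}$-names of $x$ and $\theta_A(x)$ coincide, and by the $\mathcal{H}$-control this fails for at most a fraction $\epsilon/K$ of $j\in\{1,\dots,n\}$. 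Invoking \cref{lem:lemanombres} with $g=\theta_A$, applied to $\{f^j\mathcal{P}\mid A\}_1^n$ on $(A,\mm(\cdot\mid A))$ versus $\{f^j\mathcal{P}\}_1^n$ on $M$, yields $\overline{d}(\{f^j\mathcal{P}\}_1^n,\{f^j\mathcal{P}\mid A\}_1^n)\leq\epsilon$ after appropriate tightening of the initial tolerances; the finite range $n<N_0$ can be absorbed by enlarging $N$ at the start. The main obstacle throughout is precisely securing these estimates \emph{uniformly in $n$} with a single choice of $\mathscr{G}$, which is what the Birkhoff--Egoroff step through $U_\delta$ delivers.
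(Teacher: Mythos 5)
Your proposal follows essentially the same route as the paper: get $\theta_A$ from the preceding lemma, observe that the $f^j\mathcal{P}$-names of $x$ and $\theta_A(x)$ can differ at time $j$ only when $f^{-j}x$ lies in the $\delta$-neighborhood $U_\delta$ of $\partial\mathcal{P}$, control the orbit frequency of visits to $U_\delta$, and feed the result into \cref{lem:lemanombres}. The paper compresses the frequency-control step into a reference to the argument of \cref{lem:sbandas}; you make it explicit via Birkhoff plus Egoroff, which is a faithful expansion of what is intended, and your justification that $\mm(U_\delta)\to 0$ (null boundaries for the conditionals, via \cref{thm:marginalesunicas} and the product structure) is the same as the paper's.

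One detail is wrong as stated: the range $n<N_0$ cannot be ``absorbed by enlarging $N$ at the start.'' In the VWB condition the index $n$ of the $\overline{d}$-estimate ranges over all $n\geq 0$ independently of the window $[N,N']$ defining the atoms $A$, so increasing $N$ does nothing for small $n$, while your Birkhoff--Egoroff set $\mathcal{H}$ only controls averages for $n\geq N_0$. The standard repair is to replace Birkhoff--Egoroff by the maximal ergodic inequality: the set where $\sup_{n\geq 1}\frac{1}{n}\#\{1\leq j\leq n: f^{-j}x\in U_\delta\}>\epsilon/K$ has measure at most $K\,\mm(U_\delta)/\epsilon$, which is small once $\delta$ is small, and this gives the required bound uniformly in $n$ with a single exceptional set. (Alternatively one can trim $\mathscr{Ex}$ further by a Chebyshev bound for each $n\leq N_0$ after $\delta$ is chosen last.) With that substitution your argument is complete and matches the paper's.
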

\begin{proof}
Let $\epsilon>0$ be given. If $E \subset M$ and $j\in \Nat$, then for every $x\in M$
\[
	x\in\text{atom }A\text{ of } f^j\parP|E\Leftrightarrow f^{-j}x\in\text{atom }f^{-j}A\text{ of } \parP,
\] 
and similarly for the partition $f^j\parP$. It follows that for every $n\in\Nat, x\in E$ the $\{f^j\parP\}_0^n$- and $\{f^j\parP|E\}_0^n$- names are determined by the atoms of $\parP$ that the finite sequence $\{f^{-j}x\}_{j=0}^n$ visits. 

Take $\varepsilon=\frac{\ep}{64}$ and apply the previous Lemma with $\varepsilon, \delta$ where $\delta$ is chosen as follows. Since all atoms $A\in \parP$ have (piecewise) smooth boundaries by disregarding some additional atoms in $\mathscr{G}\subset \bigvee_{N}^{N'}f^{-n}\parP$ we can modify the $\varepsilon$-measure preserving map $\theta:(E,\mm(\cdot|E))\to (M,\mm)$ that we constructed to some other $2\varepsilon$-measure preserving map $\theta':(E',\mm(\cdot|E'))\to (M,\mm)$ such that
\[
x\not\in \mathscr{Ex}(\theta')\Rightarrow \forall n, \frac{1}{n}\#\{1\leq j\leq n: \parP-\text{name of }x\neq  \parP-\text{name of }\theta(x)\}\leq 2\varepsilon.
\]	
To see this note that points $x,y$ such that $d(x,y)<\delta$ and $x,y$ are in different atoms of $\parP$, necessarily are in $N_{\delta}=\cup_{i} N_{\delta}(\partial P_i)$, where $N_{\delta}(\partial P_i)$ denotes the $\delta$-tubular neighborhood of $\partial P_i$; since $\partial P_i$ is piecewise-smooth, this $\delta$-neighborhood is contained in the union of finitely many $\delta$-neighborhoods of smooth codimension one manifolds of zero $\mm$-measure. From here the claim follows, as one can choose $\delta$ so that $\mm(N_{\delta})$ is arbitrarily small.

If $A\in \mathscr{G}\subset \bigvee_{N}^{N'}f^{-n}\parP$, then by applying \Cref{lem:lemanombres} we conclude that for all $n\geq 0$ it holds
\[
\overline{d}(\{f^j\parP\}_1^n,\{f^j\parP|A\}_1^n)\leq \overline{d}(\{f^j\parP\}_1^n,\{f^j\parP|E\}_1^n)+ \overline{d}(\{f^j\parP|E\}_1^n,\{f^j\parP|A\}_1^n)\leq 16(2\varepsilon)+2\varepsilon<\epsilon.
\]
Hence $\parP$ is VWB.
\end{proof}

\Cref{thm:VWB,thm:Ornstein} imply \hyperlink{theoremC}{Theorem C}.

% \subsection{The \texorpdfstring{$c$}{c} constant case} % (fold)
% \label{sub:thecconstantcase}

% subsection the_ (end)

% section finer_ergodic_properties (end)

\section{Applications and concluding remarks} % (fold)
\label{sec:applications_and_concluding_remarks}

In this last section we apply our results, particularly of \Cref{sec:a_characterization_of_equilibrium_states}, to the rank-one case. We finish with some questions.

\subsection{Hyperbolic flows} % (fold)
\label{sub:hyperbolic_flows}

Let $\Phi=(f^t)_{t\in\Real}$ be an Anosov flow with invariant splitting $TM=E^s\oplus E^c\oplus E^u$, where $E^c$ is generated by tangents to flow direction. If this flow is transitive there is a dichotomy: either
\begin{enumerate}
	\item $\Phi$ is a suspension of an Anosov diffeomorphism, and therefore $M$ is a fiber bundle over $S^1$, with fibers transverse to the flow line, or
	\item both $\Fs,\Fu$ are minimal foliations. 
\end{enumerate}
See \cite{AnosovFlow}. In what follows we will focus in this last case. 

For Anosov flows the powerful of symbolic representations is available \cite{SymbHyp,RatnerMarkov}. With this one can establish the following.

\begin{proposition}\label{pro:medidasinestablesrankone}
Given a H\"older potential $\varphi:M\to\Real$ there exists $\mu^u\in\mathrm{Meas}(\Fu)$ satisfying:
\begin{enumerate}
	\item[a)] $\mu^u_x$ has full support inside $\Wu{x}$, and is non-atomic.
	\item[b)] $\forall t\in\Real, x\in M$, $f^{-t}\mu^u_{f^t(x)}=e^{\Ptop(\varphi) t-\int_0^t \varphi(f^s(\cdot))ds}\mu^u_x$.
	\item[c)] The map $\mu^u:x\to \mu^u_x$ is weakly continuous in the following sense: given $y\in \Wu{x,\epsilon_0}$ and $\hs_{y,x}:\Wu{y}\to\Wu{x}$ the locally defined Poincaré map, it follows that for any  $A\subset \Wu{x}$ relatively open and pre-compact it holds
	\[
	\mu_{y}^u(\hs_{y,x}(A))\xrightarrow[y\to x]{}\mu^u_x(A).
	\]
	Moreover the convergence is uniform in $x$.
\end{enumerate}
\end{proposition}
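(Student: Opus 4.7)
The plan is to combine the existence of equilibrium states for Anosov flows (classical, due to Bowen--Ruelle \cite{BowenRuelle}, available here via Markov partitions \cite{SymbHyp,RatnerMarkov}) with a disintegration along weak-stable leaves, followed by a global extension along each unstable leaf by means of the desired conformality equation. Let $\mu_{\varphi}$ denote the unique equilibrium state of the flow $\Phi$ for the potential $\varphi$; by the Gibbs property of $\mu_\varphi$ together with the local product structure of $\Fu$ and the weak-stable foliation $\Fws=\Fs\cup \Fc$, we may disintegrate $\mu_\varphi$ on small foliated boxes adapted to the hyperbolic splitting.

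First I would build the measures locally. Fix $\epsilon_0>0$ smaller than the local product structure radius, and for each $x\in M$ consider the box $Q(x)\subset M$ obtained by saturating $\Wu{x,\epsilon_0}$ with local weak-stable plaques. The classical Bowen-Ruelle-Sinai theory (together with Haydn's local product structure \cite{localproductstructureHaydn}) yields that the disintegration of $\mu_\varphi|Q(x)$ along the weak-stable plaques has well-defined conditionals on $\Wu{x,\epsilon_0}$; let $\eta^u_x$ be such a conditional. The Gibbs inequality immediately gives that $\eta^u_x$ is positive on relatively open subsets of $\Wu{x,\epsilon_0}$ and non-atomic, and the stability of the disintegration under weak-stable holonomy yields the continuity statement (c) in a local form.

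Second I would extend $\eta^u_x$ to the whole leaf $\Wu{x}$ by pullback under the flow. Given $y\in\Wu{x}$ take $t>0$ so large that $f^{-t}y\in\Wu{f^{-t}x,\epsilon_0}$ (which is possible since $\Fu$ is contracted under $f^{-t}$); define $\mu^u_x$ on a neighborhood of $y$ by
\[
\mu^u_x|_{f^{t}(\Wu{f^{-t}x,\epsilon_0})}:=e^{\Ptop(\varphi)\,t-\int_0^t \varphi\circ f^s\,ds}\cdot (f^t)_*\eta^u_{f^{-t}x}.
\]
Consistency for different choices of $t$ reduces to the cocycle identity for $\int_0^{t}\varphi\circ f^s ds$ together with the transformation law satisfied by $\eta^u_{f^{-t}x}$ under the flow; the latter is a consequence of $\mu_{\varphi}$ being flow invariant and having the Gibbs property, expressed in local charts. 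This produces $\mu^u\in \mathrm{Meas}(\Fu)$ satisfying (b) by construction.

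The properties (a) and (c) on the global leaf follow at once: positivity and non-atomicity are preserved under the rescaling in the previous step, and the continuous dependence (c) comes from the Hölder regularity of $\varphi$ (which makes the Jacobian cocycle continuous) combined with the local continuity established above. The main obstacle is the \textbf{consistency of the extension}, i.e.\@ ensuring that the measure obtained by unwinding two different large times $t$ and $t'$ agree on the overlap: this is not completely formal because the local conditionals $\eta^u_x$ are only defined $\mu_\varphi$-a.e. in $x$, and one must argue (using the Hölder continuity of the transfer cocycle and the uniform Gibbs estimates) that there is a canonical continuous representative of $x\mapsto \eta^u_x$, after which invariance of $\mu_\varphi$ under $f^t$ translates exactly into the required cocycle relation. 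Everything else is bookkeeping.
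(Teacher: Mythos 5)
Your outline follows essentially the route the paper itself takes: the paper does not prove this proposition from scratch but attributes the construction to Haydn \cite{localproductstructureHaydn} (building on Sinai), with details in the appendix of Pollicott--Walkden \cite{Pollicott2001}, and observes that part (c) follows from Haydn's stronger statement for strong-stable holonomies together with the flow-conformality in (b). Your two-step scheme (local measures on $\Wu{x,\epsilon_0}$ from the product structure of the Bowen--Ruelle equilibrium state, then global extension along the leaf by the cocycle $e^{\Ptop(\varphi)t-\int_0^t\varphi\circ f^s\,ds}$) is exactly that construction, and the consistency of the extension for different unwinding times is indeed just the cocycle identity once the local family is in hand.

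The one point I would push back on is that the step you defer as ``the main obstacle'' is not a technicality but the actual content of the cited theorem. Rokhlin disintegration of $\mu_\varphi$ over the weak-stable plaques of a box produces \emph{probability} conditionals defined only for $\mu_\varphi$-almost every plaque and only up to the choice of box; there is no a priori normalization that makes $\eta^u_x$ and $\eta^u_{x'}$ comparable for $x'$ near $x$, so ``choosing a canonical continuous representative'' is not something the Gibbs estimates hand you directly. The resolution in Sinai--Haydn--Pollicott--Walkden is to build the leaf measures the other way around: one defines $\mu^u_x$ on unstable sets of Markov rectangles as the eigenmeasures of the Ruelle transfer operator on the one-sided symbolic model (equivalently, via the limits $\lim_n e^{-n\Ptop+S_n\varphi}$ of reference measures, as in \cref{thm:existenciafamiliamedidas} of this paper), checks the holonomy transformation law with the Jacobian \eqref{eq:Jacobianohol}, and only \emph{then} identifies these measures with the conditionals of $\mu_\varphi$. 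If you want a self-contained proof rather than a citation, you should run the construction in that order; as written, your argument has a circularity at the point where you assert that flow-invariance of $\mu_\varphi$ ``translates exactly into the required cocycle relation'' for objects that are not yet canonically defined at every $x$.
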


\begin{proof}
The construction is due to N. Haydn \cite{localproductstructureHaydn}, based in the original construction of Gibbs measures from Sinai \cite{Sinai_1972}. Details are given in the Appendix of the paper of M. Pollicot and C. Waldken \cite{Pollicott2001}. Part $c)$ is not explicitly stated by Haydn who only states (a stronger version of) this fact for holonomies corresponding to the strong stable foliation, although it clearly follows from this and part $b)$.    
\end{proof}

Let $f=f_1$ the time-one map. Given $\varphi$ we denote $\tilde{\varphi}$ its average along flow lines $\tilde{\varphi}(x)=\int_0^1 \varphi(f^tx)dt$. Observe that by the above proposition the family of measures $\mu^u=\{\mu^u_x\}_{x\in M}$ satisfy the same conditions as the measures along unstables that we constructed in \Cref{sub:the_conditional_measures_potentials_constant_along_the_center_and_srbs}, therefore all our construction goes through and we can determine an equilibrium state $\muf[\tilde{\varphi}]$ for the system $(f,\tilde{\varphi})$ having conditionals along unstables given by the family $\mu^u$. We now relate $\muf[\varphi]$ and the equilibrium states for the flow.

Denote by $\PTM{\Phi}{M}=\cap_{t\in\Real} \PTM{f_t}{M}$ the set of invariant measures for the flow. The topological pressure of the system $(\Phi,\varphi)$ is given by the variational principle (\cite{WaltersPres}, Corollary $4.12$)
\begin{equation}
\Ptop(\Phi, \varphi)= \sup_{\mathclap{\nu\in \PTM{\Phi}{M}}}\{h_{\nu}(\Phi_1)+\int \varphi d\nu\}, 
\end{equation}
and the definition of equilibrium state for this system is clear. In what follows we will write $\Eq(f,\varphi)$ for the set of equilibrium measures of $(f,\varphi)$, and analogously $\Eq(\Phi,\varphi)$ denotes the set of equilibrium states for $(\Phi,\varphi)$. Clearly $\Eq(\Phi,\varphi)\subset \Eq(f,\varphi)$, and although in principle these sets can be different, it is known that 
\[
\Ptop(\Phi,\varphi)=\Ptop(f,\varphi).
\]
On the other hand, using the previous line it is direct to check that $\Ptop(\Phi,\varphi)=\Ptop(f,\tilde{\varphi})$, therefore it follows
\[
	\Eq(\Phi,\varphi)=\{\mu\in \PTM{\Phi}{M}:h_{\mu}(f)+\int \tilde{\varphi} d\mu\}\subset \Eq(f, \tilde{\varphi}).
\] 

\begin{proposition}\label{pro:unicamedidaequilibrio}
If $\Phi$ is a transitive Anosov flow, then the sets $\Eq(\Phi,\varphi)$ and $\Eq(f,\tilde{\varphi})$ coincide. Therefore $\Eq(f, \tilde{\varphi})=\{\muf[\tilde{\varphi}]\}$.
\end{proposition}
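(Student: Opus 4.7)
The nontrivial inclusion is $\Eq(f,\tilde{\varphi})\subset \Eq(\Phi,\varphi)$. Given $\mu\in\Eq(f,\tilde{\varphi})$ I will construct its flow average and show it must coincide with $\mu$. Define
\begin{equation*}
\hat\mu:=\int_0^1 f^s_\ast \mu\,ds,
\end{equation*}
which is flow-invariant thanks to the $f$-invariance of $\mu$ (reparametrise $s\mapsto s+t\bmod 1$ in the integral). The technical core is the $f$-coboundary identity
\begin{equation*}
\tilde{\varphi}\circ f^t-\tilde{\varphi}=\psi_t\circ f-\psi_t,\qquad \psi_t(x):=\int_0^t\varphi(f^u x)\,du,
\end{equation*}
valid for every $t\in\Real$, obtained by rewriting $\int_0^1\varphi(f^{s+t}x)\,ds-\int_0^1\varphi(f^sx)\,ds$ as $\int_1^{1+t}-\int_0^t$ and translating. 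Since $f$-coboundaries integrate to zero against $\mu$, this gives $\int\tilde{\varphi}\,df^t_\ast\mu=\int\tilde{\varphi}\,d\mu$ for all $t$.

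Entropy is preserved under conjugation by $f^s$, so $h_{f^s_\ast\mu}(f)=h_\mu(f)$; combining this with affinity of $\nu\mapsto h_\nu(f)$ on $\PTM{f}{M}$ yields $h_{\hat\mu}(f)=h_\mu(f)$, while the previous paragraph gives $\int\tilde{\varphi}\,d\hat\mu=\int\tilde{\varphi}\,d\mu$. Flow-invariance of $\hat\mu$ and Fubini provide $\int\varphi\,d\hat\mu=\int\tilde{\varphi}\,d\hat\mu$, and thus
\begin{equation*}
h_{\hat\mu}(f)+\int\varphi\,d\hat\mu=h_\mu(f)+\int\tilde{\varphi}\,d\mu=\Ptop(f,\tilde{\varphi})=\Ptop(\Phi,\varphi),
\end{equation*}
i.e.\@ $\hat\mu\in\Eq(\Phi,\varphi)$. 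By the classical Bowen--Ruelle theorem $\Eq(\Phi,\varphi)=\{\nu\}$ is a singleton; moreover $\nu$ is $f=f_1$-ergodic, because minimality of $\Fs,\Fu$ places $\Phi$ in the non-suspension case of the dichotomy recalled above, where $\Phi$ is mixing and $\nu$ is Bernoulli for $f_1$ by Ornstein--Weiss. Hence $\hat\mu=\nu$.

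To pass from $\nu=\int_0^1 f^s_\ast\mu\,ds$ to $\mu=\nu$, push Lebesgue measure on $[0,1]$ forward by the continuous curve $s\mapsto f^s_\ast\mu$ to obtain a Borel probability $P$ on the Choquet simplex $\PTM{f}{M}$ with barycenter $\nu$. Expanding each $\kappa$ via its ergodic decomposition $\kappa=\int\eta\,dQ_\kappa(\eta)$ inside the barycentric integral gives $\nu=\int\eta\,d\tilde Q(\eta)$ with $\tilde Q:=\int Q_\kappa\,dP(\kappa)$, a representing probability for $\nu$ on $f$-ergodic measures. Since the ergodic decomposition of $\nu$ is $\delta_\nu$, uniqueness forces $\tilde Q=\delta_\nu$, which in turn forces $Q_\kappa=\delta_\nu$ (equivalently $\kappa=\nu$) for $P$-a.e.\@ $\kappa$. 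Hence $f^s_\ast\mu=\nu$ for Lebesgue-a.e.\@ $s$, and by weak-$\ast$ continuity in $s$ for every $s$; taking $s=0$ gives $\mu=\nu\in\Eq(\Phi,\varphi)$. The \emph{therefore} in the statement follows at once, since $\muf[\tilde{\varphi}]\in\Eq(f,\tilde{\varphi})=\Eq(\Phi,\varphi)=\{\nu\}$. The main ingredient imported from outside this paper is the $f_1$-ergodicity of $\nu$ in the mixing case — the essential obstacle to a fully self-contained proof — which is a well-established consequence of symbolic dynamics together with the Ornstein--Weiss theorem.
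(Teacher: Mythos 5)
Your proof is correct and follows essentially the same route as the paper: the coboundary identity $\tilde{\varphi}\circ f^{t}-\tilde{\varphi}=\psi_t\circ f-\psi_t$, averaging over $s\in[0,1]$ to produce a flow-invariant equilibrium state, Bowen--Ruelle uniqueness, and $f_1$-ergodicity of the flow equilibrium state to collapse the average. You are somewhat more explicit than the paper on the two points it compresses (affinity of pressure under the integral average, and the extremality/ergodic-decomposition step forcing $f^s_{\ast}\mu=\nu$), but the argument is the same.
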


\begin{proof}
Let $A(t,x)=\int_0^t \varphi(\Phi_sx)ds$; then $A:M\times\Real\to\Real$ is an additive cocycle over $\Phi$ ($A(t+s,x)=A(t,\Phi_sx)+A(s,x))$ and $A(1,x)=\tilde{\varphi}$. For transitive hyperbolic flows it is known (cf. \cite{BowenRuelle})that $\Eq(\Phi,\varphi)=\{\mu\}$, and in particular $\mu$ is ergodic for the flow (in fact $(\Phi,\mu)$ is a Bernoulli flow). Fix any $\mm\in \Eq(f,\tilde{\varphi})$ and note that since $f$ and $f^s$ commute, $f^s\mm\in \Eq(f,\tilde{\varphi} \circ f^s)$.

Now the cocycle property tells us that 
\begin{align*}
&A(1+s,x)=A(1,f^s(x))+A(s,x)=A(s,f(x))+A(1,x)\\
&\Rightarrow \tilde{\varphi}\circ f^s(x)-\tilde{\varphi}(x)=A(1,f^s(x))-A(1,x)=A(s,f(x))- A(s,x)
\end{align*}  
and $\tilde{\varphi}\circ f^s$ is cohomologous to $\tilde{\varphi}$, i.e.\@ its difference is of the form $k(f(x))-k(x)$ for some $k$ (that in this case is H\"older). It is simple to check that cohomologous potentials have the same equilibrium states, therefore $f^s\mm\in \Eq(f,\tilde{\varphi})$. The measure $\tilde{\mu}=\int_0^1 f^s\mm$ is invariant under the flow, and a equilibrium state for $(f, \tilde{\varphi})$ hence $\tilde{\mu}=\mu$. Since $\mu$ is ergodic for the map $f=f^1$ ($(\Phi, \mu)$ is mixing thus clearly $(f,\mu)$ is mixing), we deduce that
\[
	f^s\mm=\mu\quad\forall s\in[0,1]
\]
and in particular $\mm=\mu$.
\end{proof}

The above together with \hyperlink{theoremB}{Theorem B} implies \hyperlink{corollaryB}{Corollary B}. In the article \cite{ContributionsErgodictheory} use \hyperlink{corollaryB}{Corollary B} to investigate further properties of the horocyclic flow, as uniqueness of quasi-invariant measures. 

Here is the last application of these ideas.

\begin{corollary}\label{cor:uniquequasiinvariantcs}
Suppose that $\zeta^{cs}\in\mathrm{Meas}(\Fcs)$ is such that for every $x\in M$, $f^{-1}\zeta^{cs}_{fx}=e^{\tilde{\varphi}-P'}\zeta^{cs}_{x}$, where $P'\in\Real$. Then $\zeta^{cs}=\mu^{cs}$ and $P'=\Ptop(\varphi)$.
\end{corollary}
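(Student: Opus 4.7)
The plan is to repeat the construction of Section \ref{sub:theequilibrium} using $\zeta^{cs}$ in place of $\mu^{cs}$, produce a new $f$-invariant measure, and then invoke the characterization of equilibrium states to identify it with $\muf[\tilde\varphi]$. Concretely, in a dynamical box $B=B(x,\ep)$ I would define
\[
\mathrm{m}^{\zeta}(U):=\int_{\Wcs{x,\ep}}\nu^{u}_{w}(U\cap \Wu{w,B})\,d\zeta^{cs}_{x}(w),\qquad U\subset B,
\]
exactly as for $\muf$, and then verify $f$-equivariance by the same computation as in Proposition \ref{pro:mesinvariante}: the $\nu^{u}$-factor contributes $e^{\Ptop(\varphi)-\tilde\varphi}$, the $\zeta^{cs}$-factor contributes $e^{\tilde\varphi-P'}$, so $f_{\ast}\mathrm{m}^{\zeta}=e^{\Ptop(\varphi)-P'}\mathrm{m}^{\zeta}$. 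Finiteness on the compact manifold $M$ then forces $P'=\Ptop(\varphi)$ and $\mathrm{m}^{\zeta}$ is $f$-invariant.

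The crucial preliminary step is showing that this locally defined measure fits together into a Borel measure on $M$, i.e.\ establishing the coherence statements of Lemmas \ref{lem:independencebox} and the $W\to W'$ invariance built into the definition of $\mathrm{m}^{B}$. Both rest on $\zeta^{cs}$ transforming correctly under the unstable holonomy (with Jacobian $\Jacu$ as in Proposition \ref{pro:medidacs}). I would derive this from the quasi-invariance alone by the telescoping argument of Proposition \ref{pro:Jacobinanohol}: writing $h^{u}=f^{-n}\circ h^{u}_{n}\circ f^{n}$ and using the iterated identity $f^{-n}\zeta^{cs}_{f^{n}y_{0}}=e^{S_{n}\tilde\varphi-nP'}\zeta^{cs}_{y_{0}}$, one compares $(h^{u})^{-1}\zeta^{cs}_{y_{0}}$ with $\Jacu\cdot\zeta^{cs}_{x_{0}}$ by pushing the comparison to arbitrarily short unstable distances, where the defect $h^{u}_{n,\ast}\zeta^{cs}_{f^{n}x_{0}}-\zeta^{cs}_{f^{n}y_{0}}$ can be controlled because $\tilde\varphi$ is H\"older and the local $cs$-transverse structure of $\zeta^{cs}$ persists under $f^{-n}$. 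I expect this to be the technical heart of the argument.

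Once $\mathrm{m}^{\zeta}$ is a bona fide $f$-invariant probability (after normalization), its conditionals along unstables are, by construction, positive multiples of $\nu^{u}_{x}=\Delta^{u}_{x}\mu^{u}_{x}$, hence absolutely continuous with respect to $\mu^{u}_{x}$. Theorem \ref{thm:Bforrankone} (equivalently, Theorem \ref{thm:condicionalesdeterminanee} applied in the flow setting via Proposition \ref{pro:unicamedidaequilibrio}) then identifies $\mathrm{m}^{\zeta}$ with the unique equilibrium state $\muf[\tilde\varphi]=\mathrm{m}^{\mu^{cs}}$. Finally, uniqueness of the Rokhlin disintegration of $\muf[\tilde\varphi]$ along the partition by local unstables, combined with the equality $\mathrm{m}^{\zeta}=\mathrm{m}^{\mu^{cs}}$ and full support plus continuity of $\mu^{cs}$, forces $\zeta^{cs}_{x}$ and $\mu^{cs}_{x}$ to agree on each local center-stable plaque up to a multiplicative constant; quasi-invariance of both families under $f^{-1}$ with the same cocycle pins that constant to $1$, giving $\zeta^{cs}=\mu^{cs}$.
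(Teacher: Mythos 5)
Your proposal follows essentially the same route as the paper's own (very terse) proof: use the quasi-invariance to compute the unstable-holonomy Jacobian of $\zeta^{cs}$ and see it agrees with that of $\mu^{cs}$, rerun the construction of \cref{sub:theequilibrium} with $\zeta^{cs}$ in place of $\mu^{cs}$ to obtain an $f$-invariant measure (whence $P'=\Ptop(\varphi)$ by finiteness), identify it with the unique equilibrium state via its absolutely continuous unstable conditionals, and recover $\zeta^{cs}=\mu^{cs}$ from the local product structure. You correctly single out the holonomy-Jacobian step (the telescoping argument of \cref{pro:Jacobinanohol}) as the technical heart, which is exactly the step the paper compresses into a single sentence.
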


\begin{proof}
The quasi-invariance condition in the hypotheses permit us to compute the Jacobian of $\zeta^{cs}$ with respect to the unstable holonomy, and this coincides with the Jacobian of $\mu^{cs}$. We can thus use $\zeta^{cs}$ instead of $\mu^{cs}$ and construct a probability $\mm$ that is an equilibrium state for $(f, \tilde{\varphi})$ (cf.\@ \Cref{sub:theequilibrium}), and therefore it is the unique equilibrium state for $(\Phi,\varphi)$. Using the local product structure one gets that $\zeta^{cs}=\mu^{cs}$.
\end{proof}

\subsection{Epilogue}\label{subs:epilogue}

We finish our article with some questions and considerations. In this part the standing hypotheses are $f:M\to M$ is a $\mathcal C^2$ center isometry with minimal stable and unstable foliations, $\varphi:M\to \Real$ a Hölder potential.

\begin{question}
Assume the existence of an equilibrium state $\muf$ for $(f,\varphi)$ and families of measures $\mu^u,\mu^s$ satisfying the quasi-invariance condition $3$ of \hyperlink{theoremA}{Theorem A}. Does it follows uniqueness of the equilibrium state? 
\end{question}

This question has eluded us for some time, and for the time being the only cases where uniqueness is known are potentials similar to the ones treated in this article. The proofs are obtained by methods that do not rely on this property. Maybe one can start from the following.

\begin{question}
Assume that the existence conclusions of \hyperlink{theoremA}{Theorem A} are valid, together with the ergodicity of $\muf, \tmuf$. Is the case that $\muf=\tmuf$?
\end{question}

Since these measures are ergodic, it would suffice to establish that they are not singular with respect to each other. Here is example of such situation.

\begin{claim}
Assume that $\varphi$ is constant on center leaves and suppose that the center disintegration of the measures $\mu^{cu}, \mu^{cs}$ is Lebesgue, meaning: for every $x$ the disintegration of $\mcux$ on $\Pcu{x,\clps}$ by local center manifolds is given by absolutely continuous measures with uniformly bounded (independently of $x$) Jacobians, and likewise for $\mcsx$ on $\Pcs{x,\clps}$. Then $\muf=\tmuf$.
\end{claim}
\begin{proof}
It suffices to establish that $\tmuf<<\muf$. During the proof we will denote $a\sim b$ to indicate that the quotient $\frac{a}{b}$ is bounded above and below, with constants independent of $\epsilon$. Using the \Cref{pro:estructuraproducto} and the remark after it we get	
\begin{align*}
&\sup_{0<\epsilon<\cpsm} \frac{\tmuf(B(x,\epsilon))}{\muf(B(x,\epsilon))}\leq D\cdot \sup_{0<\epsilon<\cpsm} \frac{\msx(\Ws{x,\epsilon})\cdot \mcux(\Pcu{x,\epsilon})}{\mcsx(\Pcs{x,\epsilon})\cdot \mux(\Wu{x,\epsilon})}
\end{align*}  
where $D$ does not depend on $x$. Recalling the definition of $\msx, \mux$ in the $c$-constant case \Cref{eq.defmuxcconstant} and since $\mcsx$ has Lebesgue disintegration,
\[
\frac{\msx(\Ws{x,\epsilon})}{\mcsx(\Pcs{x,\epsilon})}\sim \left(\frac{\ccen}{\epsilon}\right)^c
\]
and likewise,
\[
\frac{\mcux(\Pcu{x,\epsilon})}{\mux(\Wu{x,\epsilon}}\sim \left(\frac{\epsilon}{\ccen}\right)^c.
\]
We conclude that there exists some constant $\widetilde{D}>0$ such that for every $x\in M$ it holds
\begin{equation}
\sup_{0<\epsilon<\cpsm} \frac{\tmuf(B(x,\epsilon))}{\muf(B(x,\epsilon))}\leq \widetilde{D}.
\end{equation}
This implies that $\tmuf<<\muf$ and $\frac{\der\tmuf}{\der\muf}(x)\leq \widetilde{D}$ for $\muf\aep(x)$, as consequence of the standard Lebesgue differentiation theorem for Borel measures. To check the absolute continuity, assume that $\muf(A)=0$ and let $r>0$ be arbitrary. Take a sequence of dynamical boxes $\{B(x_k,\epsilon_k)\}_{k\in\Nat}$ satisfying  
	\begin{enumerate}
		\item $A\subset \bigcup_{k\geq 0}B(x_k,\epsilon_k)$,
		\item $\sum_{k=0}^{\oo} \muf(B(x_k,\epsilon_k)) <\frac{r}{\widetilde{D}}$.
	\end{enumerate}
Then
\[	
\tmuf(A)\leq \sum_{k=0}^{\oo} \tmuf(B(x_k,\epsilon_k))\leq \widetilde{D} \sum_{k=0}^{\oo} \muf(B(x_k,\epsilon_k))<r,
\]
and since $r>0$ is arbitrary, $\tmuf(A)=0$.  
\end{proof}

The conditional measures of $\mu^{cs}_x$ and $\mu^{cu}_x$ along centrals seem to contain some important information about the equilibrium state. We can thus ask.

\begin{question}
When do the center conditional measures of $\mu^{cs}_x$ and $\mu^{cu}_x$ are absolutely continuous?
\end{question}

\smallskip

One possible tool to answer the previous question is using the construction of \Cref{sec:measures_along_foliations} as in the following example.

\begin{example}\label{ex:desintegracioncentral}
For $x\in M$ denote by $\lambda_x^c$ the normalized Lebesgue measure on $\Wc{x,\clps}$. Assume that $E^c$ is differentiable, and that $\varphi$ is $c$-constant. 

\begin{claim}
 For every $x\in M$ the conditionals measures of $\mcux|\Pcu{x,\clps}$ with respect to the center foliation are given by $\{\lambda_y^c\}_{y\in\Wc{x}}$.
\end{claim}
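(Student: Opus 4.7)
The strategy is to construct an auxiliary section $\tilde{\mu}^{cu}\in\mathrm{Meas}(\Fcu)$ with manifestly Lebesgue center disintegration, verify it satisfies the same equivariance properties as $\mu^{cu}$ under $f$ and stable holonomies, and argue via uniqueness that $\tilde{\mu}^{cu}$ and $\mu^{cu}$ agree up to a multiplicative constant.

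Differentiability of $E^c$ and the orthogonality $E^c\perp E^u$ ensure that every $cu$-leaf $\Wcu{x_0}$ admits local product decompositions $B=\bigsqcup_{z\in U}\Wc{z,r}$, where $U\subset\Wu{x_0}$ is a small unstable plaque. Using the $\mu^u$ family provided by Theorem A, I would set
\[
\tilde{\mcux}\bigr|_B := \int_{U}\lambda_{\Wc{z,r}}\,d\mux[x_0](z)
\]
and check consistency across overlapping boxes; in the $c$-constant case this follows from the identity $\mux=\pi^c_x\mcux[x]$ of \cref{sub:the_conditional_measures_potentials_constant_along_the_center_and_srbs}, which makes $\mu^u$ on different unstable transversals of the same $cu$-region canonically identified by the center projection. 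The resulting section $\tilde{\mu}^{cu}$ has full support and, tautologically, Lebesgue center disintegration on every foliation box.

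Next I would verify $f^{-1}\tilde{\mcufx}=e^{P-\varphi}\tilde{\mcux}$. For $B$ as above, since $f$ is a center isometry one has $f^{-1}_{\ast}\lambda_{fC}=\lambda_C$ for any center plaque $C$; combining this with the quasi-invariance \eqref{eq:invarianciainest} of $\mu^u$ and the $c$-constancy of $\varphi$ (which allows pulling the factor $e^{P-\varphi(w)}=e^{P-\varphi(y)}$, $y\in\Wc{w,r}$, inside the integral over the center plaque) yields
\[
f^{-1}\tilde{\mcufx}\bigr|_B = \int_{U}\lambda_{\Wc{w,r}}\,e^{P-\varphi(w)}\,d\mux[x_0](w) = \int_B e^{P-\varphi(y)}\,d\tilde{\mcux}(y).
\]
An analogous computation, this time using \cref{lem:holisometry} (a stable holonomy $\hs:\Wcu{x_0}\to\Wcu{y_0}$ sends center plaques isometrically to center plaques) together with the observation that the stable Jacobian $\Jacs$ of \cref{pro:medidacu} is itself $c$-constant when $\varphi$ is, should show that $\tilde{\mu}^{cu}$ satisfies the same stable holonomy transformation rule $(\hs)^{-1}\tilde{\mu}^{cu}_{y_0}=\Jacs\cdot\tilde{\mu}^{cu}_{x_0}$ as $\mu^{cu}$.

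Finally, to conclude $\tilde{\mu}^{cu}=c\cdot\mu^{cu}$, one considers the Radon-Nikodym density $\rho:M\to\Real_{>0}$ obtained by taking, on each $cu$-leaf $\Wcu{x}$, the density $d\tilde{\mu}^{cu}_x/d\mu^{cu}_x$. The shared $f$-quasi-invariance forces $\rho\circ f=\rho$, and the shared stable holonomy transformation forces $\rho(\hs y)=\rho(y)$, so $\rho$ is $f$-invariant and constant on stable leaves. Minimality of $\Fs$, together with continuity of $\rho$ (inherited from both sections being strongly absolutely continuous with continuous densities relative to a common reference), then forces $\rho$ to be constant, yielding $\tilde{\mu}^{cu}=c\cdot\mu^{cu}$ and therefore the advertised center disintegration of $\mcux$. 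The main obstacle is verifying the stable holonomy equivariance of $\tilde{\mu}^{cu}$: since $\hs$ need not send unstable plaques of $\Wcu{x_0}$ to unstable plaques of $\Wcu{y_0}$, it does not act directly on the transverse measure $\mu^u$, and one must carefully track how the composition of $\hs$ with center projection deforms $\mu^u$, exploiting the known $\hs$-equivariance of $\mu^{cu}$ (\cref{pro:medidacu}(3)) together with the $c$-constancy of $\Jacs$.
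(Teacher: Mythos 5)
Your overall strategy---build an auxiliary section $\tilde{\mu}^{cu}$ with manifestly Lebesgue center disintegration and then identify it with $\mu^{cu}$ by a uniqueness argument---is genuinely different from the paper's proof, and as written it has a real gap at the identification step. The paper argues directly on the approximating sequence: for $B=\Wc{\Wu{x,\ep},\ep}$ and $B_n=f^n(B)$, differentiability of $E^c$ gives (via Fubini) that $\Leb^{cu}|B_n$ disintegrates along center plaques into normalized Lebesgue measures; since $f^{-n}$ is an isometry on center plaques, the same holds for $f^{-n}\Leb|B_n$; and since $\varphi$ is $c$-constant the density $e^{\SB}$ is constant on each center plaque, so the normalized conditionals of $\nu^n=e^{\SB}f^{-n}\Leb$ are still Lebesgue. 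This property passes to convex combinations and vague limits, hence to every element of $\mathcal{X}$, in particular to $\mu^{cu}$. No uniqueness statement is needed.

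The gap in your argument is the final step. You set $\rho=d\tilde{\mu}^{cu}_x/d\mu^{cu}_x$ leafwise and claim it exists and is continuous because both sections are ``strongly absolutely continuous with continuous densities relative to a common reference.'' Strong absolute continuity in this paper refers to behaviour under transverse (stable) holonomies, not to absolute continuity with respect to any reference measure on the leaf; a priori $\mu^{cu}_x$ is only a fixed point of the Schauder--Tychonoff argument, and nothing guarantees $\tilde{\mu}^{cu}_x\ll\mu^{cu}_x$, let alone with a continuous density. What you are implicitly invoking is uniqueness (up to scale) of quasi-invariant sections with prescribed Jacobian and holonomy equivariance; the paper proves such a statement only in the rank-one flow case (\cref{cor:uniquequasiinvariantcs}), where it already requires the uniqueness of the equilibrium state and the local product structure, and the Epilogue records closely related uniqueness questions as open in general. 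A second gap, which you flag but do not resolve, is the stable-holonomy equivariance of $\tilde{\mu}^{cu}$: since $\hs$ does not preserve unstable plaques, the product formula defining $\tilde{\mu}^{cu}$ is not transported by $\hs$, and deducing the equivariance from that of $\mu^{cu}$ via $\mux=\pi^c_x\mcux$ would require knowing how $\mu^{cu}$ disintegrates along centers---which is precisely what is to be proved. Your computation of the $f$-quasi-invariance of $\tilde{\mu}^{cu}$ is correct, but the identification $\tilde{\mu}^{cu}=c\cdot\mu^{cu}$ needs a different mechanism; the most economical one is to push the Lebesgue disintegration through the construction of $\mu^{cu}$ itself, as above.
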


In particular the disintegration is defined everywhere. Similarly for $\mcsx$.

To see this fix $x\in M$, $0<\ep\leq\clps$ and let $B=\Pcs{x,\ep}$. For $n\geq 0$ let $B_n=f^n(B)$ and note that since $f$ is an center isometry then  $\forall y\in B_n$ the map $f^{-n}|: \Wc{y,B_n}\to \Wc{f^{-n}y,B}$ is an isometry with respect to the induced metric. As $\Fc$ is differentiable, the disintegration of the Lebesgue measure $\nu|B_n=\Leb^{cu}|B_n$ along center plaques is given by $\{\lambda_y^c\}_{y\in B_n}$, therefore for $\upsilon_n=f^{-n}\nu|B_n$ its disintegration is given by $\{f^{-n}\lambda_y^c\}_{y\in B_n}=\{\lambda_z^c\}_{z\in B}$. This in turn implies that the center disintegration of $\nu^n=e^{\SB}f^{-n}\nu$ is given by
\[
  (\nu^n)^c_y=\frac{e^{\SB}}{\int e^{\SB} \der(\upsilon_n)^c_y}(\upsilon_n)^c_y=(\upsilon_n)^c_y=\lambda_y^c.
\]
From this it follows that every section $\nu\in\mathcal{X}$ \eqref{eq:combinacionesconvexas}, in particular $\mu^{cu}$, has the same disintegration, proving our claim.
\end{example}

\begin{question}
What are in general the disintegrations of $\mu^{cs}$ and $\mu^{cu}$ along center leaves?
\end{question}

\smallskip 
 
Similarly, we can inquire about the transverse measures of $\mcux, \mcsx$ on (local) center leaves. For the geometrical proof of the Kolmogorov property \Cref{thm:propiedadK} we noted that a sufficient condition would be establishing that these measures are Lebesgue. We suspect that this is always the case.

\begin{question}
Is it true that the transverse (normalized) measures of 
\begin{align*}
\mcux|\Wu{\Wc{x,\clps},\clps}\\
\mcsx|\Ws{\Wc{x,\clps},\clps}
\end{align*}
are equal $\lambda_x^c$?
\end{question}

% subsection hyperbolic_flows (end)

% section applications_and_concluding_remarks (end)

\section{Appendix: Proof of Theorem D}\label{sec:Appendix}\hypertarget{sec:Appendix}{}

Let $A:\Tor^2\to\Tor^2$ be a linear Anosov map, and consider $f:\Tor^2\times \Tor^2\to \Tor^4$ the group extension of $A$ given by
\[
	f(x,\theta_1,\theta_2)=(A\cdot x,\theta_1+\alpha_1 k(x),\theta_2+\alpha_2 k(x))
\] 
where $k:\Tor^2\to\Tor$ is analytic, not cohomologous to constant, and $(\alpha_1,\alpha_1)$ is a Liouville vector (see below). Then $f$ is a conservative analytic center isometry with center bundle parallel to the vertical fibers $\{0\}\times \Real^2$. This example was considered by Dolgopyat \cite{dolgolivsic} in other context. Due to a result of Katok, Theorem 1 in \cite{Katok1980a}, this map is a $K$-automorphism, therefore in particular its unstable foliation $\Fu$ is ergodic.

\begin{lemma}
$\Fu$ is minimal. 
\end{lemma}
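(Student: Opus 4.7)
My plan is to combine the explicit description of the unstable leaves coming from the skew-product structure with the commutativity of $f$ with the fiberwise translations, and with the K-property of $(f,\Leb)$ granted by Katok's theorem.

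First, I would describe the leaves of $\Fu$ explicitly. Writing $\alpha := (\alpha_1,\alpha_2)$, the fact that $f$ is a skew product over the linear Anosov $A$ with vertical center fibers forces the unstable bundle $E^u$ of $f$ to graph over $E^u_A$; in the universal cover, the unstable leaf through $(\tilde x_0,\tilde\theta_0)$ is the image of the curve
\[
t \longmapsto \bigl(\tilde x_0 + tv,\ \tilde\theta_0 + \alpha\,\psi(t)\bigr),\qquad \psi(t) := \sum_{j\geq 1}\bigl[k(A^{-j}(x_0+tv))-k(A^{-j}x_0)\bigr],
\]
where $v$ is a unit vector spanning $E^u_A$. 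In particular the projection of $W^u(x_0,\theta_0)$ to the first factor is the $A$-unstable leaf $W^u_A(x_0)$, which is dense in $\Tor^2$ because $v$ has irrational slope.

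Second, I would use the essential symmetry of the construction: the translations $L_\eta(x,\theta) := (x,\theta+\eta)$ commute with $f$ for every $\eta \in \Tor^2$, since $\omega$ does not depend on $\theta$. Consequently $L_\eta$ permutes the leaves of $\Fu$ by $L_\eta(W^u(x,\theta)) = W^u(x,\theta+\eta)$, so that if some leaf $W^u(x_0,\theta_0^{\ast})$ is dense in $\Tor^4$ then every leaf $W^u(x_0,\theta)$ in the same fiber $\{x_0\}\times\Tor^2$ is also dense.

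Third, by Katok's theorem $(f,\Leb)$ is a K-automorphism, and in particular the unstable foliation is ergodic (its conditional measures are Lebesgue), whence Lebesgue-a.e.\ $\Fu$-leaf is dense in $\Tor^4$. Combined with the second step, the set
\[
D' := \{x\in\Tor^2 : W^u(x,\theta)\text{ is dense for every }\theta\in\Tor^2\}
\]
has full Lebesgue measure in $\Tor^2$, and is saturated by the $A$-unstable foliation (two points on the same $A$-unstable leaf share the same $\Fu$-leaf up to a fiber translation, which is absorbed by $L_\eta$-equivariance).

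The remaining difficulty, which I expect to be the main obstacle, is to promote the full-measure conclusion to the pointwise statement $D'=\Tor^2$. The natural plan is to apply the parallel argument to $f^{-1}$, obtaining an analogous full-measure set $\widetilde D'$ associated to the stable foliation of $f$ that is saturated by the $A$-stable foliation; then, through a Hopf-style argument exploiting the $L_\eta$-equivariance together with the local product structure of $f$, one shows that $D'\cap \widetilde D'$ is closed under both the $A$-stable and $A$-unstable holonomies. Since the linear Anosov $A$ is accessible (any two points of $\Tor^2$ are joined by a finite $us$-path), this forces $D'\cap \widetilde D' = \Tor^2$, and in particular $D'=\Tor^2$, completing the proof of minimality of $\Fu$.
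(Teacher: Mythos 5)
Your first three steps are sound: the explicit description of the leaves as graphs over $W^u_A$, the observation that the fiber translations $L_\eta$ commute with $f$ and hence permute unstable leaves within each fiber, and the deduction from Katok's theorem that the (open, $u$-saturated, $L_\eta$-invariant) set of points with dense unstable leaf has full Lebesgue measure. The problem is the final step, which you yourself flag as the main obstacle and which remains a genuine gap. Passing from ``Lebesgue-a.e.\ leaf is dense'' to ``every leaf is dense'' is exactly the hard part, and the mechanism you propose does not deliver it: a Hopf/accessibility argument for the linear Anosov $A$ produces \emph{measure-theoretic} conclusions (a set saturated by both $A$-stable and $A$-unstable holonomies has measure $0$ or $1$), but $D'$ already has full measure, so nothing new is gained. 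To conclude $D'=\Tor^2$ pointwise you would need to prove that $D'$ is invariant under $A$-stable holonomy \emph{at every point}, and you give no argument for why density of the unstable leaves over $x$ should propagate to the unstable leaves over a point $y\in W^s_A(x)$; this is not automatic (full-measure dense $G_\delta$ sets saturated by a foliation need not be everything, and ergodicity of a foliation does not in general imply its minimality).

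The paper closes this gap by a different device: it realizes $\Fu$ as the orbit foliation of a skew-product flow $\hat\psi^u_t$ over the uniquely ergodic linear flow $\psi^u_t$ on $\Tor^2$, and invokes Furstenberg's dichotomy (Theorem 4.1 of \cite{Furstenberg1961}): either $\hat\psi^u_t$ is uniquely ergodic, or every ergodic invariant measure has atomic disintegration along the vertical tori. The second alternative is excluded because Lebesgue is $\hat\psi^u$-invariant and ergodic (by Katok's theorem, which is where the K-property enters) and is manifestly non-atomic on fibers. Unique ergodicity with a fully supported invariant measure then gives minimality of every orbit closure directly, with no a.e.-to-everywhere passage. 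If you want to salvage your approach, you should replace your fourth step by this unique-ergodicity argument (or supply an actual proof that $D'$ is closed and stable-holonomy invariant, which is not sketched in your proposal).
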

\begin{proof}
Let $e^u$ be a unit vector in $\Tor^2$ generating the unstable bundle of $A$ and, consider the linear flow $\psi^u_t:\Tor^2\to\Tor^2$, $\psi^u_t(x)=x+t\cdot e^u$. The unstable foliation of $A$ are the orbits of $\psi^u$, and it holds
\[
	A\psi^u_t(x)=\psi^u_{\lambda t}(Ax)
\]
 where $\lambda>1$ is the unstable eigenvalue of $A$. It is not hard to verify that the unstable foliation of $f$ is the orbit foliation of the flow $\hat{\psi}^u_t:\Tor^4\to\Tor^4$ given by
 \[
 \hat{\psi}^u_t(x,\theta_1,\theta_2)=(\psi^u_t(x), \theta_1+\alpha_1l_t(x),\theta+\alpha_2l_t(x))
 \]
 where $l_t(x)=\sum_{n=1}^{\oo}k(A^{-n}x)-k(A^{-n}\psi^u_t(x))=\sum_{n=1}^{\oo} k(A^{-n}x)-k(A^{-n}x+\frac{t}{\lambda^n}e^u)$; since $k$ is differentiable the previous series is convergent. In particular $\hat{\psi}^u$ is conservative.

The flow $\psi^u$ is uniquely ergodic, $\Leb$ being its invariant measure, therefore by a classical result due to Furstenberg, either $\hat{\psi}^u_t$ is uniquely ergodic (and therefore minimal, since it preserves $\mu=\Leb\times d\theta_1\times d\theta_2$), or any $\hat{\psi}^u_t$ ergodic invariant measure has atomic disintegration along the partition given by the vertical tori (i.e., the center foliation of $f$). See Theorem $4.1$ in \cite{Furstenberg1961}. The later possibility is prohibited by Katok's result, as $\hat{\psi}^u_t$ is ergodic with respect to Lebesgue.
\end{proof}

The vector $(\alpha_1,\alpha_2)$ is chosen so that $\frac{\alpha_1}{\alpha_2}\not\in\mathbb{Q}$, and that for every $n\in\Nat$ there exists $m_{n,1},m_{n,2}\in \Nat$ such that
\begin{itemize}
	\item \(|\al_1m_{n,1}+\al_2m_{n,2}|<\frac{1}{m_{n,2}^n}\);
	\item \(m_{n,2}\geq m_{n,1}>n\).
\end{itemize}

Denote $d_{n}(x,\theta_1,\theta_2)=\exp(2\pi i \prodi{(\theta_1,\theta_2)}{(m_{n,1},m_{n,2})})$; then $d_n:\Tor^4\to\Tor$ satisfies 
\begin{align*}
d_n(f(x,\theta_1,\theta_2))=\exp(2\pi i \prodi{(\theta_1+\alpha_1 k(x),\theta_2+\alpha_2 k(x))}{(m_{n,1},m_{n,2})})\\
=\exp(2\pi ik(x) \prodi{(\alpha_1,\alpha_2 )}{(m_{n,1},m_{n,2})})\cdot d_n(x,\theta_1,\theta_2).
\end{align*}

Define $\varphi:M\to \Real$ by
\begin{align*}
\varphi(x,\theta_1,\theta_2)&=\sum_{n\geq 0} d_{n}(f(x,\theta_1,\theta_2))-d_{n}(x,\theta_1,\theta_2)\\
&=\sum_{n} \left(\exp\Big(2\pi ik(x) \prodi{(\alpha_1,\alpha_2 )}{(m_{n,1},m_{n,2})}\Big)-1\right) d_n(x,\theta_1,\theta_2).
\end{align*}

Observe that $\varphi$ is the $\mathcal{C}^{\oo}$ limit of coboundaries, therefore $\Eq(f,\varphi)=\Eq(f,0)$. It is clear on the other hand that the Lebesgue measure in $\Tor^4$ maximizes entropy

Assume that there exists a family of measures $\zeta^{u}=\{\zeta^u_L: L\in\Fu\}$ satisfying the quasi-invariance condition $f^{-1}\zeta_{fL}^u=e^{\Re(\varphi) - \htop(f)}\zeta_{L}$, and depending continuously on the point. Then by \hyperlink{theoremB}{Theorem B} one would have 
\[
	\zeta_L=e^{\omega_L}\cdot\mathscr{m}^u_L
\]
where $\mathscr{m}^u_L$ is the corresponding Margulis measure on $L$, and $\omega_L:L\to\Real_{>0}$ is continuous. We would thus get
\[
	e^{\Re(\varphi)-\htop(f)+\omega_L}\mathscr{m}^u_L=f^{-1}(e^{\omega_{fL}}\cdot \mathscr{m}^u_{fL})=e^{\omega_{fL}\circ f-\htop(f)}\mathscr{m}^u_L,
\]
and hence $\Re(\varphi)=\omega_{fL}\circ f-\omega_L$ (because the Margulis measures have full support). If $k(0)=0$ then $L_0=\Wu{0}$ is fixed, and this gives, for $\omega=\omega_{L_0}$,
\[
	\Re(\varphi)=\omega\circ f-\omega\quad \text{on }L_0.
\]
Since the family $\{\omega_{\Wu{x}}\}_x$ depends continuously on $x$ ($\mathscr{m}^u$ are essentially the Lebesgue measure), $\omega$ extends continuously to $M$, thus implying that $\Re(\varphi)$ is cohomologous to the zero function, with continuous transfer function. Likewise for $\Im(\varphi)$: thus there would exist $\phi:M\to \Comp$ continuous so that $\varphi=\phi\circ f-\phi$.

We can use Fourier expansion to find $\phi$; for $x\in\Tor^2$ we consider $\varphi(x,\cdot,\cdot):\Tor^2\to \Real$. If $(n_1,n_2)\in \Z^2$ we denote by 
\[
	e_{(n_1,n_2)}(\theta_1,\theta_2)=\exp\left(2\pi i\prodi{(n_1,n_2)}{(\theta_1,\theta_2)}\right)
\]
the corresponding element of the Fourier basis. Note that $d_n(x,\theta_1,\theta_2)=e_{(m_{n,1},m_{n,2})}(\theta_1,\theta_2)$. We compute
\begin{align*}
&\varphi(x,\cdot,\cdot)=\sum_{n} \left(e_{(m_{n,1},m_{n,2})}(k(x)\alpha_1, k(x)\alpha_2)-1\right)\cdot e_{(m_{n,1},m_{n,2})}\\
&\phi(x,\cdot,\cdot)=\sum_{(n_1,n_2)} a_{(n_1,n_2)}(x)\cdot e_{(n_1,n_2)}\shortintertext{where $a_n:\Tor^2\to\Comp$ is continuous, and}
&\varphi(x,\cdot,\cdot)=\phi(f(x,\cdot,\cdot))-\phi(x,\cdot,\cdot)=\sum_{(n_1,n_2)}\left(a_{(n_1,n_2)}(Ax)\cdot e_{(n_1,n_2,)}((k(x)\alpha_1, k(x)\alpha_2))-a_{(n_1,n_2)}(x)\right)\cdot e_{(n_1,n_2)}\\
&\Rightarrow \begin{dcases}
(I)\quad a_{(n_1,n_2)}(Ax)\cdot e_{(n_1,n_2,)}((k(x)\alpha_1, k(x)\alpha_2))=a_{(n_1,n_2)}(x)\quad (n_1,n_2)\neq (m_{n,1},m_{n,2})\\
(II)\quad \left(a_{(m_{n,1},m_{n,2})}(Ax)-1\right)\cdot e_{(m_{n,1},m_{n,2})}((k(x)\alpha_1,k(x)\alpha_2))=a_{(m_{n,1},m_{n,2})}(x)-1
\end{dcases}
\end{align*}
From $(I)$ we get that if $(n_1,n_2)\neq (m_{n,1},m_{n,2})$ then $|a_{(n_1,n_2)}(Ax)|=|a_{(n_1,n_2)}(x)|$; since $A$ is topologically transitive, $|a_{(n_1,n_2)}(x)|=c_{(n_1,n_2)}\in \Real$ constant. It follows that $c_{(n_1,n_2)}\cdot e_{(n_1,n_2,)}((k(x)\alpha_1, k(x)\alpha_2))=c_{(n_1,n_2)}$ for every $x$, and this implies that $c_{(n_1,n_2)}=0$ (because $k(x)$ is not constant). We argue similarly for $(II)$: fixed pair $(m_{n,1},m_{n,2})$, the function $x\mapsto |a_{(m_{n,1},m_{n,2})}(x)-1|$ has to be constant, and therefore is the zero function. It follows that $a_{(m_{n,1},m_{n,2})}(x)=1$ for every $x$. Putting together $(I), (II)$ we get that 
\[
	\phi=\sum_n d_n=\sum_{\crampedclap{(m_{n,1},m_{n,2})}}\ \ e_{(m_{n,1},m_{n,2})}.
\] 

This is a contradiction, due to the Riemann-Lebesgue lemma. We have shown that our assumption on the existence of families of measures $\zeta^{u}$ (and the corresponding one for $\Im(\varphi)$) is incorrect, thus establishing \hyperlink{theoremD}{Theorem D}. 

\begin{remark}
We have used a complex valued potential to simplify the computations, but clearly the same argument can be adapted using one real valued potential, by employing the Fourier basis of $\Tor^2$ that consists of products of sines and cosines.
\end{remark}

% % subsection uniqueness_and_the_ (end)

% section introduction_and_main_results (end)

\newpage
%Competing interests: The author(s) declare none.

\printbibliography

\addcontentsline{toc}{section}{References}
\end{document}